\documentclass[a4paper,11pt]{amsart}

\makeatletter
\@namedef{subjclassname@2010}{%
  \textup{2010} Mathematics Subject Classification}
\makeatother

\usepackage{amscd}
\usepackage{amsmath}
\usepackage{amssymb}
\usepackage{amsthm}
\usepackage{float}
\usepackage[dvips]{graphicx}
\usepackage{enumerate}

\usepackage{array}
\usepackage{amscd}
\usepackage[all]{xy}
\usepackage{url}

\DeclareFontFamily{U}{rsfs}{%
\skewchar\font127}
\DeclareFontShape{U}{rsfs}{m}{n}{%
<-6>rsfs5<6-8.5>rsfs7<8.5->rsfs10}{}
\DeclareSymbolFont{rsfs}{U}{rsfs}{m}{n}
\DeclareSymbolFontAlphabet
{\mathrsfs}{rsfs}
\DeclareRobustCommand*\rsfs{%
\@fontswitch\relax\mathrsfs}

\theoremstyle{plain}
\newtheorem{thm}{Theorem}[section]
\newtheorem{prop}[thm]{Proposition}
\newtheorem{lem}[thm]{Lemma}
\newtheorem{sublem}[thm]{Sublemma}
\newtheorem{defi}[thm]{Definition}
\newtheorem{rmk}[thm]{Remark}
\newtheorem{aside}[thm]{Aside}

\newtheorem{cor}[thm]{Corollary}

\newtheorem{prop-defi}[thm]{Proposition-Definition}
\newtheorem{thm-defi}[thm]{Theorem-Definition}
\newtheorem{lem-defi}[thm]{Lemma-Definition}

\newtheorem{assum}[thm]{Assumption}
\newtheorem{conj}[thm]{Conjecture}
\newtheorem{exam}[thm]{Example}

\newdimen\argwidth
\def\db[#1\db]{
 \setbox0=\hbox{$#1$}\argwidth=\wd0
 \setbox0=\hbox{$\left[\box0\right]$}
  \advance\argwidth by -\wd0
 \left[\
 n.3\argwidth\box0 \kern.3\argwidth\right]}

\newcommand{\vt}{\vartheta}
\newcommand{\aA}{\mathcal{A}}
\newcommand{\bB}{\mathcal{B}}
\newcommand{\cC}{\mathcal{C}}
\newcommand{\dD}{\mathcal{D}}
\newcommand{\eE}{\mathcal{E}}
\newcommand{\fF}{\mathcal{F}}

\newcommand{\hH}{\mathcal{H}}

\newcommand{\lL}{\mathcal{L}}
\newcommand{\mM}{\mathcal{M}}

\newcommand{\oO}{\mathcal{O}}
\newcommand{\pP}{\mathcal{P}}
\newcommand{\qQ}{\mathcal{Q}}

\newcommand{\sS}{\mathcal{S}}
\newcommand{\tT}{\mathcal{T}}
\newcommand{\uU}{\mathcal{U}}

\newcommand{\wW}{\mathcal{W}}
\newcommand{\xX}{\mathcal{X}}
\newcommand{\yY}{\mathcal{Y}}

\newcommand{\Hom}{\mathop{\rm Hom}\nolimits}

\newcommand{\dR}{\mathbf{R}}
\newcommand{\dL}{\mathbf{L}}
\newcommand{\NS}{\mathop{\rm NS}\nolimits}

\newcommand{\id}{\textrm{id}}

\newcommand{\ch}{\mathop{\rm ch}\nolimits}

\newcommand{\Ext}{\mathop{\rm Ext}\nolimits}
\newcommand{\Spec}{\mathop{\rm Spec}\nolimits}

\newcommand{\Coh}{\mathop{\rm Coh}\nolimits}

\newcommand{\im}{\mathop{\rm im}\nolimits}

\newcommand{\cneq}{\mathrel{\raise.095ex\hbox{:}\mkern-4.2mu=}}
\newcommand{\eqcn}{\mathrel{=\mkern-4.5mu\raise.095ex\hbox{:}}}

\newcommand{\Aut}{\mathop{\rm Aut}\nolimits}

\newcommand{\Stab}{\mathop{\rm Stab}\nolimits}

\newcommand{\DT}{\mathop{\rm DT}\nolimits}

\newcommand{\modu}{\mathop{\rm mod}\nolimits}

\newcommand{\Slice}{\mathop{\rm Slice}\nolimits}

\newcommand{\Imm}{\mathop{\rm Im}\nolimits}

\newcommand{\Ker}{\mathop{\rm ker}\nolimits}
\newcommand{\Ree}{\mathop{\rm Re}\nolimits}

\newcommand{\cl}{\mathop{\rm cl}\nolimits}

\newcommand{\HN}{\operatorname{HN}}

\begin{document}

\title[Moduli of semistable objects and DT invariants]{Moduli of Bridgeland semistable objects  on 3-folds and Donaldson-Thomas invariants}

\date{\today}

\author{Dulip Piyaratne and Yukinobu Toda}

\address{Kavli Institute for the Physics and Mathematics of the Universe, University of Tokyo,
5-1-5 Kashiwanoha, Kashiwa, 277-8583, Japan.}

\email{dulip.piyaratne@ipmu.jp}

\address{Kavli Institute for the Physics and Mathematics of the Universe, University of Tokyo,
5-1-5 Kashiwanoha, Kashiwa, 277-8583, Japan.}

\email{yukinobu.toda@ipmu.jp}

\subjclass[2010]{Primary 14F05; Secondary 14D23, 18E30, 18E40, 14J10, 14J30, 14J32, 14N35}

\keywords{Bridgeland stability conditions, Derived category, Bogomolov-Gieseker inequality, Donaldson-Thomas invariants}

\begin{abstract}
We show that
the moduli stacks of Bridgeland semistable 
objects on smooth projective 3-folds are 
quasi-proper algebraic stacks 
of finite type, if they satisfy 
the Bogomolov-Gieseker (BG for short) inequality 
conjecture proposed by Bayer, Macr\`{i}
and the second author. 
The key ingredients are the equivalent form 
of the BG inequality conjecture and its generalization 
to arbitrary very weak stability conditions. 
This result is applied to define
Donaldson-Thomas  invariants counting Bridgeland 
semistable objects on smooth projective 
Calabi-Yau 3-folds satisfying the BG inequality conjecture, 
for example on \'etale quotients 
of abelian 3-folds. 
\end{abstract}

\maketitle

\section{Introduction}
\subsection{Motivation and background}
Let $X$ be a smooth projective variety over $\mathbb{C}$. 
Following Douglas's 
work~\cite{Dou2} on $\Pi$-stability in physics, 
Bridgeland introduced  the 
complex manifold
$$
\Stab(X)
$$
called the \textit{space of stability conditions} on the bounded derived category $D^b \Coh(X)$ of 
coherent sheaves on $X$ (see \cite{Brs1}). 
Each point of $\Stab(X)$
determines certain semistable 
objects in $D^b \Coh(X)$, and 
the whole space is expected to contain 
the stringy K\"{a}hler moduli space of $X$. 
Also the space $\Stab(X)$ has potential 
applications to Donaldson-Thomas (DT for short) invariants~\cite{Tcurve1}, \cite{Tcurve2}, \cite{TodBG}, 
\cite{TICM}, 
and birational geometry~\cite{BaMa2}, \cite{BaMa3}, 
\cite{ABCH}, \cite{Todext}, \cite{Todbir}.
Here one needs to construct 
the moduli spaces of Bridgeland semistable 
objects, and then to study the variations of these moduli 
spaces under the changes of stability conditions. 

However, in general the space of stability conditions is
a difficult object to
study, and there exist several foundational issues. 
At least, the following conjecture needs to be be settled at firsthand:
 
\begin{conj}\label{conj:intro}
We have the following for $X$:
\begin{enumerate}
\item $\Stab(X) \neq \emptyset$, and
\item for each stability condition $\sigma \in \Stab(X)$, the moduli stack of 
$\sigma$-semistable objects with a fixed Chern character
is a quasi-proper
algebraic stack of finite type.  
\end{enumerate}
\end{conj}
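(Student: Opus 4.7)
For part (i) of the conjecture on a smooth projective 3-fold, I would appeal to the construction of Bayer, Macr\`i and the second author: under the BG inequality conjecture, their double-tilt procedure applied to slope stability (with respect to an ample class $\omega$ and a twist $B$) yields a genuine Bridgeland stability condition $\sigma_{\omega,B}\in\Stab(X)$, so $\Stab(X)\neq\emptyset$. The substantive content is therefore in part (ii), which I would decompose into four sub-claims: algebraicity of the moduli stack, openness of $\sigma$-semistability in families, boundedness of the family of $\sigma$-semistable objects of fixed Chern character, and the existence part of the valuative criterion that yields quasi-properness.

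Algebraicity would follow from general representability results of Lieblich on moduli of universally gluable complexes, once the heart $\aA_\sigma$ underlying $\sigma$ is known to be open in families. Openness of the $\sigma$-semistable locus would then be a formal consequence of boundedness together with constructibility of Harder--Narasimhan factors. The crucial new step is boundedness. Here the plan is first to reformulate the BG inequality conjecture in an equivalent form that is more naturally iterated under tilting, then to axiomatize a class of \emph{very weak} stability conditions (including slope stability, tilt stability, and the double tilts relevant to the construction of $\sigma$) for which a version of this equivalent inequality continues to hold. Given a $\sigma$-semistable object $E$ with fixed Chern character, one would then apply iterated Harder--Narasimhan filtrations with respect to a chain of these auxiliary very weak stabilities to control the Chern classes of the coherent cohomology sheaves $\hH^i(E)$, whence boundedness of $E$ reduces to boundedness of coherent sheaves with bounded Chern character.

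For quasi-properness, I would verify the existence part of the valuative criterion up to S-equivalence: given a flat family of $\sigma$-semistable objects over the punctured spectrum of a DVR, first extend the family as an object of the derived category using representability, and then modify the extension by its Harder--Narasimhan filtration on the special fibre to produce a $\sigma$-semistable extension. The finite-type claim reuses the same boundedness input.

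I expect the main obstacle to be boundedness. The BG inequality, in its original formulation, only constrains a cubic polynomial in the Chern character of a tilt-semistable object, and this does not directly bound the rank or the individual Chern classes of the cohomology sheaves of an object $E$ that is semistable after a double tilt. The key device is therefore the equivalent reformulation and its extension to very weak stability conditions: this is what allows one to pass from an inequality on $E$ to simultaneous inequalities on HN factors of its cohomology sheaves with respect to tilt and slope stability, and hence to the desired rank and degree bounds. A secondary difficulty lies in the valuative criterion itself, since in dimension three the heart of $\sigma$ arises from a double tilt, so extending families across special fibres requires simultaneous control of both tilted hearts.
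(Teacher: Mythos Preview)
First, note that Conjecture~\ref{conj:intro} is stated as an open conjecture; the paper does not prove it in general but only the special case of a smooth projective 3-fold satisfying the BG inequality conjecture (Theorem~\ref{thm:stack}). Your proposal implicitly restricts to this setting, which is fine, and your overall architecture---equivalent form of BG, axiomatization via very weak stability, boundedness propagated through tilting, and Abramovich--Polishchuk for the valuative criterion---matches the paper's.

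There are, however, two genuine gaps relative to the paper's argument. The more serious one is that you omit \emph{generic flatness} of the heart. In the paper's reduction (following~\cite{Tst3}), the moduli problem is reduced to two independent ingredients: boundedness of semistable objects \emph{and} generic flatness of $\aA$ (Proposition~\ref{prop:generic}). Generic flatness is what guarantees that for $\eE\in\aA_S$ there is a dense open $U\subset S$ with $\eE_s\in\aA$ for all $s\in U$; this is precisely what makes $\mM_\sigma(v)\subset\mM$ an open substack. Your proposed substitute, deducing openness from ``boundedness together with constructibility of Harder--Narasimhan factors'', is not sufficient: constructibility alone gives a constructible locus, and one needs the density statement from~\cite[Proposition~3.5.3]{AP} plus generic flatness to upgrade this to openness. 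The paper establishes generic flatness for $\Coh(X)$ trivially and then shows it is inherited under each tilt (Proposition~\ref{prop:generic}), in parallel with boundedness.

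The second gap is that your plan treats only $\sigma_{\omega,B}$ itself, whereas the theorem covers every $\sigma$ in the connected component $\Stab_{\omega,B}^{\circ}(X)$. The paper handles this via a separate deformation argument (Proposition~\ref{lem:flat:con}): once boundedness and generic flatness hold at one good point, they propagate along paths in the stability manifold by comparing slicings within $\varepsilon$ of each other. Your outline does not address this step. Finally, your description of boundedness (``iterated HN filtrations to control Chern classes of cohomology sheaves'') is in the right spirit but is vaguer than the paper's mechanism: the paper runs an induction on $\Imm Z_t^{\dag}$ using the wall-and-chamber structure and the support property (Proposition~\ref{prop:bound}), reducing at the base case to Langer's boundedness for sheaves (Theorem~\ref{thm:Langer}) together with a duality trick (Lemma~\ref{lem:dual}).
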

Here 
an algebraic stack is called \textit{quasi-proper}
if it satisfies the valuative criterion of 
properness
in~\cite[Proposition~2.44]{GT}
without the separatedness. 
The above conjecture is known to be true 
when $\dim X \le 2$. 
The construction problem (i) 
for surfaces was solved by 
Bridgeland~\cite{Brs2}
and Arcara-Bertram~\cite{AB}, by
tilting coherent sheaves. 
However, this is an open problem in $\dim X \ge 3$. 
For the 3-dimensional case, Bayer, Macr\`{i} and the second 
author~\cite{BMT} reduced the problem (i)
to a conjectural Bogomolov-Gieseker (BG for short) type 
inequality involving the Chern characters of 
certain two term complexes, called \textit{tilt semistable objects} (see Conjecture \ref{intro:BMT} below). 
On the other hand, the moduli problem (ii)
was solved by the second author~\cite{Tst3}
for K3 surfaces, and the same argument 
also applies to any surface.
The main 
purpose of this paper is to 
solve problem (ii) 
for 3-folds
satisfying the above mentioned BG inequality conjecture
in~\cite{BMT}. 
A rough statement is as follows:
\begin{thm}\label{thm:intro0}
Let $X$ be a smooth projective 3-fold 
satisfying the BG inequality conjecture in~\cite{BMT}. 
Then it satisfies Conjecture~\ref{conj:intro} (ii). 
\end{thm}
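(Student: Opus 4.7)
The plan is to reduce moduli-theoretic control of Bridgeland semistable objects to a chain of boundedness and valuative statements about \emph{very weak stability conditions} and tilt stability, for which the BG inequality of BMT supplies the numerical input.

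\textbf{Step 1 (Equivalent form and extension of the BG inequality).} First I would recast the BMT conjecture as an equivalent quadratic inequality on the truncated Chern character $(\ch_0,\ch_1,\ch_2,\ch_3)$ of tilt semistable objects, chosen so that the shape of the inequality is preserved under deformation of the stability data. Then I would upgrade this to an inequality valid for semistable objects with respect to \emph{any} very weak stability condition on $D^b\Coh(X)$, by interpolating such a condition with the tilt stability used in BMT and arguing via openness and continuity of the very weak stability locus.

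\textbf{Step 2 (Boundedness).} Given $\sigma \in \Stab(X)$ and a fixed Chern character $v$, I would exhibit a very weak stability condition $\tau$ whose heart $\aA_\tau$ contains every $\sigma$-semistable object of class $v$. The generalized BG inequality from Step 1 then bounds the Chern characters of all Harder--Narasimhan factors, with respect to $\tau$, of any $\sigma$-semistable object of class $v$. Since $\aA_\tau$ is obtained from $\Coh(X)$ by a finite chain of tilts, and coherent sheaves with bounded Chern character form a bounded family by Grothendieck, one propagates boundedness back through each tilt to conclude that the $\sigma$-semistable objects of class $v$ form a bounded family. This delivers the finite-type property.

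\textbf{Step 3 (Algebraicity and the valuative criterion).} Algebraicity of the moduli stack follows from Lieblich's algebraic stack of universally gluable complexes together with openness of $\sigma$-semistability in a fixed numerical class, once we restrict to the bounded family of Step 2. For quasi-properness, given a DVR $R$ with fraction field $K$ and a $\sigma$-semistable $K$-family $E_K$, I would use noetherianity of the heart of $\sigma$ to produce a flat limit $E_R$ in that heart, and then run a variant of Langton's algorithm adapted to Bridgeland stability: if the central fiber $E_0$ is not $\sigma$-semistable, replace $E_R$ by an elementary modification along the maximal destabilizing subobject of $E_0$. The generalized BG inequality of Step 1 provides the monotone numerical quantity that forces this procedure to terminate, giving an $R$-family of $\sigma$-semistable objects.

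\textbf{Main obstacle.} The decisive difficulty is \textbf{Step 2}: the BMT inequality as originally stated controls only individual tilt semistable objects, so extracting from it a \emph{uniform} numerical bound on all HN factors in every relevant very weak heart requires both the reformulation in Step 1 and a delicate tilt-by-tilt reduction in which one must track how stability chambers and numerical classes behave under each elementary tilt. Once such uniform bounds are in hand, algebraicity and the valuative criterion follow along lines that are by now standard in the surface case and in Lieblich's framework.
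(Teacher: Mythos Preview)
Your overall architecture---reformulate BMT as a quadratic inequality, propagate it to tilted hearts, use it for boundedness, then conclude algebraicity and the valuative criterion---matches the paper's strategy. However, there is one genuine omission and one point where your approach diverges from the paper's in a way that creates extra work.

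\textbf{The missing ingredient: generic flatness.} In Step~3 you assert that algebraicity follows from Lieblich's stack plus openness of $\sigma$-semistability once boundedness is known. But openness does not follow from boundedness alone. The paper identifies \emph{two} properties that must be verified for the heart $\aA$ of $\sigma$: boundedness of semistable objects, and \emph{generic flatness}---for any family $\eE \in \aA_S$ over a smooth base $S$, there is a dense open $U \subset S$ with $\eE_s \in \aA$ for all $s \in U$. Without generic flatness one cannot even show that the locus of $s$ with $\eE_s \in \aA$ is constructible, let alone that semistability is open. The paper proves generic flatness is preserved under each tilt (Proposition~\ref{prop:generic}) and under deformation of the stability condition (Proposition~\ref{lem:flat:con}); this is a substantial part of the argument that your sketch does not address.

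\textbf{The valuative criterion.} Your Step~3 proposes to produce a flat $R$-limit from noetherianity and then run a Langton-type elementary-modification procedure, with termination governed by the BG quantity. The paper instead invokes the Abramovich--Polishchuk semistable reduction theorem (Theorem~\ref{thm:val}), which already packages both the existence of a limit in the heart and the Langton-type termination, using discreteness of the central charge and noetherianity rather than a BG invariant. Your route is not obviously wrong, but (i) producing a limit in an abstract heart over a DVR is itself the nontrivial content of AP's ``sheaves of t-structures'' formalism, not a consequence of noetherianity alone, and (ii) using the BG discriminant as the monotone invariant for Langton termination in the Bridgeland setting is nonstandard and would need a careful argument; the paper avoids this entirely by citing AP.

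\textbf{A smaller point on Step~2.} Your phrasing ``exhibit a very weak stability condition $\tau$ whose heart contains every $\sigma$-semistable object'' is vague. The paper does not find a single auxiliary $\tau$ for arbitrary $\sigma$; rather it first establishes boundedness and generic flatness for the specific double-tilt point $\sigma_{\omega,B}$ via the explicit chain $\Coh(X) \leadsto \bB_{\omega,B} \leadsto \aA_{\omega,B}$, checking at each step that Assumption~\ref{assum:bound} holds (Propositions~\ref{prop:tbound}, \ref{prop:bound}, Lemma~\ref{lem:boundP}), and then transports both properties along a path in $\Stab_{\omega,B}^{\circ}(X)$ to reach an arbitrary $\sigma$ (Proposition~\ref{lem:flat:con}).
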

The argument proving Theorem~\ref{thm:intro0}
also proves the same statement for tilt semistable 
objects on 3-folds without assuming the BG inequality conjecture, 
which was also studied in~\cite[Proposition~3.7]{TodBG}. 
So far the BG inequality conjecture in~\cite{BMT} 
is known to hold in the following cases:
\begin{itemize}
\item $X=\mathbb{P}^3$ by Macr\`{i}~\cite{MaBo}. 
\item $X \subset \mathbb{P}^4$
is a smooth quadric threefold by Schmidt~\cite{BSch}.
\item 
$X$ is an abelian 3-fold by Maciocia and the first
author~\cite{MaPi1}, \cite{MaPi2}, \cite{Piya}, and  by Bayer, Macr\`{i} and Stellari~\cite{BMS}.
\item $X$ is an \'etale quotient of an abelian 3-fold by Bayer, Macr\`{i} and Stellari~\cite{BMS}. 
\end{itemize}
The result of Theorem~\ref{thm:intro0} is applied 
to the above 3-folds, and it gives new non-trivial 
Bridgeland moduli spaces on 3-folds. 
Furthermore, we use Theorem~\ref{thm:intro0} to construct 
Donaldson-Thomas invariants 
counting Bridgeland semistable objects on Calabi-Yau 3-folds
satisfying the BG inequality conjecture, 
for example on A-type Calabi-Yau 3-folds~\cite{BMS}, 
fulfilling the expected properties. 

\subsection{BG type inequality conjecture}
Let $X$ be a smooth projective 3-fold.
Let $B \in \mathrm{NS}(X)_{\mathbb{Q}}$ and $\omega \in \mathrm{NS}(X)_{\mathbb{R}}$ 
be an ample class with $\omega^2$ rational; that is $\omega = mH$ for some ample divisor class $H \in \mathrm{NS}(X)$ with $m^2 \in \mathbb{Q}_{>0}$.
In \cite{BMT}, Bayer, Macr\`{i} and the second author
constructed data 
\begin{align}\label{intro:sigma}
\sigma_{\omega, B}=(Z_{\omega, B}, \aA_{\omega, B})
\end{align}
for a conjectural Bridgeland stability condition on $X$. 
Here $\aA_{\omega, B}$ is the
heart of a bounded t-structure on $D^b \Coh(X)$ 
given as a double tilt of $\Coh(X)$,  
and $Z_{\omega, B} \colon K(X) \to \mathbb{C}$
is the group homomorphism defined by
\begin{align*}
Z_{\omega, B}(E) \cneq -\int_{X} e^{-i\omega} \ch^B(E),
\end{align*}
where $\ch^B(E) \cneq e^{-B}\ch(E)$.
The pair (\ref{intro:sigma}) 
is shown to 
give a Bridgeland stability condition
on $D^b \Coh(X)$, if the
following conjectural BG inequality 
holds for certain two term complexes, called tilt 
semistable objects
(see Conjecture~\ref{conj:BMT}):
\begin{conj}\emph{(BG Inequality Conjecture,  \cite{BMT})}\label{intro:BMT}
For a tilt semistable object
$E \in D^b \Coh(X)$ 
with $\Imm Z_{\omega, B}(E)=0$,  
we have the inequality
\begin{align*}
\ch_3^B(E) \le \frac{1}{18} \omega^2 \ch_1^B(E). 
\end{align*}
\end{conj}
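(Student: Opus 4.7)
My approach would follow the strategy that has succeeded in the cases enumerated above ($\mathbb{P}^3$, smooth quadric threefolds, abelian threefolds, and their \'etale quotients): reduce the cubic Chern-character inequality for tilt semistable objects to the classical Bogomolov--Gieseker inequality for slope-semistable sheaves, via a combination of wall-crossing on the first tilt and, whenever available, a well-chosen derived equivalence.

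As a first step I would normalize. Twisting by line bundles absorbs the $B$-field, rescaling $\omega$ exploits the homogeneity of the inequality, and passing to Jordan--H\"older factors in the tilted heart reduces to the case where $E$ is tilt-\emph{stable}. The condition $\Imm Z_{\omega,B}(E)=0$ then pins $E$ onto a sharp numerical locus in tilt-stability space, which I can parametrize explicitly in terms of $\omega \cdot \ch_1^B(E)$ and $\ch_2^B(E) - \tfrac12 \omega^2 \ch_0^B(E)$; in particular it forces the class of $E$ to be extremal for the first-tilt stability on the relevant slice, so the inequality to be proved is the sharp one at the vertical wall.

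The core argument is a wall-crossing step inside the first tilt $\Coh^B(X)$. I would vary $\omega$ in its ample cone: either $E$ remains tilt-stable as $\omega$ moves into a boundary regime (a large-volume limit, or a rational point where the heart degenerates) in which the target inequality follows from the classical BG bound applied to the slope-semistable factors of $\hH^*(E)$; or $E$ becomes strictly semistable on a numerical wall, so that a destabilizing short exact sequence lets me bound $\ch_3^B(E)$ by an inductive hypothesis on a Bogomolov-type discriminant of each factor. On 3-folds admitting a Fourier--Mukai partner (as with abelian threefolds) I would supplement this with an FM transform interchanging the cohomological weights of $\ch^B$, thereby converting the $\ch_3^B$ inequality into a quadratic BG bound for the transformed object -- essentially the mechanism of \cite{MaPi1,MaPi2,Piya,BMS}.

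The main obstacle, and the reason the conjecture remains open in general, is the sharpness of the constant $\tfrac{1}{18}$: it is saturated by structure sheaves of points (and their twists and shifts), so no crude estimate survives. Concretely the wall-crossing must simultaneously propagate a sharp control of $\ch_3^B$ through an a priori uncontrolled chain of walls, and rule out exotic tilt-semistable complexes whose numerical class is invisible to the classical BG inequality. In every known case this last step has required a geometric input tailored to $X$ (a derived equivalence, an exceptional collection, or a finite cover), and I expect any serious attempt at a uniform proof to require supplying such an input intrinsically from the geometry of $X$; that is the point at which I would expect the argument to break down and where the paper under discussion instead takes the inequality as a hypothesis.
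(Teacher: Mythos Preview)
The statement you were given is Conjecture~\ref{intro:BMT}, not a theorem: the paper does not contain a proof of it. On the contrary, the entire paper is structured around \emph{assuming} this inequality (or its equivalent form in Conjecture~\ref{conj:BMS}) and deducing consequences for moduli stacks and DT invariants. So there is no ``paper's own proof'' against which to compare your proposal.

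You yourself recognize this in your final paragraph, and your description of the known strategies (wall-crossing in tilt stability, reduction to classical BG via limiting behavior, Fourier--Mukai transforms in the abelian case) is an accurate summary of how the special cases listed in the introduction were handled. Your identification of the obstruction --- that the constant $1/18$ is sharp, saturated by skyscraper sheaves, so that any uniform argument must carry a sharp bound through an a priori unbounded sequence of walls --- is also correct and is precisely why the conjecture remains open for general 3-folds. What the paper \emph{does} prove about the conjecture is Theorem~\ref{intro:equiv} (Theorem~\ref{thm:BG:equiv} in the body): the pointwise form at $\nu_{\omega,B}=0$ is equivalent to the quadratic inequality $\overline{\Delta}_{\omega,B}+6\overline{\nabla}_{\omega,B}\ge 0$ for all tilt semistable objects, generalizing \cite[Theorem~4.2]{BMS} to non-proportional $B,\omega$. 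That equivalence is established by the deformation of tilt stability along the semicircular walls of Lemma~\ref{lem:wall-tiltstable}, which is a refinement of the wall-crossing step you sketch, but it does not by itself yield a proof of the conjecture.

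In short: your proposal is not a proof, nor does it claim to be one; it is a reasonable outline of the landscape, and it correctly concludes that the paper takes the inequality as a hypothesis rather than establishing it.
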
 

One of the important statements in~\cite{BMS}
is that, when $B$ and $\omega$ are 
proportional, 
Conjecture~\ref{intro:BMT} is equivalent to 
another conjectural inequality for tilt semistable 
objects $E$ without the condition 
$\Imm Z_{\omega, B}(E)=0$. 
Our proof of Conjecture~\ref{conj:intro} (ii) 
for 3-folds rely on this equivalent inequality, which 
we generalize to the case when $B$ and $\omega$ are not 
proportional: 
\begin{thm}\emph{(\cite{BMS}, $B$ and $\omega$ are proportional; 
Theorem~\ref{thm:BG:equiv}, in general)}\label{intro:equiv}
Conjecture~\ref{conj:BMT} is equivalent to the following: 

Any tilt semistable object $E$ satisfies the inequality
\begin{align}\notag
(\omega^2\ch_1^B(E))^2
&-2\omega^3\ch_0^B(E) \omega\ch_2^B(E)  \\ \label{intro:BG2}
&+12 (\omega\ch_2^B(E))^2 -18 \omega^2\ch_1^B(E) \ch_3^B(E) \ge 0. 
\end{align}
\end{thm}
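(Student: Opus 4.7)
The plan is to reduce the non-proportional case to the proportional case of \cite{BMS} via an invariance argument for the quadratic form
\[
Q_{\omega,B}(E) \cneq (\omega^2 \ch_1^B(E))^2 - 2 \omega^3 \ch_0^B(E) \cdot \omega\ch_2^B(E) + 12 (\omega \ch_2^B(E))^2 - 18 \omega^2 \ch_1^B(E) \cdot \ch_3^B(E),
\]
i.e.\ the left-hand side of (\ref{intro:BG2}). The first step is algebraic: since $\ch^{B+t\omega}(E) = e^{-t\omega} \ch^B(E)$, the quantities $\omega^k \ch_j^{B+t\omega}(E)$ are explicit polynomials in $t$ with coefficients in the $\omega^k \ch_j^B(E)$, and a direct computation shows that the $t$-dependence cancels in $Q_{\omega,B+t\omega}(E)$; thus $Q$ is invariant under $B \mapsto B + t\omega$. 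The second step is that tilt-semistability is likewise preserved under this shift, since it corresponds to an upper-triangular element of $\widetilde{\mathrm{GL}}^+(2,\mathbb{R})$ acting on the weak stability data $(Z_{\omega,B}, \bB_{\omega,B})$. Combined with the Bogomolov inequality for tilt-semistable objects from \cite{BMT}, this yields, for any tilt-semistable $E$ with $\omega^3 \ch_0^B(E) \neq 0$, a real value $t_0$ such that $\Imm Z_{\omega, B+t_0\omega}(E) = 0$: the condition is quadratic in $t$, and its discriminant is a positive multiple of the Bogomolov discriminant of $E$, which is nonnegative by tilt-semistability. The degenerate case $\omega^3 \ch_0^B(E) = 0$ must be handled separately but is more direct.

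With these ingredients in place the proof becomes symmetric. For the forward direction, apply Conjecture \ref{intro:BMT} to $E$ at the shifted data $(\omega, B + t_0\omega)$ where $\Imm Z = 0$, giving $\ch_3^{B+t_0\omega}(E) \le \tfrac{1}{18} \omega^2 \ch_1^{B+t_0\omega}(E)$; substituting the identity $\omega \ch_2^{B+t_0\omega}(E) = \tfrac{1}{6}\omega^3 \ch_0^{B+t_0\omega}(E)$ into $Q_{\omega, B+t_0\omega}(E)$ causes the $(\omega^3 \ch_0)^2$ contributions from the middle two terms to cancel, leaving exactly
\[
\omega^2 \ch_1^{B+t_0\omega}(E)\bigl(\omega^2 \ch_1^{B+t_0\omega}(E) - 18 \ch_3^{B+t_0\omega}(E)\bigr) \ge 0
\]
on the correct sign branch; invariance under $B \mapsto B + t\omega$ from the first step then promotes this to $Q_{\omega,B}(E) \ge 0$. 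The converse direction requires no shift at all: assuming (\ref{intro:BG2}) and specializing to any tilt-semistable $E$ with $\Imm Z_{\omega,B}(E) = 0$ forces the same substitution, collapsing $Q_{\omega,B}(E)$ to the same product, which rearranges to the inequality of Conjecture \ref{intro:BMT}.

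The main obstacle I expect is controlling the sign of $\omega^2 \ch_1^{B+t_0\omega}(E)$ at the shifted point, so that the conclusion of Conjecture \ref{intro:BMT} indeed gives $Q \ge 0$ rather than $Q \le 0$: this requires either an a priori positivity statement for tilt-semistable objects living in the tilted heart $\bB_{\omega, B+t_0\omega}$, or a case analysis ruling out the boundary possibility $\omega^2 \ch_1^{B+t_0\omega}(E) = 0$. Closely related, and equally delicate, is verifying that the $\widetilde{\mathrm{GL}}^+(2,\mathbb{R})$-shift really does preserve tilt-semistability in the general non-proportional setting, since the heart $\bB_{\omega,B}$ itself moves under $B \mapsto B+t\omega$ and one must match Harder--Narasimhan filtrations across the shift; this is what made the proportional case in \cite{BMS} cleanest, and the extra work here is to decouple the argument from the condition that $B$ lie on the ray spanned by $\omega$.
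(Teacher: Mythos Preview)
Your strategy---shift the twist $B \mapsto B+t_0\omega$ so that the tilt-slope becomes zero, then invoke Conjecture~\ref{conj:BMT} there---is the right shape, and your algebraic identity $Q_{\omega,B+t\omega}=Q_{\omega,B}$ together with the collapse of $Q$ to $v_1(v_1-18v_3)$ at $\nu=0$ is exactly what the paper uses. But the mechanism you propose for transporting tilt-semistability is wrong, not merely delicate. The tilt central charge $W_{\omega,B+t\omega}$ is \emph{not} a $\widetilde{\mathrm{GL}}^+(2,\mathbb{R})$-transform of $W_{\omega,B}$: writing $W=(-6v_2^B+v_0^B)+iv_1^B$, one computes
\[
\Ree W_{\omega,B+t\omega}=\Ree W_{\omega,B}+6t\,\Imm W_{\omega,B}-3t^2 v_0^B,\qquad
\Imm W_{\omega,B+t\omega}=\Imm W_{\omega,B}-t\,v_0^B,
\]
and the stray $v_0^B$ terms cannot be absorbed into a $2\times 2$ real matrix acting on $(\Ree W,\Imm W)$. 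So there is no reason for $E$ to remain tilt-semistable at $(\omega,B+t_0\omega)$, and hence no way to feed $E$ into Conjecture~\ref{conj:BMT} at that point.

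The paper fixes this by shifting $B$ \emph{and} rescaling $\omega$ simultaneously: if $\nu_{\omega,B}(E)=\vartheta$, one passes to $(\eta\omega,\,B+\vartheta\omega)$ with $\eta=\sqrt{3\vartheta^2+1}$. The identity
$\Imm Z_{\omega,B}(E)-\vartheta\,v_1^B(E)=\eta^{-1}\Imm Z_{\eta\omega,B+\vartheta\omega}(E)$
explains why this particular rescaling lands $E$ at tilt-slope zero. That $E$ actually lies in $\bB_{\eta\omega,B+\vartheta\omega}$ and is tilt-stable there (Lemma~\ref{prop2.1}) is \emph{not} a formal $\widetilde{\mathrm{GL}}^+$ fact: it is deduced from sharp bounds on the $\mu$-slopes of the cohomology sheaves $\hH^{-1}(E)$ and $\hH^0(E)$ (Proposition~\ref{prop:slope-bounds}), obtained via the classical Bogomolov--Gieseker inequality and the factorization~(\ref{eq1}). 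These slope bounds are the missing ingredient in your outline; without them the argument does not close, and your $t_0$ (which keeps $\omega$ fixed) does not in general land on the locus where stability is preserved. Your worry about the sign of $\omega^2\ch_1^{B'}(E)$ is resolved automatically once $E$ is shown to lie in the new heart $\bB_{\eta\omega,B+\vartheta\omega}$, since $v_1^{B+\vartheta\omega}(E)>0$ is forced for objects there with finite tilt-slope.
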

The advantage of the inequality (\ref{intro:BG2})
is that it can be used to imply the support property of $\sigma_{\omega, B}$
in some sense. More precisely, Conjecture~\ref{intro:BMT} implies  
\begin{align*}
\sigma_{\omega, B} \in 
\Stab_{\omega, B}^{\circ}(X),
\end{align*}
where $\Stab_{\omega, B}^{\circ}(X)$ is
a connected component of the 
space of stability conditions 
whose central charges are written as 
linear combinations of 
$\omega^{3-j}\ch_j^B(E)$ for $0\le j\le 3$. 
The inequality (\ref{intro:BG2})
also plays a key role 
to generalize our moduli problem for an arbitrary 
very weak stability condition and associated tilting, 
which we discuss in this paper.

\subsection{Moduli stacks of Bridgeland semistable objects on 3-folds}

Now we give the precise statement of Theorem~\ref{thm:intro0} 
as follows: 
\begin{thm}\emph{(Theorem~\ref{thm:stack})}\label{thm:intro:stack}
Suppose that $X$ is a smooth projective 
3-fold satisfying Conjecture~\ref{intro:BMT}.
Then for any $\sigma=(Z, \aA) \in \Stab_{\omega, B}^{\circ}(X)$, 
the moduli stack of $\sigma$-semistable objects $E\in \aA$
with a fixed 
vector 
\begin{align*}
(\omega^3\ch_0^B(E), \omega^2 \ch_1^B(E), \omega \ch_2^B(E), \ch_3^B(E))
\end{align*}
is a quasi-proper algebraic stack 
of finite type.   
\end{thm}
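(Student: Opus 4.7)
The plan is to establish the three defining properties of the moduli stack separately, namely boundedness (finite type), algebraicity, and quasi-properness, with the quadratic form inequality (\ref{intro:BG2}) from Theorem~\ref{intro:equiv} and its generalization to arbitrary very weak stability conditions playing the central role.

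First, I would reduce to a boundedness statement for semistable objects with fixed numerical invariants. Recall that $\aA_{\omega,B}$ is obtained by a double tilt of $\Coh(X)$: one first tilts $\Coh(X)$ at a slope threshold for the Mumford-type function $\mu_{\omega,B}$ to obtain a heart $\bB_{\omega,B}$ carrying a tilt (very weak) stability condition, and then tilts $\bB_{\omega,B}$ at a second slope to reach $\aA_{\omega,B}$. Given $\sigma = (Z, \aA) \in \Stab_{\omega,B}^\circ(X)$ and a $\sigma$-semistable $E \in \aA$, I would take its Harder-Narasimhan filtration in $\bB_{\omega,B}$ with respect to tilt semistability and bound the Chern characters of the tilt semistable factors. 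Here the key input is the generalized BG inequality of Theorem~\ref{intro:equiv}, serving as a support property for the very weak stability condition on $\bB_{\omega,B}$: it confines the numerical classes consistent with a fixed vector $(\omega^3\ch_0^B, \omega^2\ch_1^B, \omega\ch_2^B, \ch_3^B)$ to a finite set. Iterating one tilt further and applying the same argument in $\Coh(X)$ reduces boundedness to the classical Grothendieck-Langer boundedness for $\mu_{\omega,B}$-semistable torsion-free sheaves with bounded discriminant.

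Second, for algebraicity and finite type, I would apply Lieblich's theorem that the stack of universally gluable complexes in $D^b\Coh(X)$ is an Artin stack locally of finite type, and then cut out the open substack of $\sigma$-semistable objects. Openness of $\sigma$-semistability in families follows from a standard argument: any potential destabilizing subobject in a fibre has numerical class lying in the finite set controlled by the inequality (\ref{intro:BG2}), and the non-vanishing loci of the corresponding Hom-sheaves are constructible. Combined with the boundedness from the previous step, this gives an algebraic stack of finite type.

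Finally, for quasi-properness I would verify the valuative criterion of~\cite[Proposition~2.44]{GT}. Given a flat family $E_K$ over the generic fibre of a DVR $R$ of a $\sigma$-semistable object, after possibly a finite base extension I must produce an extension to a $\sigma$-semistable object on $X \times \Spec R$. Following the strategy used by the second author~\cite{Tst3} in the surface case, I would start from an arbitrary coherent extension, iteratively modify it by elementary transformations along the maximal destabilizing subobject of the special fibre, and show the procedure terminates. This is the main obstacle: the elementary transformations take place in the double tilt $\aA_{\omega,B}$, so one must ensure that the Chern character vectors of the destabilizers that appear lie in a discrete subset of a bounded region of the $\sigma$-slope strip, forcing termination. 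It is precisely here that the quadratic inequality (\ref{intro:BG2}) and its very weak generalization are indispensable, as they play the role of the support property needed to pin down the numerical invariants at each stage.
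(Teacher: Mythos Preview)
Your broad architecture---boundedness, then algebraicity via Lieblich's stack plus openness, then the valuative criterion---matches the paper's, and your intuition that the inequality~(\ref{intro:BG2}) functions as a support property controlling the numerical classes of Harder--Narasimhan factors is exactly right. But three points diverge substantially from what the paper actually does.

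First, you treat only $\sigma=\sigma_{\omega,B}$. For an arbitrary $\sigma=(Z,\aA)$ in the connected component $\Stab_{\omega,B}^{\circ}(X)$ the heart $\aA$ need not be $\aA_{\omega,B}$, so a $\sigma$-semistable object $E\in\aA$ has no reason to lie in $\bB_{\omega,B}$, and ``taking its HN filtration in $\bB_{\omega,B}$'' is not well posed. The paper handles this by first proving boundedness and generic flatness for $\sigma_{\omega,B}$ itself (via the tilting framework of Section~\ref{sec:veryweak}) and then showing in Proposition~\ref{lem:flat:con} that both properties propagate along paths in $\Stab_{\omega,B}^{\circ}(X)$; this deformation step is essential and absent from your sketch.

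Second, you omit \emph{generic flatness} of the heart $\aA$, which the paper singles out as one of the two ingredients (alongside boundedness) needed to invoke~\cite[Theorem~3.20]{Tst3}. Openness of semistability in families is not automatic from finiteness of possible destabilizing classes alone; one needs that for a family $\eE\in\aA_S$ the locus of $s\in S$ with $\eE_s\in\aA$ contains a dense open, and the paper proves this is inherited under each tilt (Proposition~\ref{prop:generic}).

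Third, your quasi-properness argument via Langton-style elementary modifications is not what the paper does and is unnecessarily hard. Once $\sigma$ is good (noetherian heart, discrete central charge), the valuative criterion is provided \emph{directly} by Abramovich--Polishchuk~\cite[Theorem~4.1.1]{AP}: every family over the punctured curve extends to one over the curve with semistable special fibre. No iterative modification or termination argument is needed, and the BG inequality plays no role at this stage---its job was already done in establishing boundedness and the support property. Your claim that~(\ref{intro:BG2}) is ``indispensable'' for the valuative criterion misplaces where the conjecture enters.
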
 
The strategy of 
the proof of Theorem~\ref{thm:intro:stack} is as follows. 
In~\cite{Tst3}, the second author reduced the moduli problem 
of Bridgeland semistable objects to the 
following two problems: 
\begin{enumerate}
\item 
\textit{generic flatness} of the corresponding heart. 
\item \textit{boundedness of semistable objects}. 
\end{enumerate}
We prove the properties (i), (ii)
 for the stability condition $\sigma_{\omega, B}$
defined in terms of data (\ref{intro:sigma}),
and show that they are preserved under the deformations of 
stability conditions. 
In order to show (i), (ii) for $\sigma_{\omega, B}$, 
we generalize the 
tilting construction 
which appeared in the construction of Bridgeland 
stability on surfaces, tilt stability and 
its further tilting.  
Following~\cite{BMS}, 
we introduce the notion of very weak stability
conditions (see Definition~\ref{defi:veryweak})
 on $D^b \Coh(X)$, which 
generalizes classical slope stability on sheaves
 and tilt stability on two term complexes of sheaves. 
Roughly speaking, a very weak stability condition
consists of a pair $(Z, \aA)$ as similar to Bridgeland stability,
 but we allow some objects in $\aA$
are mapped to zero by $Z$.  
For a very weak stability condition $(Z, \aA)$, we introduce
the notion of its BG type inequality in the form
\begin{align}\label{intro:BG}
\Ree Z(E) \Delta_R(E) + \Imm Z(E) \Delta_I(E) \ge 0
\end{align}
for any semistable object $E \in \aA$
such that $\Delta_R, \Delta_I$ satisfying certain conditions. 
The inequality (\ref{intro:BG}) turns out 
to be a generalization of the classical 
BG inequality for surfaces, and the 
conjectural inequality (\ref{intro:BG2}) for tilt semistable objects. 
We construct the tilting of $(Z, \aA)$ as a one parameter 
family of very weak stability conditions of the form
\begin{align}\label{intro:tilt}
(Z, \aA) \leadsto
(Z_t^{\dag}, \aA^{\dag}), \ \  t>0,
\end{align}
where $\aA^{\dag}$ is  a tilt of $\aA$, and 
$Z_t^{\dag}$ is defined by $-i Z+t\Delta_I$. 
Applying the tilting process (\ref{intro:tilt})
twice starting from the classical slope stability,
we get the construction (\ref{intro:sigma})
in~\cite{BMT}. 
We show that, in some sense, the above 
properties (i), (ii)
are preserved under the operation (\ref{intro:tilt}). 
This yields the desired properties for the stability condition
 (\ref{intro:sigma}), and so proves Theorem~\ref{thm:intro:stack}. 
Finally, the quasi-properness of 
the moduli stack 
follows from the valuative criterion proved by 
Abramovich-Polishchuk~\cite{AP}.

\subsection{Donaldson-Thomas invariants}
Suppose that $X$ is a Calabi-Yau 3-fold 
satisfying Conjecture~\ref{intro:BMT}, for example an
A-type Calabi-Yau 3-fold (see \cite{BMS}). 
We use Theorem~\ref{thm:intro:stack} 
to define Donaldson-Thomas invariants 
counting Bridgeland semistable objects 
on such Calabi-Yau 3-folds. 
Following the construction of 
generalized DT invariants~\cite{JS}, \cite{K-S}
counting semistable sheaves, 
we construct a map
\begin{align}\notag
\DT_{\ast}(v) \colon \Stab_{\omega, B}^{\circ}(X) \to \mathbb{Q},
\end{align}
for each $v\in H^{\ast}(X, \mathbb{Q})$.
The invariant $\DT_{\sigma}(v)$ counts 
$\sigma$-semistable objects in $D^b \Coh(X)$
with Chern character $v$. 
In particular, the invariant
\begin{align}\label{intro:DTwB}
\DT_{\omega, B}(v) \cneq \DT_{\sigma_{\omega, B}}(v)
\end{align}
counts $Z_{\omega, B}$-semistable objects in $\aA_{\omega, B}$
that are certain three term complexes in the derived category. 
In a forthcoming paper, we will 
pursue the wall-crossing formula 
relating the invariants (\ref{intro:DTwB}) with 
the original DT invariants counting 
semistable sheaves, and show that they are invariant under 
 the deformations of the complex structure on $X$.

\subsection{Relation to the existing works}
As we mentioned before, Conjecture~\ref{conj:intro}~(ii)
for surfaces essentially follows from~\cite{Tst3},
and Theorem~\ref{thm:intro:stack}
is a 3-fold generalization. 
Moreover, the arguments in this paper contain
several improvements toward Conjecture~\ref{conj:intro} (ii). 
One of the key points of our approach is 
the formulation of the BG type inequality 
in the form (\ref{intro:BG}),
and to show that several properties are 
inherited from the tilting process (\ref{intro:tilt}). 
This generalized tilting argument would be 
convenient for the future study of Conjecture~\ref{conj:intro} (ii)
for higher dimensional varieties, 
which may require further tilting. 

Regarding the moduli problems involving objects in the derived category of 3-folds, 
the second author constructed moduli spaces of limit 
stable objects~\cite{Tolim}
on Calabi-Yau 3-folds. These are special cases of 
Bayer's polynomial stability conditions~\cite{Bay}.
Also they can be interpreted as limiting 
degenerations of Bridgeland stability conditions, 
and the moduli spaces constructed in~\cite{Tolim} may appear
as moduli stacks in Theorem~\ref{thm:intro:stack} 
at points which are sufficiently close 
to the so called \textit{large volume limit}. 
Also there exist works by Jason Lo~\cite{Lo1}, \cite{Lo2}
constructing moduli spaces of 
certain polynomial semistable objects called 
PT semistable objects, which 
may appear near the large volume limit as well. 
Similarly, the DT type invariants
constructed in~\cite{PT},~\cite{Tolim}
also may coincide with the  
invariants (\ref{intro:DTwB})
near the large volume limit.

\subsection{Plan of the paper}
In Section~\ref{sec:veryweak}, 
we introduce the notion of very weak stability conditions 
on triangulated categories, their BG type inequality and 
the associated tilting. The results of this section contain 
several generalizations of known results for tilt stability. 
In Section~\ref{sec:BGconj}, we interpret tilt stability and 
the associated double tilting in the framework of Section~\ref{sec:veryweak}, 
and proves an equivalent form of Conjecture~\ref{intro:BMT}
in Theorem~\ref{intro:equiv}. 
In Section~\ref{sec:moduli}, we prove Theorem~\ref{thm:intro:stack}
by showing general results on 
the generic flatness and the boundedness under the tilting (\ref{intro:tilt}). 
In Section~\ref{sec:DT}, 
using the result of Section~\ref{sec:moduli}, 
we define Donaldson-Thomas invariants 
counting Bridgeland semistable objects on Calabi-Yau 3-folds
satisfying Conjecture~\ref{intro:BMT}. 

\subsection{Acknowledgment}
The authors would like to thank the organizers of the 
workshop ``Geometry From Stability Conditions''
held on Feb. 16-20, 2015, at the University of Warwick. 
We are especially grateful to the referee for pointing out several 
errors and also for giving useful comments that led to a substantial improvement of this paper.
This work is supported by World Premier 
International Research Center Initiative
(WPI initiative), MEXT, Japan. The second author
 is also supported by Grant-in Aid
for Scientific Research grant (No.~26287002)
from the Ministry of Education, Culture,
Sports, Science and Technology, Japan.

\subsection{Notations and conventions}
Throughout this paper, all the varieties are defined over 
$\mathbb{C}$. 
For a variety $X$, by $\Coh(X)$ we denote  the category of 
coherent sheaves on $X$, and 
 $\Coh_{\le d}(X)$  denote its subcategory 
of coherent sheaves whose supports have 
dimension less than or equal to $d$. 
For simplicity, we write $\Coh_{\le 0}(X)$ as $\Coh_0(X)$. 
When  $\aA$ is the heart of a bounded t-structure  on 
a triangulated category $\dD$,  by 
$\hH_{\aA}^i(\ast)$ we denote the corresponding  $i$-th cohomology functor. 
When $\aA=\Coh(X)$ and $\dD = D^b \Coh(X)$, we simply write $\hH^i(\ast)$ for $\hH_{\Coh(X)}^i(\ast)$. 
For a set of objects $\sS \subset \dD$,  by 
$\langle \sS \rangle \subset \dD$ we denote
its extension closure, that is the smallest extension closed subcategory 
of $\dD$ which contains $\sS$.
We denote the upper half plane $\{z \in \mathbb{C} : \Imm z >0\}$ by $\mathbb{H}$. 

\section{Tilting via very weak stability conditions}\label{sec:veryweak}
In this section, we develop 
general arguments of very weak stability conditions, 
which are the variants of weak stability of~\cite{Tcurve1}
introduced in~\cite[Definition~B.1]{BMS}.
\subsection{Very weak stability conditions}\label{subsec:vw}
Let $\dD$ be a triangulated category, and 
$K(\dD)$ its Grothendieck group. 
We fix a finitely generated free abelian 
group $\Gamma$ and a group homomorphism
\begin{align*}
\cl \colon K(\dD) \to \Gamma. 
\end{align*}
We first give the following definition: 
\begin{defi}\label{defi:veryweak}
A very weak pre-stability condition 
on $\dD$ is a pair $(Z, \aA)$, 
where $\aA$ is the heart of a bounded t-structure on $\dD$, 
and $Z \colon \Gamma \to \mathbb{C}$ is a group homomorphism 
satisfying the following conditions: 
\begin{enumerate}
\item For any $E \in \aA$, we have
\begin{align*}
Z(E) \in \mathbb{H} \cup \mathbb{R}_{\le 0}.
\end{align*}
Here $Z(E) \cneq Z(\cl(E))$ and $\mathbb{H}$ is the 
upper half plane. 

\item Let 
\begin{align*}
\mu \cneq -\frac{\Ree Z}{\Imm Z}
\colon \aA \to \mathbb{R} \cup \{\infty\}
\end{align*} 
be the associated slope function. 
Here we set $\mu(E)=\infty$ if $\Imm Z(E)=0$. 
Then $\mu$ satisfies the Harder-Narasimhan (HN for short) property.  
\end{enumerate}
\end{defi}

We say that 
$E \in \aA$ is $\mu$-(semi)stable 
if for any non-zero subobject $F \subset E$
in $\aA$, 
we have the inequality
\begin{align*}
\mu(F) <(\le) \, \mu(E/F). 
\end{align*}
The HN filtration of an object $E \in \aA$ is 
a chain of subobjects 
\begin{align*}
0=E_0 \subset E_1 \subset \cdots \subset E_n=E
\end{align*}
 in $\aA$ such that each $F_i=E_i/E_{i-1}$ is 
$\mu$-semistable with 
$\mu(F_i)>\mu(F_{i+1})$. 
If such HN filtrations exist for all objects in $\aA$,
we say that $\mu$ satisfies the HN property.
\begin{rmk}
The definition of the $\mu$-semistable objects
is equivalent to the usual definition 
of slope semistability: 
$E \in \aA$ is $\mu$-semistable if and only if for any 
non-zero $F \subset E$ in $\aA$, we have
$\mu(F) \le \mu(E)$. However, the 
$\mu$-stable objects are different 
from those defined by 
the inequality
$\mu(F) <\mu(E)$.
\end{rmk}	

For a given a very weak pre-stability condition $(Z, \aA)$, 
we define its slicing on $\dD$ (see \cite[Definition~3.3]{Brs1})
\begin{align*}
\{\pP(\phi)\}_{\phi \in \mathbb{R}}, \
\pP(\phi) \subset \dD
\end{align*}
as in the case of Bridgeland 
stability conditions (see \cite[Proposition~5.3]{Brs1}). 
Namely, for $0<\phi \le 1$, 
the category $\pP(\phi)$ is defined to 
be 
\begin{align*}
\pP(\phi) =\{ 
E \in \aA : 
E \mbox{ is } \mu \mbox{-semistable with }
\mu(E)=-1/\tan (\pi \phi)\} \cup \{0\}.
\end{align*} 
Here we set $-1/\tan \pi =\infty$. 
The other subcategories are defined by setting
\begin{align*}
\pP(\phi+1)=\pP(\phi)[1].
\end{align*}
For an interval $I \subset \mathbb{R}$, 
we define $\pP(I)$ to be the smallest extension 
closed subcategory of $\dD$ which contains 
$\pP(\phi)$ for each $\phi \in I$. 

Note that  
the category $\pP(1)$ contains the 
following category 
\begin{align}\label{def:C}
\cC \cneq \{E \in \aA : Z(E)=0\}. 
\end{align}
It is easy to check that $\cC$ 
is closed under subobjects and quotients in $\aA$. 
In particular, $\cC$ is an 
abelian subcategory of 
$\aA$. 
We say that
 $(Z, \aA)$ is a \textit{pre-stability 
condition} if $\cC=\{0\}$. 
We define the subgroup $\Gamma_0 \subset \Gamma$
by 
\begin{align*}
\Gamma_0 \cneq [\cl(\cC)] \subset \Gamma. 
\end{align*}
Here for a subset $S \subset \Gamma$, 
by $[S]$ we denote  the saturation of the 
subgroup of $\Gamma$
generated by $S$. 
Note that $E \in \aA$ satisfies $\cl(E) \in \Gamma_0$
if and only if $E \in \cC$. 
Also $Z$ descends to the
group homomorphism 
\begin{align*}
\overline{Z} \colon 
\Gamma/\Gamma_0 \to \mathbb{C}.
\end{align*} 
We define the following analogue of support property 
introduced by Kontsevich-Soibelman~\cite{K-S}
for Bridgeland stability conditions: 
\begin{defi}\label{defi:support}
A very weak pre-stability condition 
$(Z, \aA)$ 
is a very weak stability condition if 
it satisfies
the support property:
there is a quadratic form 
$Q$ on $\Gamma/\Gamma_0$ 
satisfying  
\begin{enumerate}
\item $Q(E) \ge 0$ for any 
$\mu$-semistable object
$E \in \aA$, and 

\item $Q|_{\Ker \overline{Z}}$ is negative definite on 
$\Gamma/\Gamma_0$. 
\end{enumerate}
\end{defi}
For $v\in \Gamma$, 
we denote by $\overline{v}$ its image 
in $\Gamma/\Gamma_0$, 
and $\lVert \ast \rVert$ is a fixed norm on 
$(\Gamma/\Gamma_0) \otimes_{\mathbb{Z}} \mathbb{R}$. 
Similarly to the Bridgeland stability, 
the support property is also interpreted 
in the following way: 
\begin{lem}\label{rmk:support}
Let $(Z, \aA)$ 
be a very weak pre-stability condition. 
Then it satisfies the support property 
if and only if the following holds:  
\begin{align}\label{C:support}
C\cneq 
\sup
\left\{\frac{\lVert  \overline{\cl(E)} \rVert}{\lvert Z(E) \rvert}
: E \mbox{ is } \mu \mbox{-semistable with }
E \notin \cC \right\} < \infty. 
\end{align}
\end{lem}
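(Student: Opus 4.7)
The plan is to prove the two directions separately, adapting the standard Kontsevich–Soibelman equivalence for Bridgeland stability to the present weak setting where the subcategory $\cC = \{E \in \aA : Z(E) = 0\}$ may be non-trivial; crucially, since $Z$ descends to $\overline{Z}$ on $\Gamma/\Gamma_0$, the problem naturally lives on the finite-dimensional real vector space $V \cneq (\Gamma/\Gamma_0) \otimes_{\mathbb{Z}} \mathbb{R}$.

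First I would show that the support property implies $C < \infty$. Extend the quadratic form $Q$ and the linear map $\overline{Z}$ to $V$ by $\mathbb{R}$-linearity, and consider the compact unit sphere $S \cneq \{v \in V : \lVert v \rVert = 1\}$. On the closed subset $K \cneq S \cap \{v : Q(v) \ge 0\}$ the continuous function $v \mapsto \lvert \overline{Z}(v) \rvert$ is nowhere zero: any vanishing point $v$ lies in $\ker \overline{Z}$ with $Q(v) \ge 0$ and $v \neq 0$, contradicting the negative definiteness of $Q|_{\ker \overline{Z}}$. Compactness therefore yields $c > 0$ with $\lvert \overline{Z}(v) \rvert \ge c$ on $K$, and homogeneity upgrades this to $\lvert \overline{Z}(v) \rvert \ge c \lVert v \rVert$ for every $v \in V$ with $Q(v) \ge 0$. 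For any $\mu$-semistable $E \notin \cC$, we have $\overline{\cl(E)} \neq 0$ and $Q(E) \ge 0$, so $\lvert Z(E) \rvert = \lvert \overline{Z}(\overline{\cl(E)}) \rvert \ge c \lVert \overline{\cl(E)} \rVert$, giving $C \le 1/c < \infty$.

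For the converse I would build $Q$ directly from $C$. Fix any inner product on $V$, write $\lVert \cdot \rVert_2$ for the induced norm, and choose $b > 0$ so that $\lVert v \rVert_2 \le b \lVert v \rVert$ for all $v \in V$ (such $b$ exists since all norms on a finite-dimensional space are equivalent). Define the quadratic form on $V$ by
\begin{align*}
Q(v) \cneq (bC)^2 \lvert \overline{Z}(v) \rvert^2 - \lVert v \rVert_2^2.
\end{align*}
Then $Q|_{\ker \overline{Z}}(v) = -\lVert v \rVert_2^2$ is manifestly negative definite. For a $\mu$-semistable $E \in \aA$, if $E \in \cC$ then $\overline{\cl(E)} = 0$ and $Q(E) = 0$; otherwise the defining bound $\lVert \overline{\cl(E)} \rVert \le C \lvert Z(E) \rvert$ combined with $\lVert \overline{\cl(E)} \rVert_2 \le b \lVert \overline{\cl(E)} \rVert$ yields $\lVert \overline{\cl(E)} \rVert_2^2 \le (bC)^2 \lvert Z(E) \rvert^2$, i.e.\ $Q(E) \ge 0$, completing the proof.

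I do not expect a real obstacle here: the argument is essentially a compactness/homogeneity exercise, and the only minor point requiring care is that the given norm $\lVert \cdot \rVert$ on $\Gamma/\Gamma_0$ is not a priori associated with an inner product, which forces the introduction of an auxiliary Euclidean norm in the converse direction. Everything else is formal, and the fact that $E \notin \cC \iff \overline{\cl(E)} \neq 0$ makes the passage between $Z$ and $\overline{Z}$ transparent.
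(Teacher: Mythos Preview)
Your proof is correct and follows essentially the same approach as the paper, which cites \cite[Lemma~A.4]{BMS} and records the explicit quadratic form $Q(v)=C\lvert \overline{Z}(v) \rvert^2 - \lVert v \rVert^2$ for the converse direction. Your introduction of an auxiliary Euclidean norm is a careful refinement ensuring $Q$ is genuinely a quadratic form when the fixed norm $\lVert\cdot\rVert$ is not a priori induced by an inner product, a point the paper leaves implicit.
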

\begin{proof}
If $(Z, \aA)$ is a pre-stability condition, then 
the lemma is stated in~\cite{K-S}
and the precise proof is 
available in~\cite[Lemma~A.4]{BMS}.
The same proof also applies for very weak pre-stability conditions. 
 Indeed if (\ref{C:support}) holds, 
then the quadratic form $Q$ in Definition~\ref{defi:support} is given by 
\begin{align}\label{quad:QQ}
Q(v)=C\lvert \overline{Z}(v) \rvert^2 - \lVert v \rVert^2
\end{align}
for $v \in \Gamma/\Gamma_0$. 
The proof of the converse is also the same as in~\cite[Lemma~A.4]{BMS}. 
\end{proof}

A very weak stability condition $\sigma=(Z, \aA)$
is called a \textit{stability condition}
if it is also a pre-stability condition, i.e. 
$\cC=\{0\}$.  
This notion coincides with the notion of 
Bridgeland stability conditions~\cite{Brs1}
satisfying the support property. 
In this case, we also call $\mu$-semistable objects
as $Z$-semistable objects, or as $\sigma$-semistable objects. 

\begin{exam}
When $\aA$ is the heart of a bounded t-structure on an arbitrary triangulated category $\dD$, 
by  definition,  
trivial pair $(Z=0, \aA)$ defines a  very weak stability condition.
\end{exam}
\begin{exam}
Let $A$ be a finite dimensional $\mathbb{C}$-algebra
and $\aA=\modu A$ the category of finitely generated 
right $A$-modules. 
Then there is a finite number of simple 
objects 
\begin{align*}
S_1, S_2, \cdots, S_k \in \aA
\end{align*}
such that $K(\aA)$ is freely 
generated by $[S_i]$ for $1\le i\le k$. 
We set $\Gamma=K(\aA)$ and $\cl=\mathrm{\id}$.
Then for 
any group homomorphism 
$Z \colon \Gamma \to \mathbb{C}$
with $Z([S_i]) \in \mathbb{H} \cup \mathbb{R}_{\le 0}$, 
it is easy to show that 
the pair $(Z, \aA)$ is a weak stability condition on 
$D^b(\aA)$. 
The subcategory $\cC \subset \aA$ is generated by 
$S_i$ with $Z(S_i)=0$. 
In fact, one can easily check that (\ref{C:support})
holds, and the quadratic form $Q$ is given 
as (\ref{quad:QQ}). 
\end{exam}
Let $\mathrm{Stab}_{\Gamma}^{\rm{vw}}(\dD)$ be the set of 
very weak stability conditions on $\dD$
with respect to the data $(\Gamma, \cl)$, and $\mathrm{Slice}(\dD)$ the 
set of slicing on $\dD$
with its topology introduced in~\cite[Section~6]{Brs1}. 
The set $\Stab_{\Gamma}^{\rm{vw}}(\dD)$ has a topology 
induced by the inclusion
\begin{align*}
\Stab_{\Gamma}^{\rm{vw}}(\dD) \subset 
\Hom(\Gamma, \mathbb{C}) \times \Slice(\dD). 
\end{align*}
Let $\Stab_{\Gamma}(\dD) \subset \Stab_{\Gamma}^{\rm{vw}}(\dD)$
be the subset consists of stability conditions. 
By~\cite[Theorem~1.2]{Brs1}, 
the map 
\begin{align*}
\Stab_{\Gamma}(\dD) \to \Hom(\Gamma, \mathbb{C})
\end{align*}
sending $(Z, \aA)$ to $Z$ is a 
local 
homeomorphism.

\subsection{Tilting of a very weak stability condition}
Following the idea to construct Bridgeland stability conditions on 
surfaces~\cite{Brs2}, \cite{AB}, 
and tilt stabilities on 3-folds~\cite{BMT}, 
we construct a `tilting' of 
a very weak stability condition $(Z, \aA)$. 
The additional required data is the 
following generalization of  
Bogomolov-Gieseker (BG)
inequality: 
\begin{defi}\label{defi:BG}
We say that $(Z, \aA)$ satisfies the 
BG inequality if 
there exist linear maps
\begin{align*}
\Delta_{R}, \Delta_{I} \colon 
\Gamma \to \mathbb{Q}
\end{align*}
satisfying the following conditions: 
\begin{enumerate}
\item For any $\mu$-semistable $E \in \aA$, we 
have the inequality
\begin{align}\label{ass1}
\Delta(E) \cneq 
\Ree Z(E) \Delta_{R}(E) + \Imm Z(E) \Delta_I(E) \ge 0. 
\end{align}
\item $\Delta_R|_{\cC}=0$, 
$\Delta_I|_{\cC} \le 0$ and $\Delta_I|_{\cC} \not\equiv  0$.  
\end{enumerate}
\end{defi}
As we will see in Subsection~\ref{subsec:tilt}, 
the inequality $\Delta(E) \ge 0$ is a generalization of 
the classical Bogomolov-Gieseker inequality 
for torsion free semistable sheaves on 
smooth projective  varieties. 
The inequality $\Delta(E) \ge 0$ 
also appears in an algebraic situation
as follows: 
\begin{exam}
Let $Q_l$ be the quiver with 
two vertex $\{1, 2\}$, and $l$-arrows 
from $1$ to $2$ (see the following picture for $l=2$)
\begin{align*}
Q_2 \colon 
\xymatrix{
1 
\ar@/_/[r] \ar@/^/[r]
 & 2 
}
\end{align*}
Let 
$\aA$ be the
category of finite dimensional 
$Q_l$-representations, and 
set $\dD=D^b(\aA)$. 
The group $\Gamma=K(\dD)$ is generated by 
$S_1$ and $S_2$, 
where $S_i \in \aA$ is the simple object
corresponding to the vertex $i$. 
We set the group homomorphism $Z \colon \Gamma \to \mathbb{C}$
by $Z(S_1)=0$ and $Z(S_2)=i$. 
Then 
$(Z, \aA)$ is a very weak stability condition with 
$\cC=\langle S_1 \rangle$, 
and 
$E \in \aA \setminus \cC$ is $\mu$-semistable 
if and only if
$\Hom(S_1, E)=0$. 
If we write $[E]=v_1[S_1]+v_2[S_2]$
in $\Gamma$, 
the condition $\Hom(S_1, E)=0$
implies $v_1 \le l v_2$. 
By setting 
\begin{align*}
(\Delta_R, \Delta_I)
=(0, lv_2-v_1)
\end{align*}
the data $(Z, \aA)$ satisfies the 
BG inequality. 
\end{exam}
\begin{rmk}
The condition $\Delta_R|_{\cC}=0$ naturally follows from 
$\Delta_I|_{\cC} \le 0$. 
Indeed for $s\in \mathbb{Q}$, 
the pair $(Z_s=Z+s \Imm Z, \aA)$
is also a very weak 
stability condition, whose semistable objects
coincide with $\mu$-semistable objects. 
We require that the BG inequality for $(Z_s, \aA)$ is 
equivalent to that of $(Z, \aA)$, i.e.
\begin{align*}
\Delta=\Ree Z_s \Delta_R + \Imm Z_s(\Delta_I-s\Delta_R) \ge 0
\end{align*}
is the BG inequality for $(Z_s, \aA)$. 
It requires $(\Delta_I-s\Delta_R)|_{\cC} \le 0$
for any $s\in \mathbb{Q}$, which implies 
$\Delta_R|_{\cC}=0$. 
\end{rmk}

Suppose that 
$(Z, \aA)$ satisfies the BG inequality, 
and fix the quadratic form $\Delta$ as above. 
For $t\in \mathbb{R}_{\ge 0}$, 
we define 
the group homomorphism 
$Z_t^{\dag} \colon \Gamma \to \mathbb{C}$ to be 
\begin{align*}
Z^{\dag}_t &\cneq -i Z+t\Delta_I =\left(\Imm Z + t\Delta_I\right) -\Ree Z \cdot i.
\end{align*}
We also define the heart 
$\aA^{\dag} \subset \dD$ to be 
\begin{align*}
\aA^{\dag} &\cneq 
\pP\left( \left({1}/{2}, \, {3}/{2} \right] \right).
\end{align*}

\begin{rmk}\label{rmk:tilt}
The heart $\aA^{\dag}$ is also described in the following way. 
Let $(\tT, \fF)$ be the pair of subcategories of $\aA$ defined by
\begin{align*}
&\tT \cneq \langle \mu\mbox{-semistable } E \in \aA \mbox{ with } 
\mu(E)>0  \rangle,  \\
&\fF \cneq \langle \mu\mbox{-semistable } E \in \aA \mbox{ with }
 \mu(E)\le 0 \rangle. 
\end{align*}
Then $(\tT, \fF)$ is a torsion pair as in \cite{HRS}, and 
$\aA^{\dag}$ coincides with the associated tilt:
\begin{align*}
\aA^{\dag}=\langle \fF[1], \tT \rangle. 
\end{align*}
In particular, any object $E \in \aA^{\dag}$ fits into
the exact sequence in $\aA^{\dag}$
\begin{align*}
0 \to \hH_{\aA}^{-1}(E)[1] \to E \to \hH_{\aA}^0(E) \to 0. 
\end{align*}
\end{rmk}
We also set 
\begin{align}\label{Cdag0}
\cC^{\dag} \cneq \{ F \in \cC : \Delta_I(F)=0\}.
\end{align}
We have the following lemma: 
\begin{lem}\label{lem:tilt1}
For any $E \in \aA^{\dag}$, we have 
$Z^{\dag}_t(E) \in \mathbb{H} \cup \mathbb{R}_{\le 0}$. 
Moreover, we have
\begin{align}\label{Cdag}
\{E \in \aA^{\dag} : Z_t^{\dag}(E)=0\}
=\left\{\begin{array}{cc}
\cC, & t=0, \\
\cC^{\dag}, & t>0.
\end{array}  \right. 
\end{align}
\end{lem}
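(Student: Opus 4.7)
The plan is to reduce both statements to a case analysis on the torsion pair $(\tT,\fF)$ of Remark~\ref{rmk:tilt}, and then to combine axiom~(i) of Definition~\ref{defi:veryweak} with the BG inequality (\ref{ass1}) and the hypothesis $\Delta_I|_{\cC}\le 0$. By Remark~\ref{rmk:tilt}, any $E\in\aA^{\dag}$ fits in an extension of $\hH^{0}_{\aA}(E)\in\tT$ by $\hH^{-1}_{\aA}(E)[1]$ with $\hH^{-1}_{\aA}(E)\in\fF$, so by additivity of $Z^{\dag}_t$ and the fact that $\mathbb{H}\cup\mathbb{R}_{\le 0}$ is closed under addition, I only need to check the first claim for $F\in\tT$ and for $F[1]$ with $F\in\fF$. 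Breaking such $F$ into HN factors and using axiom~(i), I would record the following signs: every HN factor $G$ of an object in $\tT$ satisfies $\Ree Z(G)\le 0$ with equality exactly when $G\in\cC$; every HN factor $G$ of an object in $\fF$ satisfies $\Imm Z(G)>0$ and $\Ree Z(G)\ge 0$, with $\Ree Z(G)=0$ iff $\mu(G)=0$.

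The nontrivial step is the case $F\in\fF$ with $\Ree Z(F)=0$, where $\Imm Z^{\dag}_t(F[1])=\Ree Z(F)$ vanishes and I must show $\Ree Z^{\dag}_t(F[1])=-\Imm Z(F)-t\Delta_I(F)\le 0$. On each HN factor $G$ of such an $F$ one has $\Ree Z(G)=0$ and $\Imm Z(G)>0$, so (\ref{ass1}) reads $\Imm Z(G)\,\Delta_I(G)\ge 0$ and forces $\Delta_I(G)\ge 0$; summing over HN factors yields $\Delta_I(F)\ge 0$, hence the claim. The remaining edge case, $F\in\cC\subset\tT$, follows at once from $\Delta_I|_{\cC}\le 0$: then $\Imm Z(F)=0$ and $\Ree Z^{\dag}_t(F)=t\Delta_I(F)\le 0$. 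The main obstacle is organizing the sign cases cleanly, so that the BG inequality is invoked only on HN factors where it gives the sharp conclusion $\Delta_I\ge 0$.

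For the second claim I would apply the same machinery to the equation $Z^{\dag}_t(E)=0$. Its imaginary part gives $\Ree Z(E)=0$, which, by the sign analysis above, forces $\hH^{0}_{\aA}(E)\in\cC$ and every HN factor of $\hH^{-1}_{\aA}(E)$ to have $\Ree Z=0$. Its real part then splits as the sum of the non-positive term $t\Delta_I(\hH^{0}_{\aA}(E))$ and the non-negative term $\Imm Z(\hH^{-1}_{\aA}(E))+t\Delta_I(\hH^{-1}_{\aA}(E))$, each of which therefore vanishes. For $t>0$, strict positivity of $\Imm Z$ on each nonzero HN factor of $\hH^{-1}_{\aA}(E)\in\fF$ then forces $\hH^{-1}_{\aA}(E)=0$ and $\Delta_I(\hH^{0}_{\aA}(E))=0$, placing $E$ in $\cC^{\dag}$. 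For $t=0$, the condition $Z^{\dag}_0(E)=0$ is equivalent to $Z(E)=0$, and the same torsion-pair analysis forces $\hH^{-1}_{\aA}(E)=0$ and $E=\hH^{0}_{\aA}(E)\in\cC$. The reverse inclusions in both cases are immediate from the definitions of $\cC$, $\cC^{\dag}$, and $Z^{\dag}_t$, so this completes the plan.
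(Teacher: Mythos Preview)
Your argument is correct and follows the same route as the paper: decompose $E\in\aA^{\dag}$ into $U[1]$ and $F$ via the torsion pair, use $\Imm Z^{\dag}_t=-\Ree Z\ge 0$ on $\aA^{\dag}$, and in the boundary case $\Ree Z(E)=0$ invoke the BG inequality (\ref{ass1}) on $U\in\pP(1/2)$ together with $\Delta_I|_{\cC}\le 0$ on $F\in\cC$. The paper is only slightly terser, jumping directly to the case $\Imm Z^{\dag}_t(E)=0$ rather than first treating $\tT$ and $\fF[1]$ separately.

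One small slip to fix: in your analysis of $\Ree Z^{\dag}_t(E)=0$ you write that the real part ``splits as the sum of the non-positive term $t\Delta_I(\hH^{0}_{\aA}(E))$ and the non-negative term $\Imm Z(\hH^{-1}_{\aA}(E))+t\Delta_I(\hH^{-1}_{\aA}(E))$.'' Since $[E]=[\hH^{0}_{\aA}(E)]-[\hH^{-1}_{\aA}(E)]$ in $K(\dD)$, the second expression actually enters with a minus sign, so $\Ree Z^{\dag}_t(E)$ is a sum of two \emph{non-positive} terms; only with that correction does $\Ree Z^{\dag}_t(E)=0$ force each to vanish (a non-positive plus a non-negative summing to zero would not).
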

\begin{proof}
By the construction of $\aA^{\dag}$, 
we have $\Imm Z^{\dag}_t(E)=-\Ree Z(E) \ge 0$
for any $E \in \aA^{\dag}$. 
Suppose that $\Imm Z^{\dag}_t(E)=0$. 
We have the short exact sequence
\begin{align*}
0 \to U[1] \to E \to F \to 0
\end{align*}
 in $\aA^{\dag}$, where $U=\hH_{\aA}^{-1}(E)$
and $F=\hH_{\aA}^0(E)$. 
We have
 $\Imm Z^{\dag}_t(U)=\Imm Z^{\dag}_t(F)=0$, and 
so $U \in \pP(1/2)$ and $F \in \cC$.  
We have
\begin{align*}
\Ree Z^{\dag}_t(E)=-\Imm Z(U)-t\Delta_I(U) + t\Delta_I(F). 
\end{align*}
If $U\neq 0$ then 
$\Imm Z(U)>0$, and (\ref{ass1}) implies 
$\Delta_I(U) \ge 0$. 
Combined with $\Delta_I(F) \le 0$ as $F \in \cC$, 
we obtain $\Ree Z^{\dag}_t(E) <0$. 
If $U=0$ then $E \in \cC$, and 
$\Ree Z^{\dag}(E)=0$ if and only if 
$t\Delta_I(E)=0$. 
Therefore, (\ref{Cdag}) also holds. 
\end{proof}

Similarly to $\mu$, we define the 
slope function $\mu^{\dag}_t \colon \aA^{\dag} \to 
\mathbb{R} \cup \{\infty\}$
to be 
\begin{align*}
\mu_t^{\dag} \cneq 
-\frac{\Ree Z^{\dag}_t}{\Imm Z_t^{\dag}} 
=\frac{\Imm Z +t \Delta_I}{\Ree Z}. 
\end{align*}
Here we set $\mu_t^{\dag}(E)=\infty$ if $\Ree Z(E)=0$. 

\begin{aside}
\rm
The following discussion highlights some potential applications of our general framework.
However, it is not directly relevant to our main purposes in this paper. 

An object of an abelian category is called a \textit{minimal object} when it has no
proper subobjects or equivalently no nontrivial quotients in the category. 
For example skyscraper sheaves of closed points are the only minimal objects of the abelian category of coherent sheaves on a scheme. 
This is a very useful algebraic notion and such minimal objects are preserved under an equivalence of abelian categories.
In our setting one can prove that (see Lemma~\ref{lem:t=0}~(iv) for
the notation and a similar argument)
$$
\langle \pP(1/2)[1], \cC \rangle = \pP^\dag_0(1) \subset  \aA^\dag.
$$
Therefore, it is an easy exercise to check that if $E \in \aA$ is 
\begin{itemize}
\item $\mu$-stable with
\item $\mu(E) = 0$, and 
\item $\Ext^{1}_{\aA}(\cC, E) =0$,
\end{itemize}
then $E[1] \in \aA^\dag$ has no proper subobjects in $\aA^\dag$. 
That is, $E[1] \in \aA^\dag$  is a minimal object.
The classes of minimal objects in  \cite[Proposition 2.2]{HuAb} and 
\cite[Lemma 2.3]{MaPi1} can be realized as examples of this result in
 the setup that we will introduce in Section \ref{sec:BGconj} for varieties.
Those classes of minimal objects played a crucial role in Bridgeland stability for
surfaces and 3-folds \cite{HuAb}, \cite{MaPi1}, \cite{MaPi2}, \cite{Piya}.
\end{aside}

\subsection{Harder-Narasimhan property after tilting}
In order to proceed further, we
need to show the HN property and 
the support property of $(Z_t^{\dag}, \aA^{\dag})$. 
For this purpose, we  
introduce the following 
technical condition.  
\begin{defi}\label{def:good}
Let $(Z, \aA)$ be a very weak stability 
condition satisfying a BG inequality. 
We say that $(Z, \aA)$ is good if the following 
conditions are satisfied: 
\begin{enumerate}
\item There exist constants $\zeta_i \in \mathbb{R}_{>0}$, $i=1,2$ such that
 $$
 Z(\Gamma) \subset \left(\zeta_1 \mathbb{Q}\right)+\left(\zeta_2 \mathbb{Q}\right)i.
 $$  
\item $\aA$ is a noetherian abelian category. 
\item 
We have $[\cl(\cC^{\dag})]=\Gamma_0^{\dag} \cneq \Gamma_0 
\cap \Ker(\Delta_I)$. 
\item For any $U \in \pP(\phi)$
with $\phi \in (0, 1)$, there is 
an injection $U \hookrightarrow \widehat{U}$ in $\aA$
such that
$\Hom(\cC^{\dag}, \widehat{U}[1])=0$. 
\end{enumerate}
\end{defi}
\begin{rmk}\label{rmk:good}
When $(Z, \aA)$ is a stability condition, 
it is good if and only if 
the condition (i) holds. 
(The condition (ii) follows from (i)
by~\cite[Proposition~5.0.1]{AP}.)
If $\zeta_i=1$, such 
a stability condition was called algebraic in~\cite{Tst3}. 
\end{rmk}
\begin{lem}\label{lem:terminate}
Suppose that a very weak stability 
condition $(Z, \aA)$ is good.  
Then 
for any $U \in \pP((0, 1))$,
 there is no infinite sequence
in $\aA$
\begin{align*}
U=U_1 \subset U_2 \subset \cdots \subset U_{i-1} \subset U_i \subset \cdots
\end{align*}
such that 
$U_i \in \pP((0, 1))$ and 
$U_i/U_{i-1} \in \cC$ for all $i$. 
\end{lem}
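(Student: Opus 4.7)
The strategy proceeds in two stages. First, I reduce to the case where each successive quotient $U_i/U_{i-1}$ lies in the smaller subcategory $\cC^{\dag}$; second, I use condition (iv) of goodness to embed the reduced chain into a single noetherian object of $\aA$ and invoke condition (ii).

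For the reduction, observe that $U_i/U_{i-1} \in \cC$ forces $Z(U_i)=Z(U)$ for every $i$, hence $\Delta_R(U_i) = \Delta_R(U)$ is constant (since $\Delta_R|_{\cC} = 0$), while $\Delta_I|_{\cC} \le 0$ and additivity of $\Delta_I$ make $\{\Delta_I(U_i)\}$ non-increasing. A uniform lower bound on $\Delta_I(U_i)$ comes from applying the BG inequality~(\ref{ass1}) to the $\mu$-semistable HN factors of each $U_i$. The key input is condition (i) of goodness: $\Imm Z(\Gamma)\subset\zeta_2\mathbb{Q}$ is a finitely generated, hence discrete, subgroup of $\mathbb{R}$, so its positive values are bounded below by some $\epsilon>0$; since $\sum_j \Imm Z(F_{i,j}) = \Imm Z(U)$, the number of HN factors of $U_i$ is at most $\Imm Z(U)/\epsilon$ uniformly in $i$. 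Combined with the support property (Lemma~\ref{rmk:support}), this yields a uniform lower bound for $\Delta_I(U_i)$. Since $\Delta_I\colon\Gamma\to\mathbb{Q}$ has image in a finitely generated (and so discrete) subgroup of $\mathbb{Q}$, the bounded non-increasing sequence $\{\Delta_I(U_i)\}$ stabilizes. Beyond that point $\Delta_I(U_i/U_{i-1})=0$, so $U_i/U_{i-1} \in \cC^{\dag}$ by~(\ref{Cdag0}). Truncating the chain, I may assume $U_i/U_{i-1}\in\cC^{\dag}$ for all $i$, whence $U_i/U\in\cC^{\dag}$ by closure of $\cC^{\dag}$ under extensions in $\aA$.

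For the embedding, I first extend condition (iv) from semistable objects to arbitrary $U\in\pP((0,1))$ by induction on the length of the HN filtration: given embeddings $U^{k-1}\hookrightarrow\widehat{U^{k-1}}$ and $F_k\hookrightarrow\widehat{F_k}$ for the $(k-1)$-st step and the $k$-th semistable factor $F_k$, two successive pushouts along $U^{k-1}\hookrightarrow\widehat{U^{k-1}}$ and $F_k\hookrightarrow\widehat{F_k}$ produce $\widehat{U}\in\aA$ with $U\hookrightarrow\widehat{U}$ and $\Hom(\cC^{\dag},\widehat{U}[1])=0$ (the vanishing follows from the long exact sequences applied to the two pushout short exact sequences). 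I then inductively extend the inclusion $U\hookrightarrow\widehat{U}$ to a compatible sequence of maps $\varphi_i\colon U_i\to\widehat{U}$: the obstruction to extending $\varphi_i$ to $\varphi_{i+1}$ lies in $\Ext^1_{\aA}(U_{i+1}/U_i,\widehat{U})=\Hom_{\dD}(U_{i+1}/U_i,\widehat{U}[1])$, which vanishes because $U_{i+1}/U_i\in\cC^{\dag}$. The images $V_i=\im(\varphi_i)$ then form an ascending chain of subobjects of the noetherian object $\widehat{U}$ (by condition (ii)) and therefore stabilize.

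The main obstacle will be the final step. Stabilization of $V_i$ does not a priori force stabilization of $U_i$: the snake lemma only identifies $U_{i+1}/U_i\cong K_{i+1}/K_i$ where $K_i=\Ker(\varphi_i)\in\cC^{\dag}$, and the $K_i$ sit in different $U_i$'s rather than in a single ambient object. My plan is to close this by iterating the embedding argument inside the Serre subcategory $\cC^{\dag}$ of $\aA$, using condition (iii) of goodness (which identifies $\Gamma_0^{\dag}=[\cl(\cC^{\dag})]$) to propagate noetherian control to the kernel chain and force termination.
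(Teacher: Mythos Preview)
Your overall strategy---reduce to $\cC^{\dag}$-quotients, then embed the chain into some $\widehat{U}$ and invoke noetherianity---matches the paper's. However, both stages contain genuine gaps.

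In Stage~1, the lower bound on $\Delta_I(U_i)$ does not follow from your argument. The BG inequality on an HN factor $F_{i,j}$ of $U_i$ gives $\Delta_I(F_{i,j}) \ge -\Ree Z(F_{i,j})\,\Delta_R(F_{i,j})/\Imm Z(F_{i,j})$, but $\lvert\Ree Z(F_{i,j})\rvert$ is not uniformly bounded: the real parts only sum to $\Ree Z(U)$ and may be individually large with cancellation. The support property controls only the image in $\Gamma/\Gamma_0$, through which $\Delta_R$ factors (since $\Delta_R|_{\cC}=0$) but $\Delta_I$ does not, so it says nothing about $\Delta_I(F_{i,j})$. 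In Stage~2, the ``second pushout along $F_k\hookrightarrow\widehat{F_k}$'' is not a pushout at all: $F_k$ sits as a \emph{quotient} of $V$, so enlarging it to $\widehat{F_k}$ means lifting the extension class of $V$ along $\Ext^1(\widehat{F_k},\widehat{U^{k-1}})\to\Ext^1(F_k,\widehat{U^{k-1}})$, with obstruction in $\Ext^2(\widehat{F_k}/F_k,\widehat{U^{k-1}})$, which you do not control. Ironically, the ``main obstacle'' you flag is not one: since $U_{i+1}\in\pP((0,1))$ and $\cC\subset\pP(1)$ we have $\Hom(\cC,U_{i+1})=0$; inductively $K_i=0$ forces $K_{i+1}\hookrightarrow U_{i+1}/U_i\in\cC$, hence $K_{i+1}=0$, so every $\varphi_i$ is already injective and noetherianity would finish immediately.

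The paper closes both gaps by never handling the unstable $U_i$ directly. It passes to the minimal destabilizing quotient $U\twoheadrightarrow Q$ (kernel $P$), sets $Q_i=(U_i/P)/T_i$ with $T_i$ the maximal $\cC$-subobject, and proves via Sublemma~\ref{lem:prepare} that each $Q_i$ is $\mu$-\emph{semistable}. Then BG applied to $Q_i$ itself bounds $\Delta_I(Q_i)$ below (as $\Imm Z(Q_i)=\Imm Z(Q)>0$ and $\Delta_R(Q_i)=\Delta_R(Q)$ are fixed), with no need to control HN factors; and condition~(iv) applies to the semistable $Q$ directly, with no pushout construction. The built-in condition $\Hom(\cC,Q_i)=0$ makes the successive extensions $Q_i\to\widehat{Q}$ injective, so noetherianity terminates the $Q_i$-chain, and one concludes by induction on the number of HN factors of $U$.
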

\begin{proof}
Suppose that such a sequence exists.  
Let $U \twoheadrightarrow Q$ be 
the minimal destabilizing quotient with respect to the
$\mu$-stability, and 
$P$ the kernel of $U \twoheadrightarrow Q$. 
Let $Q_i'$ be the cokernel of
the composition 
$P \hookrightarrow U \hookrightarrow U_i$, 
and define $Q_i$ to be the quotient of $Q_i'$ by 
its maximal subobject $T_i \subset Q_i'$ in $\aA$
with $T_i \in \cC$. 
Then we obtain the sequence in $\aA$
\begin{align}\notag
Q_1 \cneq Q \subset Q_2 \subset Q_3 \subset \cdots \subset
Q_{i-1} \subset Q_i \subset \cdots
\end{align}
such that $F_i \cneq Q_i/Q_{i-1} \in \cC$ for all
$i$. 
By the equality
\begin{align*}
\Delta_I(Q_i)=\Delta_I(Q_{i-1})+\Delta_I(F_i)
\end{align*}
and $\Delta_I(F_i) \le 0$, 
we have $\Delta_I(Q_i) \le \Delta_I(Q_{i-1})$. 
On the other hand, 
$Q_i$ is $\mu$-semistable by Sublemma~\ref{lem:prepare} below,
and so the BG inequality gives 
 $\Delta(Q_i) \ge 0$.
Combined with 
$Z(Q_i)=Z(Q)$
and $\Delta_R(Q_i)=\Delta_R(Q)$, we have
\begin{align*}
\Ree Z(Q) \Delta_R(Q)+\Imm Z(Q) \Delta_I(Q_i) \ge 0. 
\end{align*}
Since $\Imm Z(U)>0$, we have $\Imm Z(Q)>0$, 
and so $\Delta_I(Q_i)$ is bounded below. 
Therefore, we may assume that $\Delta_I(Q_i)$ is constant, 
which implies that $F_i \in \cC^{\dag}$. 
Now we take $Q \subset \widehat{Q}$ as in 
Definition~\ref{def:good} (iv). 
The condition $\Hom(\cC^{\dag}, \widehat{Q}[1])=0$ 
and $F_i \in \cC^{\dag}$ implies that 
\begin{align*}
\cdots \subset Q_{i-1} \subset Q_i \subset \cdots \subset \widehat{Q}.
\end{align*}
Since $\aA$ is noetherian, the above sequence must terminate. 
The result now follows by the induction on the number of HN 
factors of $U$. 
\end{proof}

\begin{sublem}\label{lem:prepare}
For $E \in \pP(\phi)$ with $\phi \in (0, 1]$, 
let $0 \to E \to E' \to F \to 0$
be an exact sequence in $\aA$ with 
$F \in \cC$ and 
$\Hom(\cC, E')=0$. Then $E' \in \pP(\phi)$. 
\end{sublem}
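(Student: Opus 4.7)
The plan is to verify directly that $E'$ satisfies the two defining properties of $\pP(\phi)$: that it lies in $\aA$ with the correct slope, and that it is $\mu$-semistable. Since $F\in\cC$ means $Z(F)=0$, additivity of $Z$ on short exact sequences of $\aA$ gives $Z(E')=Z(E)$, so $\mu(E')=\mu(E)=-1/\tan(\pi\phi)$. Moreover $E'\in\aA$ as an extension of objects of $\aA$, so the only real work is to establish $\mu$-semistability of $E'$.

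To do that I would take an arbitrary nonzero subobject $G\subset E'$ in $\aA$ and dissect it against the given short exact sequence. Set $G'\cneq G\times_{E'}E$, the kernel of the composite $G\hookrightarrow E'\twoheadrightarrow F$, and let $G''$ be the image of $G$ in $F$, giving a short exact sequence $0\to G'\to G\to G''\to 0$ in $\aA$ with $G'\subset E$ and $G''\subset F$. Because $\cC$ is closed under subobjects in $\aA$, one has $G''\in\cC$, so $Z(G)=Z(G')$, and in particular $\mu(G)=\mu(G')$ whenever both are defined.

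The next step is to use the hypothesis $\Hom(\cC,E')=0$ to force $G'$ to be a nonzero subobject of $E$ lying outside $\cC$. Indeed, if $G'=0$ then $G\hookrightarrow F$ gives $G\in\cC$, contradicting $\Hom(\cC,E')=0$; and if $G'\in\cC$, then $G\in\cC$ as well by extension-closure of $\cC$ in $\aA$ (which follows from additivity of $Z$), giving the same contradiction. Thus $G'\neq 0$ and $Z(G')\neq 0$. When $\phi\in(0,1)$ the slope $\mu(E)$ is finite; if one had $\Imm Z(G')=0$ then $\mu(G')=\infty>\mu(E)$, violating $\mu$-semistability of $E$, so $\Imm Z(G')>0$, and semistability of $E$ yields $\mu(G')\le\mu(E)$. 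Combined with $\mu(G)=\mu(G')$ and $\mu(E')=\mu(E)$, this gives $\mu(G)\le\mu(E')$, as required. When $\phi=1$, the inequality $\mu(G)\le\mu(E')=\infty$ is automatic, so that case is trivial.

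I do not anticipate a genuine obstacle: all the moves are standard abelian-category manipulations, and the only delicate point is eliminating the degenerate cases $G\in\cC$ and $G'\in\cC$, which is precisely the role of the hypothesis $\Hom(\cC,E')=0$ together with the Serre property of $\cC\subset\aA$.
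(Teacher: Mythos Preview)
Your proof is correct and follows essentially the same approach as the paper: intersect an arbitrary nonzero subobject of $E'$ with $E$ via the given short exact sequence, use that the cokernel lands in $\cC$ to identify slopes, and invoke $\Hom(\cC,E')=0$ to ensure the intersection is nonzero, then apply $\mu$-semistability of $E$. Your treatment is in fact slightly more careful than the paper's, as you explicitly handle the degenerate cases $G'\in\cC$, $\Imm Z(G')=0$, and $\phi=1$.
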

\begin{proof}
Let $0 \to P' \to E' \to Q' \to 0$ be an exact sequence in $\aA$
with $P', Q' \neq 0$, and let
$F' \subset F$ be the image of the composition 
$P' \hookrightarrow E'  \twoheadrightarrow F$. 
Since $\cC$ is closed under subobjects in $\aA$, we have
$F' \in \cC$. 
Let $P$ be the kernel of $P' \twoheadrightarrow F'$. 
Then $P \subset E$, and the
assumption $\Hom(\cC, E')=0$ implies that 
$P \neq 0$. 
As $\mu(P')=\mu(P)$ and $\mu(E')=\mu(E)$, 
the $\mu$-semistability of $E$ implies the $\mu$-semistability 
of $E'$.  
\end{proof}
\begin{lem}\label{lem:noether}
Suppose that a very weak stability condition 
$(Z, \aA)$
is good. Then 
$\aA^{\dag}$ is noetherian. 
\end{lem}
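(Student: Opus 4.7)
The plan is to show that every ascending chain $E'_1 \subset E'_2 \subset \cdots$ of subobjects of a fixed $E \in \aA^{\dag}$ stabilizes, by successively stabilizing the induced chains on $\aA$-cohomology. First, since $\hH^{-1}(E'_i) \subset \hH^{-1}(E)$ is an ascending chain in $\aA$, the noetherianity condition (ii) of Definition~\ref{def:good} allows us to assume $\hH^{-1}(E'_i) = G$ is constant for $i \geq i_0$.

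Next, I use the rationality condition (i) to stabilize $Z^{\dag}_0(E'_i)$. Since $\Gamma$ is finitely generated and $Z(\Gamma) \subset \zeta_1 \mathbb{Q} + \zeta_2 \mathbb{Q}\cdot i$, both $\Ree Z(\Gamma)$ and $\Imm Z(\Gamma)$ are finitely generated subgroups of $\mathbb{Q}$, hence cyclic and therefore discrete in $\mathbb{R}$. The sequence $\Imm Z^{\dag}_0(E'_i) = -\Ree Z(E'_i)$ is monotone non-decreasing (by Lemma~\ref{lem:tilt1}), bounded above by $\Imm Z^{\dag}_0(E)$, and valued in a discrete set, so it stabilizes. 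Once it does, $F_i = E'_{i+1}/E'_i$ satisfies $\Imm Z^{\dag}_0(F_i) = 0$, which by Lemma~\ref{lem:tilt1} forces $\Ree Z^{\dag}_0(F_i) \leq 0$. Then $\Ree Z^{\dag}_0(E'_i) = \Imm Z(E'_i)$ becomes monotone non-increasing; the identity $\Imm Z(E'_i) = \Imm Z(\hH^0(E'_i)) - \Imm Z(G) \geq -\Imm Z(G)$ shows it is bounded below, and hence also stabilizes in its discrete range. Thus $Z^{\dag}_0(F_i) = 0$ for $i$ large, and the case $t=0$ of Lemma~\ref{lem:tilt1} gives $F_i \in \cC \subset \tT \subset \aA$.

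Since $F_i \in \aA$, we have $\hH^{-1}(F_i) = 0$, and the long exact sequence collapses to a short exact sequence $0 \to \hH^0(E'_i) \to \hH^0(E'_{i+1}) \to F_i \to 0$ in $\aA$ with $F_i \in \cC$. This produces an ascending chain in $\aA$ with successive quotients in $\cC$, to which I aim to apply Lemma~\ref{lem:terminate}. Because $\hH^0(E'_i) \in \tT = \pP((0,1])$, I decompose it via the torsion pair $(\pP(1) \cap \tT,\, \pP((0,1)) \cap \tT)$ inside $\tT$ (whose Hom-vanishing comes from $\mu$-semistability), writing $0 \to S_i \to \hH^0(E'_i) \to U_i \to 0$. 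The torsion-free chain $\{U_i\} \subset \pP((0,1))$ inherits $\cC$-quotients and stabilizes directly by Lemma~\ref{lem:terminate}. The main obstacle is stabilizing the torsion chain $\{S_i\} \subset \pP(1) \cap \tT$, where Lemma~\ref{lem:terminate} does not apply verbatim; here I would combine Sublemma~\ref{lem:prepare} with conditions (iii) and (iv) of Definition~\ref{def:good}, using the embedding $U \hookrightarrow \widehat{U}$ with $\Hom(\cC^{\dag}, \widehat{U}[1])=0$ to reduce the $S_i$-analysis to a further application of Lemma~\ref{lem:terminate} within this enlarged framework. Once both $\hH^{-1}(E'_i)$ and $\hH^0(E'_i)$ stabilize, $F_i$ has trivial cohomology in both degrees, so $F_i = 0$ and the original chain stabilizes.
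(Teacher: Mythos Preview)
Your overall strategy is sound up to the point where you obtain the ascending chain $\hH^0(E'_i)\subset\hH^0(E'_{i+1})\subset\cdots$ in $\aA$ with successive quotients in $\cC$. But this chain is \emph{not} contained in any fixed object of $\aA$: from the long exact sequence for $0\to E'_i\to E\to E/E'_i\to 0$ the map $\hH^0(E'_i)\to\hH^0(E)$ need not be injective, so noetherianity of $\aA$ cannot be invoked directly. Your decomposition into $S_i\in\pP(1)$ and $U_i\in\pP((1/2,1))$ handles $U_i$ correctly via Lemma~\ref{lem:terminate}, but the termination of $\{S_i\}$ is a genuine gap. Your proposed fix does not work: condition~(iv) of Definition~\ref{def:good} only provides the embedding $U\hookrightarrow\widehat U$ for $U\in\pP(\phi)$ with $\phi\in(0,1)$, precisely excluding the case $\phi=1$ you need; and Sublemma~\ref{lem:prepare} gives no termination statement. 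Without further input there is no reason an unbounded chain $S_1\subset S_2\subset\cdots$ in $\pP(1)$ with $\cC$-quotients should stop (think of $\oO_D\subset\oO_D(C)\subset\oO_D(2C)\subset\cdots$ for a curve $C$ on a surface $D\subset X$).

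The paper sidesteps this by working dually with a chain of \emph{surjections} $E_1\twoheadrightarrow E_2\twoheadrightarrow\cdots$ in $\aA^{\dag}$. After stabilizing $\hH^0_{\aA}(E_i)$ (now quotients of $\hH^0_{\aA}(E)$, hence bounded) and $\hH^{-1}_{\aA}(F_i)\subset\hH^{-1}_{\aA}(E)$, the residual unbounded chain is $U_i=\hH^{-1}_{\aA}(E_i)$, which lies in $\fF=\pP((0,1/2])\subset\pP((0,1))$. Since $\cC\subset\tT$, one has $\Hom(\cC,U_i)=0$ automatically, and Lemma~\ref{lem:terminate} applies in one stroke with no $\pP(1)$-part to worry about. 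The asymmetry is that $\cC\subset\tT$, so the $\tT$-side (your $\hH^0(E'_i)$) is where the difficulty accumulates, while the $\fF$-side is clean. (Minor point: you wrote $\tT=\pP((0,1])$; in fact $\aA=\pP((0,1])$ and $\tT=\pP((1/2,1])$.)
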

\begin{proof}
The result and the argument is similar to~\cite[Lemma~5.5.2]{BVer}. 
Suppose that there is an infinite sequence of 
surjections 
\begin{align}\label{surj}
E_1 \twoheadrightarrow E_2 \twoheadrightarrow \cdots
\twoheadrightarrow E_i \twoheadrightarrow E_{i+1} \twoheadrightarrow \cdots
\end{align}
in $\aA^{\dag}$.
Since $\Imm Z^{\dag}_t=-\Ree Z$ is 
discrete and non-negative on $\aA^{\dag}$, we may 
assume that 
$\Imm Z^{\dag}_t(E_i)$ is independent of $i$. 
Let us consider the sequence of short exact sequences in $\aA^{\dag}$
\begin{align*}
0 \to F_i \to E \cneq E_1 \to E_i \to 0. 
\end{align*}
Then we have the chain of surjections 
\begin{align*}
\hH_{\aA}^0(E) \twoheadrightarrow \hH_{\aA}^0(E_2)  \cdots \twoheadrightarrow\hH_{\aA}^0(E_i) \twoheadrightarrow 
\hH_{\aA}^0(E_{i+1}) \twoheadrightarrow \cdots
\end{align*}
in $\aA$. Hence,
we may assume that 
$\hH_{\aA}^0(E) \stackrel{\cong}{\to}
\hH_{\aA}^0(E_i)$
as $\aA$ is noetherian. 
Also we have the chain of 
inclusions 
\begin{align*}
\hH_{\aA}^{-1}(F_i) \subset \hH_{\aA}^{-1}(F_{i+1})
\subset \cdots \subset \hH_{\aA}^{-1}(E)
\end{align*} 
in $\aA$. So we may assume that $\hH_{\aA}^{-1}(F_i)$ is independent of 
$i$. By setting $V=\hH_{\aA}^{-1}(E)/\hH_{\aA}^{-1}(F_i)$, we 
have the exact sequence in $\aA$
\begin{align*}
0 \to V \to \hH_{\aA}^{-1}(E_i) \to \hH_{\aA}^0(F_i) \to 0. 
\end{align*}
Note that $\hH_{\aA}^0(F_i) \in \cC$ as
$\Ree Z(F_i)=0$. 
We write $U_i=\hH_{\aA}^{-1}(E_i) \in \pP((0, 1/2])$. 
Since $\cC$ is closed under subobjects and quotients 
 in $\aA$ and 
$\Hom(\cC, U_i)=0$, we have
the chain of inclusions in $\aA$
\begin{align*}
U \cneq U_1 \subset U_2 \subset \cdots \subset 
U_i \subset U_{i+1} \subset \cdots
\end{align*}
whose subquotients $U_i/U_{i-1}$ are contained in $\cC$. 
By Lemma~\ref{lem:terminate},
the above sequence terminates, and so 
the sequence (\ref{surj}) also terminates as required. 
\end{proof}
\begin{lem}
\label{lem:noetherian}
If $(Z, \aA)$ is good, then  
the data $(Z^{\dag}_t, \aA^{\dag})$ is a very 
weak pre-stability condition. 
\end{lem}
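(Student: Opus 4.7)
The goal is to verify the three conditions of Definition \ref{defi:veryweak}. The first thing to check is that $\aA^\dag$ is the heart of a bounded t-structure. Since $\mu$ has the HN property on $\aA$, Remark \ref{rmk:tilt} identifies $(\tT, \fF)$ as a torsion pair on $\aA$ in the sense of Happel--Reyssens--Smal{\o}, and the standard HRS tilting construction produces $\aA^\dag$ as a heart, with $K(\aA^\dag) = K(\aA) = K(\dD)$. The condition that $Z^\dag_t(\aA^\dag) \subset \mathbb{H} \cup \mathbb{R}_{\le 0}$ is precisely Lemma \ref{lem:tilt1}. So everything reduces to verifying the HN property for the slope function $\mu^\dag_t$.

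My plan for the HN property is the classical Bridgeland-style noetherian argument, adapted to the very weak setting. Let $E \in \aA^\dag$. I want to show that $E$ has both a maximal destabilizing subobject and a minimal destabilizing quotient with respect to $\mu^\dag_t$; once these exist, an induction on $\Imm Z^\dag_t(E)$ terminates and produces the filtration. The key numerical input is that $\Imm Z^\dag_t(-) = -\Ree Z(-)$, which on $\aA^\dag$ takes values in $\zeta_1 \mathbb{Q}_{\ge 0}$ by condition (i) of Definition \ref{def:good}; in particular, for a fixed $E$, only finitely many values of $\Imm Z^\dag_t(F)$ are available to a subobject $F \subset E$ in $\aA^\dag$. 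Combining this discreteness with the noetherianity of $\aA^\dag$ established in Lemma \ref{lem:noether}, any ascending chain of subobjects of $E$ along which $\mu^\dag_t$ strictly increases must stabilize, producing a maximal destabilizer of $E$; dually, the finiteness of values of $\Imm Z^\dag_t$ bounds descending chains of quotients among those of a fixed imaginary part, and a standard argument (passing to a cofinal subchain of fixed $\Imm Z^\dag_t$-value and comparing $\Ree Z^\dag_t$) reduces to the ascending chain condition via noetherianity to get a minimal destabilizing quotient.

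The genuine subtlety — and the one thing that distinguishes this from the strict Bridgeland case — is the presence of objects with $Z^\dag_t = 0$, i.e.\ objects in $\cC$ (when $t=0$) or $\cC^\dag$ (when $t>0$), which have $\mu^\dag_t = \infty$ and can appear as subobjects inside the zero-slope piece. These are precisely the degenerate objects that very weak stability tolerates, but one still has to make sure they do not block the noetherian argument above: a priori a chain of subobjects might be forced to become infinite by absorbing copies of $\cC^\dag$-objects of zero $Z^\dag_t$. I expect this to be the main obstacle, and the way to handle it is exactly the mechanism already used in the proof of Lemma \ref{lem:noether}: the termination lemma (Lemma \ref{lem:terminate}), together with condition (iv) of Definition \ref{def:good}, rules out infinite chains whose successive quotients lie in $\cC$. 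Applied fiberwise at each value of $\Imm Z^\dag_t$, this kills the degenerate obstruction and lets the Bridgeland HN argument go through. Once HN is in hand, all three axioms of Definition \ref{defi:veryweak} are verified and $(Z^\dag_t, \aA^\dag)$ is a very weak pre-stability condition.
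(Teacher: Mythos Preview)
Your approach is essentially the same as the paper's, but you are working harder than necessary in the final step. The paper reduces to the standard criterion (noetherian plus no infinite descending chain of destabilizing subobjects, \cite[Proposition~2.12]{Tcurve1}), and then observes: once $\Imm Z^{\dag}_t$ stabilizes along such a chain, the successive quotients have $\Imm Z^{\dag}_t = 0$, hence $\mu^{\dag}_t = \infty$, which \emph{immediately} contradicts the destabilizing inequality $\mu^{\dag}_t(E_{i+1}) > \mu^{\dag}_t(E_i/E_{i+1})$. No further control on $\cC$ or $\cC^{\dag}$ is required at this stage.

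Your worry that ``a chain might be forced to become infinite by absorbing copies of $\cC^{\dag}$-objects'' is misplaced: that obstruction was already absorbed into the proof of Lemma~\ref{lem:noether} (via Lemma~\ref{lem:terminate}), and once $\aA^{\dag}$ is noetherian you do not need to re-invoke Lemma~\ref{lem:terminate} for the HN property. Objects of slope $\infty$ cannot destabilize anything, so the degenerate piece causes no trouble in the HN argument itself. Your plan would still work, but it duplicates effort already spent in Lemma~\ref{lem:noether}.
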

\begin{proof}
Since $\aA^{\dag}$ is noetherian by Lemma~\ref{lem:noether}, 
it is enough to show that
there is no infinite sequence of 
subobjects in $\aA^{\dag}$
\begin{align*}
\cdots \subset E_{i+1} \subset E_i \subset \cdots
\subset E_2 \subset E_1
\end{align*}
with $\mu^{\dag}_t(E_{i})>\mu^{\dag}_t(E_{i}/E_{i-1})$ (see \cite[Proposition~2.12]{Tcurve1}). 
Suppose that such a sequence exists. 
Since $\Imm Z^{\dag}_t$ values are non-negative and discrete 
on $\aA^{\dag}$, 
we may assume that 
$\Imm Z^{\dag}_t(E_{i-1})=\Imm Z^{\dag}_t(E_{i})$
for all $i$. 
Then $\mu^{\dag}_t(E_i/E_{i-1})=\infty$; this is the required contradiction. 
\end{proof}

\subsection{Support property after tilting}
Let $(Z, \aA)$ be a very weak stability condition which 
satisfies a BG inequality and is good. 
In this subsection, we show 
the support property of $(Z_t^{\dag}, \aA^{\dag})$. 
Let 
$\pP_t^{\dag}(\phi)$ 
be the slicing 
associated with $(Z_t^{\dag}, \aA^{\dag})$. 
We first 
investigate the category 
$\pP^{\dag}_t(\phi)$
when $t=0$.  
\begin{lem}\label{lem:t=0}
The category $\pP^{\dag}_0(\phi)$ is described
in the following way: 
\begin{enumerate}
\item If $0<\phi<1/2$, then
$\pP^{\dag}_0(\phi)=\pP(\phi+1/2)$. 
\item $\pP^{\dag}_0(1/2)$ consists of 
$E \in \pP(1)$ with $\Hom(\cC, E)=0$. 
\item If $1/2<\phi<1$, then 
$\pP^{\dag}_0(\phi)$ consists of objects
$E \in \langle \pP(\phi+1/2), \cC \rangle$
with $\Hom(\cC, E)=0$. 
\item $\pP^{\dag}_0(1)$ coincides with 
$\langle \pP(3/2), \cC \rangle$. 
\end{enumerate}
\end{lem}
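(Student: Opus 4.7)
The plan is to prove the four descriptions by a case-by-case analysis, translating conditions on the new phase (with central charge $Z^{\dag}_0 = -iZ$) back into conditions on the old slicing $\pP$ via the decomposition $\aA^{\dag} = \langle \fF[1], \tT \rangle$ of Remark~\ref{rmk:tilt}, and then checking $\mu^{\dag}_0$-semistability.

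As preparation I would record a dictionary. For a $\mu$-semistable object $M$ of old phase $\phi'$ with $Z(M) \neq 0$, the identity $\mu^{\dag}_0 = -1/\mu$ gives: $M[1]$ with $M \in \pP(\phi')$, $\phi' \in (0, 1/2]$, has new phase $\phi' + 1/2 \in (1/2, 1]$; and $M$ itself with $M \in \pP(\phi')$, $\phi' \in (1/2, 1]$, has new phase $\phi' - 1/2 \in (0, 1/2]$. The subcategory $\cC \subset \pP(1)$ is exceptional and is sent to new phase $1$ since $Z^{\dag}_0|_{\cC} = 0$. I would also record the rigidity: $\Ree Z \geq 0$ on $\fF$ with zero locus $\pP(1/2)$, $\Imm Z \geq 0$ on $\tT$ with zero locus $\pP(1)$, and $\Ree Z \leq 0$ on $\pP(1)$ with zero locus $\cC$.

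With this in hand, for any $E \in \aA^{\dag}$ I write the exact sequence $0 \to \hH_{\aA}^{-1}(E)[1] \to E \to \hH_{\aA}^{0}(E) \to 0$ and use the phase-$\phi$ condition combined with rigidity to place each cohomology into a specific slice. In case (i), $\phi \in (0, 1/2)$ forces $\mu^{\dag}_0(E) < 0$, which forces $\hH_{\aA}^{-1}(E) = 0$ and $\hH_{\aA}^{0}(E) \in \pP(\phi + 1/2) \subset \tT$; since $Z(E) \neq 0$ no $\cC$-interaction is possible. In case (iv), $\phi = 1$ forces $\Ree Z(E) = 0$, and by rigidity $\hH_{\aA}^{-1}(E) \in \pP(1/2)$ and $\hH_{\aA}^{0}(E) \in \cC$, giving $\langle \pP(3/2), \cC \rangle$; semistability at the maximal new slope $\infty$ is automatic. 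In case (ii), $\phi = 1/2$ forces $\Imm Z(E) = 0$ with $\Ree Z(E) < 0$, and since any nonzero $\hH_{\aA}^{-1}(E)[1]$ has strictly positive new slope and would destabilize $E$, one obtains $E \in \pP(1) \setminus \cC$. Case (iii) is intermediate, mixing $\fF[1]$-pieces of a single slope with $\cC$-quotients.

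The condition $\Hom(\cC, E) = 0$ in (ii) and (iii) is necessary: any nonzero $C \hookrightarrow E$ in $\aA^{\dag}$ with $C \in \cC$ yields a new-phase-$1$ subobject, which strictly destabilizes $E$ of new phase $\phi < 1$. For sufficiency I would test an arbitrary subobject $G \hookrightarrow E$ in $\aA^{\dag}$ by taking the $\aA$-cohomology long exact sequence of $0 \to G \to E \to H \to 0$, HN-filtering $\hH_{\aA}^{-1}(G) \in \fF$ and a suitable subquotient of $\hH_{\aA}^{0}(G) \in \tT$ inside $E$ with respect to $\mu$, and comparing new slopes factor by factor using $\mu^{\dag}_0 = -1/\mu$ and the already-established slice structure of $E$.

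The main obstacle will be this sufficiency step in cases (ii) and (iii): for a subobject $G$ with nontrivial torsion-pair decomposition, its pieces can in principle span slices of $\pP$ on both sides of $1/2$, and ruling out $\mu^{\dag}_0(G) > \mu^{\dag}_0(E)$ requires systematically exploiting the slice rigidity of $E$, the $\mu^{\dag}_0 = -1/\mu$ dictionary, and the exclusion of new-phase-$1$ components via $\Hom(\cC, E) = 0$. Once this comparison is closed, each case reduces to a finite check on sign data.
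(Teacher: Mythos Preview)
Your proposal is correct and follows essentially the same approach as the paper. The paper only writes out case (iii) explicitly and declares the other cases similar; your outline of all four cases via the dictionary $\mu_0^{\dag}=-1/\mu$, the decomposition $0\to\hH_{\aA}^{-1}(E)[1]\to E\to\hH_{\aA}^0(E)\to 0$, and the sufficiency check by taking $\aA$-cohomology of a subobject $P\subset E$ is exactly what the paper does (the paper's key inequality $\mu_0^{\dag}(P)\le\mu_0^{\dag}(\hH_{\aA}^{-1}(P)[1])\le\mu_0^{\dag}(E)$ is the clean way to close your ``factor by factor'' comparison).
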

\begin{proof}
We only prove the case when $1/2<\phi<1$. 
One can prove the other cases similarly. 
Let us take $E \in \pP^{\dag}_0(\phi)$
with $1/2<\phi<1$, i.e. $E\in \aA^{\dag}$ is  
$\mu_0^{\dag}$-semistable 
with $\mu_0^{\dag}(E)=-1/\tan (\pi \phi) >0$. 
We have the exact sequence
\begin{align*}
0 \to \hH_{\aA}^{-1}(E)[1] \to E \to \hH^0_{\aA}(E) \to 0
\end{align*}
in $\aA^{\dag}$. Let $F \subset \hH^0_{\aA}(E)$ be the 
maximum subobject in $\aA$ with $F \in \cC$, 
and let $T=\hH^0_{\aA}(E)/F$. 
If $T \neq 0$ then $\mu_0^{\dag}(T)< 0$, 
and this is not possible 
as $E$ is $\mu_0^{\dag}$-semistable. 
Hence, $\hH^0_{\aA}(E) \in \cC$. 
As $Z_0^{\dag}(\hH^0_{\aA}(E))=0$, by considering the HN 
filtration of $\hH_{\aA}^{-1}(E)$, one 
can easily check that $\hH_{\aA}^{-1}(E)$ is $\mu$-semistable. 
Hence, $E \in \langle \pP(\phi+1/2), \cC \rangle$
holds. 
Also the $\mu_0^{\dag}$-semistability of $E$
implies that $\Hom(\cC, E)=0$. 

Conversely, let us take an object
$E \in \langle \pP(\phi+1/2), \cC \rangle$
for $1/2<\phi<1$
satisfying $\Hom(\cC, E)=0$, 
and an exact sequence 
\begin{align*}
0 \to P \to E \to Q \to 0
\end{align*}
in $\aA^{\dag}$ with $P, Q \neq 0$. 
Note that $P\notin \cC$ as
$\Hom(\cC, E)=0$. 
By taking the long exact sequence of 
the cohomology functor $\hH_{\aA}^{\ast}(\ast)$, we get
 $\hH^{-1}_{\aA}(P) \in \pP((0, \phi-1/2])$. 
Since $\hH^0_{\aA}(P) \in \pP((1/2, 1])$
and $P\notin \cC$, 
we have the inequalities 
\begin{align*}
\mu_0^{\dag}(P) \le \mu_0^{\dag}(\hH_{\aA}^{-1}(P)[1]) \le 
\mu_0^{\dag}(E).
\end{align*}
Therefore, $E$ is $\mu_0^{\dag}$-semistable.  
\end{proof}

Let $Q$ be a quadratic form 
on $\Gamma/\Gamma_0$ 
which gives the support property of $(Z, \aA)$,
and 
$\Gamma_0^{\dag}=\Gamma_0 \cap \Ker(\Delta_I)$
the subgroup of $\Gamma_0$
considered in Definition~\ref{def:good}. 
Note that $Z_t^{\dag}$ descends to the group
homomorphism
\begin{align*}
\overline{Z}_t^{\dag} \colon 
\Gamma/\Gamma_0^{\dag} \to \mathbb{C}. 
\end{align*}
\begin{lem}\label{lem:quad}
There exists $k_0>0$ such that 
the quadratic form 
\begin{align*}
\Delta_{k, t} \cneq k Q +t \Delta
\end{align*}
on $\Gamma/\Gamma_0^{\dag}$ is 
negative definite on 
$\Ker (\overline{Z}_t^{\dag})$
for any $k \in (0, k_0)$ and $t>0$. 
\end{lem}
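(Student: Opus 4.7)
The plan is to evaluate $\Delta_{k,t}$ on $\Ker(\overline{Z}_t^{\dag})$, observe that the resulting expression is in fact independent of $t$, and then extract a uniform $k_0$ from the support property of $(Z,\aA)$.

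First, I would describe $\Ker \overline{Z}_t^{\dag}$ concretely. Since $\Imm Z_t^{\dag}=-\Ree Z$ and $\Ree Z_t^{\dag}=\Imm Z+t\Delta_I$, a real class $v$ lies in $\Ker \overline{Z}_t^{\dag}$ iff $\Ree Z(v)=0$ and $\Imm Z(v)=-t\Delta_I(v)$; in particular $\Ker \overline{Z}_t^{\dag}\subset V$, where $V:=\{v \in (\Gamma/\Gamma_0^{\dag})\otimes\mathbb{R} : \Ree Z(v)=0\}$. Moreover, if a non-zero $v \in \Ker\overline{Z}_t^{\dag}$ had $\Imm Z(v)=0$, then the kernel relation would force $\Delta_I(v)=0$, so $v$ would lift to $\Gamma_0\cap\Ker\Delta_I=\Gamma_0^{\dag}$, contradicting $v\neq 0$ in $\Gamma/\Gamma_0^{\dag}$. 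Thus every non-zero $v\in\Ker\overline{Z}_t^{\dag}$ has $\Imm Z(v)\neq 0$.

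Second, on such $v$ the relation $\Ree Z(v)=0$ gives $\Delta(v)=\Imm Z(v)\,\Delta_I(v)$, and substituting $\Imm Z(v)=-t\Delta_I(v)$ yields $t\Delta(v)=-\Imm Z(v)^2$. Hence on $\Ker \overline{Z}_t^{\dag}$,
\[
\Delta_{k,t}(v) \;=\; kQ(v)-\Imm Z(v)^2,
\]
and this expression is manifestly independent of $t$. The lemma therefore reduces to exhibiting $k_0>0$ such that $kQ(v)<\Imm Z(v)^2$ for every $v\in V$ with $\Imm Z(v)\neq 0$ and every $k\in(0,k_0)$; equivalently, the ratio $Q(v)/\Imm Z(v)^2$ must be bounded above on $V\setminus\{\Imm Z=0\}$.

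Third, I would establish this uniform bound by compactness together with the support property. Let $\pi\colon (\Gamma/\Gamma_0^{\dag})\otimes\mathbb{R}\to(\Gamma/\Gamma_0)\otimes\mathbb{R}$ be the natural projection; both $Q$ and $\Imm Z$ vanish on $\Ker\pi=(\Gamma_0/\Gamma_0^{\dag})\otimes\mathbb{R}$ (the former because $Q$ is pulled back from $\Gamma/\Gamma_0$, the latter because $Z(\Gamma_0)=0$), so the ratio descends to a function on the finite-dimensional space $\hat V:=\pi(V)$. By homogeneity it suffices to bound $Q/\Imm Z^2$ on the unit sphere of $\hat V$ minus $\hat V_0:=\pi(V\cap\{\Imm Z=0\})$. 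Now $\hat V_0\subset\Ker\overline{Z}$, so the support property of $(Z,\aA)$ gives $Q(v^*)<0$ for every $v^*\in\hat V_0\setminus\{0\}$, and consequently $Q/\Imm Z^2\to-\infty$ as one approaches $\hat V_0$; combined with continuity on the open complement in the sphere this forces the supremum $C$ to be finite. Any $k_0>0$ small enough (for instance $k_0=1/(C+1)$ when $C\ge 0$, or arbitrary positive $k_0$ otherwise) then makes $kQ(v)<\Imm Z(v)^2$ for all $v\in V\setminus\{\Imm Z=0\}$ and $k\in(0,k_0)$.

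The only step that is not entirely formal is the first one, where the precise definition $\Gamma_0^{\dag}=\Gamma_0\cap\Ker\Delta_I$ is what rules out degenerate kernel elements; the subsequent simplification in Step 2 is a direct calculation, and the compactness argument in Step 3 is standard once one identifies the negative definiteness of $Q$ on $\Ker\overline{Z}$ as providing the needed blow-up to $-\infty$ at the boundary.
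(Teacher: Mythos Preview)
Your approach is essentially the paper's: both reduce to the identity $\Delta_{k,t}(v)=kQ(v)-(\Imm Z(v))^2$ on $\Ker\overline{Z}_t^{\dag}$ and then appeal to the negative definiteness of $Q$ on $\Ker\overline{Z}$; your Step~3 spells out the compactness argument that the paper leaves implicit. Two small points, however.

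First, you never check that $\Delta$ (hence $\Delta_{k,t}$) actually descends to a quadratic form on $\Gamma/\Gamma_0^{\dag}$. The paper verifies this explicitly using $\Ree Z=\Imm Z=\Delta_R=\Delta_I=0$ on $\Gamma_0^{\dag}$.

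Second, your Step~1 claim that a non-zero $v\in\Ker\overline{Z}_t^{\dag}$ with $\Imm Z(v)=0$ ``lifts to $\Gamma_0\cap\Ker\Delta_I=\Gamma_0^{\dag}$'' tacitly assumes $\Ker Z\subset\Gamma_0\otimes\mathbb{R}$, which is not part of the abstract setup: $\Gamma_0$ is only defined as the saturation of $\cl(\cC)$, and nothing forces it to equal $\Ker Z$. The repair is immediate and is what the paper's terse sentence covers. If such a $v$ exists, the kernel relation still gives $\Delta_I(v)=0$; then either $\pi(v)\neq 0$ in $\Gamma/\Gamma_0$, in which case $\pi(v)\in\Ker\overline{Z}$ and $Q(v)<0$, so $\Delta_{k,t}(v)=kQ(v)<0$ for every $k>0$; or $\pi(v)=0$, meaning $v\in(\Gamma_0/\Gamma_0^{\dag})\otimes\mathbb{R}$ with $\Delta_I(v)=0$, which forces $v=0$ by the definition of $\Gamma_0^{\dag}$. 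With this case handled, your Step~3 correctly deals with the remaining case $\Imm Z(v)\neq 0$.
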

\begin{proof}
We first check that $\Delta_{k, t}$ is a 
quadratic form on $\Gamma/\Gamma_0^{\dag}$. 
It is enough to check that $\Delta$ descends to 
$\Gamma/\Gamma_0^{\dag}$, or equivalently 
for $\gamma_1 \in \Gamma_0^{\dag}$ and 
$\gamma_2 \in \Gamma$, we have 
\begin{align}\label{descend}
\Delta(\gamma_1+ \gamma_2)-\Delta(\gamma_1)-\Delta(\gamma_2)=0.
\end{align}
The LHS of (\ref{descend}) is calculated as
\begin{align*}
\Ree Z(\gamma_1)\Delta_R(\gamma_2)+\Ree Z(\gamma_2)\Delta_R(\gamma_1)+
\Imm Z(\gamma_1)\Delta_I(\gamma_2)+\Imm Z(\gamma_2)\Delta_I(\gamma_1). 
\end{align*} 
Since $\gamma_1 \in \Gamma_0^{\dag}$, we have 
$\Ree Z(\gamma_1)=\Imm Z(\gamma_1)=\Delta_I(\gamma_1)=0$. 
Also $\Delta_{R}|_{\cC}=0$ in Definition~\ref{defi:BG}
implies $\Delta_R(\gamma_1)=0$. Therefore, (\ref{descend}) holds. 

We take a non-zero 
$\gamma \in \Gamma/\Gamma_0^{\dag}$ such that 
$\overline{Z}_t^{\dag}(\gamma)=0$, i.e. 
$\Ree Z(\gamma)=0$ and $\Imm Z(\gamma)+ t\Delta_I(\gamma)=0$. 
So we have
\begin{align}\label{kQ}
\Delta_{k, t}(\gamma)=kQ(\gamma)-(\Imm \overline{Z}(\gamma))^2. 
\end{align}
Since $Q$ is negative definite on $\Ker(\overline{Z})$
in $\Gamma/\Gamma_0$, 
one can find $k_0>0$, which is independent of 
$t$ and $\gamma$, such that 
the RHS of (\ref{kQ}) is always negative
for any $k \in (0, k_0)$. 
\end{proof}
\begin{prop}\label{lem:Delkt}
For any $k \in (0, k_0)$, $t>0$
and a
$\mu^{\dag}_t$-semistable 
object $E \in \aA^{\dag}$, 
we have $\Delta_{k, t}(E) \ge 0$. 
In particular, we have  
$\Delta(E) \ge 0$. 
\end{prop}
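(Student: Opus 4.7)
The plan is to deduce $\Delta_{k,t}(E)\ge 0$ by combining the BG inequality and support property of $(Z,\aA)$ at the level of $\mu$-semistable factors with the negative-definiteness from Lemma~\ref{lem:quad}. The second statement $\Delta(E)\ge 0$ then follows by letting $k\to 0$ in the family $\Delta_{k,t}=kQ+t\Delta$, since this family depends continuously on $k$ and $(0,k_0)$ is an open interval. So I focus on the first inequality for fixed $k\in (0,k_0)$ and $t>0$.

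First I would decompose $E$ using the exact sequence in $\aA^\dag$ from Remark~\ref{rmk:tilt},
\begin{equation*}
0\to U[1]\to E\to F\to 0,
\end{equation*}
with $U=\hH_{\aA}^{-1}(E)\in \pP((0,1/2])$ and $F=\hH_{\aA}^{0}(E)\in \pP((1/2,1])$, and then refine by the $\mu$-HN filtrations of $U$ and $F$ in $\aA$. This produces a list of $\mu$-semistable factors $G_1,\dots,G_n \in \aA$ with
\begin{equation*}
\cl(E)=\sum_{G_j\subset F}\cl(G_j)-\sum_{G_j\subset U}\cl(G_j).
\end{equation*}
For each $G_j$, the BG inequality (Definition~\ref{defi:BG}) yields $\Delta(G_j)\ge 0$ and the support property of $(Z,\aA)$ yields $Q(G_j)\ge 0$, so each diagonal summand $\Delta_{k,t}(G_j)\ge 0$. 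The remaining task is to control the bilinear cross-terms that arise when expanding the quadratic form $\Delta_{k,t}$ on the signed sum above.

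The cross-terms are controlled via a Hodge-index-style argument using Lemma~\ref{lem:quad}: on $\Gamma/\Gamma_0^\dag$, the form $\Delta_{k,t}$ is negative definite on the hyperplane $\Ker(\overline{Z}_t^\dag)$, so it has Lorentzian-type signature with at most one positive direction, which is transverse to this hyperplane. The $\mu_t^\dag$-semistability of $E$ forces all the factors $G_j$ to have $Z_t^\dag$-values lying in a common half-plane determined by the phase of $Z_t^\dag(E)$, and more precisely the components of $\cl(G_j)$ transverse to $\Ker(\overline{Z}_t^\dag)$ point in a consistent direction. Writing each $\cl(G_j)$ as a sum of a piece proportional to $\cl(E)$ modulo $\Ker(\overline{Z}_t^\dag)$ plus a correction in $\Ker(\overline{Z}_t^\dag)$, and using that $\Delta_{k,t}$ is negative on the latter subspace while each $G_j$ satisfies $\Delta_{k,t}(G_j)\ge 0$, a Cauchy--Schwarz-type estimate converts the pointwise positivity of the $\Delta_{k,t}(G_j)$ into the desired positivity of $\Delta_{k,t}(E)$.

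The main obstacle I expect is the degenerate case when some $G_j$ have $\Imm Z_t^\dag(G_j)=\Ree Z(G_j)=0$, i.e.~belong to $\pP(1)$ or $\cC$, so that $\mu_t^\dag(G_j)=\infty$ and the slope-ordering imposed by semistability of $E$ degenerates. Here I would separate out the contributions from $\cC^\dag$, which lie in $\Gamma_0^\dag$ and so are invisible to $\Delta_{k,t}$ when viewed on the quotient $\Gamma/\Gamma_0^\dag$, and handle the remaining factors in $\pP(1)$ by a limiting argument, using the goodness hypothesis (Definition~\ref{def:good}) and the fact that the slicing $\pP$ controls such factors in terms of $\Delta_I$.
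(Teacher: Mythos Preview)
Your approach has a genuine gap at the signature/Cauchy--Schwarz step. The negative-definiteness in Lemma~\ref{lem:quad} is on $\Ker(\overline{Z}_t^{\dag})$, which has \emph{codimension two} in $\Gamma/\Gamma_0^{\dag}$ (since $Z_t^{\dag}$ maps to $\mathbb{C}\cong\mathbb{R}^2$), not codimension one. So $\Delta_{k,t}$ can have signature $(2,\ast)$, not $(1,\ast)$; it is not Lorentzian, and there is no single positive direction. The convexity statement you need (as in \cite[Lemma~A.7]{BMS}) only applies on the codimension-one slice where the $Z_t^{\dag}$-values lie on a fixed ray $\mathbb{R}_{\ge 0}\,z$. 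Your $\mu$-HN factors $G_j$ (or $G_j[1]$) all have $Z_t^{\dag}$-values in the closed upper half plane, but they are \emph{not} on a common ray, and $\mu_t^{\dag}$-semistability of $E$ gives no such alignment for them: these $G_j$ are subquotients in $\aA$, not sub/quotients of $E$ in $\aA^{\dag}$, so the slope inequality for $E$ says nothing directly about $\arg Z_t^{\dag}(G_j)$. Hence the cross-terms are not controlled by your argument.

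The paper's proof is organized precisely to produce aligned pieces. It inducts on the discrete invariant $\Imm Z_t^{\dag}(E)=-\Ree Z(E)$ and runs a wall-crossing in $t$: either $E$ remains $\mu_{t'}^{\dag}$-semistable down to $t'\to 0^+$, in which case Lemma~\ref{lem:t=0} reduces to $\mu$-semistable factors where $Q\ge 0$ and $\Delta\ge 0$ hold directly; or there is a wall at some $0<t'<t$ with an exact sequence $0\to E_1\to E\to E_2\to 0$ in $\aA^{\dag}$ satisfying $\mu_{t'}^{\dag}(E_1)=\mu_{t'}^{\dag}(E_2)$, i.e.\ $Z_{t'}^{\dag}(E_1),Z_{t'}^{\dag}(E_2)$ lie on the \emph{same} ray. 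Now the convex-cone lemma applies to $\Delta_{k',t'}$, and a rescaling $k'=kt'/t$ converts back to $\Delta_{k,t}$. The alignment of phases at the wall is exactly the missing ingredient in your decomposition.
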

\begin{proof}
Note that for any $E \in \aA^{\dag}$, 
we have $\Imm Z_t^{\dag}(E)=-\Ree Z(E) \ge 0$. 
We show the claim by the induction on 
$-\Ree Z(E)$. 
We define $R>0$ to be
\begin{align}\label{min}
R \cneq \mathrm{min}\{ -\Ree Z(E)>0 : 
E \in \aA^{\dag}\}. 
\end{align}
If $-\Ree Z(E)$ is either zero or $R$, then 
any $\mu_t^{\dag}$-semistable object $E \in \aA^{\dag}$ is 
also $\mu_t^{\dag}$-semistable for $0<t\ll 1$, 
and so it is $\mu_0^{\dag}$-semistable. 
By Lemma~\ref{lem:t=0}, 
the object $E$ satisfies 
$Q(E) \ge 0$ and $\Delta(E) \ge 0$, 
and so the inequality $\Delta_{k, t}(E) \ge 0$ holds. 
Suppose that $-\Ree Z(E)>R$. If $E$ is $\mu_t^{\dag}$-semistable 
for $0<t\ll 1$, then 
again Lemma~\ref{lem:t=0} shows that 
$\Delta_{k, t}(E) \ge 0$. 
Otherwise, there is $0<t'<t$ such that 
$E$ is $\mu_{t'}^{\dag}$-semistable, 
and an exact sequence 
\begin{align*}
0\to E_1 \to E \to E_2 \to 0
\end{align*}
with $\mu_{t'}^{\dag}(E_1)=\mu_{t'}^{\dag}(E_2)$. 
By the induction hypothesis, we have 
$\Delta_{k', t'}(E_i) \ge 0$
for any $k'\in (0, k_0)$
and $i=1, 2$. 
Let $\cC_{E, k', t} \subset \Gamma_{\mathbb{R}}$ be the 
cone defined by 
\begin{align*}
\cC_{E, k', t} \cneq 
\{ v \in \Gamma_{\mathbb{R}} : 
Z_{t}^{\dag}(v) \in \mathbb{R}_{\ge 0} Z_t^{\dag}(E), 
\Delta_{k, t}(v) \ge 0 \}. 
\end{align*}
By~\cite[Lemma~A.7]{BMS} the cone $\cC_{E, k', t}$ is convex,
and so $\Delta_{k', t'}(E) \ge 0$. 
By setting $k'=kt'/t$, we obtain the inequality
$\Delta_{k, t}(E)=t\Delta_{k', t'}(E)/t' \ge 0$. 
The last statement follows by taking $k \to 0^+$. 
\end{proof}
We have the following corollary:
\begin{cor}\label{cor:support}
Suppose that 
$(Z, \aA)$ is good. 
Then $(Z_t^{\dag}, \aA^{\dag})$ is a 
very weak stability condition such that the 
map defined by
\begin{align}\label{slice:cont}
\mathbb{R}_{>0} \to \Stab^{\rm{vw}}_{\Gamma}(\dD), \  t \mapsto (Z_t^{\dag}, \aA^{\dag})
\end{align}
is continuous. 
\end{cor}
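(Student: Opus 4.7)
The plan is to verify the two defining conditions of a very weak stability condition (HN property and support property) for $(Z_t^{\dag},\aA^{\dag})$, and then to treat the continuity statement as a separate task that follows once both conditions are established uniformly in $t$.

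First, I will assemble what has already been proved. Lemma~\ref{lem:tilt1} shows that $Z_t^{\dag}(E)\in \mathbb{H}\cup\mathbb{R}_{\le 0}$ for every $E\in\aA^{\dag}$, which is condition (i) of Definition~\ref{defi:veryweak}. Lemma~\ref{lem:noetherian} shows that $\mu_t^{\dag}$ has the HN property, so $(Z_t^{\dag},\aA^{\dag})$ is a very weak pre-stability condition. Thus only the support property remains for the first half of the statement.

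For the support property, I will use the quadratic form
\[
\Delta_{k,t} = kQ + t\Delta
\]
on $\Gamma/\Gamma_0^{\dag}$, choosing any $k\in(0,k_0)$ with $k_0$ as in Lemma~\ref{lem:quad}. By Lemma~\ref{lem:quad}, $\Delta_{k,t}$ is negative definite on $\Ker(\overline{Z}_t^{\dag})$, which yields part (ii) of Definition~\ref{defi:support}. Proposition~\ref{lem:Delkt} gives $\Delta_{k,t}(E)\ge 0$ for every $\mu_t^{\dag}$-semistable $E\in\aA^{\dag}$, which is part (i). The identification $\Gamma_0^{\dag}=[\cl(\cC^{\dag})]$ is exactly condition (iii) of goodness combined with Lemma~\ref{lem:tilt1}, so the quadratic form lives on the correct quotient. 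This completes the argument that $(Z_t^{\dag},\aA^{\dag})\in\Stab_{\Gamma}^{\rm{vw}}(\dD)$ for all $t>0$.

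For continuity of the map \eqref{slice:cont}, recall that the topology on $\Stab_{\Gamma}^{\rm{vw}}(\dD)$ is the subspace topology induced from $\Hom(\Gamma,\mathbb{C})\times\Slice(\dD)$. Continuity into the first factor is clear since $Z_t^{\dag}=-iZ+t\Delta_I$ is linear, hence smooth, in $t$. Continuity into $\Slice(\dD)$ requires more care: the heart $\aA^{\dag}$ is independent of $t$, so every slicing $\pP_t^{\dag}$ satisfies $\aA^{\dag}=\pP_t^{\dag}((0,1])$, and it suffices to show that the phase function $\phi_t^{\dag}\colon\aA^{\dag}\setminus\cC^{\dag}\to(0,1]$ attached to the $\mu_t^{\dag}$-HN filtration varies continuously with $t$ in the standard sup-metric on slicings. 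I expect this to follow by a deformation argument in the spirit of Bridgeland's local homeomorphism theorem: one fixes $t_0>0$ and shows that for $|t-t_0|$ sufficiently small, the maximal and minimal $\mu_t^{\dag}$-phases of any object differ by at most a controlled amount from those at $t_0$. The support property established above, together with the fact that $\Imm Z_t^{\dag}$ takes non-negative discrete values on $\aA^{\dag}$, bounds the possible jumps in phases and hence forces the required uniform continuity.

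The main obstacle I anticipate is this last uniform-continuity estimate: one must rule out that, as $t$ moves, certain semistable factors drift far in phase even though the central charge moves only slightly. The way around it is to observe that, since the heart is fixed and $\Imm Z_t^{\dag}=-\Ree Z$ is bounded below by the constant $R$ in \eqref{min} on any non-$\cC^{\dag}$ object, the support inequality $\Delta_{k,t}(E)\ge 0$ constrains $|Z_t^{\dag}(E)|$ from above in terms of $\lVert\overline{\cl(E)}\rVert$ uniformly for $t$ in a neighbourhood of $t_0$, so the bound \eqref{C:support} of Lemma~\ref{rmk:support} holds uniformly and the standard slicing-continuity argument applies.
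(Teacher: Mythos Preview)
Your approach is essentially the same as the paper's: the support property via $\Delta_{k,t}$ from Lemma~\ref{lem:quad} and Proposition~\ref{lem:Delkt}, then continuity into $\Slice(\dD)$ via a uniform support constant on a compact interval of $t$-values. Two small slips to fix: the support property bounds $\lvert Z_t^{\dag}(E)\rvert$ from \emph{below} (equivalently, $\lVert\overline{\cl(E)}\rVert/\lvert Z_t^{\dag}(E)\rvert$ from above), not from above; and $\Imm Z_t^{\dag}$ is not bounded below by $R$ on all of $\aA^{\dag}\setminus\cC^{\dag}$ --- it vanishes on $\pP_t^{\dag}(1)\supsetneq\cC^{\dag}$, which is why the paper handles $\phi=1$ separately by noting $\pP_t^{\dag}(1)$ is independent of $t$. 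The paper also makes your ``standard slicing-continuity argument'' concrete: from the uniform support bound one gets $\lvert 1-Z_{t'}^{\dag}(E)/Z_t^{\dag}(E)\rvert<C\lvert t-t'\rvert$ for semistable $E$ with $\phi\in(0,1)$, and then applies this to both $E$ and any $\mu_{t'}^{\dag}$-destabilizing sub or quotient $F$ to trap $\arg Z_{t'}^{\dag}(F)$ near $\arg Z_t^{\dag}(E)$.
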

\begin{proof}
By Lemma~\ref{lem:quad} and Proposition~\ref{lem:Delkt}, 
$(Z_t^{\dag}, \aA^{\dag})$ satisfies the support property. Hence,
it is a very weak stability condition. 
In order to show that the map (\ref{slice:cont}) is continuous,
it is enough to show that the map
\begin{align}\label{slice:cont2}
\mathbb{R}_{>0} \to \Slice(\dD), \  t \mapsto \{\pP_t^{\dag}(\phi)\}_{\phi \in \mathbb{R}}
\end{align}
is continuous. 
Let $K \subset \mathbb{R}_{>0}$ be a 
closed interval. 
Since the quadratic form $\Delta_{k, t}$ is continuous with respect to $t$, 
we can choose a quadratic form $\Delta_K$ which is independent of $t$
giving the support property of $(Z_t^{\dag}, \aA^{\dag})$ for any $t\in K$. 
In particular, for any 
$t, t' \in K$ and 
$E \in \pP_{t}^{\dag}(\phi)$ with $\phi \in (0, 1)$, 
we have the inequality (see~Remark~\ref{rmk:support})
\begin{align}\label{support:Z}
\left \lvert 1-\frac{Z_{t'}^{\dag}(E)}{Z_{t}^{\dag}(E)} \right \rvert 
<C \lvert t-t' \rvert,
\end{align}
where $C>0$ is a constant which is independent of $t, t'$ and $\phi$.
If necessary, we may replace $K$ by a smaller interval 
such that $C \lvert t-t' \rvert$ is  small enough, say less than $1/8$. 
Let $E \twoheadrightarrow F$ be a
destabilizing quotient of $E$
in $\aA^{\dag}$  with respect to the
 $\mu_{t'}^{\dag}$-stability. 
Applying (\ref{support:Z}) to $E$ and $F$, we have 
\begin{align*}
&\lvert \arg Z_t^{\dag}(E) - \arg Z_{t'}^{\dag}(E) \rvert <\pi \theta, \\ 
&\lvert \arg Z_t^{\dag}(F) - \arg Z_{t'}^{\dag}(F) \rvert <\pi \theta.
\end{align*}
Here $\theta \in [0, 1)$ is determined by 
$\sin (\pi \theta/2) =C\lvert t-t' \rvert$. 
On the other hand, we have 
$\arg Z_t^{\dag}(E) \le \arg Z_t^{\dag}(F)$
and $\arg Z_{t'}^{\dag}(E)>
 \arg Z_{t'}^{\dag}(F)$. 
Therefore,
\begin{align*}
\lvert \arg Z_{t}^{\dag}(E) - \arg Z_{t'}^{\dag}(F) \rvert <\pi \theta.
\end{align*}
Similar arguments show the 
same inequality holds for destabilizing 
subobjects of $E$. As a result, we obtain 
\begin{align*}
\pP_{t}^{\dag}(\phi) \subset \pP_{t'}^{\dag}((\phi-\theta, \phi+\theta)). 
\end{align*}
As $\theta \to 0$ when $t' \to t$, 
and $\pP_t^{\dag}(1)$ is independent of $t$, the map (\ref{slice:cont2})
is continuous. 
\end{proof}

The following corollary follows from Lemma~\ref{lem:tilt1} 
and Corollary~\ref{cor:support}. 
\begin{cor}
Suppose that $(Z, \aA)$ is good.
Then the 
following conditions are equivalent for $t>0$: 
\begin{enumerate}
\item $(Z_t^{\dag}, \aA^{\dag})$ is a stability condition. 
\item $Z_t^{\dag}|_{\pP(1)}$ is a stability condition on $\pP(1)$. 
\item $\Delta_I$ is strictly negative on $\cC \setminus \{0\}$. 
\end{enumerate}
\end{cor}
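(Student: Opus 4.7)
The plan is to read off both equivalences essentially as direct consequences of Lemma~\ref{lem:tilt1}, using Corollary~\ref{cor:support} only to know that $(Z_t^{\dag}, \aA^{\dag})$ is already a very weak stability condition (so the remaining issue is whether its degeneracy subcategory is trivial).

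First I would show $(i) \Longleftrightarrow (iii)$. By Corollary~\ref{cor:support}, for $t>0$ the pair $(Z_t^{\dag}, \aA^{\dag})$ is a very weak stability condition, so by definition it is a (Bridgeland) stability condition precisely when its degeneracy subcategory
\[
\cC_t^{\dag,\mathrm{new}} \cneq \{E \in \aA^{\dag} : Z_t^{\dag}(E) = 0\}
\]
is zero. But Lemma~\ref{lem:tilt1} identifies this subcategory for $t>0$ with $\cC^{\dag} = \{F \in \cC : \Delta_I(F) = 0\}$. Since the BG-inequality datum forces $\Delta_I|_{\cC} \le 0$, we have $\cC^{\dag} = 0$ if and only if $\Delta_I(F) < 0$ for every nonzero $F \in \cC$, which is condition (iii). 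This gives $(i) \Longleftrightarrow (iii)$.

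Next I would handle $(ii) \Longleftrightarrow (iii)$ by a direct computation of $Z_t^{\dag}$ restricted to $\pP(1)$. By the definition of the slicing, $\pP(1)$ is the full abelian subcategory of $\aA$ consisting of $\mu$-semistable objects with $\mu = \infty$, i.e.\ objects $E \in \aA$ with $\Imm Z(E) = 0$ and $\Ree Z(E) \le 0$. For such $E$ we compute
\[
Z_t^{\dag}(E) = -iZ(E) + t\Delta_I(E) = t\Delta_I(E) - Z(E)\cdot i,
\]
so $\Imm Z_t^{\dag}(E) = -Z(E) \ge 0$. If $-Z(E) > 0$ then $Z_t^{\dag}(E) \in \mathbb{H}$ with no issue, while if $Z(E) = 0$ (i.e.\ $E \in \cC$) then $Z_t^{\dag}(E) = t\Delta_I(E)$ is a nonpositive real number. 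Thus the strict positivity axiom for a Bridgeland central charge on $\pP(1)$ — namely $Z_t^{\dag}(E) \in \mathbb{H} \cup \mathbb{R}_{<0}$ for every nonzero $E$ — becomes exactly the requirement $\Delta_I(F) < 0$ for all nonzero $F \in \cC$, which is (iii). The HN property for this restricted stability on $\pP(1)$ is inherited from $(Z_t^{\dag}, \aA^{\dag})$ (or, alternatively, is automatic because $\pP(1)$ becomes a finite-length abelian category under the hypothesis on $\Delta_I$ combined with goodness of $(Z,\aA)$, the Im of the central charge being discrete nonnegative).

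There should be no serious obstacle here: the entire content of the corollary is the translation of ``$Z_t^{\dag}(E) = 0 \Rightarrow E = 0$'' from Lemma~\ref{lem:tilt1} into the statement about $\Delta_I$, once we observe that the only place where the stability vs.\ very weak stability dichotomy can fail is on the image of $\cC$ inside $\aA^{\dag}$. The one spot needing mild care is checking that the restricted pair on $\pP(1)$ in (ii) really is a Bridgeland stability condition and not just a central charge with the positivity axiom — but this follows because very weak stability on the ambient $\aA^{\dag}$ restricts to HN filtrations on the abelian subcategory $\pP(1)$, whose objects are already $\mu$-semistable of slope $\infty$ in the original picture.
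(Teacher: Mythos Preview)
Your proof is correct and takes essentially the same approach as the paper, which simply records that the corollary follows from Lemma~\ref{lem:tilt1} and Corollary~\ref{cor:support}. Your expansion of these citations into the explicit computation of the degeneracy locus $\cC^{\dag}$ and of $Z_t^{\dag}$ on $\pP(1)$ is exactly what those references encode.
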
 

\begin{rmk}
Even if $\cC^{\dag} \neq 0$, 
as $\cC^{\dag} \subsetneq \cC$, 
the very weak stability condition 
$(Z^{\dag}_t, \aA^{\dag})$ for $t>0$ is closer to 
a stability condition. 
\end{rmk}

Finally in this subsection, we give a speculative 
argument for the BG inequality of $(Z_t^{\dag}, \aA^{\dag})$. 
Let $E \in \aA^{\dag}$ be a $\mu_t^{\dag}$-semistable object. 
By Proposition~\ref{lem:Delkt},   $\Delta(E) \ge 0$.
Hence, we have
\begin{align}\label{Del:dag}
\Ree Z_t^{\dag}(E) \Delta_I(E)+
\Imm Z_t^{\dag}(E) (-\Delta_R(E)) \ge t \Delta_I(E)^2 \ge 0. 
\end{align} 
Moreover, $\Delta_I|_{\cC^{\dag}}=0$
and $\Delta_R|_{\cC^{\dag}}=0$ holds. 
This implies that, although the inequality (\ref{Del:dag})
is not a BG inequality for $(Z_t^{\dag}, \aA^{\dag})$, 
it is very close to it. 
We expect that a BG inequality for $(Z_t^{\dag}, \aA^{\dag})$
is obtained by adding 
some natural correction term 
$\nabla_I$ on $-\Delta_R$
satisfying $\nabla_I|_{\cC} \le (\not \equiv) 0$.
Namely, by setting
\begin{align}\label{DRIdag}
(\Delta_R^{\dag}, \Delta_I^{\dag})
=(\Delta_I, \nabla_I-\Delta_R),
\end{align}
we may obtain a BG inequality for 
$(Z_t^{\dag}, \aA^{\dag})$, i.e. 
for any $\mu_t^{\dag}$-semistable object 
$E \in \aA^{\dag}$, we may have
\begin{align}\label{Del:dag2}
\Ree Z_t^{\dag}(E) \Delta_R^{\dag}(E)+
\Imm Z_t^{\dag}(E) \Delta_I^{\dag}(E) \ge 0. 
\end{align}
In Subsection~\ref{subsec:tilt}, we will 
see that the above form of the BG inequality (\ref{Del:dag2})
exactly matches with 
the conjectural 
BG inequality for 
tilt semistable objects on 3-folds in~\cite{BMT}.  

\subsection{The limit $t\to 0^+$ }\label{subsec:to0}
We will carry the notation introduced in the previous subsection. 
The purpose of this subsection is to investigate
the $\mu_t^{\dag}$-semistable objects for $0<t\ll 1$. 
We first prepare the following lemma: 
\begin{lem}\label{lem:Delta}
Let $E \in \aA^{\dag}$ be  a $\mu^{\dag}_t$-semistable object
 with $\mu^{\dag}_t(E)<\infty$. Let 
$0 \to E_1 \to E \to E_2 \to 0$
be a short exact sequence in $\aA^{\dag}$ 
with $\mu_t^{\dag}(E_1)=\mu_t^{\dag}(E_2)$.
Let  $R_i \cneq \Ree Z(E_i)$ and $I_i \cneq \Imm Z(E_i)$.
Then 
\begin{align*}
\Delta(E) \ge \Delta(E_1)+ \Delta(E_2)+ 
\frac{R_1 R_2}{t} \left(\frac{I_1}{R_1}-\frac{I_2}{R_2} \right)^2. 
\end{align*}
\end{lem}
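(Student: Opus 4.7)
The plan is to reduce the asserted inequality to an elementary algebraic manipulation combining the hypothesis $\mu_t^{\dag}(E_1) = \mu_t^{\dag}(E_2)$ with the pointwise BG inequality supplied by Proposition~\ref{lem:Delkt}.

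First I would note that both $E_1$ and $E_2$ are themselves $\mu_t^{\dag}$-semistable: $E_1$ is a subobject and $E_2$ a quotient of the $\mu_t^{\dag}$-semistable $E$, and all three share the same slope, so a destabiliser of either $E_i$ would also destabilise $E$. Proposition~\ref{lem:Delkt} then gives $\Delta(E_1), \Delta(E_2) \ge 0$. Since $\Delta_R$, $\Delta_I$, $\Ree Z$, and $\Imm Z$ are all additive in the class of $E$, expanding $\Delta$ as a quadratic form yields the polarisation identity
\begin{align*}
\Delta(E) - \Delta(E_1) - \Delta(E_2) = R_1 \Delta_R(E_2) + R_2 \Delta_R(E_1) + I_1 \Delta_I(E_2) + I_2 \Delta_I(E_1),
\end{align*}
so it suffices to bound the right hand side below by $(R_1 R_2/t)(I_1/R_1 - I_2/R_2)^2$.

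Since $\mu_t^{\dag}(E) < \infty$ and the slopes match, both $R_1, R_2$ are strictly negative, and in particular $R_1 R_2 > 0$. The inequality $\Delta(E_i) \ge 0$ rearranges, upon dividing by $R_i < 0$, to $\Delta_R(E_i) \le -(I_i/R_i)\Delta_I(E_i)$. Substituting these two bounds into the cross-term identity above and writing $u \cneq I_1/R_1 - I_2/R_2$, a short calculation produces the lower bound
\begin{align*}
u\bigl(R_1 \Delta_I(E_2) - R_2 \Delta_I(E_1)\bigr).
\end{align*}
On the other hand, the slope equality $\mu_t^{\dag}(E_1) = \mu_t^{\dag}(E_2)$ becomes $R_2(I_1 + t\Delta_I(E_1)) = R_1(I_2 + t\Delta_I(E_2))$, equivalently $R_1 \Delta_I(E_2) - R_2 \Delta_I(E_1) = R_1 R_2 u/t$. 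Feeding this back in gives precisely $(R_1 R_2/t)\,u^2$, which is the desired bound.

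The only delicate point is sign bookkeeping: one must verify that $R_1, R_2 < 0$ so that $R_1 R_2 > 0$, reverse inequalities correctly when dividing by $R_i$, and confirm that the various cross terms collapse to exactly the stated square after substitution. Beyond this the lemma is purely formal, requiring no input beyond additivity of $Z$, $\Delta_R$, $\Delta_I$ and the pointwise BG inequality of Proposition~\ref{lem:Delkt}.
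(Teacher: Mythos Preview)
Your proposal is correct and follows essentially the same route as the paper's proof: expand $\Delta(E)-\Delta(E_1)-\Delta(E_2)$ as a bilinear cross-term, use $\Delta(E_i)\ge 0$ from Proposition~\ref{lem:Delkt} (after noting $R_i<0$) to bound the $\Delta_R$ terms, and then invoke the slope equality to collapse the remaining expression to $(R_1R_2/t)u^2$. Your explicit remark that $E_1,E_2$ inherit $\mu_t^{\dag}$-semistability from $E$ is left implicit in the paper but is indeed needed to invoke Proposition~\ref{lem:Delkt}.
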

\begin{proof}
The condition $\mu_t^{\dag}(E_1)=\mu_t^{\dag}(E_2)$ is equivalent to 
\begin{align}\label{Del1}
\frac{I_1+t \Delta_I(E_1)}{R_1}=\frac{I_2+t \Delta_I(E_2)}{R_2}. 
\end{align}
Also since $\Imm Z^{\dag}_t(E_i)=-R_i>0$, 
the condition $\Delta(E_i)\ge 0$
in Proposition~\ref{lem:Delkt} is equivalent to 
\begin{align}\label{Del2}
\Delta_R(E_i) \le -\frac{I_i \Delta_I(E_i)}{R_i}. 
\end{align}
We have
\begin{align*}
&\Delta(E) -\Delta(E_1)-\Delta(E_2) \\
& \qquad =R_1 \Delta_R(E_2)+R_2 \Delta_R(E_1)
+I_1 \Delta_{I}(E_2)+I_2 \Delta_I(E_1) \\
& \qquad  \ge -R_1 \frac{I_2 \Delta_I(E_2)}{R_2}
-R_2 \frac{I_1 \Delta_I(E_1)}{R_1}
+I_1 \Delta_{I}(E_2)+I_2 \Delta_I(E_1) \\
& \qquad =R_1 R_2 \left(\frac{I_1}{R_1}-\frac{I_2}{R_2} \right) 
\left(\frac{\Delta_I(E_2)}{R_2}-\frac{\Delta_I(E_1)}{R_1} \right) \\
& \qquad =\frac{R_1 R_2}{t} \left(\frac{I_1}{R_1}-\frac{I_2}{R_2} \right)^2. 
\end{align*}
Here we have used (\ref{Del2})
for the first inequality, and (\ref{Del1})
for the last equality. 
Therefore, we obtain the desired result. 
\end{proof}
For
$v\in \Gamma$,  
we define 
$M_{t}^{\dag}(\overline{v})$
to be the set of 
isomorphism classes of $\mu_t^{\dag}$-semistable
objects
$E \in \aA^{\dag}$ with $\overline{\cl(E)} =\overline{v}$
in $\Gamma/\Gamma_0^{\dag}$. 
Note that, similarly to the existence of 
wall and chamber structures in the 
space of Bridgeland 
stability conditions (see~\cite[Proposition~9.3]{Brs2}), 
the result of Corollary~\ref{cor:support}
 implies 
the existence of locally finite 
set of points $W \subset \mathbb{R}_{>0}$ called 
\textit{walls}, such that $M_t^{\dag}(\overline{v})$
is constant on 
each connected component of $\mathbb{R}_{>0} \setminus W$. 
In the following lemma, we 
show that there is no point $t \in W$
for $0<t \ll 1$.
\begin{lem}\label{lem:wallemp}
There is $t_0>0$ such that 
$M_t^{\dag}(\overline{v})$ is constant for $0<t<t_0$. 
\end{lem}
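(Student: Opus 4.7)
The plan is to argue by contradiction using the quantitative inequality of Lemma~\ref{lem:Delta}, exploiting the discreteness built into the goodness condition~(i). Suppose to the contrary there is a sequence $t_n \searrow 0$ at which $M^{\dag}_t(\overline{v})$ genuinely changes; then for each $n$ we obtain a non-trivial short exact sequence
$$0 \to E_1^{(n)} \to E^{(n)} \to E_2^{(n)} \to 0$$
in $\aA^{\dag}$ with $\overline{\cl(E^{(n)})}=\overline{v}$, with $E^{(n)}$ being $\mu^{\dag}_{t_n}$-semistable, with $\mu^{\dag}_{t_n}(E_1^{(n)})=\mu^{\dag}_{t_n}(E_2^{(n)})$, and for which the equality of slopes does \emph{not} hold identically in $t$ --- otherwise the destabilization is numerical, persists through $t_n$, and $M^{\dag}_t(\overline{v})$ does not actually change there.

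The main calculation handles the case $R \cneq \Ree Z(v) \neq 0$. Writing $R_i \cneq \Ree Z(E_i^{(n)})$ and $I_i \cneq \Imm Z(E_i^{(n)})$, we have $R_i \leq 0$ and $R_1+R_2=R$, so AM-GM gives $R_1 R_2 \leq R^2/4$; moreover both $R_i \neq 0$, for otherwise $\mu^{\dag}_{t_n}(E_1^{(n)}) = \mu^{\dag}_{t_n}(E_2^{(n)}) = \infty$ would force $R=0$. Combining Lemma~\ref{lem:Delta} with $\Delta(E_i^{(n)}) \geq 0$ from Proposition~\ref{lem:Delkt} produces the pivotal inequality
$$(R_2 I_1 - R_1 I_2)^2 \;\leq\; t_n\, R_1 R_2\, \Delta(v) \;\leq\; \frac{t_n R^2}{4}\, \Delta(v) \;\longrightarrow\; 0.$$
By goodness condition~(i), $Z(\Gamma)$ is a finitely generated abelian subgroup of $\zeta_1 \mathbb{Q} + \zeta_2 \mathbb{Q}\, i$, hence lies in the discrete lattice $\tfrac{1}{N}\zeta_1\mathbb{Z} \oplus \tfrac{1}{N}\zeta_2\mathbb{Z}\, i$ for some $N\in\mathbb{Z}_{>0}$. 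Thus $R_2 I_1 - R_1 I_2$ takes values in the discrete set $\tfrac{1}{N^2}\zeta_1\zeta_2\mathbb{Z}$, so the vanishing limit forces $R_2 I_1 - R_1 I_2 = 0$ for all $n$ sufficiently large. Rewriting
$$\mu^{\dag}_t(E_1^{(n)}) - \mu^{\dag}_t(E_2^{(n)}) \;=\; \frac{R_2 I_1 - R_1 I_2}{R_1 R_2} \;+\; t\left(\frac{\Delta_I(E_1^{(n)})}{R_1} - \frac{\Delta_I(E_2^{(n)})}{R_2}\right),$$
the constant coefficient vanishes by the above, and then the wall relation at $t_n > 0$ forces the linear coefficient to vanish as well. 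Hence the slope equality is identically true in $t$, contradicting our non-numerical assumption on the wall.

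The residual point of concern is the edge case $R = 0$, in which $\mu^{\dag}_{t_n}(E^{(n)}) = \infty$ and Lemma~\ref{lem:Delta} does not apply directly. Here any destabilizing factors also satisfy $R_i = 0$, so by Lemma~\ref{lem:tilt1} and Lemma~\ref{lem:t=0}(iv) the whole picture takes place inside $\langle \pP(1/2)[1], \cC\rangle \subset \aA^{\dag}$. On this subcategory $\Ree Z$ vanishes identically, and the role of the slope is played by the secondary stability datum $\Ree Z^{\dag}_t = \Imm Z + t\Delta_I$, whose coefficients once again lie in the discrete lattice supplied by goodness~(i). I expect the same linear-in-$t$ plus discreteness dichotomy to close this case, though it will require careful unwinding of the Harder--Narasimhan structure for these degenerate objects. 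This is where the bulk of the remaining technical work should sit, but no essentially new ideas beyond those above should be required.
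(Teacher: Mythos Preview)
Your main argument for the case $\Ree Z(v)\neq 0$ is correct and is essentially the paper's approach: both combine Lemma~\ref{lem:Delta} with $\Delta(E_i)\geq 0$ and the discreteness coming from goodness~(i) to exclude walls near $0$. The paper phrases this directly, producing the explicit bound $t_0 = 4/(\Delta(v)\,r^2 R^2)$, while you phrase it by contradiction via a sequence $t_n\searrow 0$; the content is the same. Your observation that $\Delta(E)=\Delta(v)$ because $\Delta$ descends to $\Gamma/\Gamma_0^{\dag}$ is exactly what the paper uses as well.

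Your treatment of the case $R=\Ree Z(v)=0$, however, is a non-gap that you have promoted to ``the bulk of the remaining technical work.'' You correctly observe that any subobject $E_1\subset E$ then also has $\Ree Z(E_1)=0$, but you miss the immediate consequence: since $\Imm Z_t^{\dag}=-\Ree Z$ is non-negative on $\aA^{\dag}$ and vanishes on $E$, it vanishes on every subobject and quotient of $E$, so $\mu_t^{\dag}(F)=\infty=\mu_t^{\dag}(E/F)$ for every short exact sequence. Thus \emph{every} object of $\aA^{\dag}$ with $\overline{\cl(E)}=\overline v$ is $\mu_t^{\dag}$-semistable for all $t>0$, and $M_t^{\dag}(\overline v)$ is trivially constant. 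There is no ``secondary stability datum'' at work here; $\mu_t^{\dag}$-semistability is the only condition defining $M_t^{\dag}(\overline v)$, and it is vacuous in this case. The paper handles this in a single clause: ``We may assume that $\Imm Z_t^{\dag}(v)=-\Ree Z(v)$ is positive.''
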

\begin{proof}
We may assume that 
$\Imm Z_t^{\dag}(v)=-\Ree Z(v)$ is positive, 
and let $R \cneq \Ree Z(v)$. 
For $E \in M_t^{\dag}(\overline{v})$, suppose that 
there is an exact sequence 
\begin{align*}
0 \to E_1 \to E \to E_2 \to 0
\end{align*}
in 
$\aA^{\dag}$ such that 
$\mu_t^{\dag}(E_1)=\mu_t^{\dag}(E_2)$
and $\mu_{t-\varepsilon}^{\dag}(E_1)>\mu_{t-\varepsilon}^{\dag}(E_2)$
for $0<\varepsilon \ll 1$. 
Using the same notation in the proof of 
Lemma~\ref{lem:Delta}, 
these 
conditions 
imply that 
\begin{align}\label{Del:pos}
\frac{I_1}{R_1}-\frac{I_2}{R_2}>0. 
\end{align}
We take $r \in \mathbb{R}_{>0}$ 
so that $r \Imm Z$ is an integer valued function
on $\Gamma$. 
Then the LHS of (\ref{Del:pos}) is bigger than or equal to 
$1/rR_1 R_2$. 
Since $0<-R_i <-R$
with $R_1+R_2=R$,
and $\Delta(E_i) \ge 0$, 
 from Lemma~\ref{lem:Delta}
we have
\begin{align*}
\Delta(E) \ge \frac{1}{tr^2 R_1 R_2} 
\ge \frac{4}{tr^2 R^2}.
\end{align*}
By the proof of Lemma~\ref{lem:quad}, 
the quadratic form $\Delta$ descends to $\Gamma/\Gamma_0^{\dag}$, and so 
$\Delta(E)=\Delta(v)$. 
Therefore, by setting 
$t_0 \cneq \frac{4}{\Delta(v)r^2 R^2}$, 
the set of objects
$M_t^{\dag}(\overline{v})$ is constant for $0<t<t_0$. 
\end{proof}

We consider the restriction of $Z_t^{\dag}$ to 
$\pP^{\dag}_0(\phi)$. 
For $0<\phi<1$, the image of 
$Z_t^{\dag}|_{\pP^{\dag}_0(\phi)}$ is contained in the upper half plane, 
and $(Z_t^{\dag}|_{\pP^{\dag}_0(\phi)}, \pP^{\dag}_0(\phi))$ 
is a pre-stability condition. 
The restriction
of $\mu_t^{\dag}$ to $\pP^{\dag}_0(\phi)$ 
is given by 
\begin{align*}
\mu_t^{\dag}|_{\pP^{\dag}_0(\phi)}=\tan (\pi \phi)
+ t \frac{\Delta_I}{\Ree Z}. 
\end{align*}
Hence,  $\mu_t^{\dag}|_{\pP^{\dag}_0(\phi)}$-semistable objects on 
$\pP^{\dag}_0(\phi)$ do not depend on $t$, 
and coincide with the $\lambda|_{\pP^{\dag}_0(\phi)}$-semistable 
objects, where 
$\lambda$ is the slope function
\begin{align}\label{def:lambda}
\lambda \cneq \frac{\Delta_I}{\Ree Z}.
\end{align}
\begin{prop}\label{lem:tsmall}
Let $v \in \Gamma$ and let $t_0$ be the corresponding positive real number as in Lemma \ref{lem:wallemp}.
Then for $0<t<t_0$, the set $M_t^{\dag}(\overline{v})$
consists of isomorphism classes of $\lambda|_{\pP^{\dag}_0(\phi)}$-semistable
objects $E \in \pP^{\dag}_0(\phi)$ with 
$\overline{\cl(E)}=\overline{v}$.
Here $\phi$ is determined by 
$\mu_0^{\dag}(v)=-1/\tan (\pi \phi)$. 
\end{prop}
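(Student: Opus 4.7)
The plan is to compare Harder--Narasimhan filtrations with respect to $\mu_t^{\dag}$ and $\mu_0^{\dag}$ in $\aA^{\dag}$, exploiting the identity $\mu_s^{\dag}(\gamma) = \mu_0^{\dag}(\gamma) + s\,\lambda(\gamma)$ at each fixed class $\gamma$ with $\Ree Z(\gamma) \neq 0$. I restrict to the case $\Ree Z(v) \neq 0$ (equivalently $\phi \in (0, 1)$ and $\mu_0^{\dag}(v)$ finite); the exceptional case $\phi = 1$ is either vacuous or handled by the same method. A preliminary observation is that $(Z_0^{\dag}, \aA^{\dag})$ also satisfies the Harder--Narasimhan property, since the proof of Lemma~\ref{lem:noetherian} only uses that $\Imm Z_0^{\dag} = -\Ree Z$ is non-negative and discrete on the noetherian category $\aA^{\dag}$, which still holds. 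Moreover, on $\pP_0^{\dag}(\phi)$ one has $\mu_s^{\dag} = -1/\tan(\pi\phi) + s\lambda$, so $\mu_s^{\dag}$-semistability and $\lambda$-semistability coincide there.

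For the forward inclusion, I would start with $E \in M_t^{\dag}(\overline{v})$. By Lemma~\ref{lem:wallemp}, $E$ is $\mu_s^{\dag}$-semistable for every $s \in (0, t_0)$. Applying the $\mu_0^{\dag}$-HN filtration to $E$, if the maximal factor $F_1$ satisfied $\mu_0^{\dag}(F_1) > \mu_0^{\dag}(E)$ (or symmetrically for the minimal factor $F_n$), then by continuity of $\mu_s^{\dag}$ at a fixed class the strict inequality would persist for small $s > 0$, contradicting $\mu_s^{\dag}$-semistability of $E$. Hence the HN filtration is trivial, so $E$ is $\mu_0^{\dag}$-semistable with $\mu_0^{\dag}(E) = \mu_0^{\dag}(v) = -1/\tan(\pi\phi)$, and thus $E \in \pP_0^{\dag}(\phi)$. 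Since short exact sequences in $\pP_0^{\dag}(\phi)$ are exact in $\aA^{\dag}$, the $\mu_t^{\dag}$-semistability of $E$ descends to $\lambda$-semistability of $E$ inside $\pP_0^{\dag}(\phi)$.

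For the reverse inclusion, let $E' \in \pP_0^{\dag}(\phi)$ be $\lambda$-semistable with class $\overline{v}$, and fix $t \in (0, t_0)$. I would take the $\mu_t^{\dag}$-HN filtration $E_1' \subset \cdots \subset E_n' = E'$ with factors $F_k$, and set $r_k = -\Ree Z(F_k) \ge 0$. The $\mu_0^{\dag}$-semistability of $E'$ gives $\mu_0^{\dag}(E_k') \le \mu_0^{\dag}(E')$ for every $k$, and dually $\mu_0^{\dag}(E'/E_k') \ge \mu_0^{\dag}(E')$, which forces the partial sums $\sum_{j \le k} r_j\,(\mu_0^{\dag}(F_j) - \mu_0^{\dag}(E'))$ to be simultaneously non-positive and non-negative, hence zero. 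This pins down $r_k > 0$ and $\mu_0^{\dag}(F_k) = \mu_0^{\dag}(E')$ for every $k$. Any hypothetical $\mu_0^{\dag}$-destabilizer of $F_k$ would lift to a subobject of $E_k' \subset E'$ of $\mu_0^{\dag}$-slope strictly above $\mu_0^{\dag}(E')$, contradicting $\mu_0^{\dag}$-semistability of $E'$; hence each $F_k \in \pP_0^{\dag}(\phi)$. The strict inequalities $\mu_t^{\dag}(F_k) > \mu_t^{\dag}(F_{k+1})$ then translate into $\lambda(F_k) > \lambda(F_{k+1})$, so $n \ge 2$ would exhibit a $\lambda$-destabilization of $E'$ inside $\pP_0^{\dag}(\phi)$; therefore $n = 1$ and $E'$ is $\mu_t^{\dag}$-semistable. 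The main technical hurdle will be the partial-sum argument that simultaneously equates all $\mu_0^{\dag}(F_k)$ with $\mu_0^{\dag}(E')$ and rules out $r_k = 0$ (i.e.\ infinite-slope factors); once that is in place the rest reduces to a routine comparison of affine slope functions.
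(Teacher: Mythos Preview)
Your forward direction is fine and matches the paper's argument (with more detail filled in). The problem is in your reverse direction, specifically the partial-sum step.

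You claim that $\mu_0^{\dag}(E_k') \le \mu_0^{\dag}(E')$ and $\mu_0^{\dag}(E'/E_k') \ge \mu_0^{\dag}(E')$ force the partial sums
\[
S_k \;=\; \sum_{j \le k} r_j\bigl(\mu_0^{\dag}(F_j) - \mu_0^{\dag}(E')\bigr)
\]
to be simultaneously non-positive and non-negative. But these two inequalities are not independent: writing everything in terms of $\Imm Z$ and $-\Ree Z$, the first gives $S_k \le 0$, while the second gives $S_n - S_k \ge 0$, and since $S_n = 0$ this again says $S_k \le 0$. You never obtain $S_k \ge 0$. So you cannot conclude $\mu_0^{\dag}(F_k) = \mu_0^{\dag}(E')$ for every $k$, and the rest of your argument (that the $F_k$ lie in $\pP_0^{\dag}(\phi)$ and hence the $\mu_t^{\dag}$-HN filtration would be a $\lambda$-destabilization) collapses. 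In fact there is no reason, a priori, for the $\mu_t^{\dag}$-HN factors of a $\mu_0^{\dag}$-semistable object to all have the same $\mu_0^{\dag}$-slope.

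The paper handles the reverse inclusion differently and avoids this trap. Rather than analyzing the full $\mu_t^{\dag}$-HN filtration, it tests $E$ against an arbitrary short exact sequence $0 \to P \to E \to Q \to 0$ in $\aA^{\dag}$ and splits into two cases. When $\mu_0^{\dag}(P) = \mu_0^{\dag}(Q)$, both $P$ and $Q$ lie in $\pP_0^{\dag}(\phi)$ and the $\lambda$-semistability of $E$ gives $\mu_t^{\dag}(P) \le \mu_t^{\dag}(Q)$ directly (this part you also had). When $\mu_0^{\dag}(P) < \mu_0^{\dag}(Q)$, the paper invokes the absence of walls on $(0, t_0)$ established in Lemma~\ref{lem:wallemp}: a crossing $\mu_{t'}^{\dag}(P) = \mu_{t'}^{\dag}(Q)$ at some $t' \in (0, t_0)$ would produce exactly the kind of wall that was ruled out there. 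This is the input your argument is missing --- you need a quantitative bound (coming ultimately from the BG inequality via Lemma~\ref{lem:Delta}) to control the strict-inequality case, not just the seesaw/semistability constraints at $t = 0$.
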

\begin{proof}
Let us take an
object $[E] \in M_t^{\dag}(\overline{v})$
for $0<t<t_0$.  
Then $E$ must be $\mu_0^{\dag}$-semistable, and so
it is an object in $\pP^{\dag}_0(\phi)$ for some $0<\phi \le 1$
satisfying $\mu_0^{\dag}(v)=-1/\tan (\pi \phi)$. 
Since $E$ is $\mu_t^{\dag}$-semistable, it must be
$\lambda|_{\pP^{\dag}_0(\phi)}$-semistable. 

Conversely, take a $\lambda|_{\pP^{\dag}_0(\phi)}$-semistable 
object $E \in \pP^{\dag}_0(\phi)$
with $\overline{\cl(E)}=\overline{v}$. 
Let us take a short exact sequence 
\begin{align*}
0 \to P \to E \to Q \to 0
\end{align*}
in $\aA^{\dag}$. 
Since $E$ is $\mu_0^{\dag}$-semistable, we 
have $\mu_0^{\dag}(P) \le \mu_0^{\dag}(Q)$. 
If $\mu_0^{\dag}(P)<\mu_0^{\dag}(Q)$ then 
$\mu_{t}^{\dag}(P)<\mu_t^{\dag}(Q)$ 
for $0<t<t_0$ as there are no walls on $(0, t_0)$. 
If $\mu_0^{\dag}(P)=\mu_0^{\dag}(Q)$ then 
$P, Q \in \pP^{\dag}_0(\phi)$ and the 
$\lambda|_{\pP_0^{\dag}(\phi)}$-semistability 
of $E$ implies 
$\mu_t^{\dag}(P) \le \mu_t^{\dag}(Q)$ for 
$0<t<t_0$. 
Therefore, we have 
$[E] \in M_t^{\dag}(\overline{v})$. 
\end{proof}

\section{Bogomolov-Gieseker inequality conjecture}\label{sec:BGconj}
In this section, we interpret the tilt stability 
and the double tilting construction in~\cite{BMT}
in terms of tilting 
in the general framework of very weak 
stability conditions in the previous section.
We also recall the BG inequality conjecture in~\cite{BMT}, and 
give two equivalent forms of it
generalizing 
\cite[Theorem~4.2, Theorem 5.4]{BMS} due to Bayer, Macr\'i and Stellari.
\subsection{The space of Bridgeland stability conditions}
Let $X$ be an $n$-dimensional smooth projective variety.
Let 
\begin{align}
\label{class:choice}
B\in \mathrm{NS}(X)_{\mathbb{Q}}, \text{ and } \omega \in \mathrm{NS}(X)_{\mathbb{R}} \text{ an ample class with } \omega^2 \text{ rational}.
\end{align}
That is $\omega = mH$ for some ample divisor class $H \in \mathrm{NS}(X)$ with $m^2 \in \mathbb{Q}_{>0}$.

Let $\ch^B(E) \cneq e^{-B}\ch(E)$ be the twisted Chern character of $E$ with respect to $B$. 
We set 
$v_j^B(E) \cneq \omega^{n-j}\ch_j^B(E)$, and 
\begin{align*}
v^B(E) \cneq (v_0^B(E), v_1^B(E), \cdots, v_n^B(E)).
\end{align*} 
Let 
$\Gamma_{\omega, B} \subset \mathbb{R}^{n+1}$
be the 
free abelian group of rank $(n+1)$ given by the image of the map
\begin{align}\label{def:vB}
v^{B} \colon K(X) \twoheadrightarrow \Gamma_{\omega, B}.
\end{align}

In what follows, we write an element of $\Gamma_{\omega, B}$
as $(v_0^B, v_1^B, \cdots, v_n^B)$. 
Applying the definitions in Subsection~\ref{subsec:vw}
to the following setting
\begin{align*}
\dD=D^b \Coh(X), \ \Gamma=\Gamma_{\omega, B}, \ 
\cl=v^B
\end{align*}
we have the space of very weak stability conditions 
\begin{align*}
\Stab_{\omega, B}^{\rm{vw}}(X) \cneq 
\Stab_{\Gamma_{\omega, B}}^{\rm{vw}}(D^b \Coh(X)).
\end{align*}
The subset $\Stab_{\omega, B}(X) \subset \Stab_{\omega, B}^{\rm{vw}}(X)$
of Bridgeland stability conditions 
admit the local homeomorphism by 
the forgetful map
\begin{align*}
\Stab_{\omega, B}(X) \to \Hom(\Gamma_{\omega, B}, \mathbb{C}), \ (Z,\aA) \mapsto Z
\end{align*}
as long as $\Stab_{\omega, B}(X)$ is non-empty.  
\subsection{Tilting via slope stability}\label{subsec:slope}
We interpret the 
classical slope stability on $\Coh(X)$
as a very weak stability condition. 
We set
$Z \colon \Gamma \to \mathbb{C}$ to be
\begin{align*}
Z(v)=-v_1^B + v_0^B i. 
\end{align*}
Here we have denoted $\Gamma=\Gamma_{\omega, B}$ for simplicity. 
It is easy to check that 
\begin{align}\label{ZCoh}
(Z, \Coh(X)) \in \Stab_{\omega, B}^{\rm{vw}}(X).
\end{align} 
Indeed, the 
associated slope function on $\Coh(X)$ is given by
the twisted slope
\begin{align}\label{tslope}
\mu_{\omega,B}(E) \cneq 
\frac{v_1^B(E)}{v_0^B(E)}. 
\end{align}
Here we set $\mu_{\omega, B}(E)=\infty$ when $E$ is 
a torsion sheaf. 
Since $\mu_{\omega, B}=\mu_{\omega}-(B\omega^2)/(\omega^3)$
for $\mu_{\omega} \cneq \mu_{0, \omega}$, 
 $\mu_{\omega, B}$-stability is independent of $B$.
Also  the existence of HN filtrations
with respect to the $\mu_{\omega}$-stability 
is well-known (see~\cite[Section~1]{Hu} for further details). 
Moreover, the trivial quadratic form $Q=0$
gives the support property of (\ref{ZCoh}). 
\begin{rmk}
If we take $\Gamma$ to be the image of the Chern character map 
in $H^{\ast}(X, \mathbb{Q})$ rather than $\Gamma_{\omega, B}$, 
the pair $(Z, \Coh(X))$ does not satisfy the support property 
when the Picard number is bigger than one. 
\end{rmk}
Suppose that $n\ge 2$. 
The category $\cC$ in (\ref{def:C}) is given by 
\begin{align*}
\cC=\Coh_{\le n-2}(X)
\end{align*}
and the saturated subgroup $\Gamma_0 \subset \Gamma$
generated by $v^B(\cC)$
is given by
\begin{align*}
\Gamma_0=\{(v_0^B, v_1^B, \cdots, v_n^B) \in \Gamma : 
v_0^B=v_1^B=0 \}.
\end{align*} 
We define the quadratic 
form $\overline{\Delta}_{\omega, B}$ on 
$\Gamma$
by 
\begin{align*}
\overline{\Delta}_{\omega, B}(v) \cneq 
(v_1^B)^2 -2v_0^B v_2^B. 
\end{align*}
\begin{lem}\label{lem:slope}
The very weak 
stability condition 
 $(Z, \Coh(X))$ satisfies the BG inequality
$\overline{\Delta}_{\omega, B}(E) \ge 0$
in the sense of Definition~\ref{defi:BG}. 
\end{lem}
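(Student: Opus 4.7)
The plan is to exhibit explicit linear maps $\Delta_R, \Delta_I \colon \Gamma \to \mathbb{Q}$ so that the associated quadratic form $\Delta$ of Definition~\ref{defi:BG} coincides on the nose with $\overline{\Delta}_{\omega, B}$, and then to verify the two conditions of that definition. Since $\Ree Z(v) = -v_1^B$ and $\Imm Z(v) = v_0^B$, I would set
\begin{align*}
\Delta_R(v) \cneq -v_1^B, \qquad \Delta_I(v) \cneq -2\,v_2^B,
\end{align*}
and check directly that $\Delta(v) = (v_1^B)^2 - 2 v_0^B v_2^B = \overline{\Delta}_{\omega, B}(v)$.

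For condition~(ii) of Definition~\ref{defi:BG}, any $F \in \cC = \Coh_{\le n-2}(X)$ has $\ch_0(F) = \ch_1(F) = 0$, hence $v_1^B(F)=0$ and $\Delta_R|_{\cC}=0$. Moreover $\ch_2^B(F) = \ch_2(F)$ represents the codimension-$2$ cycle class of the support, so $v_2^B(F) = \omega^{n-2} \ch_2(F) \ge 0$, giving $\Delta_I|_{\cC}\le 0$. Non-triviality of $\Delta_I|_{\cC}$ follows by taking $F$ to be the structure sheaf of any closed $(n-2)$-dimensional subscheme $Z$, for which $\omega^{n-2}\cdot [Z] > 0$ by ampleness of $\omega$.

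The main work is condition~(i). A $\mu_{\omega, B}$-semistable $E \in \Coh(X)$ is either torsion or torsion-free. In the torsion case $v_0^B(E)=0$, so $\overline{\Delta}_{\omega, B}(E) = (v_1^B(E))^2 \ge 0$ trivially. In the torsion-free case I would combine two classical inputs: first, the Hodge index theorem applied to the divisor class $c_1^B(E)$ gives
\begin{align*}
(\omega^{n-1} c_1^B(E))^2 \ge \omega^n \cdot \omega^{n-2}(c_1^B(E))^2;
\end{align*}
second, the classical Bogomolov--Gieseker inequality $\omega^{n-2}\Delta_{BG}(E)\ge 0$ holds for $\mu_\omega$-semistable torsion-free sheaves, where $\Delta_{BG}(E) = (c_1(E))^2 - 2\,\rk(E)\ch_2(E)$ is $B$-invariant, and slope semistability is independent of $B$. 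A short algebraic manipulation then yields the identity
\begin{align*}
(v_1^B)^2 - 2 v_0^B v_2^B = \bigl[(\omega^{n-1}c_1^B)^2 - \omega^n\omega^{n-2}(c_1^B)^2\bigr] + \omega^n\omega^{n-2}\Delta_{BG}(E),
\end{align*}
in which both bracketed summands on the right are nonnegative. The only non-routine point I anticipate is precisely this torsion-free case, where the statement of the lemma cannot be extracted from Bogomolov--Gieseker alone but requires pairing it with Hodge index to obtain the intrinsic, $\omega$-adapted form $(v_1^B)^2 - 2 v_0^B v_2^B \ge 0$ demanded by Definition~\ref{defi:BG}.
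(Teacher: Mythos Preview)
Your proposal is correct and follows essentially the same approach as the paper: you choose the same linear maps $(\Delta_R,\Delta_I)=(-v_1^B,-2v_2^B)$, verify condition~(ii) on $\cC=\Coh_{\le n-2}(X)$ in the same way, and for condition~(i) in the torsion-free case you use exactly the paper's decomposition $\overline{\Delta}_{\omega,B}(E)=\bigl[(\omega^{n-1}c_1^B)^2-\omega^n\omega^{n-2}(c_1^B)^2\bigr]+\omega^n\,\omega^{n-2}\Delta_{BG}(E)$ and appeal to the Hodge index theorem together with the classical Bogomolov--Gieseker inequality. Your explicit treatment of the torsion case and of the non-triviality of $\Delta_I|_{\cC}$ is slightly more detailed than the paper's write-up, but there is no substantive difference.
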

\begin{proof}
For $E \in D^b \Coh(X)$, we have
\begin{align*}
\overline{\Delta}_{\omega, B}(E) = (\omega^{n-1} \ch^B_1(E))^2 - \omega^n \left( \omega^{n-2} \ch_1^B(E)^2 \right) + \omega^n \Delta_\omega(E),
\end{align*}
where $\Delta_\omega(E)$ is defined by
\begin{align*}
\Delta_{\omega}(E)
 = \omega^{n-2} \left(\ch_1^B(E)^2 - 2 \ch_0^B(E) \ch_2^B(E) \right)
\end{align*}
which is independent of $B$. 
The classical Bogomolov-Gieseker inequality 
is $\Delta_{\omega}(E) \ge 0$ for 
any torsion free $\mu_{\omega, B}$-semistable sheaf $E$. 
Together with the Hodge index theorem, the 
inequality $\overline{\Delta}_{\omega, B}(E) \ge 0$
follows
for any $\mu_{\omega, B}$-semistable sheaf
$E \in \Coh(X)$. 
Note that $\overline{\Delta}_{\omega, B}$ can be written as
\begin{align*}
\overline{\Delta}_{\omega, B}(v)=
\Ree Z(v) \cdot (-v_1^B) + \Imm Z(v) \cdot (-2v_2^B). 
\end{align*}
We set 
\begin{align}\label{DRI}
(\Delta_R, \Delta_I)=(-v_1^B, -2v_2^B). 
\end{align} 
Since $v_1^B=0$ on $\Coh_{\le n-2}(X)$, and 
$-2v_2^B(F) \le 0$ for $F \in \Coh_{\le n-2}(X)$
with the equality if and only if 
$F \in \Coh_{\le n-3}(X)$, 
the pair $(Z, \Coh(X))$ satisfies the BG inequality. 
\end{proof}
Note that the category $\cC^{\dag}$ 
defined in (\ref{Cdag0}) 
is given by
\begin{align}\label{Cdag2}
\cC^{\dag}=\Coh_{\le n-3}(X). 
\end{align}
\begin{lem}
The very weak stability condition 
$(Z, \Coh(X))$ is good in the 
sense of Definition~\ref{def:good}. 
\end{lem}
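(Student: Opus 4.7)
The plan is to verify the four conditions (i)--(iv) of Definition~\ref{def:good} in turn for $(Z, \Coh(X))$, using the identifications $\cC = \Coh_{\le n-2}(X)$ and $\cC^\dag = \Coh_{\le n-3}(X)$ from~(\ref{Cdag2}), and the choice $(\Delta_R, \Delta_I) = (-v_1^B, -2v_2^B)$ from~(\ref{DRI}). Together these yield $\Gamma_0^\dag = \{v \in \Gamma : v_0^B = v_1^B = v_2^B = 0\}$ as a saturated subgroup of $\Gamma$.

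Conditions (i) and (ii) are essentially formal. For (i), writing $\omega = mH$ with $H$ integral and $m^2 \in \mathbb{Q}_{>0}$ as in~(\ref{class:choice}), each power $\omega^k = m^k H^k$ pairs rationally with $\ch_j^B(E) \in H^\ast(X, \mathbb{Q})$, giving $v_0^B \in m^n\mathbb{Q}$ and $v_1^B \in m^{n-1}\mathbb{Q}$, so (i) holds with $\zeta_1 = m^{n-1}$ and $\zeta_2 = m^n$; condition (ii) is the standard noetherianness of $\Coh(X)$. For (iii), the inclusion $\cl(\cC^\dag) \subset \Gamma_0^\dag$ is immediate since $\ch_i(F) = 0$ for $i < \codim \Supp F$ when $F \in \Coh_{\le n-3}$. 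As both sides are saturated in $\Gamma$, equality reduces to a rank comparison, which I establish by producing, for each $3 \le j \le n$, a Bertini-generic complete intersection $Y_j$ of $j$ ample divisors (smooth of dimension $n-j \le n-3$); the class $\cl(\mathcal{O}_{Y_j}) \in \cl(\cC^\dag)$ then satisfies $v_{j'}^B = 0$ for $j' < j$ and $v_j^B = \omega^{n-j} \cdot [Y_j] > 0$. These ``upper-triangular'' classes rationally span $\Gamma_0^\dag \otimes \mathbb{Q}$, giving $[\cl(\cC^\dag)] = \Gamma_0^\dag$.

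For (iv), let $U \in \pP(\phi)$ with $\phi \in (0,1)$. Finiteness of $\mu_{\omega,B}(U)$ forces $\rank U > 0$, and any nonzero torsion subsheaf would have infinite slope destabilizing $U$; hence $U$ is torsion-free and $U \hookrightarrow U^{\vee\vee}$. Fixing an ample line bundle $L$ and $m \gg 0$ so that $U^\vee \otimes L^m$ is globally generated, I dualize a surjection $\mathcal{O}_X^N \twoheadrightarrow U^\vee \otimes L^m$ to obtain $U^{\vee\vee} \hookrightarrow L^m \otimes \mathcal{O}_X^N =: \widehat{U}$, which is locally free; composing with $U \hookrightarrow U^{\vee\vee}$ yields the required injection $U \hookrightarrow \widehat{U}$ in $\Coh(X)$. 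For any $F \in \cC^\dag$, $\codim \Supp F \ge 3$ together with local freeness of $\widehat{U}$ forces $\mathcal{E}xt^q(F, \widehat{U}) = 0$ for $q \le 2$, and the local-to-global Ext spectral sequence then gives $\Hom(\cC^\dag, \widehat{U}[1]) = \Ext^1(\cC^\dag, \widehat{U}) = 0$. I expect the most delicate step to be the rank match in (iii); the Bertini cycle construction sketched above supplies the needed classes and is the main input there.
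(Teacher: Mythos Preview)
Your proof is correct and follows the same skeleton as the paper's: verify (i)--(iv) directly, with (i) and (ii) trivial. The two differences worth noting are in (iii) and (iv).

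For (iii), the paper simply asserts without further argument that $[\cl(\cC^\dag)]$ coincides with the locus $v_2^B=0$ inside $\Gamma_0$; you actually justify this by producing the upper-triangular basis via complete intersections $Y_j$. Your argument is a legitimate (and more careful) way to fill in what the paper leaves implicit.

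For (iv), the paper takes the simpler choice $\widehat{U}=U^{\vee\vee}$ and uses the fact that a reflexive sheaf on a smooth variety is $S_2$, so that $\Ext^1(F,U^{\vee\vee})=0$ already for any $F$ supported in codimension $\ge 2$ (in particular for $F\in\cC^\dag$). You instead push $U^{\vee\vee}$ further into a locally free sheaf $L^m\otimes\mathcal{O}_X^N$ and then use the standard $\mathcal{E}xt$-vanishing for locally free sheaves against codimension~$\ge 3$ support. Both routes work; the paper's is shorter because it stops at reflexivity, while yours trades the $S_2$/depth input for a global-generation step and an embedding into a locally free sheaf, which makes the Ext computation entirely elementary. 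Either choice is fine for the lemma at hand.
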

\begin{proof}
The conditions (i), (ii) in Definition~\ref{def:good}
are obvious. 
As for (iii), by (\ref{Cdag2}), the 
saturation of the 
subgroup of $\Gamma_0$
generated by 
$v^B(\cC^{\dag}) \subset \Gamma_0$ 
coincide with $v_2^B=0$ in $\Gamma_0$.  
As for (iv), any torsion free sheaf $U \in \Coh(X)$ fits
 into the short exact sequence
\begin{align*}
0 \to U \to U^{\vee \vee} \to T \to 0
\end{align*}
with $T \in \Coh_{\le n-2}(X)$. 
Since $U^{\vee \vee}$ is reflexive, we have 
$\Hom(\cC, U^{\vee \vee}[1])=0$ as required for condition (iv). 
\end{proof}

\subsection{Tilt stability via very weak stability conditions}
\label{subsec:tilt}
Let $(Z, \Coh(X))$ be 
the very weak stability condition 
given by (\ref{ZCoh}).  
By Corollary~\ref{cor:support}, we  
have the associated very weak stability conditions
\begin{align}\notag
(Z_t^{\dag}, \Coh^{\dag}(X)) \in \Stab_{\omega, B}^{\rm{vw}}(X), 
\ t \in \mathbb{R}_{>0}.
\end{align}
Using (\ref{DRI}), 
the group homomorphism 
$Z_t^{\dag}=-i Z +t \Delta_I$ is given by
\begin{align}\label{Zdag}
Z_t^{\dag}(v)=\left(-2t v_2^B + v_0^B\right) + v_1^B i. 
\end{align}
The associated slope function $\mu_{t}^{\dag}$ is given by
\begin{align}\label{mutd}
\mu_{t}^{\dag}=\frac{2tv_2^B -v_0^B}{v_1^B}. 
\end{align}
Let us describe $\Coh^{\dag}(X)$ in terms of tilting of $\Coh(X)$. 
\begin{defi}\label{def:HNmu}
 For
a given subset  
$I \subset \mathbb{R} \cup \{\infty\}$, the subcategory $\HN^{\mu}_{\omega, B}(I) \subset
\Coh(X)$ is defined by
$$
\HN^{\mu}_{\omega, B}(I) = \langle E \in \Coh(X) :
E \mbox{ is } \mu_{\omega, B} \mbox{-semistable with }
\mu_{\omega, B}(E) \in I \rangle.
$$
Here $\langle \ast \rangle$ means the extension 
closure.
We also define 
the pair of subcategories 
$(\mathcal{T}_{\omega,B}, \mathcal{F}_{\omega,B})$
of $\mathrm{Coh}(X)$ to be
\begin{align*}
\tT_{\omega , B} = \HN^{\mu}_{\omega, B}((0, \infty]), \ \ \
\fF_{\omega , B} = \HN^{\mu}_{\omega, B}((-\infty, 0]).
\end{align*}
\end{defi} 
If $I=\{\vartheta\}$ for some
$\vartheta \in \mathbb{R} \cup \{\infty\}$, we just write 
$\HN^{\mu}_{\omega, B}(I)$ as $\HN^{\mu}_{\omega, B}(\vartheta)$. 
Therefore, in terms of the slicing $\{\pP(\phi)\}_{\phi \in \mathbb{R}}$ for 
the $\mu_{\omega,B}$-stability 
\begin{align*}
\pP(\phi) = \HN^{\mu}_{\omega,B}(-1/\tan (\pi \phi)).
\end{align*}
\begin{rmk}
Since $\ch_1^B = \ch_1 - B \, \ch_0$, we have $\mu_{\omega,B}(E) - \vartheta
 = \mu_{\omega, B+\vartheta\omega}(E)$ for any $\vartheta \in \mathbb{R}$. 
Therefore, for any $r \in \mathbb{R}_{>0}$ we have
\begin{align*}
\HN^{\mu}_{\omega, B}(\vartheta) = \HN^{\mu}_{r\omega, B+\vartheta\omega}(0).
\end{align*}
\end{rmk}
 The existence of Harder-Narasimhan filtrations 
with respect to $\mu_{\omega, B}$-stability 
implies that the pair $(\mathcal{T}_{\omega, B}, \mathcal{F}_{\omega, B})$
is a torsion pair (see~\cite{HRS}) on $\mathrm{Coh}(X)$. 
Let the abelian category 
\begin{align*}
\mathcal{B}_{\omega, B } =
\langle \mathcal{F}_{\omega, B}[1], \mathcal{T}_{\omega,B }
\rangle \subset D^b \mathrm{Coh}(X) 
\end{align*}
be the corresponding tilt of $\Coh(X)$.
Obviously, we have $\Coh^{\dag}(X)=\bB_{\omega, B}$. 
\begin{rmk}
If $\dim X=2$, then $\cC^{\dag}=\{0\}$ and 
$(Z_t^{\dag}, \bB_{\omega, B})$ is a Bridgeland 
stability condition constructed 
by~\cite{Brs2}, \cite{AB}. 
\end{rmk}

From here onwards we assume $\dim X=3$. 
We
relate $(Z_t^{\dag}, \bB_{\omega, B})$
with the tilt stability 
introduced in~\cite{BMT}. 
Let $Z_{\omega,B } \colon K(X) \to \mathbb{C}$
be the group homomorphism 
defined by 
\begin{align}\label{ZwB}
Z_{\omega, B}(E) &=
-\int_{X} e^{-i\omega}\ch^B(E) \\ \notag
&=\left(-v_3^B(E) + \frac{1}{2}v_1^B(E)\right)
+ i\left(v_2^B(E) - \frac{1}{6}v_0^B(E)
  \right).
\end{align}
Note that $Z_{\omega, B}$ factors through 
$v^B \colon K(X) \twoheadrightarrow \Gamma_{\omega, B}$. 
Following \cite{BMT}, 
the tilt-slope $\nu_{\omega , B}(E)$ of $E \in \bB_{\omega , B}$ is defined by
\begin{align*}
\nu_{\omega,B}(E)
=\frac{\Imm Z_{\omega,B }(E)}{v_1^B(E)}.
\end{align*}
Here we set $\nu_{\omega, B}(E)=\infty$ when $v_1^B(E) = 0$. 
In the above notation, 
we have 
\begin{align*}
6\nu_{\omega, B}(E)=\mu_{t=3}^{\dag}(E)
\end{align*} 
from the formula (\ref{mutd}). 
The associated 
$\nu_{\omega, B}$-stability on $\bB_{\omega, B}$
was called \textit{tilt-stability} in~\cite{BMT}. 
\begin{rmk}\label{rmk:tiltstab}
By~\cite[Corollary~3.3.3]{BMT}, the tilt 
stability can be 
extended to any $B+i\omega \in \mathrm{NS}(X)_{\mathbb{C}}$
with $\omega$ ample by~\cite[Corollary~3.3.3]{BMT}. 
So 
we don't have to assume the condition (\ref{class:choice})
to discuss the tilt stability.
We keep the condition (\ref{class:choice}) until the end of 
this subsection, while we 
discuss without the condition (\ref{class:choice}) in the next subsection. 
 \end{rmk}
Let us discuss the BG inequality for 
$(Z_{t}^{\dag}, \bB_{\omega, B})$. 
By (\ref{Cdag2}), 
the category $\cC^{\dag}$ is given by 
$\Coh_0(X)$. 
Therefore, a natural choice of the 
correction term $\nabla_I$ in (\ref{Del:dag2}) may be of the 
form $a v_3^B$ for some $a<0$. 
By (\ref{DRI}), 
the functions (\ref{DRIdag})
may be of the form
\begin{align*}
(\Delta_R^{\dag}, \Delta_I^{\dag})=(-2v_2^B, v_1^B +av_3^B)
\end{align*}
and 
the inequality (\ref{Del:dag2}) may be of the following form
\begin{align*}
(-2tv_2^B+v_0^B) \cdot (-2v_2^B)+ v_1^B \cdot (v_1^B + av_3^B) \ge 0. 
\end{align*}
We require that 
the above inequality 
becomes an equality when $v_i^B=1/i^{!}$; i.e. 
when $B$ is proportional to $\omega$, it is 
the Chern character of a line bundle $L$
with $c_1(L)$ proportional to $\omega$.
This requirement uniquely determines 
$a=-6t$, and the above inequality becomes
\begin{align*}
(v_1^B)^2 -2v_0^B v_2^B +2t(2(v_2^B)^2 -3v_1^B v_3^B) \ge 0. 
\end{align*}
Now we define the quadratic form $\overline{\nabla}_{\omega, B}$
on $\Gamma$ to be
\begin{align*}
\overline{\nabla}_{\omega, B}(v)
\cneq 2 (v_2^B)^2 -3 v_1^B v_3^B. 
\end{align*}
By setting $t=3$, we arrive at the following conjecture: 
\begin{conj}\label{conj:BMS}
Let $X$ be a smooth projective 3-fold. 
Then for any $\nu_{\omega, B}$-semistable object 
$E \in \mathcal{B}_{\omega, B}$, 
we have the inequality
\begin{align}\label{ineq:BMS}
\overline{\Delta}_{\omega, B}(E) + 6 \overline{\nabla}_{\omega, B}(E) \ge 0.  
\end{align}
\end{conj}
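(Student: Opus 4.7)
The plan is to attack (\ref{ineq:BMS}) via a three-step scheme: (1) verify the inequality on an explicit class of $\nu_{\omega, B}$-semistable objects serving as base cases, (2) propagate the inequality across walls of $\nu_{\omega, B}$-stability via Jordan-Hölder filtrations, and (3) reduce the general case to the large volume limit where $\nu_{\omega, B}$-semistable objects in $\bB_{\omega, B}$ have classical descriptions. Note that by the very derivation of the correction term $6\overline{\nabla}_{\omega, B}$ in the excerpt, the inequality is designed to become an equality on line bundles with $c_1$ and $B$ proportional to $\omega$, which fixes the natural tightness and guides the proof.

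First, I would verify the inequality for line bundles, twisted ideal sheaves of points and curves, and reflexive rank-one sheaves—classes which remain $\nu_{\omega, B}$-semistable throughout a cofinal family of chambers. For line bundles, $\overline{\Delta}_{\omega, B}(L) \ge 0$ reduces to the Hodge index theorem, while the correction $6\overline{\nabla}_{\omega, B}(L)$ can be written as a perfect square in $(c_1(L)-B)$ against $\omega$. Next, I would propagate by wall-crossing: by Corollary~\ref{cor:support}, the set of $\nu_{\omega, B}$-semistable objects with fixed Chern character is locally constant outside a locally finite wall structure in the parameter space $(\omega, B)$. On each wall a strictly semistable object admits a Jordan-Hölder filtration by factors $F_i$ of equal $\nu$-slope whose classes $v^B(F_i)$ are proportional modulo a one-dimensional subspace; an induction on a rank/degree invariant combined with convexity of the BG-type cone, exactly in the spirit of the argument in Proposition~\ref{lem:Delkt} using \cite[Lemma~A.7]{BMS}, would allow one to transport the inequality from one chamber to the next. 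Finally, in the large volume limit, $\nu_{\omega, B}$-semistable objects are known to be shifts of twisted Gieseker-semistable sheaves, for which $\overline{\Delta}_{\omega, B}(E) \ge 0$ is classical BG combined with Hodge index.

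The main obstacle is controlling the $\ch_3^B$-contribution packaged inside $\overline{\nabla}_{\omega, B}$. The classical BG inequality on surfaces gives no information about $\ch_3$, so the term $-3 v_1^B v_3^B$ requires a genuinely three-dimensional positivity estimate. In the presently known cases—$\mathbb{P}^3$ via \cite{MaBo}, smooth quadric threefolds via \cite{BSch}, and abelian $3$-folds and their \'etale quotients via \cite{MaPi1,MaPi2,Piya,BMS}—this missing positivity is supplied by highly specific structure (strong exceptional collections, Kuznetsov components, or Fourier-Mukai autoequivalences dual to the polarization respectively), which reduces the inequality to sheaf-level computations. For a general smooth projective 3-fold no such tool is available, and the hard part of any proof will be providing a new intrinsic mechanism—perhaps via restriction to a very general surface section together with a uniform control of $\chi(\mathcal{O}_S) \cdot \ch_3^B(E)$, or via the analysis of extremal $\nu$-stable objects saturating (\ref{ineq:BMS})—that bounds $\ch_3^B(E)$ in terms of the lower-degree Chern characters for $\nu_{\omega, B}$-semistable $E$. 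This is precisely the core open content of the BG inequality conjecture.
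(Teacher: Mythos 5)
You have been asked to prove what the paper itself states only as a conjecture: Conjecture~\ref{conj:BMS} is not proved anywhere in the paper for a general smooth projective 3-fold, and it remains open. The paper's actual content concerning this statement is threefold: a heuristic \emph{derivation} of the form of the inequality (\ref{ineq:BMS}) from the tilting framework of Section~\ref{sec:veryweak}, by demanding that the candidate BG inequality for $(Z_t^{\dag},\bB_{\omega,B})$ become an equality on the Chern character of a line bundle with $c_1$ and $B$ proportional to $\omega$ (which pins down the coefficient $6$); Theorem~\ref{thm:BG:equiv}, which shows Conjecture~\ref{conj:BMS} is equivalent to Conjecture~\ref{conj:BMT} ranging over all complexified ample classes $B+i\omega$, via Lemma~\ref{prop2.1} and the identity \eqref{eq1}; and the further equivalence with the limit form Conjecture~\ref{conj:limitBG} via Propositions~\ref{prop:BG-zero-delta} and \ref{prop:beta-limit-position}. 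Your proposal correctly recognizes this status in its closing paragraph, so there is no gap to report \emph{relative to the paper}: neither you nor the authors prove the statement.

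That said, it is worth noting how your sketch relates to what the paper does establish. Your step (2) (propagation across walls using convexity of the BG cone, in the spirit of Proposition~\ref{lem:Delkt} and \cite[Lemma~A.7]{BMS}) and step (3) (reduction to a limiting regime) are essentially the skeleton of the paper's reduction of Conjecture~\ref{conj:BMS} to Conjecture~\ref{conj:limitBG}, and of the strategy carried out in \cite{BMS} for abelian 3-folds. The genuine obstruction you identify is the correct one: the large volume limit only yields the classical Bogomolov--Gieseker inequality, which carries no information about $\ch_3^B$, so there is no chamber in which the full inequality (\ref{ineq:BMS}) is known unconditionally for all semistable objects, and wall-crossing has nothing to propagate from. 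One small caution on your step (1): verifying (\ref{ineq:BMS}) on line bundles and rank-one sheaves involves a Hodge-index-type estimate on $2(\omega D^2/2)^2-3(\omega^2 D)(D^3/6)$ rather than a literal perfect square, and in any case such base cases do not feed into a complete induction for the reason just given. Your assessment that supplying the missing three-dimensional positivity is the core open content of the conjecture is exactly right, and is consistent with the paper treating the statement as a hypothesis throughout Sections~\ref{sec:moduli} and \ref{sec:DT}.
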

Note that by setting $\nu_{\omega, B}(E)=0$, we obtain the following
conjecture stated in~\cite{BMT}: 
\begin{conj}[\cite{BMT}]\label{conj:BMT}
Let $X$ be a smooth projective 3-fold. 
Then for any $\nu_{\omega, B}$-semistable object 
$E \in \mathcal{B}_{\omega, B}$ with 
$\nu_{\omega, B}(E)=0$, i.e. $v_1^B(E) \neq 0$,
$v_2^B(E)=v_0^B(E)/6$, 
we have the inequality
\begin{align*}
v_3^B(E) \le \frac{1}{18} v_1^B(E). 
\end{align*}
\end{conj}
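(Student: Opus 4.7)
The plan is to derive Conjecture~\ref{conj:BMT} directly from the immediately preceding Conjecture~\ref{conj:BMS} by specializing it to the hyperplane $\{\Imm Z_{\omega,B}=0\}$. Conjecture~\ref{conj:BMS} asserts inequality~\eqref{ineq:BMS} for \emph{every} $\nu_{\omega,B}$-semistable object in $\bB_{\omega,B}$, while Conjecture~\ref{conj:BMT} is just the case $\nu_{\omega,B}(E)=0$, i.e.\ $\Imm Z_{\omega,B}(E) = v_2^B(E)-v_0^B(E)/6 = 0$. So the entire plan reduces to one algebraic collapse on this slice together with one positivity check for $v_1^B$; no new deep input is needed beyond Conjecture~\ref{conj:BMS}, which is what Subsection~\ref{subsec:tilt} motivates directly.

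For the algebraic collapse, I take $E\in\bB_{\omega,B}$ that is $\nu_{\omega,B}$-semistable with $v_2^B(E)=v_0^B(E)/6$, apply Conjecture~\ref{conj:BMS}, and unpack $\overline{\Delta}_{\omega,B}(E)+6\overline{\nabla}_{\omega,B}(E)\ge 0$ into
\begin{align*}
(v_1^B(E))^2 - 2 v_0^B(E) v_2^B(E) + 12 (v_2^B(E))^2 - 18 v_1^B(E)\, v_3^B(E) \ge 0.
\end{align*}
Since $12(v_2^B)^2 - 2 v_0^B v_2^B = 2 v_2^B (6 v_2^B - v_0^B)$ vanishes identically on the slice $v_2^B = v_0^B/6$, the above reduces to $(v_1^B(E))^2 \ge 18\, v_1^B(E)\, v_3^B(E)$.

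For the positivity step I use that $\bB_{\omega,B}$ is, by construction in Subsection~\ref{subsec:tilt}, the tilt of $\Coh(X)$ at the torsion pair attached to $\mu_{\omega,B}$, so that $v_1^B(F)=\Imm Z(F)\ge 0$ for every $F\in\bB_{\omega,B}$ (compare the formula~\eqref{Zdag}). Combined with the hypothesis $v_1^B(E)\neq 0$, this forces $v_1^B(E)>0$, and dividing the displayed inequality by $18\, v_1^B(E)$ yields the desired bound $v_3^B(E)\le v_1^B(E)/18$. The only substantive input beyond straightforward arithmetic is the strict positivity $v_1^B(E)>0$, which is immediate from the tilting construction; there is no genuine obstacle. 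The serious mathematical content of the section — showing that this slice inequality is actually \emph{equivalent} to the full Conjecture~\ref{conj:BMS} — is the subject of Theorem~\ref{intro:equiv} and is not needed for the statement as worded.
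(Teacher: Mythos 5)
Your derivation is correct and matches the paper's own treatment: the paper introduces Conjecture~\ref{conj:BMT} precisely as the specialization of Conjecture~\ref{conj:BMS} to $\nu_{\omega,B}(E)=0$ and remarks that the implication is obvious, which is exactly the algebraic collapse you carry out (and your positivity check is also right, since $v_1^B=\Imm Z^{\dag}_t\ge 0$ on $\bB_{\omega,B}$, so $v_1^B(E)\neq 0$ forces $v_1^B(E)>0$). Keep in mind that the statement remains conditional --- both you and the paper only deduce it from Conjecture~\ref{conj:BMS}, which is itself conjectural --- and the genuinely nontrivial content of the section is the converse implication in Theorem~\ref{thm:BG:equiv}, which you correctly identify as not needed for the statement as worded.
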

Obviously Conjecture~\ref{conj:BMT} is implied by Conjecture~\ref{conj:BMS}. 
Conversely from \cite[Theorem~4.2]{BMS}, 
if Conjecture~\ref{conj:BMT} holds for all $(B, \omega)$
with $B$ proportional to $\omega$, then Conjecture~\ref{conj:BMS}
also holds for all $(B, \omega)$ with $B$ proportional to $\omega$. 
In Subsection~\ref{subsec:Generalized}, 
we will generalize the result of~\cite[Theorem~4.2]{BMS}, 
and show that Conjecture~\ref{conj:BMS} and Conjecture~\ref{conj:BMT}
are indeed equivalent. 

\subsection{Double tilting construction}\label{subsec:double}
In this subsection, we change the notation 
and set $W=Z_{t=3}^{\dag}$
where $Z_t^{\dag}$ is given by (\ref{Zdag}), i.e. 
\begin{align}\label{Ztilt}
W(v)=\left(-6v_2^B + v_0^B\right) + v_1^B i. 
\end{align}
We consider the very weak stability condition $(W, \bB_{\omega, B})$. 
Note that the associated semistable objects 
coincide with $\nu_{\omega, B}$-semistable objects
in $\bB_{\omega, B}$.  

\begin{prop}\label{prop:good}
Suppose that Conjecture~\ref{conj:BMS} holds. 
Then $(W, \bB_{\omega, B})$
satisfies the BG inequality and
is good.  
\end{prop}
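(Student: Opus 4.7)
The plan is to apply Definition~\ref{defi:BG} with the explicit choice
\begin{align*}
(\Delta_R, \Delta_I) = (-2v_2^B,\; v_1^B - 18 v_3^B),
\end{align*}
which is precisely the one motivated by the heuristic derivation at the end of Subsection~\ref{subsec:tilt}, and then verify the four conditions of Definition~\ref{def:good}. Since $\mu_W$ and $\nu_{\omega,B}$ differ only by the positive scalar $6$, the notions of $W$-semistability on $\bB_{\omega,B}$ and of $\nu_{\omega,B}$-semistability coincide, and the required BG-type inequality therefore reduces to Conjecture~\ref{conj:BMS} via the direct calculation
\begin{align*}
\Ree W \cdot \Delta_R + \Imm W \cdot \Delta_I
&= (-6v_2^B+v_0^B)(-2v_2^B) + v_1^B(v_1^B-18v_3^B) \\
&= \overline{\Delta}_{\omega,B} + 6\,\overline{\nabla}_{\omega,B}.
\end{align*}

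To finish condition (i) of Definition~\ref{defi:BG} and to check condition (ii) there, I identify $\cC = \{E \in \bB_{\omega,B} : W(E) = 0\}$. By Lemma~\ref{lem:tilt1} applied to the first tilting $(Z_t^{\dag}, \bB_{\omega,B})$ at $t=3$, this $\cC$ coincides with the category $\cC^{\dag}$ of Subsection~\ref{subsec:slope}, which by~(\ref{Cdag2}) equals $\Coh_0(X)$. On $\Coh_0(X)$ one has $v_0^B = v_1^B = v_2^B = 0$ while $v_3^B = \ch_3 = \mathrm{length}(\cdot) \ge 0$, so $\Delta_R|_{\cC}=0$, $\Delta_I|_{\cC} = -18 v_3^B \le 0$, and $\Delta_I$ is strictly negative on any skyscraper sheaf, confirming $\Delta_I|_{\cC} \not\equiv 0$.

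For goodness, condition (i) of Definition~\ref{def:good} follows from $\omega = mH$: one checks $v_0^B, v_2^B \in m\mathbb{Q}$ and $v_1^B \in \mathbb{Q}$, so $W(\Gamma) \subset m\mathbb{Q} + \mathbb{Q}\,i$. Condition (ii) is Lemma~\ref{lem:noether} applied to the good condition $(Z,\Coh(X))$ of Subsection~\ref{subsec:slope}. The new $\cC^{\dag}$, namely $\{F \in \Coh_0(X) : v_3^B(F)=0\}$, is trivial because the length function is injective on classes of $0$-dimensional sheaves; correspondingly $\Gamma_0 \cap \Ker(\Delta_I) = 0$ since on $\Gamma_0 = \{(0,0,0,v_3)\}$ one has $\Delta_I = -18 v_3^B$. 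Thus both sides of (iii) equal zero, and (iv) becomes vacuous upon taking $\widehat{U} = U$. The whole argument uses Conjecture~\ref{conj:BMS} in a genuinely substantive way only in the first paragraph; everything else is a direct computation or an appeal to the goodness of slope stability already established in Subsection~\ref{subsec:slope}, so there is no real obstacle once the conjecture is granted.
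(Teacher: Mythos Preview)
Your proof is correct and follows essentially the same approach as the paper: the same choice $(\Delta_R^{\dag},\Delta_I^{\dag})=(-2v_2^B,\,v_1^B-18v_3^B)$, the same identification $\cC=\Coh_0(X)$, and the same reduction of goodness to $\cC^{\dag\dag}=\{0\}$ (so that (iii) and (iv) become trivial) and to Lemma~\ref{lem:noether} for noetherianity. The only cosmetic difference is that you spell out more details, such as the explicit computation $\Ree W\cdot\Delta_R+\Imm W\cdot\Delta_I=\overline{\Delta}_{\omega,B}+6\overline{\nabla}_{\omega,B}$ and the check of condition (i) via $\omega=mH$; your phrase ``the length function is injective'' is slightly awkward (you simply need $\ch_3(F)>0$ for $0\ne F\in\Coh_0(X)$), but the argument is sound.
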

\begin{proof}
The inequality in 
Conjecture~\ref{conj:BMS} implies 
that 
for any $\nu_{\omega, B}$-semistable object $E \in \bB_{\omega, B}$, 
we have 
\begin{align*}
\Ree W(E) \Delta_R^{\dag}(E)+\Imm W(E) \Delta_I^{\dag}(E) \ge 0,
\end{align*}
where
\begin{align}\label{Delta3}
(\Delta_R^{\dag}, \Delta_I^{\dag})=(-2v_2^B, v_1^B -18v_3^B).
\end{align}
Note that the category $\cC^{\dag} \subset \bB_{\omega, B}$
of objects $E \in \bB_{\omega, B}$ with $W(E)=0$
coincides with $\Coh_0(X)$. 
Since $\Delta_R|_{\Coh_0(X)} \equiv 0$
and 
$\Delta_I|_{\Coh_0(X) \setminus \{0\}} <0$, 
the very weak stability
condition $(W, \bB_{\omega, B})$ satisfies the BG
inequality. 

We check that $(W, \bB_{\omega, B})$ is good. 
The condition (i) in Definition~\ref{def:good} is obvious.
The condition (ii) follows from 
Lemma~\ref{lem:noether} since  
$\bB_{\omega, B}$ is a tilt of a good 
very weak stability condition on $\Coh(X)$.   
Also   
the category $\cC^{\dag \dag}$ of objects $F \in \cC^{\dag}=\Coh_0(X)$
with $\Delta_I^{\dag}(F)=0$ is $\{0\}$.  
Therefore, we also have the conditions (iii) and (iv) as required. 
\end{proof}
As we observed in
 the proof of Proposition~\ref{prop:good}, 
the category $\cC^{\dag \dag}$ is zero.
Hence, by Corollary~\ref{cor:support} 
we have the associated one parameter family of stability conditions
\begin{align}\label{dtilt}
(W_t^{\dag}, \bB_{\omega, B}^{\dag}) \in \Stab_{\omega, B}(X), \ 
t \in \mathbb{R}_{>0}. 
\end{align}
The group homomorphism $W_t^{\dag}=-i W+t \Delta_I^{\dag}$
 is given by
\begin{align}\notag
W_t^{\dag}(v)=\left((1+t)v_1^B-18tv_3^B\right) +(6v_2^B-v_0^B)i. 
\end{align}
The associated slope function $\nu_t^{\dag}$ is given by 
\begin{align}\notag
\nu_t^{\dag}=
\frac{18tv_3^B-(1+t)v_1^B}{6v_2^B-v_0^B}. 
\end{align}
By comparing it with $Z_{\omega, B}$
given by (\ref{ZwB}), 
we have 
\begin{align}\label{nu=Z}
\frac{9}{4}\nu_{t=1/8}^{\dag}=-\frac{\Ree Z_{\omega, B}}{\Imm Z_{\omega, B}}. 
\end{align}
We describe $\bB_{\omega, B}^{\dag}$ in terms of 
tilt stability. 
Similar to Definition~\ref{def:HNmu}, for tilt-slope $\nu_{\omega,B}$ stability
on $\bB_{\omega, B}$ we define the following: 
\begin{defi}
 For
a given subset  
$I \subset \mathbb{R} \cup \{\infty\}$, the subcategory $\HN^{\nu}_{\omega, B}(I) \subset
\bB_{\omega, B}$ is defined by
$$
\HN^{\nu}_{\omega, B}(I) = \langle E \in \bB_{\omega, B} :
E \mbox{ is } \nu_{\omega, B} \mbox{-semistable with }
\nu_{\omega, B}(E) \in I \rangle.
$$
We also define 
the pair of subcategories 
$(\mathcal{T}_{\omega,B}', \mathcal{F}_{\omega,B}')$
of $\bB_{\omega, B}$ to be
\begin{align*}
\tT_{\omega , B}' = \HN^{\nu}_{\omega, B}((0, \infty]), \ \ \
\fF_{\omega , B}'= \HN^{\nu}_{\omega, B}((-\infty, 0]).
\end{align*}
\end{defi}
We also set 
$\HN^{\nu}_{\omega, B}(I)=\HN^{\nu}_{\omega, B}(\vartheta)$
when $I=\{\vartheta\}$. 
The HN 
property of tilt stability implies that
the pair of subcategories 
$(\mathcal{T}_{\omega,B}', \mathcal{F}_{\omega,B}')$
forms a torsion pair on $\bB_{\omega,B}$. 
By tilting, we have another heart
\begin{align*}
\mathcal{A}_{\omega, B} = \langle \mathcal{F}_{\omega, B}'[1], 
\mathcal{T}_{\omega, B}' \rangle \subset D^b \mathrm{Coh}(X). 
\end{align*}
By the construction, 
we have $\bB_{\omega, B}^{\dag}=\aA_{\omega, B}$. 
By (\ref{nu=Z}), the condition
(\ref{dtilt}) for $t=1/8$ is equivalent 
to the statement
\begin{align}\label{BMT:stab}
\sigma_{\omega, B} \cneq 
(Z_{\omega, B}, \mathcal{A}_{\omega, B}) \in 
\mathrm{Stab}_{\omega, B}(X),
\end{align}
which was conjectured in~\cite{BMT}. 




\subsection{Slope bounds for cohomology sheaves of objects in $\bB_{\omega,B}$}
In this  and in the next subsections, 
we prepare some results on tilt stability
to show the equivalence of Conjectures~\ref{conj:BMS}
and \ref{conj:BMT}. 
We will carry the notations introduced in Subsections \ref{subsec:tilt} and \ref{subsec:double}, except that we now don't assume the condition 
(\ref{class:choice})
as we mentioned in Remark~\ref{rmk:tiltstab}. 

It is straightforward to check that, for any $\vt \in \mathbb{R}$
\begin{equation}
\label{eqn:chern}
v_i^{B+\vt \omega} = \sum_{j=0}^{i} \frac{(-1)^j}{j!} \vt^j \, v_{i-j}^B,
\end{equation}
and for any $\beta \in \mathbb{R}_{>0}$, we have 
$\overline{\Delta}_{\vt \omega,B+ \beta \omega}(E) = \vt^4 \overline{\Delta}_{\omega,B}(E)$.
For a given  $\vt \in \mathbb{R}$, we set 
$$
\eta = \sqrt{3\vt^2+1}.
$$
\begin{lem}
For $E \in D^b \Coh(X)$, we have the following identities:
\begin{align} \label{Duid1}
&\Imm Z_{\omega,B}(E) - \vt v_1^B(E) = \frac{1}{\eta} \Imm Z_{\eta \omega, B+ \vt\omega}(E) \\
\label{Duid2}
&v_0(E) \Imm Z_{\omega,B}(E) = -\frac{1}{2} \overline{\Delta}_{\omega,B}(E) + \frac{1}{2} v_1^{B-\omega/\sqrt{3}}(E) v_1^{B+\omega/\sqrt{3}}(E).
\end{align}
\end{lem}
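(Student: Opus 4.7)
The plan is to prove both identities by direct expansion, since after unwinding the definitions of $Z_{\omega,B}$, $\overline{\Delta}_{\omega,B}$, and $v_j^B$, each identity reduces to a polynomial identity in $v_0^B(E),\dots,v_3^B(E)$ that can be verified by matching coefficients. No stability-theoretic input is needed.

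For identity (\ref{Duid1}), I would first record the behavior of the twisted Chern characters under the shift $B \mapsto B + \vt\omega$. The relation $e^{-(B+\vt\omega)} = e^{-\vt\omega}e^{-B}$ yields the expansion in (\ref{eqn:chern}), and in particular
\[
\omega\,\ch_2^{B+\vt\omega}(E) = v_2^B(E) - \vt\,v_1^B(E) + \tfrac{\vt^2}{2}\,v_0^B(E),
\]
while $\ch_0$ is unchanged. Plugging into the definition of $\Imm Z_{\eta\omega,B+\vt\omega}(E) = (\eta\omega)\ch_2^{B+\vt\omega}(E) - \tfrac{1}{6}(\eta\omega)^3\ch_0(E)$ and dividing by $\eta$ gives
\[
\tfrac{1}{\eta}\Imm Z_{\eta\omega,B+\vt\omega}(E) = v_2^B(E) - \vt\,v_1^B(E) + \left(\tfrac{\vt^2}{2} - \tfrac{\eta^2}{6}\right)v_0^B(E).
\]
The coefficient of $v_0^B(E)$ equals $-\tfrac{1}{6}$ precisely because of the normalization $\eta^2 = 3\vt^2 + 1$. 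Comparing with $\Imm Z_{\omega,B}(E) - \vt v_1^B(E) = v_2^B(E) - \tfrac{1}{6}v_0^B(E) - \vt v_1^B(E)$ completes the identity.

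For identity (\ref{Duid2}), the same type of expansion gives
\[
v_1^{B\mp\omega/\sqrt{3}}(E) = v_1^B(E) \pm \tfrac{1}{\sqrt{3}}v_0^B(E),
\]
so $v_1^{B-\omega/\sqrt{3}}(E)\cdot v_1^{B+\omega/\sqrt{3}}(E) = (v_1^B(E))^2 - \tfrac{1}{3}(v_0^B(E))^2$. Substituting into the right-hand side together with $\overline{\Delta}_{\omega,B}(E) = (v_1^B(E))^2 - 2v_0^B(E)v_2^B(E)$, the $(v_1^B(E))^2$ terms cancel and one is left with $v_0^B(E)v_2^B(E) - \tfrac{1}{6}(v_0^B(E))^2$, which is exactly $v_0(E)\Imm Z_{\omega,B}(E)$.

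There is no real obstacle here; the lemma is a bookkeeping step, presumably set up for the subsequent application to the equivalence of Conjectures~\ref{conj:BMS} and \ref{conj:BMT}. The only point requiring a small check is the coefficient $\tfrac{\vt^2}{2} - \tfrac{\eta^2}{6} = -\tfrac{1}{6}$ in the first identity, which pins down the choice of $\eta = \sqrt{3\vt^2+1}$.
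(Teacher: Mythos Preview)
Your proof is correct and follows essentially the same approach as the paper: both identities are verified by direct expansion using the shift relations~\eqref{eqn:chern}. For~\eqref{Duid2} the paper organizes the computation slightly differently---it first passes to $B' = B - \omega/\sqrt{3}$ and writes $\Imm Z_{\omega,B}(E) = v_2^{B'}(E) - \tfrac{1}{\sqrt{3}}v_1^{B'}(E)$ before multiplying by $v_0(E)$---but your direct expansion in the $v_j^B$ coordinates is just as valid and arguably cleaner.
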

\begin{proof}
The identity (\ref{Duid1}) follows easily by simplifying $\Imm Z_{\eta \omega, B+ \vt\omega}(E)$ with the relations \eqref{eqn:chern}.

Let us show the identity (\ref{Duid2}). 
Let $B' = B - \omega/\sqrt{3}$. 
By the definition of $\Imm Z_{\omega,B}(E)$, and  
$\ch^B(E) = e^{-\omega/\sqrt{3}}\ch^{B'}(E)$, we have 
$$
\Imm Z_{\omega,B}(E) = v_2^{B'}(E) - \frac{1}{\sqrt{3}}v_1^{B'}(E).
$$
Therefore,
\begin{align*}
 &v_0(E)  \Imm Z_{\omega,B}(E) \\
&\qquad = v_0(E) v_2^{B'}(E) - \frac{1}{2} (v_1^{B'}(E))^2  + \frac{1}{2} v_1^{B'}(E) \left(v_1^{B'}(E) - \frac{2}{\sqrt{3}}v_0(E)\right) \\
 & \qquad = -\frac{1}{2} \overline{\Delta}_{\omega,B}(E) + \frac{1}{2} v_1^{B'}(E) v_1^{B'+2\omega/\sqrt{3}}(E)   \\
 &\qquad = -\frac{1}{2} \overline{\Delta}_{\omega,B}(E) + \frac{1}{2} v_1^{B-\omega/\sqrt{3}}(E) v_1^{B+\omega/\sqrt{3}}(E)
\end{align*}
as required. 
\end{proof}
\noindent Consequently, we have 
\begin{align*}
\nu_{\omega, B} (E) - \vt &= \frac{\Imm Z_{\omega,B}(E) - \vt v_1^B(E)}{v_1^B(E)} \\ 
&=\frac{ \Imm Z_{\eta \omega, B+ \vt\omega}(E)}{\eta v_1^B(E)}.
\end{align*}
Assume $v_0^B(E)  \ne 0$. From multiplying both the denominator and the 
numerator by $v_0^B(E)$, we get
\begin{align}
 \nu_{\omega, B} (E) - \vt 
 =  \frac{-\overline{\Delta}_{\omega, B}(E) 
+ v_1^{B + (\vt-\eta/\sqrt{3})\omega}(E)  
v_1^{B+ (\vt+ \eta/\sqrt{3})\omega}(E)}
 {2\eta v_0^B(E) v_1^B(E)}.
 \label{eq1}
\end{align}

We have the following proposition, which 
generalizes the claims in \cite[Prop.~3.1, 3.2]{MaPi1} and 
\cite[Prop.~4.1, 4.2]{MaPi2}. Our proofs are also somewhat similar to them. 
\begin{prop}
\label{prop:slope-bounds}
Let $\vt$ be any real number and let $\eta = \sqrt{3\vt^2+1}$. 
Let $E \in \bB_{\omega, B}$ and $E_{i}= \hH^{i}(E)$.
Then we have the following:
 \begin{enumerate}[(i)]
\item if $E \in \HN^{\nu}_{\omega, B}((-\infty, \vt)) $,  then 
$E_{-1} \in \HN^{\mu}_{\omega, B} ((-\infty, \vt- \eta/\sqrt{3}))$;

\item if $E \in \HN^{\nu}_{\omega, B}((\vt, \infty)) $,  then 
$E_0 \in \HN^{\mu}_{\omega, B} (( \vt + \eta/\sqrt{3}, \infty])$;
and

\item if $E$ is tilt semistable with $\nu_{\omega,B}(E) =\vt$, then
\begin{enumerate}[(a)]
\item $E_{-1} \in \HN^{\mu}_{\omega, B} ((-\infty, \vt- \eta/\sqrt{3}])$ with equality
$\mu_{\omega,B}(E_{-1}) = \vt - \eta/\sqrt{3}$ holds
  if and only if $v_2^{B+(\vt- \eta/\sqrt{3})\omega}(E_{-1}) = 0$, and
\item when $E_0$ is torsion free $E_0 \in \HN^{\mu}_{\omega, B} ([\vt + \eta/\sqrt{3}, \infty))$ with equality
$\mu_{\omega,B}(E_{0}) = \vt + \eta/\sqrt{3}$ holds
  if and only if $v_2^{B+(\vt+\eta/\sqrt{3})\omega}(E_{0}) =0$.
\end{enumerate}
\end{enumerate}
\end{prop}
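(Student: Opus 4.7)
The plan is to apply the identity~(\ref{eq1}) to an extremal $\mu_{\omega,B}$-Harder--Narasimhan factor of $E_{-1}$ (resp.\ $E_0$), combined with tilt-semistability of $E$ and the classical Bogomolov--Gieseker bound $\overline{\Delta}_{\omega,B}(F) \ge 0$ for $\mu_{\omega,B}$-semistable torsion-free $F$ (contained in Lemma~\ref{lem:slope}). Using $v_1^{B+\beta\omega}(F) = v_0^B(F)(\mu_{\omega,B}(F) - \beta)$, the identity~(\ref{eq1}) for such $F$, with $\mu = \mu_{\omega,B}(F)$, rewrites as
\[
\nu_{\omega,B}(F) - \vt \;=\; \frac{-\overline{\Delta}_{\omega,B}(F) + v_0^B(F)^{2}\bigl(\mu - \vt + \tfrac{\eta}{\sqrt{3}}\bigr)\bigl(\mu - \vt - \tfrac{\eta}{\sqrt{3}}\bigr)}{2\eta\, v_0^B(F)\, v_1^B(F)}.
\]
Since $\eta/\sqrt{3} = \sqrt{\vt^{2} + 1/3} > |\vt|$, we have $\vt - \eta/\sqrt{3} < 0 < \vt + \eta/\sqrt{3}$, which will make the sign tracking of the two linear factors clean.

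For (i) and the inequality part of (iii)(a), I would take $F$ to be the largest-slope $\mu$-HN factor of $E_{-1}$. Since $E_{-1} \in \fF_{\omega,B}$, also $F \in \fF_{\omega,B}$, so $F[1] \hookrightarrow E_{-1}[1] \hookrightarrow E$ in $\bB_{\omega,B}$. The case $\mu(F) = 0$ is excluded because it would give $v_1^{B}(F[1]) = 0$, hence $\nu_{\omega,B}(F[1]) = \infty$ by convention, contradicting tilt-semistability of $E$ at finite $\nu_{\omega,B}(E)$. Thus $v_0^B(F) > 0$ and $v_1^B(F) < 0$, and the displayed identity applies: tilt-semistability gives $\nu_{\omega,B}(F) \le \nu_{\omega,B}(E) \le \vt$ (strict in (i)), the negative denominator forces the numerator to be $\ge 0$ (strict in (i)), and Bogomolov then forces the product $(\mu - \vt + \eta/\sqrt{3})(\mu - \vt - \eta/\sqrt{3}) \ge 0$ (strict in (i)). Since the right factor is negative, one concludes $\mu \le \vt - \eta/\sqrt{3}$, strict in case (i). For (ii) and the inequality in (iii)(b), the dual construction takes $G$ to be the smallest-slope $\mu$-HN factor of $E_0$, giving a quotient $E \twoheadrightarrow G$ in $\bB_{\omega,B}$ (the kernel of $E_0 \twoheadrightarrow G$ lies in $\tT_{\omega,B}$ because its HN slopes all exceed $\mu(G) > 0$); the identity now has positive denominator, and the symmetric sign analysis gives $\mu \ge \vt + \eta/\sqrt{3}$. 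Torsion summands of $E_0$ trivially satisfy the bound since they have slope $\infty$.

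For the equality statements in (iii), when $\mu(F) = \vt - \eta/\sqrt{3}$ the product factor vanishes, so the numerator reduces to $-\overline{\Delta}_{\omega,B}(F) \le 0$; comparing with $\nu_{\omega,B}(F) \le \vt$ and the negative denominator forces $\overline{\Delta}_{\omega,B}(F) = 0$ and $\nu_{\omega,B}(F) = \vt$. Iterating the same analysis on the remaining HN pieces shows that every $\mu$-HN factor of $E_{-1}$ must also saturate the extremal slope, hence $E_{-1}$ is itself $\mu$-semistable of slope $\vt - \eta/\sqrt{3}$ with $\overline{\Delta}_{\omega,B}(E_{-1}) = 0$. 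Since $\overline{\Delta}_{\omega,\cdot}$ is $B$-invariant, at the twist $B' = B + (\vt - \eta/\sqrt{3})\omega$ we have $v_1^{B'}(E_{-1}) = 0$, so $\overline{\Delta}_{\omega,B}(E_{-1}) = \overline{\Delta}_{\omega,B'}(E_{-1}) = -2\, v_0^{B'}(E_{-1})\, v_2^{B'}(E_{-1})$, and vanishing is equivalent to $v_2^{B+(\vt - \eta/\sqrt{3})\omega}(E_{-1}) = 0$ as claimed; part (iii)(b) is handled symmetrically using a bottom-HN factor of a torsion-free $E_0$. The main obstacle will be the equality-case bookkeeping: propagating the saturation of the extremal slope from a single HN factor to all of $E_{-1}$ (resp.\ $E_0$), and cleanly identifying $\overline{\Delta}_{\omega,B} = 0$ with the vanishing of the single component $v_2$ at the preferred twist.
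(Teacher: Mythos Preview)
Your approach is essentially the paper's: pick the extremal $\mu_{\omega,B}$-HN factor of $E_{-1}$ (resp.\ $E_0$), embed (resp.\ project) in $\bB_{\omega,B}$, and combine identity~(\ref{eq1}) with the classical BG inequality and sign tracking. Your rewriting of the numerator as $v_0^B(F)^2(\mu-\vt+\eta/\sqrt{3})(\mu-\vt-\eta/\sqrt{3})$ is just the paper's $v_1^{B+(\vt-\eta/\sqrt{3})\omega}(F)\,v_1^{B+(\vt+\eta/\sqrt{3})\omega}(F)$ made explicit.

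One imprecision to fix: in part~(i) you repeatedly invoke ``tilt-semistability of $E$'', but there $E$ is only assumed to lie in $\HN^{\nu}_{\omega,B}((-\infty,\vt))$, not to be $\nu_{\omega,B}$-semistable. The paper instead argues that $F[1]\hookrightarrow E$ forces $F[1]\in\HN^{\nu}_{\omega,B}((-\infty,\vt))$, and since $v_1^B(F[1])>0$ this already yields $\nu_{\omega,B}(F[1])<\vt$; the same reasoning rules out $\mu(F)=0$. Your conclusion is correct, only the justification needs this small adjustment.

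For the equality case in~(iii) the paper takes a shorter path than your iteration. Once you have established that every $\mu$-HN factor of $E_{-1}$ has slope at most $\vt-\eta/\sqrt{3}$, the hypothesis $\mu_{\omega,B}(E_{-1})=\vt-\eta/\sqrt{3}$ is a weighted average (weights $v_0^B>0$) equalling the common upper bound, so all factors saturate and $E_{-1}$ is $\mu$-semistable outright; then apply~(\ref{eq1}) directly to $E_{-1}$ to get $\overline{\Delta}_{\omega,B}(E_{-1})=0$, which at the twist $B'=B+(\vt-\eta/\sqrt{3})\omega$ reads $-2v_0^B(E_{-1})\,v_2^{B'}(E_{-1})=0$. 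This avoids having to chase the condition through successive HN quotients.
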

\begin{proof}
Note that any object $E \in \bB_{\omega,B}$ fits into the short exact sequence
$$
0 \to E_{-1}[1] \to E \to E_0 \to 0
$$
in  $\bB_{\omega,B}$.

(i) If $E \in \HN^{\nu}_{\omega, B}((-\infty, \vt))$ then we have $E_{-1}[1]  \in \HN^{\nu}_{\omega, B}((-\infty, \vt))$.
Let $E_{-1}^+$ be the HN $\mu_{\omega, B}$-semistable factor of 
$E_{-1}$ with the highest $\mu_{\omega, B}$ slope.
From the HN filtration of $E_{-1}$ with respect to the 
$\mu_{\omega, B}$-stability, $E_{-1}[1]$ fits into the short exact sequence
$$
 0 \to  E_{-1}^+ [1] \to E_{-1}[1] \to Q [1] \to 0
$$
in $\bB_{\omega,B}$, where 
$Q$ is the cokernel of $E_{-1}^{+} \hookrightarrow E_{-1}$ in 
$\Coh(X)$. 
Hence, $E_{-1}^+[1] \in \HN^{\nu}_{\omega,B}((-\infty, \vt))$.

Assume the opposite for a contradiction; so that
$$
v_1^{B+(\vt- \eta/\sqrt{3})\omega}(E_{-1}^+) \ge 0.
$$
We have
$$
  \nu_{\omega,B} (E_{-1}^+[1]) -\vt  =  \nu_{\omega,B} (E_{-1}^+) -\vt.
$$
Since $E_{-1}^+$ is $\mu_{\omega, B}$-semistable, by the classical Bogomolov-Gieseker inequality
$$
\overline{\Delta}_{\eta\omega,B}(E_{-1}^+) \ge 0
$$
and since $E_{-1}^+ \in \fF_{\omega,B} = \HN^{\mu}_{\omega,B}((-\infty, 0])$, 
we have
 $\nu_{\omega,B}(E_{-1}^+[1]) \ne \infty$. Also for any $\vt \in \mathbb{R}$
we have $\vt + \eta/\sqrt{3}> 0$, and so
$$
 v_1^B(E_{-1}^+) < 0 \  \text{and} \  v_1^{B + (\vt + \eta/\sqrt{3})\omega}(E_{-1}^+)<0.
$$ 
Hence, as $v_0(E_{-1}^+) >0$, by \eqref{eq1}, we have $\nu_{\omega,B}
(E_{-1}^+) - \vt \ge 0$.
But this is not possible as $E_{-1}^+[1] \in
\HN^{\nu}_{\omega,B}((-\infty, \vt))$. This is the required
contradiction to complete the proof.

(ii) 
If $v_0(E_0)=0$ then clearly we have the required result. So we may assume
 $v_0(E_0) > 0$.
Since $E \in \HN^{\nu}_{\omega, B}((\vt, \infty])$, we have
$E_0 \in \HN^{\nu}_{\omega,B}((\vt, \infty])$.
Let $E_{0}^-$ be the HN $\mu_{\omega, B}$-semistable factor of $E_{0}$ with the lowest $\mu_{\omega,B}$ slope.
By the HN filtration of $E_{0}$ with respect to 
$\mu_{\omega, B}$-stability, 
since $v_0(E_0) > 0$ we have
$ E_{0}^-$ is a non-trivial torsion free quotient of $E_0$.
So
$$
0 \to K \to E_0 \to  E_{0}^- \to 0
$$
is a short exact sequence in $\bB_{\omega,B}$, where 
$K$ is the kernel of 
$E_0 \twoheadrightarrow E_0^{-}$ in 
$\Coh(X)$. 
Since
$E_0 \in \HN^{\nu}_{\omega,B}((\vt, \infty])$ we have $E_0^- \in \HN^{\nu}_{\omega,B}((\vt, \infty])$ .

Assume the opposite for a contradiction; 
so that 
$$
v_1^{B+(\vt+\eta/\sqrt{3})\omega}(E_{0}^-) \le 0.
$$
Since $E_{0}^-$ is $\mu_{\omega, B}$-semistable, by the classical Bogomolov-Gieseker inequality,
$$
\overline{\Delta}_{\eta \omega,B}(E_{0}^-) \ge 0. 
$$
Also since $E_{0}^- \in \tT_{\omega,B} = \HN^{\mu}_{\omega,B}((0,\infty])$ is torsion free, 
for any $\vt \in \mathbb{R}$ we have $\vt - \eta/\sqrt{3}< 0$, and so

$$
v_1^B(E_{0}^-) > 0 \ \text{and} \ v_1^{B + (\vt- \eta/\sqrt{3})\omega}(E_{0}^-)>0.
$$ 
Hence, as $v_0(E_{0}^-) >0$, by \eqref{eq1}, we have $\nu_{\omega,B}
(E_{0}^-) - \vt \le 0$.
But this is not possible as $E_{0}^- \in
\HN^{\nu}_{\omega,B}((\vt,\infty])$. This is the required
contradiction to complete the proof.

(iii)
Similarly to (i) one can show that if $E \in \HN^{\nu}_{\omega,B}((-\infty, \vt])$ then
$E_{-1} \in \HN^{\mu}_{\omega, B} ((-\infty, \vt- \eta/\sqrt{3}])$.
Hence, for  tilt semistable $E \in \bB_{\omega,B}$ with $\nu_{\omega,B}(E)=\vt$, we have 
 $E_{-1} \in \HN^{\mu}_{\omega, B} ((-\infty, \vt- \eta/\sqrt{3}])$.
The equality 
$v_1^{B+(\vt- \eta/\sqrt{3})\omega}(E_{-1}^+) = 0$
holds when $E_{-1}$ is slope semistable, and so it satisfies the classical
BG inequality.
Since $\nu_{\omega,B}(E_{-1})-\vt \le 0$, from \eqref{eq1} we have 
$v_1^{B+(\vt- \eta/\sqrt{3})\omega}(E_{-1})=0$ if and only if $\overline{\Delta}_{\omega,B}(E_{-1}) = 0$ equivalently $v_2^{B+(\vt- \eta/\sqrt{3})\omega}(E_{-1})=0$.
This completes the proof of (a).
The proof of (b) is similar to that of (a).
\end{proof}


\subsection{Some properties of tilt stable objects}
When $E \in \Coh(X)$ is a $\mu_{\omega,B}$-(semi)stable sheaf, it is straightforward to check that 
it is also $\mu_{r \omega, B+\vt\omega}$-(semi)stable for any $r \in \mathbb{R}_{>0}$ and $\vt \in \mathbb{R}$.
In this subsection we obtain a somewhat similar 
result for tilt stable objects.

\begin{lem}
\label{prop2.1}
Let $E \in \bB_{\omega,B}$ and for given $\vt \in \mathbb{R}$ let $\eta = \sqrt{3\vt^2+1}$. 
Then $E$ is $\nu_{\omega,B}$-stable with $\nu_{\omega,B} (E)=\vt$ if and only if 
$E \in \bB_{\eta\omega, B+\vt\omega}$ is $\nu_{\eta\omega, B+\vt\omega}$-stable with $\nu_{\eta\omega, B+\vt\omega}(E) =0$.
\end{lem}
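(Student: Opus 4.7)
The plan is to prove the equivalence through three steps: a slope-conversion identity, a heart-membership check via Proposition~\ref{prop:slope-bounds}, and a transfer of destabilizing subobjects across the two hearts.

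First, combining identity~(\ref{Duid1}) with the $\omega$-rescaling of $v_1$ under $\omega\mapsto\eta\omega$ and $B\mapsto B+\vt\omega$ yields
\[
\nu_{\eta\omega,\,B+\vt\omega}(E)\;=\;\frac{v_1^B(E)}{\eta\bigl(v_1^B(E)-\vt\,v_0(E)\bigr)}\,\bigl(\nu_{\omega,B}(E)-\vt\bigr),
\]
where the $v_1$ defining $\nu_{\eta\omega,B+\vt\omega}$ is computed with respect to the rescaled polarization $\eta\omega$. This converts the slope condition $\nu_{\omega,B}(E)=\vt$ into $\nu_{\eta\omega,B+\vt\omega}(E)=0$ (and vice versa) under the non-degeneracy $v_1^B(E)\neq 0$ and $v_1^B(E)\neq\vt\,v_0(E)$, both automatic for tilt-stable $E$ of finite slope; applied to sub/quotient objects, the same formula is the core device for transporting slope comparisons between the two setups.

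Second, for heart membership, assume $E\in\bB_{\omega,B}$ is $\nu_{\omega,B}$-stable with slope $\vt$. Proposition~\ref{prop:slope-bounds}(iii) gives $\mu_{\omega,B}(\hH^{-1}(E))\le\vt-\eta/\sqrt 3$ and $\mu_{\omega,B}(\hH^0(E)_{\mathrm{tf}})\ge\vt+\eta/\sqrt 3$. Shifting by $\mu_{\omega,B+\vt\omega}=\mu_{\omega,B}-\vt$ and using the scale-invariance of $\mu$-(semi)stability under $\omega\mapsto\eta\omega$, the cohomology slopes acquire the strict signs needed for $\hH^{-1}(E)\in\fF_{\eta\omega,B+\vt\omega}$ and $\hH^0(E)\in\tT_{\eta\omega,B+\vt\omega}$ (torsion sheaves are always in $\tT$ independent of the polarization), so $E\in\bB_{\eta\omega,B+\vt\omega}$. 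Strict stability is used here to exclude positive-dimensional torsion in $\hH^0(E)$ that would otherwise spoil the $\tT$-condition. The opposite direction of the membership claim is analogous, and goes through because $\vt\pm\eta/\sqrt 3$ always has the required sign owing to $\eta^2=3\vt^2+1>3\vt^2$.

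Third, for stability itself, argue by contradiction: let $0\to A\to E\to C\to 0$ be a destabilizing short exact sequence in $\bB_{\eta\omega,B+\vt\omega}$ with $\nu_{\eta\omega,B+\vt\omega}(A)\ge 0$. Applying Proposition~\ref{prop:slope-bounds} now in the $(\eta\omega,B+\vt\omega)$ polarization to $A$ and $C$ produces $\mu$-slope bounds on their cohomology sheaves; transferring these back via $\mu_{\omega,B+\vt\omega}=\mu_{\omega,B}-\vt$ shows $A,C\in\bB_{\omega,B}$, so the same triangle is short exact in $\bB_{\omega,B}$ as well. The slope-conversion formula, combined with a sign analysis of its prefactor $v_1^B/(\eta(v_1^B-\vt v_0))$ using the cohomology bounds just obtained, then forces $\nu_{\omega,B}(A)\ge\vt$, contradicting the $\nu_{\omega,B}$-stability of $E$. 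The reverse implication is proved by the same scheme.

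The principal technical obstacle is precisely this last step: the hearts $\bB_{\omega,B}$ and $\bB_{\eta\omega,B+\vt\omega}$ are tilts of $\Coh(X)$ at different $\mu$-slope levels, so a short exact sequence in one is not automatically one in the other. The essential leverage is Proposition~\ref{prop:slope-bounds}, since its cohomological bounds on tilt (semi)stable objects match exactly the membership conditions for the opposite heart, converting the comparison of subobjects across distinct t-structures into a concrete $\mu$-slope calculation. The remaining bookkeeping concerns the sign of the prefactor $v_1^B/(\eta(v_1^B-\vt v_0))$ in the slope-conversion formula and the careful handling of torsion in the zeroth cohomology of the candidate destabilizers.
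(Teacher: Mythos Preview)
Your overall strategy matches the paper's: Proposition~\ref{prop:slope-bounds} for heart membership, then transport a destabilizing sequence between the two hearts and compare slopes. However, your third step has a genuine gap. Proposition~\ref{prop:slope-bounds} applied at $(\eta\omega, B+\vt\omega)$ to the pair $A,C$ only controls \emph{half} of the needed cohomology sheaves: part~(ii) bounds $\hH^0(A)$ (since $A$ has positive tilt slope) and part~(i) bounds $\hH^{-1}(C)$ (since $C$ has negative tilt slope). To place $A,C$ in $\bB_{\omega,B}$ you also need $\hH^{-1}(A)\in\fF_{\omega,B}$ and $\hH^0(C)\in\tT_{\omega,B}$, and the proposition says nothing about these. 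For instance, knowing only $A\in\bB_{\eta\omega,B+\vt\omega}$ gives $\mu_{\omega,B}(\hH^{-1}(A))\le\vt$, which is too weak when $\vt>0$. The paper fills this gap via the long exact sequence of $\Coh(X)$-cohomology: from $0\to A\to E\to C\to 0$ one gets $\hH^{-1}(A)\hookrightarrow\hH^{-1}(E)$ and $\hH^0(E)\twoheadrightarrow\hH^0(C)$, and the bounds on $\hH^{-1}(E),\hH^0(E)$ from your second step then supply the missing halves.

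Two smaller points. Your remark that ``strict stability is used here to exclude positive-dimensional torsion in $\hH^0(E)$ that would otherwise spoil the $\tT$-condition'' is misplaced: torsion sheaves lie in every $\tT$ regardless, and the inequality $\vt+\eta/\sqrt{3}>\vt$ already gives the strict bound needed for $\tT_{\eta\omega,B+\vt\omega}$. Also, the paper avoids your prefactor sign analysis by working directly with $\Imm Z_{\eta\omega,B+\vt\omega}$ via identity~(\ref{Duid1}): once $A\in\bB_{\omega,B}$ is established, $v_1^B(A)>0$ and the sign of $\nu_{\omega,B}(A)-\vt$ is simply that of $\Imm Z_{\eta\omega,B+\vt\omega}(A)$, which sidesteps the degenerate cases in your ratio.
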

\begin{proof}
Suppose that $E \in \bB_{\omega, B}$ is $\nu_{\omega, B}$-stable 
with $\nu_{\omega, B}(E)=\vartheta$. 

Let $E_{i} := \hH^{i}(E)$.
By (iii) of Proposition
\ref{prop:slope-bounds}, we have 
$$
E_{-1} \in \HN_{\omega, B}^{\mu}((-\infty, \vt-\eta/\sqrt{3}]) \subset \HN_{\eta\omega, B+\vt\omega}^{\mu}((-\infty, 0)),
$$
and
$$
E_0 \in \HN_{\omega, B}^{\mu}([\vt+\eta/\sqrt{3},\infty]) \subset \HN_{\eta\omega, B+\vt\omega}^{\mu}((0,\infty]).
$$
Therefore, $E \in \bB_{\eta\omega, B+\vt\omega}$. 
Recall that 
$$
\nu_{\omega,B} (E) -\vt = \frac{ \Imm Z_{\eta \omega, B+ \vt\omega}(E)}{\eta v_1^B(E)}.
$$
Therefore, $\Imm Z_{\eta \omega, B+ \vt\omega}(E) = 0$ and so $\nu_{\eta \omega, B+ \vt\omega}(E)=0$. 

Assume $E$ is a $\nu_{\eta \omega, B+ \vt\omega}$ tilt unstable object for a contradiction. 
Therefore, from the Harder-Narasimhan property,  there exist 
$F \in \HN_{\eta \omega, B+ \vt\omega}^{\nu}((0,\infty])$, 
$G \in \HN_{\eta \omega, B+ \vt\omega}^{\nu}((-\infty,0))$, and
a short exact sequence
\begin{equation}
\label{ses.1}
0 \to F \to E \to G \to 0
\end{equation}
 in $\bB_{\eta \omega, B+ \vt\omega}$. By considering the long exact sequence of $\Coh(X)$-cohomologies, we have $F_{-1} \hookrightarrow E_{-1}$ and $E_0 \twoheadrightarrow G_0$ in $\Coh(X)$.  So 
$$
F_{-1} \in \HN_{\omega, B}^{\mu}((-\infty, \vt-\eta/\sqrt{3}])  \subset \HN_{\omega, B}^{\mu}((-\infty, 0)),
$$
and 
 $$
G_0 \in \HN_{\omega, B}^{\mu}([\vt+\eta/\sqrt{3},\infty])  \subset \HN_{\omega, B}^{\mu}((0,\infty]).
 $$
By (ii) of Proposition \ref{prop:slope-bounds}, 
$$
F_0  \in \HN_{\eta\omega, B+ (\vt+\eta/\sqrt{3})\omega}^{\mu}((0,\infty])  \subset \HN_{\omega, B}^{\mu}((0,\infty]),
$$
and by (i) of Proposition \ref{prop:slope-bounds}, 
 $$
 G_{-1} \in  \HN_{\eta\omega, B + (\vt-\eta/\sqrt{3})\omega}^{\mu}((-\infty, 0))  \subset \HN_{\omega, B}^{\mu}((-\infty, 0)).
 $$
Therefore, \eqref{ses.1} is also a short exact sequence in $\bB_{\omega,B}$.
Since we have  $\Imm Z_{\eta \omega, B+ \vt\omega}(F)>0$ and $\Imm Z_{\eta \omega, B+ \vt\omega}(G) <0$, 
we obtain 
$\nu_{\omega,B}(F)- \vt> 0$ and $\nu_{\omega,B}(G) -\vt< 0$. 
That is $E$ is $\nu_{\omega,B}$-unstable; which is not possible. This is the required contradiction to complete the proof. 
The proof in the other direction is similar to the above. 
\end{proof}

\noindent Let $\alpha \in \mathbb{R}_{>0}$ and $\beta \in \mathbb{R}$.
Then from the above result we have 
$$
\HN^{\nu}_{\alpha \omega, B + \beta \omega}(\tau) = \HN^{\nu}_{\alpha\sqrt{3\tau^2+1}  \omega, B+ (\beta+ \tau \alpha)\omega}(0).
$$
Moreover, we have 
$$
\HN^{\nu}_{\omega, B}(\vt) = \HN^{\nu}_{\sqrt{3\vt^2+1}\omega, B+ \vt\omega}(0).
$$
Therefore,  
$
\HN^{\nu}_{\omega, B}(\vt) = \HN^{\nu}_{\alpha \omega, B + \beta \omega}(\tau) 
$
when $\sqrt{3\vt^2+1} = \alpha\sqrt{3\tau^2+1} $ and 
$\vt = \beta + \tau \alpha$ hold.
So we get the following, which is a generalization of \cite[Lemma~4.3]{BMS}.
\begin{lem}
\label{lem:wall-tiltstable}
Let the object $E \in \bB_{\omega, B}$ be $\nu_{\omega,B}$-stable. Then $E \in \bB_{\alpha \omega, B + \beta \omega}$ is 
$\nu_{\alpha \omega, B + \beta \omega}$-stable for all $\alpha \in \mathbb{R}_{>0}$ and $\beta \in \mathbb{R}$ such that
$$
\alpha^2  + 3 \left( \beta - \nu_{\omega,B}(E)\right)^2  = 3\nu_{\omega,B}(E)^2 + 1.
$$
\end{lem}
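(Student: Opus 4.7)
The plan is to reduce this to a direct double application of Lemma~\ref{prop2.1}, which establishes that every tilt stable object becomes tilt stable with slope zero after a canonical parameter change, and vice versa. The condition given in the statement is exactly the algebraic constraint that makes two such parameter changes land in the same normalised stability condition.

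First, I would set $\vartheta \cneq \nu_{\omega,B}(E)$ and, given $(\alpha,\beta)$ satisfying the quadratic relation
\[
\alpha^2 + 3(\beta-\vartheta)^2 = 3\vartheta^2 + 1,
\]
define $\tau \cneq (\vartheta-\beta)/\alpha \in \mathbb{R}$. A routine check then shows that $(\alpha,\beta,\tau)$ satisfies both matching relations from the discussion immediately preceding Lemma~\ref{lem:wall-tiltstable}, namely
\[
\vartheta = \beta + \tau\alpha \quad \text{and} \quad \sqrt{3\vartheta^2+1} = \alpha\sqrt{3\tau^2+1}.
\]
Indeed, the first is immediate from the definition of $\tau$, and the second follows by squaring both sides and substituting $\tau\alpha = \vartheta-\beta$ into the original quadratic relation.

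Next, I would invoke Lemma~\ref{prop2.1} for the pair $(\omega,B)$ with parameter $\vartheta$: the $\nu_{\omega,B}$-stability of $E$ with slope $\vartheta$ is equivalent to the statement that $E$ lies in $\bB_{\eta\omega, B+\vartheta\omega}$ and is $\nu_{\eta\omega,B+\vartheta\omega}$-stable of slope $0$, where $\eta = \sqrt{3\vartheta^2+1}$.

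Finally, I would apply Lemma~\ref{prop2.1} in the opposite direction to the pair $(\alpha\omega, B+\beta\omega)$ with parameter $\tau$: to show that $E$ is $\nu_{\alpha\omega,B+\beta\omega}$-stable (necessarily of slope $\tau$, by the matching relations), it suffices to show $E$ lies in $\bB_{\alpha\sqrt{3\tau^2+1}\omega,\, B+(\beta+\tau\alpha)\omega}$ and is $\nu$-stable there with slope $0$. By the two matching relations, this target heart and slope function coincide identically with those from the previous paragraph, so the conclusion is immediate.

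There is essentially no obstacle: the lemma is a purely formal consequence of the ``Hodge-like'' parametrisation built into Lemma~\ref{prop2.1}, and the only nontrivial content is the algebraic identification of the quadratic relation in the statement with the two matching conditions appearing in the discussion before the lemma. Geometrically, this reflects the well-known fact that walls for tilt stability in the $(\alpha,\beta)$-half-plane are circles centred on the $\beta$-axis with the specified radius, and on such a ``numerical wall'' the set of tilt (semi)stable objects is constant.
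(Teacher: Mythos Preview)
Your proposal is correct and follows essentially the same approach as the paper: the paper's proof is precisely the short discussion immediately preceding the lemma, which applies Lemma~\ref{prop2.1} twice to identify $\HN^{\nu}_{\omega, B}(\vartheta)$ with $\HN^{\nu}_{\alpha\omega, B+\beta\omega}(\tau)$ via the common normalisation to slope zero, under the two matching relations you wrote down. Your write-up merely makes explicit the definition $\tau = (\vartheta-\beta)/\alpha$ and the algebraic verification that the quadratic constraint in the statement is equivalent to those two relations; one cosmetic point is that the locus in the $(\alpha,\beta)$-plane is an ellipse here (due to the factor~$3$), not a circle.
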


\subsection{Generalized conjectural BG inequality}\label{subsec:Generalized}

Let $E \in \bB_{\omega,B}$ be a $\nu_{\omega,B}$-stable object with $\nu_{\omega,B}(E) =\vt$ for some $\vt \in \mathbb{R}$. 
So 
\begin{equation}
\label{eq2}
v_2^B(E) - \frac{1}{6}v_0^B(E)= \vt v_1^B(E).
\end{equation}

From Proposition \ref{prop2.1}, $E \in \bB_{\eta\omega, B+\vt\omega}$ is $\nu_{\eta\omega, B+\vt\omega}$-stable with $\nu_{\eta\omega, B+\vt\omega}(E) =0$. 
So the inequality for $E$ in Conjecture~\ref{conj:BMT}
reads as:
$$
v_3^{B+\vt\omega}(E) \le \frac{(3\vt^2+1)}{18}v_1^{B+\vt\omega}(E).
$$
That is,
$$
v_3^B(E) - \vt v_2^B(E) + \frac{(6\vt^2-1)}{18} v_1^B(E) + \frac{\vt}{18} v_0^B(E) \le 0.
$$
 As $\vt \ne \infty$, $v_1^B(E) > 0$. Therefore, from multiplying both sides of the above inequality by 
 $v_1^B(E)$ and then using \eqref{eq2}, we get the inequality
\begin{align*}
18v_1^B(E) v_3^B(E) - 12(v_2^B(E))^2 + 2  v_0^B(E) v_2^B(E) - (v_1^B(E))^2 \le 0.
\end{align*}
Since $\overline{\Delta}_{\omega,B}(E) = (v_1^B(E))^2 -  2  v_0^B(E) v_2^B(E)$ 
and $\overline{\nabla}_{\omega, B}(E) = 2(v_2^B(E))^2 - 3v_1^B(E)v_3^B(E)$,
 we obtain the following result: 
\begin{thm}\label{thm:BG:equiv}
For a given smooth projective 3-fold $X$, 
Conjecture \ref{conj:BMS} holds if and only if 
Conjecture \ref{conj:BMT}  holds for all the
complexified ample classes $B + i \omega$.
\end{thm}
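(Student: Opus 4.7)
The plan is to prove both implications directly, largely following the calculation already carried out immediately before the theorem statement. The forward direction is immediate: if $E$ is $\nu_{\omega,B}$-semistable with $\nu_{\omega,B}(E)=0$, then $v_2^B(E) = v_0^B(E)/6$ and $v_1^B(E)>0$; substituting this relation into $\overline{\Delta}_{\omega,B}(E) + 6\,\overline{\nabla}_{\omega,B}(E) \ge 0$ and simplifying recovers the inequality $v_3^B(E) \le v_1^B(E)/18$ of Conjecture \ref{conj:BMT}.

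For the reverse implication, I would combine the rescaling identity of Lemma \ref{prop2.1} with the Chern character formula \eqref{eqn:chern}. Given a $\nu_{\omega,B}$-stable object $E$ with tilt slope $\vt$, Lemma \ref{prop2.1} identifies $E$ with a $\nu_{\eta\omega,B+\vt\omega}$-stable object of slope $0$, where $\eta=\sqrt{3\vt^2+1}$. Applying Conjecture \ref{conj:BMT} at the complexified ample class $(B+\vt\omega)+i\eta\omega$ yields
\[
v_3^{B+\vt\omega}(E) \le \tfrac{3\vt^2+1}{18}\, v_1^{B+\vt\omega}(E),
\]
with the twisted characters computed via \eqref{eqn:chern}. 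Multiplying by $v_1^B(E)>0$, expanding, and using the slope relation $v_2^B(E) - v_0^B(E)/6 = \vt\, v_1^B(E)$, the inequality collapses exactly to $\overline{\Delta}_{\omega,B}(E)+6\,\overline{\nabla}_{\omega,B}(E)\ge 0$, which is Conjecture \ref{conj:BMS} for $E$. This is the content of the pre-theorem computation, which takes care of the stable case.

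To extend from tilt stable to tilt semistable objects, I would use Jordan--H\"older in $\bB_{\omega,B}$: any $\nu_{\omega,B}$-semistable $E$ is an iterated extension of tilt stable factors $E_i$ of common tilt slope $\vt$. Each $E_i$ satisfies Conjecture \ref{conj:BMS} by the stable case, and the inequality for $E$ then follows from a cone-convexity argument parallel to the proof of Proposition \ref{lem:Delkt} (cf.\ \cite[Lemma~A.7]{BMS}): the classes $v^B(E_i)$ and $v^B(E)=\sum v^B(E_i)$ lie on the ray in $\Gamma_{\mathbb{R}}$ cut out by $\nu_{\omega,B}(\cdot)=\vt$, and $\overline{\Delta}_{\omega,B}+6\,\overline{\nabla}_{\omega,B}$ is a quadratic form which is non-negative on the extremal rays generated by the $v^B(E_i)$.

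The main obstacle is precisely this semistable-to-stable reduction, since the quadratic form $\overline{\Delta}_{\omega,B}+6\,\overline{\nabla}_{\omega,B}$ is not additive under extensions; the convexity of the relevant cone, together with the non-negativity on extremal rays inherited from the stable case, circumvents this exactly as in the cited lemma. The identity manipulation in the reverse direction is routine given \eqref{eqn:chern}, provided one notes that $v_1^B(E)>0$ (which holds whenever $\nu_{\omega,B}(E)\ne\infty$) so that clearing denominators preserves the direction of the inequality.
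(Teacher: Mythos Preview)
Your proposal is correct and follows essentially the same approach as the paper: the forward direction is immediate, and the reverse direction is exactly the computation in Subsection~\ref{subsec:Generalized} using Lemma~\ref{prop2.1} to transport a tilt-stable object of slope $\vt$ to one of slope $0$ at the rescaled class $(B+\vt\omega)+i\eta\omega$.

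The only point worth flagging is your treatment of the passage from stable to semistable. The Jordan--H\"older plus convexity route you sketch is more delicate than necessary: the quadratic form $\overline{\Delta}_{\omega,B}+6\overline{\nabla}_{\omega,B}$ is identically \emph{zero} (not negative definite) on $\Ker W$, so the hypothesis of \cite[Lemma~A.7]{BMS} is not met without a perturbation of the kind used in Lemma~\ref{lem:quad}. A cleaner resolution is to observe that the proof of Lemma~\ref{prop2.1} already works verbatim for \emph{semistable} objects: the destabilizing sequence produced there satisfies $\nu_{\omega,B}(F)>\vt>\nu_{\omega,B}(G)$ with strict inequalities, which contradicts semistability just as well as stability. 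Hence the pre-theorem computation applies directly to any $\nu_{\omega,B}$-semistable $E$, and no separate reduction step is needed.
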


\subsection{Another equivalent form of BG inequality conjecture}

In this subsection we formulate an equivalent form of Conjecture \ref{conj:BMS} which only considers BG type inequalities for  a small class of tilt stable objects. 
This generalizes \cite[Conjecture 5.3]{BMS} and we show that it is equivalent to Conjecture \ref{conj:BMS}. 
The following discussion is not needed in the rest of the paper, and so the reader is safe to skip this subsection.

Most of our arguments in this subsection are closely related to Section 5 of \cite{BMS}, and  also we try to follow somewhat similar notations. 

Let us consider the complexified classes parametrized by $\alpha \in \mathbb{R}_{>0}$ and $\beta \in \mathbb{R}$:
$$
(B + \beta \omega) + i (\alpha \omega).
$$
By definition  
$$
\nu_{\alpha \omega, B+ \beta\omega} = \frac{\omega \ch_2^{B+\beta\omega} - \frac{1}{6}\alpha^2\omega^3 \ch_0}{\alpha \omega^2 \ch_1^{B+\beta\omega}}.
$$
Therefore, we can consider 
$$
Z_{\alpha, \beta}^{\nu} = -\left( \omega \ch_2^{B+\beta\omega} - \frac{1}{6}\alpha^2\omega^3 \ch_0 \right) + i \omega^2 \ch_1^{B+\beta\omega}
$$
as the associated group homomorphism in $\nu_{\alpha \omega, B+ \beta\omega}$ tilt stability.
For a given object $E$, if we have
$$
\lim_{\alpha \to 0^+} - \Ree Z_{\alpha , \beta}^\nu(E) =0,
$$ 
when  $\beta \to \overline{\beta}$, then $\overline{\beta}$  satisfies $\omega \ch_2^{B+\overline{\beta} \omega}(E) = 0$.
That is
$$
v_0^B(E) \, \overline{\beta}^2 - 2v_1^B(E) \, \overline{\beta} + 
2 v_2^B(E) = 0.
$$
We consider one of its root defined by
\begin{equation}
\label{eq:limit-beta}
\overline{\beta}(E) \cneq \frac{2 v_2^B(E)}{v_1^B(E) + \sqrt{\overline{\Delta}_{\omega, B}(E)}}.
\end{equation}
We conjecture the following, which generalizes  \cite[Conjecture 5.3]{BMS}. Also our claim is directly adapted from their formulation.
\begin{conj}
\label{conj:limitBG}
Let $E$ be an object of $D^b \Coh(X)$.  Suppose there is an open neighborhood $U \subset \mathbb{R}^2$ containing $(0, \overline{\beta}(E))$ such that for any $(\alpha, \beta) \in U$ with $\alpha>0$, 
$E \in \bB_{\alpha\omega, B+\beta\omega}$ is $\nu_{\alpha\omega, B+\beta\omega}$-stable. Then 
$$
\ch_3^{B + \overline{\beta}(E) \omega}(E) \le 0.
$$ 
\end{conj}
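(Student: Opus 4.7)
The plan is to derive Conjecture~\ref{conj:limitBG} from Conjecture~\ref{conj:BMS} by evaluating the generalized BG inequality at points $(\alpha,\beta)\in U$ approaching $(0,\overline{\beta}(E))$. Since by hypothesis $E$ is $\nu_{\alpha\omega,B+\beta\omega}$-stable for every such point with $\alpha>0$, the inequality
$$\overline{\Delta}_{\alpha\omega,\,B+\beta\omega}(E) + 6\,\overline{\nabla}_{\alpha\omega,\,B+\beta\omega}(E) \ge 0$$
follows directly from Conjecture~\ref{conj:BMS}.

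First I would expand both quadratic forms using the scaling identity $v_j^{B+\beta\omega}(E)=\alpha^{3-j}\omega^{3-j}\ch_j^{B+\beta\omega}(E)$ that comes from replacing $\omega$ by $\alpha\omega$. This produces a factor of $\alpha^4$ out of $\overline{\Delta}_{\alpha\omega,B+\beta\omega}$ and $\alpha^2$ out of $\overline{\nabla}_{\alpha\omega,B+\beta\omega}$, so after dividing by $\alpha^2$ the inequality takes the schematic form $\alpha^{2}P(\beta)+Q(\beta)\ge 0$ with $Q(\beta)=12\bigl(\omega\ch_2^{B+\beta\omega}(E)\bigr)^{2}-18\,\omega^{2}\ch_1^{B+\beta\omega}(E)\cdot\ch_3^{B+\beta\omega}(E)$ and $P$ the remaining part from $\overline{\Delta}$.

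Next I would send $(\alpha,\beta)\to(0,\overline{\beta}(E))$ along a path inside $U$ with $\alpha>0$. The defining equation of $\overline{\beta}(E)$ forces $\omega\ch_2^{B+\overline{\beta}(E)\omega}(E)=0$, which kills both the $\alpha^{2}P(\beta)$ summand and the $(\omega\ch_2^{B+\beta\omega})^{2}$ summand of $Q$ in the limit. What survives is
$$-18\,\omega^{2}\ch_1^{B+\overline{\beta}(E)\omega}(E)\cdot\ch_3^{B+\overline{\beta}(E)\omega}(E) \ge 0.$$
A short computation from~\eqref{eq:limit-beta} — using that $\overline{\beta}(E)$ is the smaller real root of $v_0^B\beta^{2}-2v_1^B\beta+2v_2^B=0$ and rationalizing the product $(v_1^B-\sqrt{\overline{\Delta}_{\omega,B}})(v_1^B+\sqrt{\overline{\Delta}_{\omega,B}})=2v_0^B v_2^B$ — yields the identity $\omega^{2}\ch_1^{B+\overline{\beta}(E)\omega}(E)=\sqrt{\overline{\Delta}_{\omega,B}(E)}$. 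Whenever $\overline{\Delta}_{\omega,B}(E)>0$ this coefficient is strictly positive, and dividing gives the desired inequality $\ch_3^{B+\overline{\beta}(E)\omega}(E)\le 0$.

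The main obstacle will be the degenerate case $\overline{\Delta}_{\omega,B}(E)=0$, in which the surviving limit inequality collapses to $0\ge 0$ and the division step fails. I would handle this by combining Lemma~\ref{lem:wall-tiltstable} and Proposition~\ref{prop:slope-bounds} to show that a discriminant-zero object which is $\nu_{\alpha\omega,B+\beta\omega}$-stable throughout an open neighborhood of $(0,\overline{\beta}(E))$ must be of a very restricted shape: via Lemma~\ref{prop2.1} one may translate stability to the vertical wall $\nu=0$, and then the slope bounds of Proposition~\ref{prop:slope-bounds} force $\hH_{\Coh(X)}^{i}(E)$ to saturate the classical Bogomolov--Gieseker equality. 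In such rigid cases $E$ reduces, up to shift and twist, to a slope-stable reflexive sheaf with vanishing discriminant, for which $\ch_3^{B+\overline{\beta}(E)\omega}(E)\le 0$ can be verified directly by a Hirzebruch--Riemann--Roch computation, completing the argument in all cases.
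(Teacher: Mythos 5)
The statement you set out to prove is a \emph{conjecture}: the paper does not prove it, and it is open for general 3-folds. Your argument does not prove it either, because its first step invokes Conjecture~\ref{conj:BMS}, an unproven conjecture which the paper shows (in the theorem of Subsection 3.7, following \cite[Theorem~5.4]{BMS}) to be \emph{equivalent} to Conjecture~\ref{conj:limitBG}. Deriving the statement from an equivalent open conjecture is not a proof of the statement; what you have actually written is one direction of the paper's equivalence theorem, namely ``Conjecture~\ref{conj:BMS} $\Rightarrow$ Conjecture~\ref{conj:limitBG}''. For that implication your limiting computation is essentially correct and is the same as the argument in \cite[Theorem~5.4]{BMS} to which the paper defers: the scalings $\overline{\Delta}_{\alpha\omega,B+\beta\omega}=\alpha^{4}\overline{\Delta}_{\omega,B+\beta\omega}$ and $\overline{\nabla}_{\alpha\omega,B+\beta\omega}=\alpha^{2}\overline{\nabla}_{\omega,B+\beta\omega}$ are right, $\omega\ch_2^{B+\overline{\beta}(E)\omega}(E)=0$ holds by the definition \eqref{eq:limit-beta}, and the identity $\omega^{2}\ch_1^{B+\overline{\beta}(E)\omega}(E)=\sqrt{\overline{\Delta}_{\omega,B}(E)}$ follows by rationalizing as you indicate, so when $\overline{\Delta}_{\omega,B}(E)>0$ the limit does give $\ch_3^{B+\overline{\beta}(E)\omega}(E)\le 0$.

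Even granting the conditional framing, your treatment of the degenerate case $\overline{\Delta}_{\omega,B}(E)=0$ has a gap. The assertion that such an object ``reduces, up to shift and twist, to a slope-stable reflexive sheaf with vanishing discriminant, for which $\ch_3^{B+\overline{\beta}(E)\omega}(E)\le 0$ can be verified directly by a Hirzebruch--Riemann--Roch computation'' is not justified: Proposition~\ref{prop:slope-bounds} and Lemma~\ref{lem:wall-tiltstable} do not by themselves force the cohomology sheaves to saturate the classical Bogomolov--Gieseker equality, and even after reducing to a $\mu_{\omega}$-stable sheaf $U$ with $\overline{\Delta}_{\omega,B}(U)=0$ the bound on $\ch_3^{B+\overline{\beta}(E)\omega}$ is not a Riemann--Roch identity --- it requires the classification of discriminant-zero tilt-stable objects carried out in \cite[Lemma~5.6]{BMS}, which is precisely what the paper's Proposition~\ref{prop:BG-zero-delta} invokes. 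So the degenerate case should be replaced by an appeal to Proposition~\ref{prop:BG-zero-delta}, and the overall claim should be downgraded from a proof of Conjecture~\ref{conj:limitBG} to a proof that Conjecture~\ref{conj:BMS} implies it.
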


Let $E \in \bB_{\omega, B}$ be $\nu_{\omega, B}$-stable. Recall  Lemma \ref{lem:wall-tiltstable}: $E \in \bB_{\alpha \omega, B+ \beta \omega}$ is $\nu_{\alpha \omega, B+\beta\omega}$-stable along the wall in $\mathbb{R}^2_{\alpha, \beta}$ defined by 
\begin{equation*}
\label{eq:wall-tilt}
\wW_{\omega, B}(E) : = \left\{ (\alpha, \beta):  \alpha^2  + 3 \left( \beta - \nu_{\omega,B}(E)\right)^2  = 3\nu_{\omega,B}(E)^2 + 1, \ \alpha >0 \right\}.
\end{equation*}
Let $\dD_{\omega, B}(E)$ be the interior of $\wW_{\omega, B}(E)$ in $\alpha \ge 0$. That is for any $E \in \bB_{\omega, B}$, we define 
\begin{equation*}
\label{eq:interior-wall}
\dD_{\omega, B}(E) : =  \left\{  (\alpha, \beta):  \alpha^2  + 3 \left( \beta - \nu_{\omega,B}(E)\right)^2  <  3\nu_{\omega,B}(E)^2 + 1, \ \alpha \ge 0 \right\}.
\end{equation*}

The $\overline{\Delta}_{\omega, B}$  values  of objects in $\bB_{\omega,B}$ are very important for us. 
In particular,   \cite[Corollary 7.3.2]{BMT} says if  $E \in \bB_{\omega,B}$ is $\nu_{\omega,B}$-stable, 
then 
\begin{equation*}
\overline{\Delta}_{\omega, B}(E) \ge 0.
\end{equation*}
Moreover, we have the following:
\begin{prop}
\label{prop:BG-zero-delta}
The inequalities in Conjectures \ref{conj:BMS} and \ref{conj:limitBG} hold for corresponding tilt stable objects $E$ with 
$\overline{\Delta}_{\omega, B}(E) = 0$. 
\end{prop}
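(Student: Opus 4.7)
The plan is to reduce both conjectural inequalities to a single numerical bound and then verify that bound using the classical Bogomolov-Gieseker equality theory. First, by Lemma~\ref{prop2.1}, I would replace $(\omega,B)$ with $(\eta\omega, B+\vt\omega)$, where $\vt = \nu_{\omega, B}(E)$ and $\eta = \sqrt{3\vt^2 + 1}$, to reduce to the case $\nu_{\omega,B}(E)=0$. The hypothesis $\overline{\Delta}_{\omega,B}(E)=0$ is preserved under this change. In this reduced setting, when $v_0(E) \ne 0$, one directly computes $v_2^B(E) = v_0(E)/6$ and $v_1^B(E) = v_0(E)/\sqrt{3}$ (the positive root is chosen since $v_1^B \geq 0$ on $\bB_{\omega,B}$), whence $\overline{\beta}(E) = 1/\sqrt{3}$. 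A routine algebraic manipulation then shows that both the inequality of Conjecture~\ref{conj:BMS} and that of Conjecture~\ref{conj:limitBG} for $E$ become equivalent to the single numerical bound
\[
v_3^B(E) \leq \frac{v_0(E)}{18\sqrt{3}}.
\]

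Next, I would analyze $E$ via its cohomology sheaves $E_{-1}, E_0$ using Proposition~\ref{prop:slope-bounds}(iii), which gives $E_{-1}\in\HN^\mu_{\omega,B}((-\infty,-1/\sqrt{3}])$ and, when $E_0$ is torsion free, $E_0\in\HN^\mu_{\omega,B}([1/\sqrt{3},\infty))$, with the equality cases characterized by the vanishing of $v_2^{B\mp\omega/\sqrt{3}}$ of the respective sheaves. Expanding $\overline{\Delta}_{\omega, B}$ as a quadratic form on the short exact sequence $0 \to E_{-1}[1] \to E \to E_0 \to 0$, and using the classical Bogomolov-Gieseker inequality $\overline{\Delta}_{\omega,B}(F) \geq 0$ for each $\mu$-semistable Harder-Narasimhan factor $F$ of $E_{-1}$ and $E_0$, a direct calculation shows that $\overline{\Delta}_{\omega,B}(E) = 0$ forces either $v_0(E_{-1}) = 0$ or $v_0(E_0) = 0$, and that each HN factor $F$ of the surviving cohomology must satisfy $\overline{\Delta}_{\omega,B}(F) = 0$ together with $\mu_{\omega,B}(F) = \pm 1/\sqrt{3}$.

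Finally, for a torsion-free $\mu_{\omega,B}$-semistable sheaf $F$ with $\overline{\Delta}_{\omega,B}(F)=0$ and $\mu_{\omega,B}(F) = \pm 1/\sqrt{3}$, the classical Bogomolov-Gieseker equality case combined with the Hodge index equality forces $\ch^B(F) = \ch_0(F)\,e^{\pm \omega/\sqrt{3}}$ up to numerical equivalence (via either the Kobayashi-Simpson identification of polystable representatives with projectively flat bundles, or a direct quadratic-form argument passing to the associated graded). A short substitution then yields $\ch_3^{B \pm \omega/\sqrt{3}}(F) = 0$, giving $v_3^B(F) = \pm v_0(F)/(18\sqrt{3})$ and hence the desired numerical bound, achieved with equality. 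The boundary cases where $v_0(E)=0$ (which, together with $\overline{\Delta}=0$, forces $v_1^B(E)=0$ so that $E$ is supported in codimension $\geq 2$) and where $E_0$ contains a torsion subsheaf are handled by separate direct case analyses, exploiting the explicit structure of tilt-stable objects in $\mathrm{Coh}_{\leq 1}(X)$ and the fact that $E$ is tilt \emph{stable} (not merely semistable). The main obstacle lies in this last step: propagating the Bogomolov-Gieseker equality for non-polystable $\mu$-semistable sheaves to the numerical identity $\ch^B(F) = \ch_0(F)\,e^{\pm\omega/\sqrt{3}}$, which is the essential analytic input that underlies the vanishing of $\ch_3^{B\pm\omega/\sqrt{3}}(F)$.
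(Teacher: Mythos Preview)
Your outline is essentially correct and follows the same route as \cite[Lemma~5.6]{BMS}, to which the paper defers. Two points are worth flagging.

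First, a sign slip: after reducing to $\nu_{\omega,B}(E)=0$, the rank $v_0(E)$ may be negative, so one has $v_1^B(E)=\lvert v_0(E)\rvert/\sqrt 3$, $\overline\beta(E)=\operatorname{sign}(v_0(E))/\sqrt 3$, and the common target inequality is $v_3^B(E)\le\lvert v_0(E)\rvert/(18\sqrt 3)$; the subsequent case analysis should be adjusted accordingly.

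Second, your passage through the cohomology sheaves $E_{-1},E_0$ and their Harder--Narasimhan filtrations, while workable, is more laborious than the standard argument. The shortcut is the ``no walls'' observation: a tilt-\emph{stable} object with $\overline\Delta_{\omega,B}(E)=0$ remains tilt-stable for every $(\alpha,\beta)$ (an immediate consequence of the support-property quadratic form, cf.\ \cite[Corollary~7.3.2]{BMT}), and letting $\alpha\to\infty$ then shows directly that $E$ or $E[-1]$ is a single $\mu_\omega$-stable reflexive sheaf, bypassing the cohomology decomposition and the delicate claim that one of $v_0(E_{-1}),v_0(E_0)$ must vanish. Your identification of the final step as the crux is correct, and the analytic input you name is exactly what is used: for a $\mu_\omega$-polystable reflexive sheaf $F$ with $\omega\cdot\Delta(F)=0$ (only this $\omega$-contraction is needed, not the full $\Delta(F)=0$), the Donaldson--Uhlenbeck--Yau theorem together with its Bando--Siu extension to reflexive sheaves yields a projectively flat admissible Hermitian--Einstein metric, so $F$ is locally free with $\ch(F)=\ch_0(F)\,e^{c_1(F)/\ch_0(F)}$. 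Combined with the Hodge-index equality case $\ch_1^B(F)\propto\omega$, this pins down $\ch_3^B(F)$ exactly, and both conjectural inequalities hold with equality.
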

\begin{proof}
Similar to the proof of \cite[Lemma 5.6]{BMS}. 
\end{proof}
The following result generalizes the claim in \cite[Lemma 5.5]{BMS}.
\begin{prop}
\label{prop:beta-limit-position}
Let $E \in \bB_{\omega, B}$ be $\nu_{\omega, B}$-stable with $\overline{\Delta}_{\omega, B}(E)>0$. Then 
$$
(0, \overline{\beta}(E)) \in \dD_{\omega, B}(E).
$$
\end{prop}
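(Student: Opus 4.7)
The plan is to reduce to the case $\nu_{\omega,B}(E) = 0$ using Lemma~\ref{prop2.1}, and then verify the claim by a short algebraic computation. Assume first that $\vartheta := \nu_{\omega,B}(E) \in \mathbb{R}$, and set $\eta := \sqrt{3\vartheta^2+1}$. By Lemma~\ref{prop2.1}, $E$ is $\nu_{\eta\omega, B+\vartheta\omega}$-stable with slope $0$. Under the affine reparametrization $(\alpha, \beta) = (\eta\alpha', \vartheta + \eta\beta')$, the wall $\wW_{\omega,B}(E)$ is carried onto $\wW_{\eta\omega, B+\vartheta\omega}(E) = \{(\alpha')^2 + 3(\beta')^2 = 1\}$, so the half-disk $\dD_{\omega,B}(E)$ is carried onto $\dD_{\eta\omega, B+\vartheta\omega}(E) = \{(\alpha', \beta') : (\alpha')^2 + 3(\beta')^2 < 1, \ \alpha' \ge 0\}$. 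Since $\overline{\beta}(E)$ is characterized by the intrinsic vanishing $\omega\,\ch_2^{B + \overline{\beta}(E)\,\omega}(E) = 0$, this reparametrization sends $(0, \overline{\beta}(E))$ to $(0, \overline{\beta}'(E))$, where $\overline{\beta}'(E)$ denotes the analogue of $\overline{\beta}(E)$ computed with respect to $(\eta\omega, B+\vartheta\omega)$; moreover $\overline{\Delta}_{\eta\omega, B+\vartheta\omega}(E) = \eta^4\,\overline{\Delta}_{\omega, B}(E) > 0$.

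After this reduction, we may assume $\nu_{\omega,B}(E) = 0$, so writing $v_i := v_i^B(E)$ we have $v_2 = v_0/6$ and $\Delta := \overline{\Delta}_{\omega,B}(E) = v_1^2 - v_0^2/3$. From the definition,
\[
\overline{\beta}(E) \;=\; \frac{2 v_2}{v_1 + \sqrt{\Delta}} \;=\; \frac{v_0/3}{v_1 + \sqrt{\Delta}}.
\]
The membership $(0, \overline{\beta}(E)) \in \dD_{\omega,B}(E)$ is then equivalent to $3\,\overline{\beta}(E)^2 < 1$, i.e.\ $|v_0|/\sqrt{3} < v_1 + \sqrt{\Delta}$. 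Since $v_1 > 0$ (as $\nu_{\omega,B}(E) \in \mathbb{R}$) and $\Delta > 0$ yields $v_1^2 > v_0^2/3$, we obtain $v_1 > |v_0|/\sqrt{3}$, so the inequality follows at once, with room to spare from the extra $\sqrt{\Delta}$.

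The residual case $\vartheta = \infty$ (equivalently $v_1^B(E) = 0$) must be treated directly, since Lemma~\ref{prop2.1} does not apply there. A short analysis based on the cohomology sheaves of $E \in \bB_{\omega,B}$, exploiting that $\hH^{-1}(E) \in \fF_{\omega, B}$ is torsion free with $\mu_{\omega,B} \le 0$ and $\hH^0(E) \in \tT_{\omega, B}$, shows that $v_1^B(E) = 0$ forces $\hH^{-1}(E)$ to be $\mu$-semistable of slope $0$ and $\hH^0(E)$ to be supported in codimension $\ge 2$; such an $E$ either already has $\overline{\Delta}_{\omega,B}(E) \le 0$ or admits a proper subobject in $\bB_{\omega,B}$ of infinite tilt-slope (namely $\hH^{-1}(E)[1]$ or $\hH^0(E)$), violating $\nu_{\omega, B}$-stability. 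Hence this case does not arise under the hypotheses. The main technical point to be careful about is the transformation of $\overline{\beta}(E)$ under the reparametrization in the reduction step; this is forced by the intrinsic characterization $\omega\,\ch_2^{B + \overline{\beta}(E)\omega}(E) = 0$, which is invariant under rescaling of $\omega$.
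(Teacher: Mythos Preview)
Your argument is correct and takes a genuinely different route from the paper. The paper keeps $\vartheta=\nu_{\omega,B}(E)$ arbitrary and invokes Proposition~\ref{prop:slope-bounds} to bound $v_0^B(E)/3v_1^B(E)$ by $-\vartheta\pm\sqrt{\vartheta^2+1/3}$, then feeds this into the explicit formula for $\overline{\beta}(E)$. You instead pass to $\vartheta=0$ via Lemma~\ref{prop2.1} and the affine reparametrization $(\alpha,\beta)=(\eta\alpha',\vartheta+\eta\beta')$, after which the needed inequality $3\overline{\beta}(E)^2<1$ follows immediately from $v_1^B(E)>0$ and $\overline{\Delta}_{\omega,B}(E)>0$ alone, with no further appeal to cohomology-sheaf slope bounds. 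In fact the two arguments use the same underlying content: once one imposes $v_2^B=\tfrac16 v_0^B+\vartheta v_1^B$, the paper's pair of inequalities from Proposition~\ref{prop:slope-bounds} is algebraically equivalent to $\overline{\Delta}_{\omega,B}(E)\ge 0$, so your reduction isolates exactly what is needed from the hypotheses.

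Two small remarks. First, your sentence that $\overline{\beta}(E)$ is ``characterized by the intrinsic vanishing $\omega\,\ch_2^{B+\overline{\beta}(E)\omega}(E)=0$'' only pins it down up to the choice between two roots; one should check that the specific root $\overline{\beta}(E)=\tfrac{2v_2^B}{v_1^B+\sqrt{\overline{\Delta}_{\omega,B}}}$ transforms to the corresponding root $\overline{\beta}'(E)$ under the reparametrization. Writing $\overline{\beta}(E)=(v_1^B-\sqrt{\overline{\Delta}_{\omega,B}})/v_0^B$ when $v_0^B\neq 0$ and using $v_0^{B'}=\eta^3 v_0^B$, $v_1^{B'}=\eta^2(v_1^B-\vartheta v_0^B)$, $\overline{\Delta}'=\eta^4\overline{\Delta}$, one gets $\overline{\beta}'(E)=(\overline{\beta}(E)-\vartheta)/\eta$ on the nose (and the $v_0^B=0$ case is immediate). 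Second, the discussion of $\vartheta=\infty$ is unnecessary: $\dD_{\omega,B}(E)$ is only defined for $\nu_{\omega,B}(E)\in\mathbb{R}$, and the paper's own proof simply assumes $\vartheta\in\mathbb{R}$ from the outset.
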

\begin{proof}
Let $E \in \bB_{\omega, B}$ be $\nu_{\omega, B}$-stable with $\nu_{\omega, B}(E) = \vt$ for some $\vt \in \mathbb{R}$. 
So $v_2^B(E) = \frac{1}{6} v_0^B(E)  + \vt v_1^B(E)$, and 
by \eqref{eq:limit-beta}
\begin{equation}
\label{eq:beta-lim-tilt}
\overline{\beta}(E) =  \frac{\frac{v_0^B(E)}{3 v_1^B(E)}   + 2 \vt}{1 + \frac{\sqrt{ \overline{\Delta}_{\omega, B}(E) }}{v_1^B(E)} }.
\end{equation}
By Proposition \ref{prop:slope-bounds}, we have 
\begin{align*}
&\omega^2 \ch_1^{B + (\vt + \sqrt{\vt^2 + 1/3})\omega} \left(\hH^{0}(E)\right) \ge 0, \\
&\omega^2 \ch_1^{B + (\vt - \sqrt{\vt^2 + 1/3})\omega} \left(\hH^{-1}(E)\right) \le 0.
\end{align*}
Since 
$\vt + \sqrt{\vt^2 + 1/3} > 0$ and  $ \vt - \sqrt{\vt^2 + 1/3} < 0$, we have
\begin{align*}
v_1^B(E) - \left( \vt + \sqrt{\vt^2 + {1}/{3}}\right) v_0^B(E) \ge & 0,\\
v_1^B(E) - \left( \vt - \sqrt{\vt^2 + {1}/{3}}\right) v_0^B(E) \ge & 0.
\end{align*}
Therefore, by dividing $v_1^B(E)>0$,
\begin{equation*}
 - \vt  - \sqrt{\vt^2 +{1}/{3}} \,  \le \, \frac{v_0^B(E)}{3 v_1^B(E)} \, \le \, - \vt + \sqrt{\vt^2 +{1}/{3}} .
\end{equation*}
Since $v_1^B(E) > 0$ and $ \overline{\Delta}_{\omega, B}(E) > 0$, 
from \eqref{eq:beta-lim-tilt} together with the above inequalities  we get
\begin{equation*}
 \vt - \sqrt{\vt^2 + {1}/{3}} \, < \, \overline{\beta}(E) \, < \,  \vt + \sqrt{\vt^2 + {1}/{3}}.
\end{equation*}
That is, we have $(0, \overline{\beta}(E)) \in \dD_{\omega, B}(E)$ as required.
\end{proof}

Now we have the main result in this subsection.
\begin{thm}
Conjectures \ref{conj:BMS} and \ref{conj:limitBG} are equivalent.
\end{thm}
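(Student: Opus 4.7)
The plan is to prove both implications separately, using Theorem~\ref{thm:BG:equiv} to convert the harder direction into a statement about Conjecture~\ref{conj:BMT}, which matches more directly with the formulation of Conjecture~\ref{conj:limitBG}.

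\emph{Conjecture~\ref{conj:BMS} $\Rightarrow$ Conjecture~\ref{conj:limitBG}.} Let $E$ satisfy the hypotheses of Conjecture~\ref{conj:limitBG}. The case $\overline{\Delta}_{\omega, B}(E)=0$ is immediate from Proposition~\ref{prop:BG-zero-delta}, so assume $\overline{\Delta}_{\omega, B}(E)>0$. Pick a sequence $(\alpha_n, \beta_n)\in U$ with $\alpha_n>0$ converging to $(0, \overline{\beta}(E))$; by hypothesis $E$ is $\nu_{\alpha_n\omega, B+\beta_n\omega}$-stable at each such point, so the equivalent form of Conjecture~\ref{conj:BMS} provided by Theorem~\ref{intro:equiv} applies to $E$ with the data $(\alpha_n\omega, B+\beta_n\omega)$. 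After dividing by $\alpha_n^2$ and letting $n\to\infty$, the first three terms vanish thanks to $\omega\,\ch_2^{B+\overline{\beta}(E)\omega}(E)=0$, leaving $-18\,\omega^2\ch_1^{B+\overline{\beta}(E)\omega}(E)\cdot\ch_3^{B+\overline{\beta}(E)\omega}(E)\ge 0$. A short algebraic check from \eqref{eq:limit-beta} gives $\omega^2\ch_1^{B+\overline{\beta}(E)\omega}(E)=\sqrt{\overline{\Delta}_{\omega, B}(E)}>0$, whence $\ch_3^{B+\overline{\beta}(E)\omega}(E)\le 0$.

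\emph{Conjecture~\ref{conj:limitBG} $\Rightarrow$ Conjecture~\ref{conj:BMS}.} By Theorem~\ref{thm:BG:equiv} it is enough to verify Conjecture~\ref{conj:BMT} for every complexified ample class. Let $E$ be $\nu_{\omega, B}$-semistable with $\nu_{\omega, B}(E)=0$; since both sides of the BMT inequality are additive along Jordan-H\"older filtrations in the tilt-semistable subcategory of slope $0$, one may assume $E$ is tilt-stable, and the degenerate case $\overline{\Delta}_{\omega, B}(E)=0$ is again covered by Proposition~\ref{prop:BG-zero-delta}. Assuming $\overline{\Delta}_{\omega, B}(E)>0$: Proposition~\ref{prop:beta-limit-position} places $(0, \overline{\beta}(E))$ inside $\dD_{\omega, B}(E)$, Lemma~\ref{lem:wall-tiltstable} gives tilt-stability of $E$ along the entire semicircle $\wW_{\omega, B}(E)$, and the nested-walls structure together with local finiteness of walls in the $(\alpha, \beta)$-plane upgrades this to tilt-stability in an open two-dimensional neighborhood of $(0, \overline{\beta}(E))$. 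Conjecture~\ref{conj:limitBG} then yields $\ch_3^{B+\overline{\beta}(E)\omega}(E)\le 0$. Expanding via $\ch^{B+\overline{\beta}\omega}=e^{-\overline{\beta}\omega}\ch^B$, substituting $v_2^B(E)=v_0^B(E)/6$, and eliminating $v_0^B(E)$ using the identity $v_1^B(E)=v_0^B(E)(1+3\overline{\beta}(E)^2)/(6\overline{\beta}(E))$ which follows from \eqref{eq:limit-beta}, a direct algebraic simplification rewrites the inequality as
\[
v_3^B(E)\le \frac{v_1^B(E)}{18}-\frac{v_0^B(E)(3\overline{\beta}(E)^2-1)^2}{108\,\overline{\beta}(E)},
\]
and since the subtracted term is non-negative in the generic sign case (the degenerate cases $v_0^B(E)=0$ or $\overline{\beta}(E)=0$ being treated by hand, where the limit inequality $\ch_3^B(E)\le 0$ is already stronger than $v_3^B(E)\le v_1^B(E)/18$), the BMT inequality $v_3^B(E)\le v_1^B(E)/18$ follows.

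The main obstacle is upgrading tilt-stability of $E$ from the one-dimensional wall $\wW_{\omega, B}(E)$ to a two-dimensional open neighborhood of $(0, \overline{\beta}(E))$ in the reverse direction; this requires combining the nested-walls theorem with local finiteness of walls in the tilt-stability parameter space and careful control of potentially destabilizing subobjects as $\alpha\to 0^+$. All other steps amount to direct algebraic computations and limits.
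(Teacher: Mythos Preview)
Your proposal is correct and follows the same route as the paper, which simply cites \cite[Theorem~5.4]{BMS} after noting that Propositions~\ref{prop:BG-zero-delta} and~\ref{prop:beta-limit-position} have been established in the general (non-proportional $B$, $\omega$) setting. You have in fact written out more of the argument than the paper does: the limiting computation in the forward direction and the algebraic reduction to $v_3^B(E)\le v_1^B(E)/18$ in the reverse direction are both correct (indeed $\overline{\beta}(E)$ and $v_0^B(E)$ have the same sign when $\nu_{\omega,B}(E)=0$, so the subtracted term is always non-negative). The step you flag as the main obstacle---extending tilt-stability from the semicircle $\wW_{\omega,B}(E)$ to an open neighborhood of $(0,\overline{\beta}(E))$---is exactly the content borrowed from \cite{BMS}, and is the only place where real work beyond Propositions~\ref{prop:BG-zero-delta} and~\ref{prop:beta-limit-position} is required.
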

\begin{proof}
The
proof is identical to that of \cite[Theorem 5.4]{BMS} as we have  Propositions \ref{prop:BG-zero-delta} and \ref{prop:beta-limit-position} in general setup. 
\end{proof}

\begin{rmk}
Consequently, we have two equivalent forms Conjectures \ref{conj:BMS} and \ref{conj:limitBG}  of  Conjecture \ref{conj:BMT}.
In \cite{BMS}, the authors showed that Conjecture~\ref{conj:BMT} holds for abelian 3-folds. They firstly reduced Conjecture~\ref{conj:BMT} for $B$ and $\omega$ are parallel cases by using the multiplication map in abelian varieties, and then proved Conjecture \ref{conj:limitBG} for those cases. 
Since we have  the equivalent formulations of the conjectures in general, 
following similar arguments in   \cite[Section 7]{BMS}, one can directly prove Conjecture \ref{conj:limitBG} for abelian 3-folds. 
\end{rmk}
\section{Moduli stacks of semistable objects}\label{sec:moduli}
Our main aim of this section is to complete the proof of Theorem~\ref{thm:intro:stack}. 
\subsection{Notation}\label{subsec:state}
Let $X$ be a smooth projective variety. 
Due to Lieblich~\cite{LIE}, 
there is an algebraic stack $\mM$ locally of finite
type  parameterizing objects $E \in D^b \Coh(X)$
with
\begin{align*}
\Ext^{<0}(E, E)=0.
\end{align*}
Let $\sigma=(Z, \aA)$ be a stability condition on 
$D^b \Coh(X)$ with respect to 
some data $(\Gamma, \cl)$.  
For given $v \in \Gamma$, one can consider 
the substack
\begin{align}\label{M:emb}
\mM_{\sigma}(v) \subset \mM
\end{align}
which parametrizes $\sigma$-semistable objects
$E \in \aA$ with $\cl(E)=v$.
A priori, we do not know whether 
$\mM_{\sigma}(v)$ is an algebraic stack 
nor is of finite type. 

Suppose that $\dim X=3$, and let
$B \in \NS(X)_{\mathbb{Q}}$ and $\omega \in \NS(X)_{\mathbb{R}}$ an ample 
class with $\omega^2$ rational as in \eqref{class:choice}. 
If we assume Conjecture~\ref{conj:BMT}, then 
we have the associated Bridgeland 
stability condition $\sigma_{\omega, B}$ given 
by (\ref{BMT:stab}). 
Note that $\sigma_{\omega, B}$ is good in the sense of 
Definition~\ref{def:good} by Remark~\ref{rmk:good}. 
Let
\begin{align*}
\Stab_{\omega, B}^{\circ}(X) \subset \Stab_{\omega, B}(X)
\end{align*}
be the connected 
component which contains $\sigma_{\omega, B}$. 
\begin{rmk}
Indeed, an argument in~\cite[Proposition~8.10]{BMS} shows that 
any $\sigma_{k\omega, B}$ for any 
$k \in \mathbb{Q}_{>0}$ are in the same 
connected component $\Stab_{\omega, B}^{\circ}(X)$. 
However, we will not use this fact. 
\end{rmk}

The purpose of this section is to prove the 
following result: 
\begin{thm}\label{thm:stack}
Let $X$ be a smooth projective 3-fold
satisfying 
Conjecture~\ref{conj:BMT}. 
Then for any $v \in \Gamma_{\omega, B}$
and 
$\sigma \in \Stab_{\omega, B}^{\circ}(X)$, 
the stack 
$\mM_{\sigma}(v)$
is a quasi-proper algebraic stack of finite type over $\mathbb{C}$,
such that the embedding (\ref{M:emb}) is an open immersion. 
\end{thm}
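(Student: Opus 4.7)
The plan is to follow the two-step reduction from \cite{Tst3}: show that establishing algebraicity and finite-typeness of $\mM_\sigma(v)$ reduces to verifying (i) \emph{generic flatness} of the heart $\aA$ (for every flat family $E\in D^b\Coh(X\times S)$, there exists a dense open $U\subset S$ over which the cohomologies $\hH^i_\aA(E|_{X\times U})$ behave well in flat families), together with (ii) \emph{boundedness} of the family of $\sigma$-semistable objects with fixed class $v$. Once these two properties are established for every $\sigma\in\Stab^\circ_{\omega,B}(X)$, openness of semistability and the local-finite wall-and-chamber structure ensure that $\mM_\sigma(v)\hookrightarrow\mM$ is an open substack of finite type. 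Quasi-properness will then be extracted separately from the Abramovich--Polishchuk valuative criterion~\cite{AP}.

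The key is to establish (i) and (ii) for the specific stability condition $\sigma_{\omega,B}$ built by the double-tilting construction of Subsection~\ref{subsec:double}, and then transport them along $\Stab^\circ_{\omega,B}(X)$. For $\sigma_{\omega,B}$ itself, my approach is to run the tilting process in Section~\ref{sec:veryweak} step by step. Start with the (trivially good) very weak stability condition $(Z,\Coh(X))$ from Subsection~\ref{subsec:slope}, for which generic flatness is classical and boundedness of Gieseker/slope semistable sheaves is standard. The first tilt produces $(Z_t^\dag,\bB_{\omega,B})$; here one needs to show that generic flatness and boundedness are inherited under the tilting operation $\leadsto$ of Subsection~\ref{subsec:tilt}. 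The main inputs are Remark~\ref{rmk:tilt} (which describes $\aA^\dag$ as a standard HRS tilt, so that $\hH^{-1}_{\aA^\dag}, \hH^0_{\aA^\dag}$ of a family are controlled by cohomologies in $\aA$ together with the torsion pair $(\tT,\fF)$), Lemma~\ref{lem:noether} (noetherianness after tilting, which bounds the length of destabilizing chains), and the BG-type inequality $\Delta_{k,t}(E)\ge 0$ of Proposition~\ref{lem:Delkt}, which together with the support-property bound (Lemma~\ref{rmk:support}) controls Chern characters of subobjects and quotients and hence forces boundedness of tilt-semistable objects. Applying Proposition~\ref{prop:good} and then the same tilting argument a second time yields generic flatness and boundedness for $\sigma_{\omega,B}=(Z_{\omega,B},\aA_{\omega,B})$.

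Next, I would propagate these two properties across $\Stab^\circ_{\omega,B}(X)$. Since the two conditions depend only on the heart up to a tilt with respect to a phase interval, openness of $\mu$-semistability along a continuous path in $\Stab^\circ$ (together with the support property, which guarantees a locally finite wall-and-chamber structure for each $v$) implies that any $\sigma\in\Stab^\circ_{\omega,B}(X)$ is obtained from $\sigma_{\omega,B}$ by a finite sequence of small deformations, each of which tilts the heart with respect to a finite-length torsion pair of $\sigma_{\omega,B}$-semistable objects of bounded Chern character. Generic flatness and boundedness are preserved under such a tilt by essentially the same argument as in the previous paragraph (the relevant HN filtrations exist in flat families over a dense open, and the class of semistable objects with fixed $v$ remains a bounded family). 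This gives (i) and (ii) for every $\sigma\in\Stab^\circ_{\omega,B}(X)$, and hence realizes $\mM_\sigma(v)$ as an open substack of $\mM$ of finite type.

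Finally, for quasi-properness I would invoke the Abramovich--Polishchuk valuative criterion~\cite{AP}: given a DVR $R$ with fraction field $K$, a family of $\sigma$-semistable objects over $\Spec K$ extends to a family over $\Spec R$ whose special fiber lies in $\aA$ and has the same class $v$, provided $\aA$ is noetherian (which holds by Lemma~\ref{lem:noether} applied twice). The HN filtration of this special fiber in $\aA$ with respect to $\sigma$ then produces, after passing to a semistable piece of phase equal to that of the generic fiber, an $R$-flat extension whose special fiber is $\sigma$-semistable of class $v$; this verifies the valuative criterion without separatedness. I expect the main obstacle in the whole argument to be the propagation of boundedness under the tilting step from $(Z,\Coh(X))$ to $(Z_t^\dag,\bB_{\omega,B})$ and from there to $(W_t^\dag,\aA_{\omega,B})$: one must use the full strength of the generalized BG inequality $\Delta\ge 0$ of Proposition~\ref{lem:Delkt} together with the equivalent inequality \eqref{intro:BG2} supplied by Theorem~\ref{thm:BG:equiv} to bound the Chern characters of HN factors of arbitrary objects appearing in flat families, since without this one cannot control the semistable locus within $\mM$.
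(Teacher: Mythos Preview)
Your overall strategy matches the paper's: reduce to generic flatness and boundedness for $\sigma_{\omega,B}$, inherit these through the double tilting, propagate along $\Stab^\circ_{\omega,B}(X)$, then invoke Abramovich--Polishchuk for the valuative criterion. There is, however, one genuine gap in the quasi-properness step, and one place where your sketch underestimates the work.

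Your quasi-properness argument does not close as written. Extending a $K$-family of $\sigma$-semistable objects to an $R$-family landing in $\aA$ (which \cite{AP} indeed gives for a noetherian heart) does not make the special fiber semistable, and ``passing to a semistable HN piece of the right phase'' does not yield an object of class $v$: the HN pieces have strictly smaller classes unless the special fiber was already semistable, and there is no mechanism in your sketch for modifying the $R$-family itself. The paper's device is to first rotate the stability condition by $e^{-i\pi\phi_0}$, where $Z(v)\in\mathbb{R}_{>0}e^{i\pi\phi_0}$, so that $\sigma$-semistable objects of class $v$ become precisely the objects of the rotated $\pP(1)$; then \cite[Theorem~4.1.1]{AP} applies directly to $\pP(1)$ and produces the extension already inside the semistable locus. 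Without this rotation (or a full Langton-type iteration, which you do not supply), the valuative criterion is not verified.

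On boundedness under tilting, you correctly flag this as the obstacle, but Proposition~\ref{lem:Delkt} and the support property are not enough by themselves: the BG inequality does not bound $v_3^B$. The paper packages what is actually needed as Assumption~\ref{assum:bound}, whose verification for $(Z,\Coh(X))$ requires Langer's boundedness theorem together with a duality argument (Lemmas~\ref{lem:dual}, \ref{lem:boundC}), and whose verification for $(W,\bB_{\omega,B})$ requires a separate analysis of the phase-$1/2$ category $\qQ^\dag_0(1/2)$ via the slope $\xi$ (Lemma~\ref{lem:boundP}) combined with the $t\to 0^+$ limit of Proposition~\ref{lem:tsmall}. Your outline does not indicate any of these ingredients.
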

Here 
an algebraic stack is called quasi-proper if 
it satisfies the 
valuative criterion of properness
in~\cite[Definition~2.44]{GT}, without the separatedness
condition.  
The proof of the above Theorem will 
be given in Subsection~\ref{subsec:proof}. 
The key ingredients are the 
boundedness and the 
generic flatness statements. 
First we recall the boundedness.  
\begin{defi}
A set of isomorphism classes of 
objects $\sS$ in $D^b \Coh(X)$
is called \textit{bounded}
if there is a $\mathbb{C}$-scheme $S$ of 
finite type, and an object 
\begin{align*}
\eE \in D^b \Coh(X \times S)
\end{align*}
 such that any object in $\sS$ is isomorphic to 
$\eE_s \cneq \dL i_{s}^{\ast}\eE$ for some $s \in S$. 
Here $i_s \colon X \times \{s \} \hookrightarrow X \times S$ is the inclusion. 
\end{defi}

Next we recall the generic flatness. 
Let $S$ be a smooth projective variety, and $\lL$ an ample line 
bundle on $S$. 
Let $\aA \subset D^b \Coh(X)$ be the heart of a bounded t-structure on 
$D^b \Coh(X)$ which is noetherian. 
By~\cite[Theorem~2.6.1]{AP}, the category
\begin{align}\label{def:AS}
\aA_{S} \cneq \{\eE \in D^b \Coh(X\times S) : 
\dR p_{X \ast}(\eE \otimes p_S^{\ast}\lL^{\otimes m}) \in \aA, \ m \gg 0 \}
\end{align}
is the heart of a bounded t-structure on $D^b \Coh(X \times S)$. 
Here $p_X$, $p_S$ are the projections from $X \times S$ onto 
the corresponding factors. 
\begin{defi}
We say that $\aA$ satisfies the generic flatness
if for any smooth projective variety $S$ and 
an object $\eE \in \aA_S$, there is a non-empty 
open subset $U \subset S$ such that 
$\eE_s \in \aA$ for any $s\in U$. 
\end{defi}
Note that if $\sigma=(Z, \aA)$ is a good stability condition, 
then the heart $\aA$ is noetherian (see~Remark~\ref{rmk:good}). 
We say that a good stability condition 
$\sigma=(Z, \aA)$ satisfies the generic flatness
if the heart $\aA$ satisfies the generic flatness.  

Our strategy for Theorem~\ref{thm:stack}
is to show that 
the boundedness and the generic flatness
are preserved (in some sense) 
under the tilting of very weak stability 
conditions 
$(Z, \aA) \leadsto (Z_t^{\dag}, \aA_t^{\dag})$
for $t>0$. 
This argument shows 
the required results for the stability 
condition 
$\sigma_{\omega, B}$ (see~Corollary~\ref{cor:Bbound}). 
Then we show that these properties are preserved under
the deformations of a stability condition (see~Proposition~\ref{lem:flat:con}),
and together with Abramovich-Polishchuk's valuative criterion
(see~Theorem~\ref{thm:val}) we 
conclude the result.

\subsection{Induction argument for the boundedness}
Let $X$ be a smooth projective variety, 
and take $\dD=D^b \Coh(X)$. 
We fix the data $(\Gamma, \cl)$ 
to consider very weak stability conditions on $\dD$, 
and use the notation as in Section~\ref{sec:veryweak}. 
Let $(Z, \aA)$ be a very weak stability condition 
on $\dD$ satisfying a BG inequality and which is also good
(see~Definition~\ref{def:good}). 
For $t>0$, let
 $(Z_t^{\dag}, \aA^{\dag})$ be the 
associated tilting given in 
Corollary~\ref{cor:support}. 
The purpose of this subsection is 
to prove the boundedness of 
$\mu_t^{\dag}$-semistable 
objects in $\aA^{\dag}$, 
assuming some kind of 
boundedness for $\mu$-semistable objects. 
We first prepare some notation. 
Let us fix an isomorphism
\begin{align}\label{fix:isom}
\Gamma_0 \otimes_{\mathbb{Z}} \mathbb{Q} \stackrel{\cong}{\to}
\mathbb{Q}^{r}.
\end{align}
\begin{defi}
We say a subset $S \subset \mathbb{Q}^r$ is 
\textit{bounded below (resp. above)} if 
there exist functions $f_i(x_1, \cdots, x_{i-1})$
for each $1\le i\le r$ such that 
every element $(s_1, \cdots, s_r) \in S$
satisfies
\begin{align*}
s_i \ge f_i(s_1, \cdots, s_{i-1}), \ 
(\mbox{resp. } s_i \le f_i(s_1, \cdots, s_{i-1})). 
\end{align*}
\end{defi}
\begin{rmk}\label{rmk:bounded}
It is easy to see that a subset $S \subset \mathbb{Q}^r$ is bounded
below (above) if and only if $S \cap S'$ is a finite 
set for any bounded above (below) subset
$S' \subset \mathbb{Q}^r$. 
\end{rmk}
If we choose an isomorphism (\ref{fix:isom}), 
it defines the notion of bounded 
below (above) subsets in $\Gamma_0$. 
For $v\in \Gamma$
and a subset $S \subset \Gamma_0$, let 
$M_{\mu}(v, S)$ be the 
set of isomorphism classes of 
$\mu$-semistable objects $E \in \aA$ with 
$\cl(E) \in v+S$. 
\begin{defi}\label{defi:boundZA}
A very weak stability condition $(Z, \aA)$
satisfies boundedness if either 
(i) or (ii) holds: 
\begin{enumerate}
\item $\Gamma_0 \neq 0$, and under a
suitable isomorphism (\ref{fix:isom}),
the set $M_{\mu}(v, S)$ of isomorphism classes of objects 
is bounded for any $v\in \Gamma$
with $\mu(v)<\infty$ and any bounded 
below subset $S \subset \Gamma_0$. 
\item $\Gamma_0=0$ and the same condition
of (i) holds for any $v\in \Gamma$. 
\end{enumerate}
\end{defi}

We put the following assumption
(recall the category $\pP_0^{\dag}(\phi)$ in 
Lemma~\ref{lem:t=0} and the 
slope function $\lambda$ in (\ref{def:lambda})): 
\begin{assum}\label{assum:bound}
\begin{enumerate}

\item 
The very weak stability condition $(Z, \aA)$
satisfies boundedness. 

\item For any $v \in \Gamma$
with $\mu(v) <\infty$
and any bounded below subset $S^{\dag} \subset \Gamma_0^{\dag}$, 
the isomorphism classes
of objects
\begin{align*}
E \in \langle U[1], \cC : U \in \aA \mbox{ is } \mu \mbox{-semistable} \rangle
\end{align*}
satisfying $\Hom(\cC, E)=0$ and $\cl(E) \in v+S^{\dag}$
is bounded. 

\item For any $v \in \Gamma \setminus \Gamma_0$, 
and any bounded below subset $S^{\dag} \subset \Gamma_0^{\dag}$, the 
isomorphism classes of $\lambda|_{\pP^{\dag}_0(1/2)}$-semistable 
objects $E \in \pP^{\dag}_0(1/2)$ with $\cl(E) \in v+S^{\dag}$
is bounded. 
\end{enumerate}
\end{assum} 
We have the following proposition: 
\begin{prop}\label{prop:bound}
Suppose that Assumption~\ref{assum:bound} holds. 
If $\Gamma_0^{\dag} \neq 0$, then  
the very weak stability condition $(Z_t^{\dag}, \aA^{\dag})$
satisfies boundedness for any $t>0$.  
\end{prop}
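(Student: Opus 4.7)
The plan is to reduce the boundedness at an arbitrary $t>0$ to the regime $0<t'\ll 1$, where Proposition~\ref{lem:tsmall} together with Lemma~\ref{lem:t=0} translates the problem into precisely the situation covered by Assumption~\ref{assum:bound}. The key initial observation is that $Z_t^{\dag}$ and the quadratic form $\Delta$ both descend to $\Gamma/\Gamma_0^{\dag}$: indeed $\Gamma_0^{\dag}\subset\Gamma_0$ forces $Z|_{\Gamma_0^{\dag}}=0$, while $\Gamma_0^{\dag}\subset\Ker(\Delta_I)$ combined with $\Delta_R|_{\cC}=0$ (from Definition~\ref{defi:BG}) gives $\Delta_R|_{\Gamma_0^{\dag}}=\Delta_I|_{\Gamma_0^{\dag}}=0$. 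Consequently, for every $s\in S^{\dag}$ the values $Z_t^{\dag}(v+s)$, $\Delta(v+s)$, and $\mu_t^{\dag}(v+s)$ agree with those on $v$; the hypothesis $\mu_t^{\dag}(v)<\infty$ thereby forces $\Imm Z_t^{\dag}(v+s)=-\Ree Z(v)$ to be a fixed positive real, and all $\mu_t^{\dag}$-semistable objects under consideration lie in the single fibre $M_t^{\dag}(\overline v)$ above $\overline v\in\Gamma/\Gamma_0^{\dag}$.

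Next I carry out a wall-crossing reduction. Inspecting the proof of Lemma~\ref{lem:wallemp}, the threshold $t_0$ there depends only on $\Delta(v')$ and $\Ree Z(v')$, so the uniformity above yields a single $t_0>0$ valid for every class in $v+\Gamma_0^{\dag}$, with no walls in $(0,t_0)$. For a $\mu_t^{\dag}$-semistable $E\in\aA^{\dag}$ with $\cl(E)\in v+S^{\dag}$, Corollary~\ref{cor:support} gives a locally finite wall structure in $\mathbb{R}_{>0}$, so $E$ crosses only finitely many walls in $[t_0,t]$. At each such wall, $E$ acquires a Jordan--H\"older filtration whose factors are $\mu_{t_1}^{\dag}$-semistable with the same slope as $E$; by Lemma~\ref{lem:Delta} their $\Delta$-values are bounded by $\Delta(v)$, and combined with the support property this forces $|Z_{t_1}^{\dag}(E_i)|$ and hence $\overline{\cl(E_i)}$ to range over a finite subset of $\Gamma/\Gamma_0^{\dag}$. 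Iterating, every such $E$ arises as a finite iterated extension of $\mu_{t'}^{\dag}$-semistable objects with $t'\in(0,t_0)$, whose classes lie in finitely many shifted bounded-below subsets of $\Gamma_0^{\dag}$.

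For the base step $t'\in(0,t_0)$, Proposition~\ref{lem:tsmall} identifies each $M_{t'}^{\dag}(\overline{v'})$ with the set of $\lambda|_{\pP^{\dag}_0(\phi)}$-semistable objects in $\pP^{\dag}_0(\phi)$, where $\phi\in(0,1)$ is determined by $\mu_0^{\dag}(v')$ and is therefore independent of the shift by $\Gamma_0^{\dag}$ (the case $\phi=1$, corresponding to $\Ree Z(v')=0$, is excluded by $\mu_t^{\dag}(v)<\infty$). Using Lemma~\ref{lem:t=0} I split into three subcases, each matched exactly by Assumption~\ref{assum:bound}: when $\phi\in(0,1/2)$, $\pP^{\dag}_0(\phi)=\pP(\phi+1/2)$ consists of $\mu$-semistable objects in $\aA$ with finite slope $-1/\tan\pi(\phi+1/2)$, and Assumption~\ref{assum:bound}(i) applies; when $\phi=1/2$, Assumption~\ref{assum:bound}(iii) applies directly; when $\phi\in(1/2,1)$, the objects lie in $\langle U[1],\cC\rangle$ for $\mu$-semistable $U\in\aA$ of finite slope and with $\Hom(\cC,E)=0$, so Assumption~\ref{assum:bound}(ii) applies.

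The main obstacle is the wall-crossing bookkeeping in the second paragraph: one must verify that the iterated JH factors really do have classes lying in \emph{shifted bounded-below} subsets of $\Gamma_0^{\dag}$, rather than merely bounded classes in $\Gamma$, so that Assumption~\ref{assum:bound} remains applicable to each factor. This forces a careful combined use of the descent of $\Delta$ to $\Gamma/\Gamma_0^{\dag}$, the support-property quadratic form from Corollary~\ref{cor:support}, the discreteness in Definition~\ref{def:good}(i), and the fact that the bounded-below class data of the total $E$ together with the finiteness of the $\overline{\cl(E_i)}$ in $\Gamma/\Gamma_0^{\dag}$ propagates to bounded-below class data for the factors through the additivity of $\cl$ across extensions.
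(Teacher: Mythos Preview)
Your base case (third paragraph) is correct and matches the paper's treatment: once an object is known to be $\mu_{t'}^{\dag}$-semistable for $0<t'\ll 1$, Proposition~\ref{lem:tsmall} together with Lemma~\ref{lem:t=0} reduces the problem exactly to the three parts of Assumption~\ref{assum:bound}, case-split according to the sign of $\mu(v)$ (equivalently, the range of $\phi$).

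The gap is in your wall-crossing reduction. You fix a single threshold $t_0$ for the class $\overline{v}$, then speak of $E$ crossing finitely many walls in $[t_0,t]$ and ``iterating'' on the Jordan--H\"older factors. But the walls and the threshold from Lemma~\ref{lem:wallemp} depend on the class in $\Gamma/\Gamma_0^{\dag}$; once you pass to a factor $E_i$ with $\overline{\cl(E_i)}\ne\overline{v}$, the wall structure changes and there is no reason the factor is semistable on $(0,t_0)$. Your fourth paragraph then explicitly flags the main difficulty --- propagating bounded-below class data through the iterated extensions --- without carrying it out; as written this is a description of what must be verified, not a proof.

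The paper organizes the argument differently and avoids this bookkeeping entirely: it proceeds by induction on the discrete invariant $\Imm Z_t^{\dag}(v)=-\Ree Z(v)$. For the minimal positive value $R$, every $\mu_t^{\dag}$-semistable object is automatically $\mu_{t'}^{\dag}$-semistable for $0<t'\ll 1$, and your base case applies. For larger $-\Ree Z(v)$, one shows the set $U=\{t>0: M_t^{\dag}(v,S^{\dag})\text{ is bounded}\}$ is nonempty (by the same base argument near $t=0$), open (from the locally finite wall structure), and closed: at a wall, strictly semistable objects split into factors $E_i$ with $0<-\Ree Z(E_i)<-\Ree Z(v)$, so the inductive hypothesis gives boundedness of each factor for \emph{every} $t>0$ simultaneously, while the support property together with Remark~\ref{rmk:bounded} controls the finitely many possible classes $\overline{\cl(E_i)}$. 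This induction is precisely the organizational device that replaces your iterated wall-crossing and makes the ``shifted bounded-below subsets'' issue tractable in one line rather than requiring the tree of refinements you outline.
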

\begin{proof}
Let us take $v\in \Gamma$ with 
$\mu_t^{\dag}(v)<\infty$, i.e. 
$\Ree Z(v) \neq 0$, and a bounded 
below subset $S^{\dag} \subset \Gamma_0^{\dag}$. 
We denote by $M_t^{\dag}(v, S^{\dag})$
the set of isomorphism classes of 
$\mu_t^{\dag}$-semistable 
objects $E \in \aA^{\dag}$ with $\cl(E) \in v+S^{\dag}$. 
We show the boundedness of 
$M_t^{\dag}(v, S^{\dag})$
by induction on 
$\Imm Z_t^{\dag}(v)=-\Ree Z(v)>0$. 
We set $R>0$ as in (\ref{min}), 
and suppose that $-\Ree Z(v)=R$.
Then any $E \in M_t^{\dag}(v, S^{\dag})$ is $\mu_{t'}^{\dag}$-semistable 
for $0<t'\ll 1$. 
If $0<\mu(v)<\infty$, 
then $E\in \aA$
and it is $\mu$-semistable 
by Proposition~\ref{lem:tsmall}. 
Hence, the 
boundedness of $M_t^{\dag}(v, S^{\dag})$
follows from Assumption~\ref{assum:bound} (i). 
If $\mu(v) \le 0$, 
by Proposition~\ref{lem:tsmall}, any object $E\in M_t^{\dag}(v, S^{\dag})$ 
fits into the
exact sequence in $\aA^{\dag}$
\begin{align*}
0\to
U[1] \to E \to F \to 0
\end{align*}
where $F \in \cC$
and $U\in \aA$ is a non-zero $\mu$-semistable object with $\mu(U)=\mu(v)$. 
Note that $\Hom(\cC, E)=0$
as $E$ is $\mu_t^{\dag}$-semistable with $\mu_t^{\dag}(E)<\infty$
and $\mu_t^{\dag}(T)=\infty$ for any $T \in \cC$. 
Hence, the boundedness of $M_t^{\dag}(v, S^{\dag})$
follows from Assumption~\ref{assum:bound} (ii). 
If $\mu(v)=\infty$, 
then $E \in \pP^{\dag}_0(1/2)$, 
and it is $\lambda|_{\pP^{\dag}_0(1/2)}$-semistable
by Proposition~\ref{lem:tsmall}. 
Hence, the boundedness of $M_t^{\dag}(v, S^{\dag})$
 follows from Assumption~\ref{assum:bound} (iii). 

Suppose that $-\Ree Z(v)>R$.
Let $U \subset \mathbb{R}_{>0}$ be the 
set of $t\in \mathbb{R}_{>0}$ 
such that $M_t^{\dag}(v, S^{\dag})$ is bounded. 
It is enough to show that $U$
is non-empty, open and closed. 
The same argument as in the case of 
$-\Ree Z(v)=R$
together with 
Lemma~\ref{lem:wallemp} and 
Proposition~\ref{lem:tsmall} 
show
that $(0, t_0) \subset U$
for some $t_0>0$. 
In particular, $U$ is non-empty. 
Also as in the proof of Lemma~\ref{lem:wallemp}, 
there is a locally finite set of walls 
$W \subset \mathbb{R}_{>0}$ such that 
$M_t^{\dag}(v, S^{\dag})$ is constant on 
each connected component of $\mathbb{R}_{>0} \setminus W$, 
which  
implies that  
$U$ is open. 
In order to show that $U$ is closed, 
we take a chamber $V \subset \mathbb{R}_{>0} \setminus W$
satisfying $V \subset U$
and take $t \in \overline{V} \cap W$. 
It is enough to show that $t\in U$. 
Let $M_t^{s \dag}(v, S^{\dag})$ be the subset of $M_t^{\dag}(v, S^{\dag})$
consisting of $\mu_t^{\dag}$-stable objects. 
Then $M_t^{s \dag}(v, S^{\dag}) \subset M_{t'}^{\dag}(v, S^{\dag})$
for $t' \in V$, and so $M_t^{s \dag}(v, S^{\dag})$ is bounded. 
If $E \in M_t^{\dag}(v, S^{\dag})$ is not $\mu_t^{\dag}$-stable, 
then there is an exact sequence 
\begin{align*}
0 \to E_1 \to E \to E_2 \to 0
\end{align*}
in $\aA^{\dag}$
with $\mu_t^{\dag}(E_1)=\mu_t^{\dag}(E_2)$. 
By the support property of $(Z_t^{\dag}, \aA^{\dag})$, 
the set of possible vectors
$\cl(E_i) \in \Gamma/\Gamma_0^{\dag}$ 
is bounded. Also $0<-\Ree Z(E_i)<-\Ree Z(v)$, and so 
the induction hypothesis is applied. 
Noting Remark~\ref{rmk:bounded}, it 
follows that there is a finite 
subset $S' \subset \Gamma/\Gamma_0^{\dag}$
and a bounded above subset $S'' \subset \Gamma_0^{\dag}$
such that all the possible $E_i$ 
are objects in $M_t^{\dag}(v', S'')$ for some $v' \in S'$. 
As $S^{\dag}$ is bounded below, one can take $S''$ to be a finite 
set, and so $M_t^{\dag}(v', S'')$ is bounded by the induction hypothesis. 
Therefore, $M_t^{\dag}(v, S^{\dag})$ is also bounded, and $t\in U$ holds. 
\end{proof}
When $(Z_t^{\dag}, \aA^{\dag})$ becomes a stability condition, 
we have the following result on the boundedness: 
\begin{lem}\label{prop:bound2}
Suppose that Assumption~\ref{assum:bound} holds, 
$\Gamma_0^{\dag}=0$ and the 
set of $F \in \cC$ with fixed $\cl(F)$ is bounded. 
Then
the stability condition $(Z_t^{\dag}, \aA^{\dag})$
satisfies boundedness for any $t>0$.  
\end{lem}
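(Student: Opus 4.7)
The plan is to mirror the proof of Proposition~\ref{prop:bound} with the simplifications and modifications forced by $\Gamma_0^{\dag}=0$. Since any subset $S^{\dag}\subset \Gamma_0^{\dag}$ must equal $\{0\}$, the target set $M_t^{\dag}(v,S^{\dag})$ reduces to $M_t^{\dag}(v)$, the isomorphism classes of $\mu_t^{\dag}$-semistable objects $E\in \aA^{\dag}$ with $\cl(E)=v$ exactly. For each $v\in\Gamma$ with $\mu_t^{\dag}(v)<\infty$, i.e. $\Ree Z(v)<0$, I will prove boundedness of $M_t^{\dag}(v)$ by induction on $\Imm Z_t^{\dag}(v)=-\Ree Z(v)>0$, whose values are discrete and positive by the goodness assumption on $(Z,\aA)$.

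For the base case, take $v$ with $-\Ree Z(v)$ equal to the minimum positive value $R$ appearing in~\eqref{min}. Any $E\in M_t^{\dag}(v)$ is automatically $\mu_{t'}^{\dag}$-semistable for every $0<t'\le t$, since any destabilizing sub/quotient would contribute a strictly smaller positive $\Imm Z_{t'}^{\dag}$-value, contradicting minimality. Then Proposition~\ref{lem:tsmall} together with Lemma~\ref{lem:t=0} place $E$ in one of the subcategories $\pP_0^{\dag}(\phi)$, the subcase being dictated by $\mu_0^{\dag}(v)$. When $0<\mu(v)<\infty$, $E$ is $\mu$-semistable in $\aA$ and Assumption~\ref{assum:bound}(i) applies with $S=\{0\}$; when $\mu(v)=\infty$, $E\in \pP_0^{\dag}(1/2)$ is $\lambda$-semistable and (as $\Ree Z(v)\ne 0$ gives $v\notin\Gamma_0$) Assumption~\ref{assum:bound}(iii) applies; when $\mu(v)\le 0$, $E$ fits in $0\to U[1]\to E\to F\to 0$ with $U\in \aA$ $\mu$-semistable, $F\in\cC$, and $\Hom(\cC,E)=0$, and Assumption~\ref{assum:bound}(ii) applies.

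For the inductive step ($-\Ree Z(v)>R$) I will show that the set $U\subset \mathbb{R}_{>0}$ of parameters $t$ for which $M_t^{\dag}(v)$ is bounded is open, closed, and non-empty. Non-emptiness and openness follow from Lemma~\ref{lem:wallemp} and the wall-chamber structure delivered by the continuity in Corollary~\ref{cor:support}. For closedness at a point $t\in \overline{V}\cap W$ with $V\subset U$ a chamber, the stable locus $M_t^{s\dag}(v)$ injects into $M_{t'}^{\dag}(v)$ for $t'\in V$ and is thus bounded. For a strictly $\mu_t^{\dag}$-semistable $E$, any non-trivial destabilizing sequence $0\to E_1\to E\to E_2\to 0$ has both factors with $\Imm Z_t^{\dag}(E_i)\in (0,\Imm Z_t^{\dag}(v))$; the support property of $(Z_t^{\dag},\aA^{\dag})$ with $\Gamma/\Gamma_0^{\dag}=\Gamma$ discrete then forces $\cl(E_i)$ into a finite set, so the induction hypothesis closes the argument.

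The main obstacle I anticipate is controlling the $\Gamma_0$-component of the class $\cl(F)$ of the $\cC$-factor appearing in the base-case decomposition $0\to U[1]\to E\to F\to 0$: although $Z(U)=-Z(v)$ is fixed and the support property of $(Z,\aA)$ pins down $\overline{\cl(U)}\in \Gamma/\Gamma_0$, the genuine class $\cl(U)=\cl(F)-v$ can a priori slide along $\Gamma_0$. In the $\Gamma_0^{\dag}\ne 0$ argument this freedom was absorbed by the bounded-below set $S^{\dag}\subset \Gamma_0^{\dag}$, which is unavailable here. This is precisely where the extra hypothesis enters: once Assumption~\ref{assum:bound}(ii) (or a direct application of the support property of $(Z,\aA)$ to $U$) fixes $\cl(F)$ up to a bounded set, the assumption that $\{F\in \cC : \cl(F)=w\}$ is bounded for each $w$ upgrades a finite set of allowed classes to a bounded family of $F$'s, and then Assumption~\ref{assum:bound}(i) bounds the complementary $\mu$-semistable factor $U$ with $\cl(U)=\cl(F)-v$ ranging over a finite set. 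This interplay is the delicate point where the argument genuinely diverges from the $\Gamma_0^{\dag}\ne 0$ case treated in Proposition~\ref{prop:bound}.
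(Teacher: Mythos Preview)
Your induction handles only the case $\mu_t^{\dag}(v)<\infty$, i.e.\ $\Ree Z(v)\neq 0$. But this is precisely the case already covered by the proof of Proposition~\ref{prop:bound}: that argument nowhere uses $\Gamma_0^{\dag}\neq 0$, and with $S^{\dag}=\{0\}$ it gives boundedness of $M_t^{\dag}(v)$ for every such $v$. The new content of the lemma lies in the case $\mu_t^{\dag}(v)=\infty$, which you omit. When $\Gamma_0^{\dag}=0$, Definition~\ref{defi:boundZA}(ii) requires boundedness for \emph{all} $v\in\Gamma$, not just those with finite slope, so you must treat $\Ree Z(v)=0$ separately. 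There, any $\mu_t^{\dag}$-semistable $E$ with $\cl(E)=v$ sits in an exact sequence $0\to U[1]\to E\to F\to 0$ in $\aA^{\dag}$ with $U\in\pP(1/2)$ (i.e.\ $\mu$-semistable with $\mu(U)=0$) and $F\in\cC$; one uses the support property of the stability condition $(Z_t^{\dag},\aA^{\dag})$ on $\Gamma=\Gamma/\Gamma_0^{\dag}$ to bound $\cl(U)$ and $\cl(F)$, then Assumption~\ref{assum:bound}(i) for $U$ and the extra hypothesis on $\cC$ for $F$.

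Your final paragraph misidentifies where the additional hypothesis is needed. In the base case with $\mu(v)\le 0$, Assumption~\ref{assum:bound}(ii) bounds the objects $E$ themselves (with $\cl(E)=v$ and $\Hom(\cC,E)=0$) directly; there is no need to control $\cl(U)$ and $\cl(F)$ separately, and the hypothesis on $\{F\in\cC:\cl(F)=w\}$ plays no role there. That hypothesis is used exactly in the missing $\mu_t^{\dag}(v)=\infty$ case described above.
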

\begin{proof}
By Proposition~\ref{prop:bound}, we only need to 
show the boundedness of $\mu_t^{\dag}$-semistable objects
$E \in \aA^{\dag}$ with a fixed $\cl(E) = v \in \Gamma$
satisfying $\mu_t^{\dag}(v)=\infty$. 
Any such object $E$ fits into a short exact sequence in $\aA^{\dag}$
\begin{align*}
0 \to U[1] \to E \to F \to 0
\end{align*}
for some $\mu$-semistable $U \in \aA$ with 
$\mu(U)=0$, and $F \in \cC$. 
From the support property of $(Z_t^{\dag}, \aA^{\dag})$, 
$\cl(U)$ and $\cl(F)$ are bounded. 
By Assumption~\ref{assum:bound} (i) and the boundedness of 
$F$, we have the boundedness of possible $E$. 
\end{proof}

\subsection{Generic flatness}
We carry on the setting in the previous subsection. 
Here we discuss the generic flatness under tilting. 
The proof of the following proposition is almost the same 
as~\cite[Proposition~3.18]{Tst3}, but we include the proof for the
reader's convenience.
\begin{prop}\label{prop:generic}
Let $(Z, \aA)$ be a 
very weak stability condition on $\dD$
which satisfies a BG inequality and is good. 
If $(Z, \aA)$ satisfies Assumption~\ref{assum:bound}
and $\aA$ satisfies the generic flatness,
then $\aA^{\dag}$ satisfies the generic flatness. 
\end{prop}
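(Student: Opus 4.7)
The plan is to adapt the argument of~\cite[Proposition~3.18]{Tst3} to the present framework of very weak stability conditions. Given a smooth projective $S$ and an object $\eE \in \aA^{\dag}_S$, the goal is to produce a non-empty open $U \subset S$ with $\eE_s \in \aA^{\dag}$ for every $s \in U$. I would first use Remark~\ref{rmk:tilt} in the relative setting: the Abramovich-Polishchuk heart $\aA^{\dag}_S$ is itself the tilt of $\aA_S$ with respect to a relative torsion pair $(\tT_S, \fF_S)$ obtained by spreading out $(\tT, \fF)$, so $\eE$ fits into a short exact sequence
\[
0 \to \gG^{-}[1] \to \eE \to \gG^{+} \to 0
\]
in $\aA^{\dag}_S$, with $\gG^{\pm} \in \aA_S$, $\gG^{-} \in \fF_S$, and $\gG^{+} \in \tT_S$. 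The hypothesis that $\aA$ satisfies generic flatness, applied to $\gG^{\pm}$, yields an open $U_1 \subset S$ on which $\gG^{\pm}_s \in \aA$ and base change commutes with $\aA_S$-cohomology; the task reduces to showing $\gG^{+}_s \in \tT$ and $\gG^{-}_s \in \fF$ for $s$ in a further non-empty open.

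The key step is a relative $\mu$-Harder-Narasimhan argument for $\gG^{+}$ (and symmetrically for $\gG^{-}$). The $\aA$-classes of the fibrewise HN factors of $\gG^{+}_s$, for $s \in U_1$, vary in a set which is bounded below modulo $\Gamma_0$ by the support property of $(Z,\aA)$. Assumption~\ref{assum:bound} then ensures that each resulting family of $\mu$-semistable HN factors is bounded: parts (i)-(iii) are precisely tailored to cover the three slope regimes $0 < \mu < \infty$, $\mu \le 0$ and $\mu = \infty$ that can arise among such HN factors of objects fibrewise in $\aA$. Combined with $\aA$-generic flatness and standard flattening stratification, after shrinking $U_1$ one obtains a finite filtration of $\gG^{+}|_{X \times U_1}$ in $\aA_{U_1}$ whose successive quotients restrict, fibrewise over an open subset, to the $\mu$-HN factors of $\gG^{+}_s$.

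With such a relative HN filtration in hand, the conclusion is formal: if the $\mu$-slope of the lowest fibrewise HN factor of $\gG^{+}_s$ were $\le 0$ on a dense subset of $U_1$, the relative filtration would exhibit a nonzero quotient $\gG^{+} \twoheadrightarrow \qQ$ in $\aA_{U_1}$ with generic fibre in $\fF$, i.e.\ $\qQ \in \fF_{U_1}$, contradicting $\gG^{+} \in \tT_S$. The symmetric argument applied to the top HN factor of $\gG^{-}$ shows $\gG^{-}_s \in \fF$ generically, and intersecting the various non-empty opens produced along the way delivers the desired $U$. The principal obstacle is the relative HN step: one must carefully combine $\aA$-generic flatness with the three boundedness statements of Assumption~\ref{assum:bound} to spread the fibrewise HN filtration out to a filtration in $\aA_{U_1}$, and verify that this filtration is compatible with the identification of $\aA^{\dag}_S$ as a relative tilt. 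Once this is established, the remainder of the proof is a straightforward translation of the arguments in~\cite[Proposition~3.18]{Tst3}.
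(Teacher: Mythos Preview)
Your approach has a real gap at its first step. You assert that the Abramovich--Polishchuk heart $\aA^{\dag}_S$ is the tilt of $\aA_S$ at a relative torsion pair $(\tT_S,\fF_S)$ obtained by ``spreading out'' $(\tT,\fF)$, and then use $\gG^{+}\in\tT_S$ to derive the final contradiction. But $\aA^{\dag}_S$ is defined in~\cite{AP} via pushforwards $\dR p_{X\ast}(\eE\otimes p_S^{\ast}\lL^{\otimes m})\in\aA^{\dag}$, not as a tilt of $\aA_S$; the compatibility you need is neither stated in the paper nor in the cited references, and you give no argument for it. Without this, you have no reason to believe $\hH_{\aA_S}^0(\eE)$ lies in anything like $\tT_S$, so the contradiction at the end (``a quotient $\gG^{+}\twoheadrightarrow\qQ$ with $\qQ\in\fF_{U_1}$ contradicts $\gG^{+}\in\tT_S$'') is unsupported. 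Your relative HN step, which you yourself flag as the principal obstacle, is therefore not the only problem: even if it went through, the conclusion would not follow.

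The paper's proof avoids both difficulties. It first checks directly from the pushforward definitions of $\aA_S$ and $\aA^{\dag}_S$ that $\hH_{\aA_S}^i(\eE)=0$ for $i\neq 0,-1$; no relative torsion pair is invoked. After using generic flatness of $\aA$ on the two surviving cohomology objects, it does \emph{not} try to spread out a Harder--Narasimhan filtration. Instead, Assumption~\ref{assum:bound} is used only to ensure, via~\cite[Proposition~3.17]{Tst3}, that the fibrewise destabilizing quotients $\hH_{\aA_S}^0(\eE)_s\twoheadrightarrow Q$ with $\mu(Q)\le 0$ are parametrized by a scheme $\qQ uot$ of finite type over $U$, so that the ``bad'' locus $\im\pi_{\qQ}$ is constructible. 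The key input you are missing is~\cite[Proposition~3.5.3]{AP}: the set of points $s$ with $\eE_s\in\aA^{\dag}$ is automatically dense in $S$. Density plus constructibility forces $U\setminus\im\pi_{\qQ}$ to contain a non-empty open set, and that finishes the argument without any relative HN machinery.
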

\begin{proof}
Let $S$ be a smooth projective variety
and $\lL$ an ample line bundle on it. 
Since the category $\aA^{\dag}$ is noetherian
(see~Lemma~\ref{lem:noether}), 
a similar construction of (\ref{def:AS}) defines the 
heart 
\begin{align*}
\aA_{S}^{\dag} \subset D^b \Coh(X \times S).
\end{align*}
Let us take $\eE \in \aA_S^{\dag}$. 
By the definition of $\aA_S$ and $\aA_S^{\dag}$, we have
\begin{align*}
&\dR p_{X \ast}(\eE \otimes p_S^{\ast}\lL^{\otimes m}) \in \aA^{\dag}
 \subset \langle \aA[1], \aA \rangle \\
&\dR p_{X \ast}(\hH_{\aA_S}^i(\eE) \otimes p_S^{\ast}\lL^{\otimes m}) \in \aA
\end{align*}
for any $i \in \mathbb{Z}$ and $m\gg 0$.  
Therefore, we have
\begin{align*}
\dR p_{X \ast}(\hH_{\aA_S}^i(\eE) \otimes p_S^{\ast}\lL^{\otimes m})=0
\end{align*}
for any $i\neq 0, -1$ and $m\gg 0$. 
This implies that $\hH_{\aA_S}^i(\eE)=0$ for $i\neq 0, -1$. 
By the assumption of the 
generic flatness of $\aA_S$, 
there is a non-empty open subset $U \subset S$ such that  
$\hH_{\aA_S}^i(\eE)_s \in \aA$ for any $s \in U$ and $i \in \{-1, 0\}$. 
It remains to show that
there is a non-empty open subset $U' \subset U$
such that 
\begin{align*}
\hH_{\aA_S}^{0}(\eE)_s \in \pP((1/2, 1]), \ 
\hH_{\aA_S}^{-1}(\eE)_s \in \pP((0, 1/2]). 
\end{align*}
for any $s\in U'$. 
The condition $\hH_{\aA_S}^0(\eE)_s \notin \pP((1/2, 1])$ 
is equivalent to the existence of 
a surjection $\hH_{\aA_S}^0(\eE)_s \twoheadrightarrow Q$
in $\aA$ with $\mu(Q)\le 0$. 
By Assumption~\ref{assum:bound}, 
for any $v \in \Gamma$
with $\mu(v) <\infty$, 
the set of $\mu$-semistable objects 
$E \in \aA$ with $\cl(E)=v$ is bounded. 
Hence, by~\cite[Proposition~3.17]{Tst3} 
there is a $\mathbb{C}$-scheme of finite type 
\begin{align*}
\pi_{\qQ} \colon 
\qQ uot(\hH_{\aA_S}^0(\eE)) \to U
\end{align*}
whose fiber at a point $s \in U$ parametrizes
the quotients $\hH_{\aA_S}^0(\eE)_s \twoheadrightarrow Q$
with $\mu(Q)\le 0$. 
On the other hand, by~\cite[Proposition~3.5.3]{AP} 
the set of points $s \in U$ with 
$\eE_s \in \aA^{\dag}$ is dense in $U$. 
This implies that the complement 
$U \setminus \im \pi_{\qQ}$ is 
dense, and because it is constructible, 
there is a non-empty
 open subset $U_1 \subset U \setminus \im \pi_{\qQ}$. 
By the construction of $\pi_{\qQ}$, 
we have $\hH_{\aA_S}^{0}(\eE)_s \in \pP((1/2, 1])$
for any $s\in U_1$. 
One can similarly find a non-empty
open subset $U_2 \subset U$
such that $\hH_{\aA_S}^{-1}(\eE)_s \in \pP((0, 1/2])$
holds for any $s\in U_2$. We obtain the desired $U'$
by setting
$U'=U_1 \cap U_2$.
\end{proof}

\subsection{Boundedness and generic flatness under deformations}
In the setting of the previous subsection, we show that the boundedness and the generic flatness are preserved
under the deformations of stability conditions. 
The following statement is stronger than~\cite[Theorem~3.20]{Tst3}, 
but the essential arguments were already there. 
\begin{prop}\label{lem:flat:con}
Let $\Stab_{\Gamma}^{\circ}(\dD) \subset \Stab_{\Gamma}(\dD)$
be a connected component. 
Suppose that 
there is a good stability condition
$\sigma \in \Stab_{\Gamma}^{\circ}(\dD)$
satisfying the boundedness
and the generic flatness. 
Then any good stability 
condition $\tau \in \Stab_{\Gamma}^{\circ}(\dD)$
satisfies boundedness and the generic flatness. 
\end{prop}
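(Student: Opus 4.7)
The plan is to follow the strategy of~\cite{Tst3}. The central local statement to prove is: if $\sigma_0=(Z_0,\aA_0)$ is a stability condition in $\Stab_\Gamma^\circ(\dD)$ with noetherian heart satisfying boundedness and generic flatness, and $\sigma_1=(Z_1,\aA_1)$ is any stability condition in $\Stab_\Gamma^\circ(\dD)$ sufficiently close to $\sigma_0$, then $\sigma_1$ again has noetherian heart and satisfies both properties. Granting this, any path in $\Stab_\Gamma^\circ(\dD)$ from $\sigma$ to $\tau$, which exists by path-connectedness of the complex manifold $\Stab_\Gamma^\circ(\dD)$, can be covered by finitely many such small open balls, and boundedness and generic flatness propagate step by step along the path. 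Since $\sigma$ is good with both properties and $\tau$ is good, the propagation delivers the conclusion at $\tau$.

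For the local step, Bridgeland's local homeomorphism together with the continuous dependence of the slicing on the central charge let us arrange that
\[
\aA_1 \subset \pP_{\sigma_0}((\psi,\psi+1])
\]
for some $\psi\in(0,1)$, so that $\aA_1$ is obtained from $\aA_0$ as a single tilt at the torsion pair $(\pP_{\sigma_0}((\psi,1]),\pP_{\sigma_0}((0,\psi]))$ in $\aA_0$; noetherianity of $\aA_1$ then follows exactly as in Lemma~\ref{lem:noether}. For boundedness, any $\sigma_1$-semistable object of fixed class $v\in\Gamma$ has HN factors with respect to $\sigma_0$ whose classes are forced into a finite set by the support properties of $\sigma_0$ and $\sigma_1$; boundedness of $\sigma_0$ produces a bounded family of such potential factors, and a standard relative $\Ext$-construction in the spirit of~\cite{Tst3} assembles a bounded parametrisation of all $\sigma_1$-semistable objects of class $v$.

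Generic flatness for $\sigma_1$ is then proved by adapting the argument of Proposition~\ref{prop:generic}. For a family $\eE\in\aA_{1,S}$ over a smooth projective base $S$, the Abramovich--Polishchuk cohomology $\hH^i_{\aA_{0,S}}(\eE)$ is concentrated in only the degrees $-1$ and $0$ by the tilt relation above, and $\hH^i_{\aA_{0,S}}(\eE)_s\in\aA_0$ for $s$ in an open dense subset, by the generic flatness of $\sigma_0$. The density of fibrewise-$\aA_1$ points from~\cite[Proposition~3.5.3]{AP}, combined with the boundedness for $\sigma_1$ just established, lets us construct Quot-type schemes whose images in $S$ detect failure of $\eE_s\in\aA_1$; constructibility of these images produces the required open locus of $\aA_1$-valued fibres.

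The principal obstacle is the intertwined nature of the two propagated properties: generic flatness for $\sigma_1$ depends crucially on boundedness for $\sigma_1$ in order to construct the Quot-type moduli whose images cut out the destabilising loci, so the two must be carried along together at each step of the covering argument. A secondary technical point is uniform control over the radius of the open balls used to cover the path; this is ensured by the fact that the support property of each stability condition in the covering varies continuously, so the slicing relation $\aA_{i+1}\subset\pP_{\sigma_i}((\psi_i,\psi_i+1])$ can be guaranteed uniformly.
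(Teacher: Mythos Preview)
Your overall strategy---cut a path from $\sigma$ to $\tau$ into short segments and propagate both properties step by step---matches the paper exactly, and your treatment of boundedness via~\cite[Theorem~3.20, Step~1]{Tst3} is precisely what the paper invokes.

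For generic flatness you take a different route from the paper. You propose to adapt Proposition~\ref{prop:generic}: split $\eE \in \aA_{1,S}$ into its $\aA_{0,S}$-cohomologies, apply the assumed generic flatness of $\aA_0$ to each piece, then build Quot-type schemes over the base to detect the bad quotients or subobjects that would obstruct $\eE_s \in \aA_1$. The paper instead argues directly, without any cohomology decomposition: by~\cite[Lemma~2.6.2]{AP} there is a fixed $G \in \aA_1$ with a surjection $G \boxtimes \lL^{-1} \twoheadrightarrow \eE$ in $\aA_{1,S}$, so by right-exactness of restriction every $\eE_s$ that lies in $\aA_1$ is a quotient of $G$ and hence lives in $\qQ_\tau((\psi,1])$ for a single $\psi>0$ determined by $G$; the just-established boundedness of $\tau$ then shows the stack of such objects has locally constructible closed points, whence $S'=\{s:\eE_s\in\aA_1\}$ is constructible, and density from~\cite[Proposition~3.5.3]{AP} produces the open subset. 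Your approach is the natural continuation of the paper's tilting machinery; the paper's is shorter once boundedness of $\tau$ is in hand, and avoids having to control the $\aA_0$-cohomologies of $\eE$ separately.

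One small correction: in your Proposition~\ref{prop:generic}-style argument the Quot schemes parametrise quotients lying in $\pP_{\sigma_0}((0,\psi])$, so the boundedness needed to make them of finite type is that of $\sigma_0$-semistable objects, not of $\sigma_1$ as you wrote. Since both are available at that stage this does not affect validity.
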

\begin{proof}
Let $\{\pP(\phi)\}_{\phi \in \mathbb{R}}$ and 
$\{\qQ(\phi)\}_{\phi \in \mathbb{R}}$
be the slicing 
determined by $\sigma$ and $\tau$ respectively. 
By connecting $\sigma$ and $\tau$ via a path
and cutting it into several pieces, 
we may assume that 
\begin{align*}
\qQ(\phi) \subset 
\pP \left(\left(\phi-\varepsilon, \phi+\varepsilon
 \right) \right)
\end{align*}
holds for any $\phi \in \mathbb{R}$ and some $0<\varepsilon <1/8$.  
Then the argument of~\cite[Theorem~3.20, Step~1]{Tst3}
shows that, for a fixed $v\in \Gamma$, 
the stack $\mM_{\tau}(v)$
is an 
algebraic substack of finite type 
such that $\mM_{\tau}(v) \subset \mM$
is an open immersion.  
In particular, $\tau$ satisfies boundedness. 
Indeed, one can replace $(\sigma, \sigma')$ 
in the proof of~\cite[Theorem~3.20, Step~1]{Tst3}
by $(\tau, \sigma)$, 
 and the rest of the arguments are the same. 

It remains to show the 
generic flatness of $\tau$. 
Let $\bB$ be the heart $\qQ((0, 1])$, and take 
an object $\eE \in \bB_S$ 
for a smooth projective variety $S$. 
We set
\begin{align*}
S' \cneq \{ s\in S : 
\eE_s \in \bB\}. 
\end{align*}
We need to show that $S'$ contains an open 
subset of $S$. 
By the result of~\cite[Lemma~2.6.2]{AP}, there is an object
$G \in \bB$, an ample line bundle $\lL$
on $S$ and a surjection 
$G \boxtimes \lL^{-1} \twoheadrightarrow \eE$
in $\bB_S$. 
By~\cite[Lemma~2.5.7]{AP}, 
the functor 
\begin{align*}
\dL i_s^{\ast} \colon 
D^b \Coh(X \times S) \to D^b \Coh(X)
\end{align*}
for any $s\in S$
is right exact
with respect to the t-structures with hearts
$\bB_S$ and $\bB$ respectively. 
Therefore,  
we have the surjection $G \twoheadrightarrow \eE_s$
in $\bB$
for $s\in S'$. 
Since there is $\psi \in (0, 1]$ such that 
$G \in \qQ((\psi, 1])$, we have 
$\eE_s \in \qQ((\psi, 1])$ for any $s\in S'$. 
By the support property
of $\tau$,  
there is only a finite number 
of ways to write $\cl(F)$
for an object $F \in \qQ((\psi, 1])$ as 
$v_1+\cdots +v_l$ for
$v_i \in \Gamma$ of the form $\cl(F_i)$ 
for some $F_i \in \qQ(\phi_i)$ with $\phi_i \in (\psi, 1]$.
 As we proved above, the stack of 
objects $F_i \in \qQ(\phi_i)$ with 
fixed $v^B(F_i)=v_i$ is 
an algebraic stack of finite type. Hence,
the set of closed points of the stack
\begin{align}\label{Ppsi1}
\oO bj \left(\qQ((\psi, 1])\right) \subset \mM
\end{align}
of objects in $\qQ((\psi, 1])$ is 
locally constructible. 
Therefore, the set of points $S'$ 
is constructible. 
On the other hand, $S'$ is dense
in $S$ by~\cite[Proposition~3.5.3]{AP}. 
Therefore, $S'$ contains an open subset. 
\end{proof}

\subsection{Boundedness of tilt semistable objects}
Below we assume that 
$X$ is a smooth projective 3-fold, and take 
$v^B$, $\Gamma=\Gamma_{\omega, B}$ 
and $Z$ as in Subsection~\ref{subsec:slope}.
We have the
very weak stability condition $(Z, \Coh(X))$ 
on $D^b \Coh(X)$, and let $\mu$ be the associated 
slope function (\ref{tslope}). 
In this setting, the category $\cC$ is given by 
$\Coh_{\le 1}(X)$
and $\cC^{\dag}$ is given by $\Coh_0(X)$. 
The subgroup 
$\Gamma_0 \subset \Gamma$ is given by $v_0^B=v_1^B=0$, 
and $\Gamma_0^{\dag} \subset \Gamma$ is
given by $v_0^B=v_1^B=v_2^B=0$. 
We have the natural isomorphism
\begin{align*}
(v_2^B, v_3^B) \colon \Gamma_0 \otimes_{\mathbb{Z}}
\mathbb{Q} \stackrel{\sim}{\to}
\mathbb{Q}^2. 
\end{align*}
We repeatedly use the following result by Langer: 
\begin{thm}\emph{(\cite[Theorem~4.4]{Langer})}\label{thm:Langer}
(i) 
The set of isomorphism classes of $\mu_{\omega, B}$-semistable 
$E\in \Coh(X)$ with fixed $(v_0^B, v_1^B, v_2^B)$ and bounded 
below $v_3^B$, is bounded. 

(ii) The set of isomorphism classes of $\widehat{\mu}_{\omega, B}$-semistable two dimensional $E \in \Coh_{\le 2}(X)$
with fixed $(v_1^B, v_2^B)$ and bounded below $v_3^B$, is bounded. 
\end{thm}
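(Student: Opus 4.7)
The plan is to follow Langer's strategy of combining the classical Bogomolov--Gieseker inequality with an effective restriction theorem to reduce dimension, then close the argument via Grothendieck's lemma on bounded families of quotients. Both statements have the same structure, so I would first do (i) in detail and then indicate the modifications for (ii).

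For (i), the first step is to reduce to torsion free sheaves. For a $\mu_{\omega,B}$-semistable $E \in \Coh(X)$ with $v_0^B(E) > 0$, the maximal torsion subsheaf $T \subset E$ must be supported in codimension $\geq 2$, since any divisorial torsion would have $\mu_{\omega,B} = +\infty$ and contradict semistability at finite slope. Hence $v_0^B$ and $v_1^B$ pass to the torsion free quotient $E/T$, while $v_2^B(T)$ and $v_3^B(T)$ are constrained by the bounded below $v_3^B(E)$ together with the fixed total $v_2^B$. The torsion piece can thus be handled separately by induction on dimension, so I may assume $E$ is torsion free.

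The heart of the argument is the classical Bogomolov--Gieseker inequality $\Delta_\omega(E) \geq 0$ (see Lemma~\ref{lem:slope}), which combined with the Hodge index theorem gives $\overline{\Delta}_{\omega,B}(E) \geq 0$. Since $v_0^B, v_1^B, v_2^B$ are fixed, this forces a two-sided bound on the relevant numerical invariants of $E$. Next, I would invoke Langer's effective restriction theorem: for sufficiently large $n$ depending only on the fixed data and on $\omega$, a general member $Y \in |nH|$ (with $\omega = mH$) has $E|_Y$ again $\mu$-semistable on the surface $Y$. By induction on dimension, the family of such $E|_Y$ is bounded, which combined with Kleiman's criterion and Riemann--Roch (using the bounded below $v_3^B$ to control $\chi(E(\ell H))$) yields a uniform bound on $h^0(E(\ell H))$ for a single fixed $\ell$. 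Grothendieck's lemma \cite[Lemma~1.7.9]{Hu} then concludes boundedness.

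For (ii), the argument runs in parallel: a $\widehat{\mu}_{\omega,B}$-semistable two dimensional $E$ has support class determined by $v_1^B(E)$, which lies in a bounded family of divisors. The analog of the Bogomolov--Gieseker inequality for pure two dimensional sheaves (in the same paper of Langer) bounds the discriminant once $v_1^B, v_2^B$ are fixed, and restriction to a general element of $|nH|$ reduces to the one dimensional case, where boundedness of semistable sheaves on a curve is classical (controlled by the fixed degree and bounded Euler characteristic coming from $v_3^B$).

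The main obstacle is the effective restriction theorem itself, which is the technically delicate ingredient and the reason Langer's refinement is needed: naive Mehta--Ramanathan style restriction requires $n$ to depend on the individual sheaf, whereas here the bound on $n$ must be uniform across the entire family parameterized by fixed $(v_0^B, v_1^B, v_2^B)$ and bounded $v_3^B$. Once this uniformity is in place, the inductive descent and the Grothendieck lemma finish the proof routinely.
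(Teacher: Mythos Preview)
The paper does not give a proof of this statement at all: Theorem~\ref{thm:Langer} is simply quoted from \cite[Theorem~4.4]{Langer} as a black box, and the paper immediately proceeds to use it in Proposition~\ref{prop:tbound}. So there is no ``paper's own proof'' to compare against.

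Your sketch is a reasonable outline of the circle of ideas behind Langer's boundedness results (Bogomolov--Gieseker plus an effective restriction theorem plus Kleiman's criterion), and you correctly identify the key technical point: one needs the degree of the hypersurface in the restriction step to be uniform over the family, which is exactly what distinguishes Langer's effective restriction theorem from Mehta--Ramanathan. That said, two places in your outline are loose. First, your reduction to the torsion-free case is incomplete: you assert that $v_2^B(T)$ and $v_3^B(T)$ are ``constrained'' by the fixed $v_2^B(E)$ and the lower bound on $v_3^B(E)$, but you have not bounded $v_2^B(T)$ from above or $v_3^B(E/T)$ from below, so you cannot yet split the problem into a bounded torsion piece and a bounded torsion-free piece. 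Second, in (ii) you claim the support divisor is determined by $v_1^B(E)$, but $v_1^B(E)=\omega^2\ch_1(E)$ only records the degree of the support against $\omega$, not its class in $\mathrm{NS}(X)$; bounding the family of possible support divisors requires a separate argument. These are not fatal, but they are genuine gaps that would need to be filled in an actual proof rather than deferred to induction.
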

Here the $\widehat{\mu}_{\omega, B}$-semistability 
on $\Coh_{\le 2}(X)$ is given by the slope function 
$v_2^B(\ast)/v_1^B(\ast)$. 
Using the above result, we prove the following 
proposition: 
\begin{prop}\label{prop:tbound}
The very weak stability condition 
$(Z, \Coh(X))$ satisfies Assumption~\ref{assum:bound}. 
\end{prop}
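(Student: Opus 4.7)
The plan is to verify each of the three items of Assumption~\ref{assum:bound} for $(Z, \Coh(X))$. Recall that here $\cC = \Coh_{\le 1}(X)$ and $\cC^\dag = \Coh_0(X)$, and $\Gamma_0$ (resp.\ $\Gamma_0^\dag$) is identified with the subgroup where $v_0^B = v_1^B = 0$ (resp.\ also $v_2^B = 0$), via the coordinates $(v_2^B, v_3^B)$ (resp.\ $v_3^B$). Items (i) and (iii) will follow fairly directly from Langer's Theorem~\ref{thm:Langer}; item (ii) is the main obstacle.

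For (i), any $\mu$-semistable $E \in \Coh(X)$ with $\mu(E) < \infty$ is automatically torsion-free, and the classical Bogomolov--Gieseker inequality $(v_1^B(E))^2 \ge 2 v_0^B(E) v_2^B(E)$ together with fixed $(v_0^B, v_1^B)$ bounds $v_2^B(E)$ above, while $S$ bounded below provides the complementary lower bound; hence $v_2^B(E)$ is confined to a finite set, and Langer's Theorem~\ref{thm:Langer}(i) finishes the job. For (iii), Lemma~\ref{lem:t=0}(ii) identifies $\pP^\dag_0(1/2)$ with pure two-dimensional sheaves in $\Coh_{\le 2}(X)$; the slope function $\lambda = 2 v_2^B/v_1^B$ is a positive rescaling of $\widehat{\mu}_{\omega, B}$, and any realisable $v \in \Gamma \setminus \Gamma_0$ must satisfy $v_0^B(v) = 0$ and $v_1^B(v) \ne 0$. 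Thus $(v_1^B, v_2^B)$ is fixed and $v_3^B$ is bounded below, so Langer's Theorem~\ref{thm:Langer}(ii) gives boundedness.

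For (ii), any object $E$ in the prescribed extension closure is a two-term complex with $U := \hH^{-1}(E) \in \Coh(X)$ and $F := \hH^0(E) \in \Coh_{\le 1}(X)$. The constraint $\cl(E) \in v + S^\dag$ fixes $v_0^B(U) = -v_0^B(v)$, $v_1^B(U) = -v_1^B(v)$, and $v_2^B(E) = v_2^B(F) - v_2^B(U)$, while bounding $v_3^B(E)$ below. Since $F \in \Coh_{\le 1}$ has effective support cycle, $v_2^B(F) \ge 0$, which lower-bounds $v_2^B(U)$. The Hom vanishing $\Hom(\cC, E) = 0$ translates via the long exact sequence of the triangle $U[1] \to E \to F$ into $\Ext^1(T, U) = 0$ for every $T \in \cC$; specialising to skyscrapers and invoking Serre duality on $X$ forces $U$ to be reflexive, while applying the condition to 1-dimensional $T$ controls the 0-dimensional torsion of $F$ and, I expect, will also bound the extremal Harder--Narasimhan slopes of $U$. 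Once the HN slopes of $U$ are controlled, Bogomolov--Gieseker applied to its HN factors upper-bounds $v_2^B(U)$, putting both $v_2^B(U)$ and $v_2^B(F)$ in finite sets. Langer's Theorem~\ref{thm:Langer}(i) applied to each HN factor of $U$, together with a standard Grothendieck boundedness argument for the pure 1-dimensional part of $F$, then converts the lower bound on $v_3^B(E)$ into boundedness of $E$.

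The hard part will be item (ii): extracting from $\Hom(\cC, E) = 0$, via Serre duality and the long exact sequence, simultaneous quantitative control of the Harder--Narasimhan filtration of $U$ and of the impurity of $F$, so that Langer's boundedness theorems become applicable. By contrast, the surrounding numerical bookkeeping is straightforward once this structural input is in place.
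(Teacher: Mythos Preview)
Your treatment of conditions (i) and (iii) is correct and matches the paper. The problem is condition (ii).

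First, the objects that actually arise when Assumption~\ref{assum:bound}(ii) is applied (see the proof of Proposition~\ref{prop:bound} together with Lemma~\ref{lem:t=0}(iii)) already have $U=\hH^{-1}(E)$ $\mu$-semistable of the slope determined by $v$; the paper accordingly verifies (ii) only for $E\in\cC_\vartheta^\sharp$ (Lemma~\ref{lem:boundC}). Your hope of controlling the HN slopes of a general $U$ from $\Hom(\cC,E)=0$ is in any case unfounded: for $U$ locally free, Serre duality gives $\Ext^1(T,U)\cong\Ext^2(U,T\otimes\omega_X)^\vee=0$ for every $T\in\Coh_{\le1}(X)$, however unstable $U$ may be.

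Second, even with $U$ $\mu$-semistable and reflexive (so that $U$ itself varies in a bounded family once $v_0^B,v_1^B,v_2^B$ are pinned down), a direct bound on $F=\hH^0(E)\in\Coh_{\le 1}(X)$ does not follow from your outline: pure one-dimensional sheaves with fixed $v_2^B$ and even fixed $v_3^B$ are \emph{not} bounded---take $\oO_L(n)\oplus\oO_{L'}(-n)$ for two lines $L,L'$ and $n\to\infty$. The condition $\Hom(\cC,E)=0$ does restrict the extension class in $\Ext^2(F,U)$, but turning this into boundedness of $F$ is not a ``standard Grothendieck argument''. The idea you are missing is the derived dual $\mathbb{D}=\dR\hH om(-,\oO_X)[2]$: Lemma~\ref{lem:dual} shows that $\mathbb{D}$ gives an anti-equivalence $\cC_\vartheta^\sharp\simeq\cC_{-\vartheta}^\flat$, where on the $\flat$ side $\hH^0$ is \emph{zero}-dimensional and the lower bound on $v_3^B$ has become an upper bound. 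There both cohomology sheaves are controlled at once by Theorem~\ref{thm:Langer}(i) together with the trivial positivity $v_3^B|_{\Coh_0(X)}\ge 0$; this is the content of Lemma~\ref{lem:boundC}.
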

\begin{proof}
The condition (i)
in Assumption~\ref{assum:bound}
follows from Theorem~\ref{thm:Langer} (i)
together with the classical BG inequality 
$(v_1^B)^2 \ge 2v_0^B v_2^B$ for torsion 
free semistable sheaves. 
The condition (iii)
follows from Theorem~\ref{thm:Langer} (ii). 
The condition (ii) follows from Lemma~\ref{lem:boundC}
below.  

\end{proof}
In order to show (ii) in the proof of Proposition~\ref{prop:tbound}, 
we prepare
some notation and a lemma. 
For $\vartheta \in \mathbb{R}$, we 
set
\begin{align*}
\cC_{\vartheta} \cneq \langle U[1], \Coh_{\le 1}(X) : 
U \mbox{ is }\mu_{\omega} \mbox{-semistable with }
\mu_{\omega}(U)=\vartheta \rangle. 
\end{align*}
\begin{rmk}
The category $\cC_{\vartheta}$
consists of objects $E \in \bB_{\omega, \vartheta\omega}$  with
$v_1^{\vartheta\omega}(E)=0$.  
In particular, $\cC_{\vartheta}$ is an abelian subcategory
of $\bB_{\omega, \vartheta\omega}$. 
\end{rmk}
We  define the following subcategories of $\cC_{\vartheta}$
\begin{align}\label{def:Csharp}
&\cC_{\vartheta}^{\sharp} \cneq
\{ E \in \cC_{\vartheta} :
\Hom(\Coh_{\le 1}(X), E)=0 \} \\
&\cC_{\vartheta}^{\flat} \cneq 
\{ E \in \cC_{\vartheta} : 
\hH^0(E) \in \Coh_0(X), \Hom(\Coh_0(X), E)=0\}. 
\end{align}
Let $\mathbb{D}$ be the dualizing functor 
defined by $\dR \hH om(-, \oO_X)[2]$. 
We have the following lemma: 
\begin{lem}\label{lem:dual}
We have the anti-equivalence of categories
\begin{align*}
\mathbb{D} \colon \cC_{\vartheta}^{\sharp} \stackrel{\sim}{\to}
\cC_{-\vartheta}^{\flat}. 
\end{align*}
\end{lem}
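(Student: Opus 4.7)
My plan is to prove both inclusions $\mathbb{D}(\cC_\vartheta^\sharp)\subseteq\cC_{-\vartheta}^\flat$ and $\mathbb{D}(\cC_{-\vartheta}^\flat)\subseteq\cC_\vartheta^\sharp$; together with $\mathbb{D}^2=\id$ (biduality on the smooth 3-fold $X$) and the contravariant compatibility on Hom-spaces, this will give the claimed anti-equivalence. The crucial tool is the adjunction identity $\Hom(A,B)\cong\Hom(\mathbb{D}(B),\mathbb{D}(A))$, which converts Hom-vanishing on one side into Hom-vanishing on the other once one controls the cohomology of $\mathbb{D}(E)$. The duality dictionary on building blocks is standard: for $U$ torsion-free $\mu_\omega$-semistable of slope $\vartheta$, $\mathbb{D}(U)$ has $\hH^{-2}=U^\vee$ (reflexive, $\mu_\omega$-semistable of slope $-\vartheta$), $\hH^{-1}=\mathcal{E}xt^1(U,\oO_X)\in\Coh_{\le 1}$, $\hH^0=\mathcal{E}xt^2(U,\oO_X)\in\Coh_0$, and vanishes otherwise; $U$ is reflexive iff both $\mathcal{E}xt^1(U,\oO_X)\in\Coh_0$ and $\mathcal{E}xt^2(U,\oO_X)=0$; for $T\in\Coh_0$, $\mathbb{D}(T)$ is a 0-dim sheaf placed in degree $+1$; for pure 1-dim Cohen--Macaulay $F_1$, $\mathbb{D}(F_1)$ is a pure 1-dim sheaf in degree $0$.

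Given $E\in\cC_\vartheta^\sharp$ with triangle $U[1]\to E\to F$ (where $U=\hH^{-1}(E)$ torsion-free and $F=\hH^0(E)\in\Coh_{\le 1}$), I would apply $\mathbb{D}$ and split the sharp condition into its 0-dim and pure 1-dim halves. The 0-dim half $\Hom(\Coh_0,E)=0$ translates via the adjunction into $\Ext^{-1}(\mathbb{D}(E),\Coh_0)=\Hom(\hH^1(\mathbb{D}(E)),\Coh_0)=0$, forcing $\hH^1(\mathbb{D}(E))=0$; inspecting the long exact sequence of the triangle $\mathbb{D}(F)\to\mathbb{D}(E)\to\mathbb{D}(U)[-1]$ shows that $\hH^1(\mathbb{D}(E))$ is an extension of $\mathcal{E}xt^2(U,\oO_X)$ by the cokernel of the connecting map $\beta\colon\mathcal{E}xt^1(U,\oO_X)\to\mathcal{E}xt^3(F,\oO_X)$, so its vanishing simultaneously yields $\mathcal{E}xt^2(U,\oO_X)=0$ and surjectivity of $\beta$. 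The pure 1-dim half $\Hom(\mbox{pure 1-dim},E)=0$ translates (using the already-established $\hH^1(\mathbb{D}(E))=0$) into $\Hom(\hH^0(\mathbb{D}(E)),\mbox{pure 1-dim})=0$, hence $\hH^0(\mathbb{D}(E))\in\Coh_0$; since $\hH^0(\mathbb{D}(E))$ sits in an extension $0\to\mathcal{E}xt^2(F,\oO_X)\to\hH^0(\mathbb{D}(E))\to\ker(\beta)\to 0$, this forces both $\mathcal{E}xt^2(F,\oO_X)=0$ (equivalently $F\in\Coh_0$) and $\ker(\beta)\in\Coh_0$, the latter being equivalent to the vanishing of the 1-dim part of $\mathcal{E}xt^1(U,\oO_X)$, so combined with the previous step we obtain full reflexivity of $U$. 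With reflexivity of $U$ and $F\in\Coh_0$ in hand, the long exact sequence cleanly identifies $\hH^{-1}(\mathbb{D}(E))=U^\vee$ (an iterated extension of $\mu_\omega$-semistable sheaves of slope $-\vartheta$, inheriting the extension structure from $U$) and $\hH^0(\mathbb{D}(E))\in\Coh_0$, placing $\mathbb{D}(E)$ inside $\cC_{-\vartheta}^\flat$; the remaining vanishing $\Hom(\Coh_0,\mathbb{D}(E))=0$ is automatic, since via $\mathbb{D}$ it equals $\Ext^{-1}(E,\Coh_0)=0$ for any $E$ with cohomology in $\{-1,0\}$.

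The reverse inclusion $\mathbb{D}(\cC_{-\vartheta}^\flat)\subseteq\cC_\vartheta^\sharp$ follows by the symmetric circle of arguments, with the flat data on the source (the constraint $\hH^0\in\Coh_0$ together with the 0-dim Hom-vanishing) translating via the same adjunction dictionary into the single sharp Hom-vanishing on the target. The main obstacle I anticipate is the interlocking cohomological bookkeeping in the second paragraph: simultaneously extracting reflexivity of $\hH^{-1}(E)$, purity $F\in\Coh_0$, and surjectivity of the requisite connecting map $\beta$ from the two halves of the sharp condition requires a careful parallel analysis of the long exact sequence for $\mathbb{D}$ applied to the defining triangle of $E$, with close attention to which conclusion is extracted from which half of sharp. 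Once these structural consequences are pinned down, the identification of the cohomology sheaves of $\mathbb{D}(E)$ and the verification that they satisfy the expected constraints is a direct diagram chase.
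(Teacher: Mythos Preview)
Your proposal is correct and follows essentially the same approach as the paper: apply $\mathbb{D}$ to the defining triangle, run the long exact sequence of $\eE xt$'s, and convert the Hom-vanishing conditions via the adjunction $\Hom(A,B)\cong\Hom(\mathbb{D}(B),\mathbb{D}(A))$. The paper in fact only writes out the direction $\mathbb{D}(\cC_{-\vartheta}^{\flat})\subset\cC_{\vartheta}^{\sharp}$ in detail (where $\hH^0(E)\in\Coh_0$ is given, so the $\eE xt$-computation is simpler) and cites \cite[Lemma~3.8]{TodBG} for the other direction; your argument for $\mathbb{D}(\cC_{\vartheta}^{\sharp})\subset\cC_{-\vartheta}^{\flat}$ is more self-contained. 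One minor remark: the structural consequences you extract along the way (that $\hH^0(E)\in\Coh_0$ and $\hH^{-1}(E)$ is reflexive for $E\in\cC_{\vartheta}^{\sharp}$) are correct but not needed for the lemma itself --- once you have established $\hH^{1}(\mathbb{D}(E))=0$ and $\hH^{0}(\mathbb{D}(E))\in\Coh_0$, the membership $\mathbb{D}(E)\in\cC_{-\vartheta}^{\flat}$ follows directly, since $\hH^{-1}(\mathbb{D}(E))\cong U^{\vee}$ holds regardless of reflexivity (the map $U^{\vee}\to\eE xt^2(F,\oO_X)$ is automatically zero as a map from a torsion-free sheaf to a torsion sheaf).
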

\begin{proof}
For an object $E \in \cC_{\vartheta}^{\sharp}$,
the proof of~\cite[Lemma~3.8]{TodBG}
shows that $\mathbb{D}(E) \in \cC_{-\vartheta}^{\flat}$. 
We only show that $\mathbb{D}(E) \in \cC_{\vartheta}^{\sharp}$
for $E \in \cC_{-\vartheta}^{\flat}$. 
We have an exact sequence in $\cC_{-\vartheta}$
\begin{align}\label{UEF:ex}
0 \to U[1] \to E \to Q \to 0
\end{align}
for a $\mu_{\omega}$-semistable sheaf 
$U$ with $\mu_{\omega}(U)=-\vartheta$
and $Q \in \Coh_0(X)$. 
Applying $\mathbb{D}$
and taking the long exact sequence of cohomologies with respect to $\Coh(X)$, 
we obtain the 
isomorphisms
$\hH^{i} \mathbb{D}(E) \cong 0$
for $i\neq -1, 0, 1$, 
 $\hH^{-1}\mathbb{D}(E) \cong U^{\vee}$ and the 
exact sequence of sheaves
\begin{align*}
0 \to \hH^0\mathbb{D}(E) \to \eE xt_X^1(U, \oO_X) \to 
Q^{\vee} \to \hH^1 \mathbb{D}(E) \to \eE xt^2(U, \oO_X) \to 0.
\end{align*}
In particular, $\hH^0 \mathbb{D}(E)$ is at most one dimensional, 
and $\hH^1 \mathbb{D}(E)$ is zero dimensional. 
By dualizing $\Hom(\Coh_0(X), E)=0$, 
we have $\hH^1\mathbb{D}(E)=0$, and so
$\mathbb{D}(E)\in \cC_{\vartheta}$. 
It remains to show $\Hom(\Coh_{\le 1}(X), \mathbb{D}(E))=0$.
By dualizing, this is equivalent to 
$\Hom(E, F)=0$ for any pure one dimensional sheaf $F$
and $\Hom(E, \Coh_0(X)[-1])=0$. 
These conditions obviously follow from the condition
$E\in \cC_{-\vartheta}^{\flat}$. 
\end{proof}
\begin{lem}\label{lem:boundC}
For any bounded below subset $S^{\dag} \subset \Gamma_0^{\dag}$
and $v\in \Gamma$, 
the set of $E \in \cC_{\vartheta}^{\sharp}$ 
with $v^B(E) \in v+S^{\dag}$ is bounded. 
\end{lem}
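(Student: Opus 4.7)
The plan is to transfer the problem through the anti-equivalence $\mathbb{D} \colon \cC_{\vartheta}^{\sharp} \stackrel{\sim}{\to} \cC_{-\vartheta}^{\flat}$ of Lemma~\ref{lem:dual}. Since $\mathbb{D}$ arises from a contravariant triangulated functor on $D^b\Coh(X \times S)$ for any parameter scheme $S$, boundedness of a family is preserved under $\mathbb{D}$. The advantage of passing to $\cC_{-\vartheta}^{\flat}$ is that the $\hH^0$ torsion of its objects lies in $\Coh_0(X)$ rather than in the larger $\Coh_{\le 1}(X)$, which is precisely where a direct attempt to bound $\cC_\vartheta^\sharp$ would fail because the length of the one-dimensional torsion is not controlled by $v^B$ alone.

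Concretely, every $F \in \cC_{-\vartheta}^{\flat}$ fits into a short exact sequence
\[
0 \to W[1] \to F \to R \to 0
\]
in $\cC_{-\vartheta}$, with $W \in \Coh(X)$ torsion-free $\mu_\omega$-semistable of slope $-\vartheta$ and $R \in \Coh_0(X)$, so it suffices to bound the families $\{W\}$ and $\{R\}$ separately. For $R$ zero-dimensional, boundedness reduces to bounding $v_3^B(R) = \mathrm{length}(R)$ from above. For $W$, Theorem~\ref{thm:Langer}(i) applies once we fix $(v_0^B(W), v_1^B(W), v_2^B(W))$ and bound $v_3^B(W)$ from below.

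The remaining step is the numerical bookkeeping. Since $R \in \Coh_0(X)$ has $v_0^B = v_1^B = v_2^B = 0$, the exact sequence gives $v_i^B(W) = -v_i^B(F)$ for $i \le 2$ and $v_3^B(W) = v_3^B(R) - v_3^B(F)$. The identity $\ch_i(\mathbb{D}(E)) = (-1)^i \ch_i(E)$ shows that $v_0^B(F)$ and $v_1^B(F)$ are determined by $(v_0^B(E), v_1^B(E))$ alone, hence fixed. The quantities $v_2^B(F)$ and $v_3^B(F)$ additionally involve the contractions $\omega B \cdot \ch_1(E)$ and $B \cdot \ch_2(E)$, which are not read off from $\Gamma$; these will be controlled using the classical Bogomolov-Gieseker inequality applied to the torsion-free sheaf $U = \hH^{-1}(E)$, together with the Hodge index theorem on $\NS(X)$, to lie in a range depending only on the fixed part of $v$. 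Combined with $v_3^B(R) \ge 0$ and the upper bound on $v_3^B(F)$ coming from the lower bound on $v_3^B(E)$ enforced by $v^B(E) \in v + S^\dag$, this yields both the upper bound on $\mathrm{length}(R)$ and the lower bound on $v_3^B(W)$ needed to conclude.

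The main obstacle is this last numerical step: showing that $\omega B \cdot \ch_1(E)$ and $B \cdot \ch_2(E)$ vary only in a bounded range over our family even when $B$ is not proportional to $\omega$. When $B$ and $\omega$ are proportional these quantities are tautologically determined by $v^B(E)$ and the bookkeeping is automatic, but in the general case one must combine the Bogomolov inequality with Hodge positivity on $\NS(X)$ to extract uniform bounds from the fixed data $v_0^B, v_1^B, v_2^B$ and the $\mu_\omega$-semistability of the underlying sheaf $W$ (or equivalently of $U$).
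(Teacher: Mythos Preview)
Your overall strategy—dualize via $\mathbb{D}$ to $\cC_{-\vartheta}^{\flat}$, split $F$ into $W[1]$ and $R\in\Coh_0(X)$, then invoke Langer—is exactly the paper's approach. The difficulty you flag, however, is an artifact of tracking the wrong twist on the dual side.

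You compute $\ch_i(\mathbb{D}(E))=(-1)^i\ch_i(E)$ and then try to express $v_i^{B}(\mathbb{D}(E))$ in terms of $v_i^{B}(E)$; this is where the stray contractions $\omega B\cdot\ch_1(E)$ and $B\cdot\ch_2(E)$ appear. The correct identity is
\[
\ch^{-B}(\mathbb{D}(E))=e^{B}\ch(E)^{\vee}=\bigl(e^{-B}\ch(E)\bigr)^{\vee}=\ch^{B}(E)^{\vee},
\]
i.e.\ $v_i^{-B}(\mathbb{D}(E))=(-1)^{i}v_i^{B}(E)$. Thus the condition ``$v_0^B,v_1^B,v_2^B$ fixed and $v_3^B$ bounded below'' on $E$ becomes exactly ``$v_0^{-B},v_1^{-B},v_2^{-B}$ fixed and $v_3^{-B}$ bounded above'' on $F=\mathbb{D}(E)$. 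Since $\mu_{\omega,B}$-semistability is independent of $B$ and Theorem~\ref{thm:Langer} holds for any twist, you may simply run the paper's two-line argument on the $\flat$ side with $-B$ in place of $B$: for $F\in\cC_{-\vartheta}^{\flat}$ one has $v_3^{-B}(R)=\length(R)\ge 0$, and Langer gives an a priori lower bound on $v_3^{-B}(W[1])$; since their sum $v_3^{-B}(F)$ is bounded above, both pieces lie in a finite set and Langer again yields boundedness.

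So the Bogomolov--Gieseker plus Hodge-index detour you propose is unnecessary, and you have not actually carried it out. (In fairness, the paper's own phrasing ``$v^B(E)\in v+S'$'' for the dual problem is slightly imprecise for the same reason and likely contributed to the confusion.)
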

\begin{proof}
By Lemma~\ref{lem:dual}, 
it is enough to show the boundedness of 
$E \in \cC_{\vartheta}^{\flat}$
with $v^B(E) \in v+S'$
for a bounded \textit{above} subset $S' \subset \Gamma_0^{\dag}$. 
By Theorem~\ref{thm:Langer} (i), 
the set of possible $v^B_3(\hH^{-1}(E)[1])$ is bounded 
below, and we have 
$v^B_3(\hH^0(E)) \ge 0$
since $\hH^0(E)$ is zero dimensional.  
Hence, one can take $S'$ to be a finite set, 
and the result follows using
Theorem~\ref{thm:Langer} (i) again. 
\end{proof}

Applying Proposition~\ref{prop:bound} to 
$(Z, \Coh(X))$, we have the following corollary: 
\begin{cor}\label{cor:btilt}
The set of $\nu_{\omega, B}$-semistable objects
$E \in \bB_{\omega, B}$ with 
fixed $(v_0^B, v_1^B, v_2^B)$
satisfying $v_1^B>0$,
and bounded below $v_3^B$, is bounded. 
\end{cor}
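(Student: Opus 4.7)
The plan is to realize Corollary~\ref{cor:btilt} as a direct specialization of Proposition~\ref{prop:bound} applied to the very weak stability condition $(Z,\Coh(X))$ from Subsection~\ref{subsec:slope}, evaluated at the tilt parameter $t=3$. First I would recall the dictionary set up in Subsection~\ref{subsec:tilt}: the tilted pair is $(Z_t^{\dag},\bB_{\omega,B})$ with $Z_t^{\dag}(v)=(-2tv_2^B+v_0^B)+v_1^B i$ and slope $\mu_t^{\dag}=(2tv_2^B-v_0^B)/v_1^B$, and the identity $6\nu_{\omega,B}=\mu_{t=3}^{\dag}$ shows that $\nu_{\omega,B}$-semistability on $\bB_{\omega,B}$ coincides with $\mu_{3}^{\dag}$-semistability. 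Hence the assertion is exactly the boundedness of $M_{3}^{\dag}(v,S^{\dag})$ for appropriately chosen $v$ and $S^{\dag}$.

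Next I would verify the hypotheses of Proposition~\ref{prop:bound}. The pair $(Z,\Coh(X))$ has already been shown to be a good very weak stability condition satisfying the BG inequality (Lemma~\ref{lem:slope} and the subsequent discussion), and Proposition~\ref{prop:tbound} establishes Assumption~\ref{assum:bound}. Moreover, $\Gamma_0^{\dag}\subset\Gamma_{\omega,B}$ is the rank-one subgroup $\{v_0^B=v_1^B=v_2^B=0\}$, which is non-zero, so the first alternative of Definition~\ref{defi:boundZA} applies. Therefore Proposition~\ref{prop:bound} yields the desired boundedness statement for $(Z_t^{\dag},\bB_{\omega,B})$ for every $t>0$, and in particular for $t=3$.

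Finally I would translate this back to the vocabulary of the corollary. Fixing $(v_0^B,v_1^B,v_2^B)$ fixes the class in $\Gamma_{\omega,B}/\Gamma_0^{\dag}$, and the isomorphism $\Gamma_0^{\dag}\otimes\mathbb{Q}\simeq\mathbb{Q}$ given by $v_3^B$ identifies a bounded-below subset of $\Gamma_0^{\dag}$ with a lower bound on $v_3^B$. The hypothesis $\mu_{3}^{\dag}(v)<\infty$ amounts to $v_1^B\neq 0$; combined with the fact that $\Imm Z_{3}^{\dag}(E)=v_1^B(E)\geq 0$ for every $E\in\bB_{\omega,B}$, this is precisely the hypothesis $v_1^B>0$ appearing in the statement.

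There is no genuine obstacle here since the heavy lifting is already contained in Propositions~\ref{prop:bound} and~\ref{prop:tbound}; the only point requiring mild care is the bookkeeping of $\Gamma_0^{\dag}$ and the correct scaling $t=3$, together with noting that the boundedness in Definition~\ref{defi:boundZA}(i) is invariant under passing from $M_{\mu_{3}^{\dag}}$ to $M_{\nu_{\omega,B}}$.
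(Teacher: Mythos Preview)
Your proposal is correct and follows essentially the same route as the paper: the corollary is stated immediately after Proposition~\ref{prop:tbound} with the remark ``Applying Proposition~\ref{prop:bound} to $(Z, \Coh(X))$, we have the following corollary,'' and your write-up spells out precisely this application, including the identification $6\nu_{\omega,B}=\mu_{t=3}^{\dag}$, the verification that $\Gamma_0^{\dag}\neq 0$, and the translation of the bounded-below condition on $v_3^B$ into a bounded-below subset $S^{\dag}\subset\Gamma_0^{\dag}$.
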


\subsection{Boundedness of Bridgeland semistable objects}
We carry on the notation in the previous subsection. 
Let 
$W$ be the group homomorphism given by (\ref{Ztilt}). 
We have the very weak stability condition $(W, \bB_{\omega, B})$
studied in Subsection~\ref{subsec:double}, 
and denote by $\{\qQ(\phi)\}_{\phi \in \mathbb{R}}$ the
slicing determined by $(W, \bB_{\omega, B})$. 
For $t>0$, let 
$(W_t^{\dag}, \bB_{\omega, B}^{\dag})$ be 
its tilting given in (\ref{dtilt}), and $\nu_t^{\dag}$, 
$\{\qQ_t^{\dag}(\phi)\}_{\phi \in \mathbb{R}}$
 the 
 slope function, slicing determined by $(W_t^{\dag}, \bB_{\omega, B}^{\dag})$,
respectively. 
By the argument in Subsection~\ref{subsec:to0}, 
we have the slope function on $\qQ_t^{\dag}(\phi)$
\begin{align}\label{xi}
\xi \cneq \frac{\Delta_I^{\dag}}{\Ree W}
=\frac{18v_3^B-v_1^B}{6v_2^B-v_0^B}
\end{align}
where $\Delta_I^{\dag}$ is given by (\ref{Delta3}).
In this notation, 
we have the following lemma: 
\begin{lem}\label{lem:boundP}
For a fixed $v\in \Gamma$, 
the set of $\xi|_{\qQ^{\dag}_0(1/2)}$-semistable 
objects $E \in \qQ^{\dag}_0(1/2)$ with $v^B(E)=v$ is bounded. 
\end{lem}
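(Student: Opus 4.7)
My plan is to analyze the structure of $E \in \qQ_0^{\dag}(1/2)$ with $v^B(E) = v$ via its cohomology in $\bB_{\omega,B}$, bound its invariants using $\xi$-semistability, and then invoke Theorem~\ref{thm:Langer}(i) together with Simpson boundedness for sheaves in $\Coh_{\le 1}(X)$.

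By Lemma~\ref{lem:t=0}(ii) the category $\qQ_0^{\dag}(1/2)$ consists of objects $E \in \bB_{\omega,B}$ with $v_1^B(E)=0$ and $\Hom(\Coh_0(X),E)=0$. From the exact sequence
\[
0 \to E_{-1}[1] \to E \to E_0 \to 0
\]
in $\bB_{\omega,B}$, the vanishing $v_1^B(E_{-1}) = v_1^B(E_0) = 0$ together with $E_{-1} \in \fF_{\omega,B}$ and $E_0 \in \tT_{\omega,B}$ force $E_{-1}$ to be torsion-free $\mu_{\omega,B}$-semistable of slope zero and $E_0 \in \Coh_{\le 1}(X)$. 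Applying $\Hom(T,-)$ for $T \in \Coh_0(X)$ yields the injection $\Ext^1(T,E_{-1}) = \Hom(T,E_{-1}[1]) \hookrightarrow \Hom(T,E) = 0$, so $E_{-1}$ is reflexive. The classical Bogomolov-Gieseker inequality then gives $v_2^B(E_{-1}) \le 0$, and combined with $v_2^B(E_0) \ge 0$ and the fixed vector $v$, the invariants $v_0^B(E_{-1})$, $v_2^B(E_{-1})$, $v_2^B(E_0)$ all take values in finite sets.

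The main step is to exploit $\xi$-semistability of $E$ via two destabilizers in $\qQ_0^{\dag}(1/2)$. First, reflexivity of $E_{-1}$ gives $E_{-1}[1] \in \qQ_0^{\dag}(1/2)$, and one checks that $E_{-1}[1] \hookrightarrow E$ is a monomorphism in this abelian subcategory; then $\xi(E_{-1}[1]) \le \xi(E)$ yields a lower bound on $v_3^B(E_{-1})$. Second, letting $T_{\le 0} \subset E_0$ be the maximal zero-dimensional subsheaf and $C = E_0/T_{\le 0}$ its pure one-dimensional quotient, we have $C \in \qQ_0^{\dag}(1/2)$ since $C$ has no zero-dimensional subsheaves, and the composition $E \twoheadrightarrow E_0 \twoheadrightarrow C$ in $\bB_{\omega,B}$ descends to an epimorphism in $\qQ_0^{\dag}(1/2)$; thus $\xi(E) \le \xi(C) = 3 v_3^B(C)/v_2^B(C)$ bounds $v_3^B(C)$ from below.

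With $v_3^B(E_{-1})$ bounded below, Theorem~\ref{thm:Langer}(i) applied to the $\mu_{\omega,B}$-semistable torsion-free sheaf $E_{-1}$ shows that it lies in a bounded family, so $v_3^B(E_{-1})$ takes finitely many values; hence $v_3^B(E_0) = v_3^B(E)+v_3^B(E_{-1})$ is bounded. Combined with the lower bound on $v_3^B(C)$, the length $\length(T_{\le 0}) = v_3^B(E_0) - v_3^B(C)$ is bounded, so $C$ belongs to a bounded family by Simpson's theorem and $T_{\le 0}$ to a Hilbert scheme of points; upper semi-continuity of $\Ext^1$ then parameterizes the extensions defining $E_0$ and $E$ by schemes of finite type, yielding the desired boundedness. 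The principal obstacle will be carefully verifying that $E_{-1}[1] \hookrightarrow E$ and $E \twoheadrightarrow C$ are genuine subobject and quotient inside the abelian subcategory $\qQ_0^{\dag}(1/2) \subset \bB_{\omega,B}^{\dag}$, together with handling the degenerate cases $E_{-1}=0$ or $v_2^B(E_0)=0$ where one of the destabilizers collapses.
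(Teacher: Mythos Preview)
Your decomposition $0 \to E_{-1}[1] \to E \to E_0 \to 0$ is genuinely different from the paper's. The paper takes the maximal pure one-dimensional subobject $F \hookrightarrow E$ and works with the quotient $E' \in \cC^{\sharp}_{(B\omega^2)/(\omega^3)}$, which it then bounds via the duality functor $\mathbb{D}$ of Lemma~\ref{lem:dual} and Lemma~\ref{lem:boundC}; the one-dimensional piece $F$ is handled afterwards by running through its Gieseker HN factors and using $\xi$-semistability of $E$ to cap the top slope. Your route places the ``hard'' piece on the left instead: since $E_{-1}$ is reflexive you can feed it directly into Theorem~\ref{thm:Langer}~(i), bypassing the duality argument entirely. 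The obstacle you flag---whether $E_{-1}[1] \hookrightarrow E$ and $E \twoheadrightarrow C$ are legitimate test morphisms for $\xi|_{\qQ_0^{\dag}(1/2)}$-semistability when the cokernel $E_0$ may fail $\Hom(\Coh_0(X),-)=0$---is real, but it dissolves once you invoke Proposition~\ref{lem:tsmall}: $\xi$-semistability here is equivalent to $\nu_t^{\dag}$-semistability in $\aA_{\omega,B}$ for $0<t\ll 1$, and since every object in sight lies in $\qQ(1) \subset \tT'_{\omega,B}$, the short exact sequences in $\bB_{\omega,B}$ are short exact in $\aA_{\omega,B}$; then $\nu_t^{\dag}(E_{-1}[1]) \le \nu_t^{\dag}(E) \le \nu_t^{\dag}(C)$ gives exactly the $\xi$-inequalities you want.

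The genuine gap is your appeal to ``Simpson's theorem'' to bound $C$. Knowing that $(v_2^B(C), v_3^B(C))$ lies in a finite set does \emph{not} bound the family of pure one-dimensional sheaves: for two disjoint lines $L_1, L_2$ the sheaves $\oO_{L_1}(n) \oplus \oO_{L_2}(-n)$ have constant $(v_2^B, v_3^B)$ as $n\to\infty$ but are unbounded. What you need is a lower bound on the minimal Gieseker slope of $C$, and this \emph{is} available from your setup: every pure one-dimensional quotient $C \twoheadrightarrow C''$ yields a quotient $E \twoheadrightarrow C''$ in $\aA_{\omega,B}$, so $\nu_t^{\dag}$-semistability gives $\xi(E) \le \xi(C'') = 3v_3^B(C'')/v_2^B(C'')$, bounding the slopes of all HN factors of $C$ from below. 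This is precisely the mechanism the paper uses for its subobject $F$, and once you insert it your argument goes through.
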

\begin{proof}
Note that the category $\qQ(1)$ is given by
\begin{align*}
\qQ(1)=\langle U[1], \Coh_{\le 1}(X) : U \mbox{ is } \mu_{\omega, B}
\mbox{-semistable with }\mu_{\omega, B}(U)=0 \rangle 
\end{align*}
i.e. it coincides with $\cC_{\vartheta=(B\omega^2)/(\omega^3)}$.
By Lemma~\ref{lem:t=0}, 
the category $\qQ^{\dag}_0(1/2)$ 
consists of 
$\xi$-semistable objects 
$E \in \cC_{(B\omega^2)/(\omega^3)}$
satisfying $\Hom(\Coh_0(X), E)=0$. 
The restriction of $\xi$ to $\qQ^{\dag}_0(1/2)$ is given 
by
\begin{align}\label{xi2}
\xi|_{\qQ^{\dag}_0(1/2)}=
\frac{18v_3^B}{6v_2^B-v_0^B}.  
\end{align}
We let $v \in \Gamma$
be such that 
$v=v^B(E)$
for some $\xi|_{\qQ^{\dag}_0(1/2)}$-semistable 
object $E \in \qQ^{\dag}_0(1/2)$. 
Then we have $v_1^B=0$, 
$v_2^B \ge 0$ and $v_0^B \le 0$. 
There is an exact sequence in $\qQ^{\dag}_0(1/2)$
\begin{align*}
0 \to F \to E \to E' \to 0
\end{align*}
where $F \in \Coh_{\le 1}(X)$
and $E' \in \cC_{(B\omega^2)/(\omega^3)}^{\sharp}$. 
As $v_2^B(\ast)$ is non-negative on 
$\qQ^{\dag}_0(1/2)$, the possible $v_2^B(E')$ are bounded. 
Combined with 
the $\xi|_{\qQ^{\dag}_0(1/2)}$-semistability of $E$, 
we obtain the lower bound of $v_3^B(E')$, and so
the possible $E'$ are bounded by Lemma~\ref{lem:boundC}. 
It remains to show the boundedness of possible $F$. 
Note that the $\xi|_{\Coh_{\le 1}(X)}$-stability 
on $\Coh_{\le 1}(X)$ coincides with the $B$-twisted 
$\omega$-Gieseker stability on it. 
Let $F_1, \cdots, F_k$ be the HN factors of 
$F$ with respect to the $\xi|_{\Coh_{\le 1}(X)}$-stability
such that $F_1$ has the maximal $\xi$-slope. 
Since $F_1 \subset E$ in $\qQ^{\dag}_0(1/2)$, 
the $\xi|_{\qQ^{\dag}_0(1/2)}$-semistability of 
$E$ implies the upper bound of the $\xi$-slope of $F_1$. 
Because the $\xi$-slope of $F_1$ is also bounded below 
$\xi(F_1)\ge \xi(F)$, 
and $v_2^B(F_1)$ is bounded, 
the set of possible $v_3^B(F_1)$ is 
also bounded. 
The same argument shows that
the possible $k\ge 1$ together with 
$v^B(F_i)$ are bounded for all $1\le i\le k$. 
By the 
boundedness of $\xi|_{\Coh_{\le 1}(X)}$-semistable
objects in $\Coh_{\le 1}(X)$ with 
fixed $v_2^B$ and $v_3^B$, we have the boundedness of possible $F_i$, 
and so the boundedness of possible $F$. 
\end{proof}

\begin{prop}
Suppose that $X$ satisfies 
Conjecture~\ref{conj:BMT}. Then 
the very weak stability condition $(W, \bB_{\omega, B})$
satisfies Assumption~\ref{assum:bound}. 
\end{prop}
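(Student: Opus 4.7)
The plan is to verify each of the three conditions of Assumption~\ref{assum:bound} for $(W, \bB_{\omega, B})$. Under the general framework of Section~\ref{sec:veryweak}, the slope function $\mu$ here is (a positive multiple of) the tilt slope $\nu_{\omega, B}$, the degenerate subcategory $\cC$ is $\Coh_0(X)$, so $\Gamma_0$ is cut out by $v_0^B = v_1^B = v_2^B = 0$ and $\Gamma_0 \otimes \mathbb{Q} \cong \mathbb{Q}$ via $v_3^B$. Under Conjecture~\ref{conj:BMT} (equivalently Conjecture~\ref{conj:BMS}), the BG data is $(\Delta_R, \Delta_I) = (-2v_2^B,\, v_1^B - 18 v_3^B)$, whence $\cC^{\dag} = 0$ and $\Gamma_0^{\dag} = 0$.

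Conditions (i) and (iii) are then supplied directly by earlier results of this section. Condition (i) asks for boundedness of $\nu_{\omega, B}$-semistable objects of $\bB_{\omega, B}$ with $(v_0^B, v_1^B, v_2^B)$ fixed (with $v_1^B > 0$) and $v_3^B$ bounded below, which is exactly Corollary~\ref{cor:btilt}. For (iii), the vanishing $\Gamma_0^{\dag} = 0$ forces $S^{\dag} = \{0\}$, and one checks that the abstract slicing $\pP^{\dag}_0(1/2)$ and slope $\lambda = \Delta_I/\Ree W$ of the framework coincide (via formula \eqref{xi}) with $\qQ^{\dag}_0(1/2)$ and $\xi$; the required boundedness is then precisely Lemma~\ref{lem:boundP}.

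The remaining condition (ii) is the main obstacle: for $v \in \Gamma$ with $v_1^B > 0$, I must show boundedness of the isomorphism classes of $E \in \langle U[1], \Coh_0(X) : U \in \bB_{\omega,B}\ \nu_{\omega,B}\text{-semistable}\rangle$ satisfying $\Hom(\Coh_0(X), E) = 0$ and $v^B(E) = v$. My plan follows the template of Lemma~\ref{lem:boundC}: apply the dualizing functor $\mathbb{D}$ as in Lemma~\ref{lem:dual} (with the shift appropriate for dimension three), which should exchange $\bB_{\omega, B}$-tilt stability with $\bB_{\omega, -B}$-tilt stability, as established in \cite{Piya} and \cite{BMS}. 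The Hom-vanishing $\Hom(\Coh_0(X), E) = 0$ dualizes into a torsion-structure condition on $\mathbb{D}(E)$, which together with the fixed class should exhibit $\mathbb{D}(E)$ as an extension of a $\nu_{\omega, -B}$-semistable object with fixed $(v_0^B, v_1^B, v_2^B)$ by a zero-dimensional sheaf of bounded length. Corollary~\ref{cor:btilt} applied to the first piece and Hilbert-scheme considerations for the second then yield the bound. The key technical step is to set up this duality precisely and to derive an effective lower bound for the $v_3^B$-value of the dualized semistable factor, where one uses the BG inequality on $\nu_{\omega, B}$-semistable objects granted by Proposition~\ref{lem:Delkt} to rule out unbounded contributions from hidden $\Coh_0$-pieces.
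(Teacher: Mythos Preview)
Your treatment of conditions (i) and (iii) matches the paper's exactly: (i) is Corollary~\ref{cor:btilt}, and (iii) is Lemma~\ref{lem:boundP} once one observes that $\Gamma_0^{\dag}=0$ forces $S^{\dag}=\{0\}$.

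For condition (ii), however, you are working much harder than necessary. The paper dispatches (ii) in one line, again by Corollary~\ref{cor:btilt}, and the reason is the following simple observation you seem to have overlooked. In the present setting $\cC=\Coh_0(X)$, not $\Coh_{\le 1}(X)$, and $\Gamma_0^{\dag}=0$ so $v^B(E)$ is completely fixed. Writing the short exact sequence $0\to U[1]\to E\to F\to 0$ with $U\in\bB_{\omega,B}$ $\nu_{\omega,B}$-semistable and $F\in\Coh_0(X)$, the vector $(v_0^B(U),v_1^B(U),v_2^B(U))$ is determined by $v$ since $F$ contributes nothing in degrees $\le 2$; and because $v_3^B(F)=\length(F)\ge 0$, the relation $v_3^B(U)=v_3^B(F)-v_3^B(E)\ge -v_3^B(E)$ bounds $v_3^B(U)$ from below. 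Corollary~\ref{cor:btilt} then bounds the family of possible $U$'s, which in turn forces $v_3^B(F)$ to take finitely many values, bounding $F$; hence $E$ is bounded.

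Your proposed duality argument via $\mathbb{D}$ would likely go through (the compatibility of $\mathbb{D}$ with tilt stability is indeed available in \cite{Piya}, \cite{BMS}), but it is modeled too closely on Lemma~\ref{lem:boundC}. That lemma needed the duality trick precisely because there $\cC=\Coh_{\le 1}(X)$ contains one-dimensional sheaves whose $\ch_3$ is unbounded below, so one cannot bound $v_3^B(U)$ directly. Here the zero-dimensionality of $\cC$ removes that obstacle, and no duality, no appeal to Proposition~\ref{lem:Delkt}, and no ``hidden $\Coh_0$-pieces'' analysis is required.
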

\begin{proof}
In the notation of the proof of Proposition~\ref{prop:good}, 
we have $\cC^{\dag}=\Coh_0(X)$, $\cC^{\dag \dag}=\{0\}$, and so 
$\Gamma_0^{\dag \dag}=\{0\}$. 
Therefore, the conditions (i), (ii) in Assumption~\ref{assum:bound}
follow
from Corollary~\ref{cor:btilt}. 
The condition (iii) in Assumption~\ref{assum:bound}
follows from Lemma~\ref{lem:boundP}. 
\end{proof}
By combining the results so far, we obtain the 
following result which is required to prove Theorem~\ref{thm:stack}:
\begin{cor}\label{cor:Bbound}
Suppose that $X$ satisfies
 Conjecture~\ref{conj:BMT}. 
Then any good 
stability condition $\sigma \in \Stab_{\omega, B}^{\circ}(X)$
satisfies boundedness
and the generic flatness.   
\end{cor}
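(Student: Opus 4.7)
The plan is to first establish boundedness and generic flatness for the specific stability condition $\sigma_{\omega,B}$ via the iterated tilting construction from Section~\ref{sec:BGconj}, and then propagate these properties to all good stability conditions in the connected component $\Stab_{\omega,B}^{\circ}(X)$ using Proposition~\ref{lem:flat:con}.

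First I would start at the bottom of the tilting tower with the classical slope stability, viewed as the very weak stability condition $(Z, \Coh(X))$ of Subsection~\ref{subsec:slope}. This pair is good, satisfies the BG inequality (Lemma~\ref{lem:slope}), satisfies Assumption~\ref{assum:bound} (Proposition~\ref{prop:tbound}), and $\Coh(X)$ satisfies the generic flatness by classical results. Then Proposition~\ref{prop:bound} yields that $(Z_t^{\dag}, \bB_{\omega,B})$ satisfies boundedness for every $t>0$, while Proposition~\ref{prop:generic} gives the generic flatness of $\bB_{\omega,B}$. In particular, this applies to the very weak stability condition $(W, \bB_{\omega,B})$ of Subsection~\ref{subsec:double}, where by Proposition~\ref{prop:good} the hypothesis that $X$ satisfies Conjecture~\ref{conj:BMT} (equivalently Conjecture~\ref{conj:BMS} via Theorem~\ref{thm:BG:equiv}) ensures that it satisfies the BG inequality and is good.

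Next I would apply the tilting machinery a second time to $(W, \bB_{\omega,B})$. For Assumption~\ref{assum:bound} at this level, part~(i) follows from Corollary~\ref{cor:btilt} applied together with the classical BG inequality, part~(ii) is the analogue of Lemma~\ref{lem:boundC} for the category $\cC^{\dag}=\Coh_0(X)$ (here trivial because $\Coh_0(X)$-objects with fixed Chern character are bounded), and part~(iii) is exactly Lemma~\ref{lem:boundP}. Because $\cC^{\dag\dag}=\{0\}$ and $\Coh_0(X)$-classes with fixed $v^B$ are bounded, Lemma~\ref{prop:bound2} and Proposition~\ref{prop:generic} yield that the tilted stability condition $(W_t^{\dag}, \aA_{\omega,B})$ satisfies boundedness for every $t>0$ and that $\aA_{\omega,B}$ satisfies generic flatness. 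Specializing to $t=1/8$, via the identification (\ref{nu=Z}), gives boundedness and generic flatness for $\sigma_{\omega,B}=(Z_{\omega,B}, \aA_{\omega,B})$.

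Finally, since $\omega^2$ is rational and $B$ is rational, one checks directly that $Z_{\omega,B}(\Gamma_{\omega,B})\subset \mathbb{Q}+m\mathbb{Q}\cdot i$ (writing $\omega=mH$), so $\sigma_{\omega,B}$ is good in the sense of Definition~\ref{def:good} by Remark~\ref{rmk:good}. As $\sigma_{\omega,B}\in \Stab_{\omega,B}^{\circ}(X)$ by definition of this component, Proposition~\ref{lem:flat:con} transports boundedness and generic flatness to every good stability condition $\sigma\in \Stab_{\omega,B}^{\circ}(X)$. The main obstacle is not any single step but the careful bookkeeping needed to verify Assumption~\ref{assum:bound} for the second tilt $(W, \bB_{\omega,B})$; the hardest technical input is already encapsulated in Lemma~\ref{lem:boundP} together with the duality argument of Lemma~\ref{lem:dual}, both of which rely ultimately on Langer's boundedness result (Theorem~\ref{thm:Langer}).
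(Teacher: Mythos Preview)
Your proposal is correct and follows essentially the same route as the paper: verify Assumption~\ref{assum:bound} and generic flatness at each level of the double tilt, apply Lemma~\ref{prop:bound2} and Proposition~\ref{prop:generic} to obtain the properties for $\sigma_{\omega,B}$, then invoke Proposition~\ref{lem:flat:con}. One minor imprecision: your description of part~(ii) of Assumption~\ref{assum:bound} for $(W,\bB_{\omega,B})$ is not merely about $\Coh_0(X)$-objects but about extensions $\langle U[1],\Coh_0(X)\rangle$ with $U$ tilt-semistable; the paper handles both (i) and (ii) simultaneously via Corollary~\ref{cor:btilt} (since $\Gamma_0^{\dag\dag}=\{0\}$ forces $v$ to be fixed), and your reference to the classical BG inequality is superfluous at this second stage.
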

\begin{proof}
Note that the set of $F \in \Coh_0(X)$ with fixed $v_3^B(F)$ is bounded. 
Applying 
Lemma~\ref{prop:bound2} to 
$(W, \bB_{\omega, B})$, we 
conclude that $(Z_{\omega, B}, \aA_{\omega, B})$
satisfies boundedness. 
Also by using Proposition~\ref{prop:generic} twice, 
we see that $\aA_{\omega, B}$ satisfies the 
generic flatness. 
The result now follows from Proposition~\ref{lem:flat:con}.  
\end{proof}

\subsection{Proof of Theorem~\ref{thm:stack}}\label{subsec:proof}

\begin{proof}
In~\cite[Theorem~3.20]{Tst3}, 
the second author gave 
general arguments for the
moduli stacks of Bridgeland semistable 
objects to be
algebraic stacks of finite type. 
First, 
because the good stability conditions form 
a dense subset in $\Stab_{\omega, B}^{\circ}(X)$, 
and the walls are defined over $\mathbb{Q}$, 
one can perturb $\sigma \in \Stab_{\omega, B}^{\circ}(X)$
and may assume that $\sigma$ is algebraic
(see~\cite[Theorem~3.20, Step~3]{Tst3}), 
i.e. $\sigma$ is good 
with $\zeta_i=1$ in 
Definition~\ref{def:good}.  
Next
for an algebraic 
stability condition $\sigma$, the problem is reduced to showing 
the boundedness 
and the generic flatness of $\sigma$, 
which are proved in Corollary~\ref{cor:Bbound}. 
Therefore, by~\cite[Theorem~3.20]{Tst3} 
the stack $\mM_{\sigma}(v)$ is 
an algebraic stack of finite type such that 
(\ref{M:emb}) is an open immersion. 

It remains to show the valuative 
criterion of properness of $\mM_{\sigma}(v)$. 
We write $\sigma=(Z, \aA)$, and 
let $\{\pP(\phi)\}_{\phi \in \mathbb{R}}$ be
the associated slicing. 
Let us take $\phi_0 \in (0, 1]$ such that 
$Z(v) \in \mathbb{R}_{>0} e^{i\pi \phi_0}$. 
Then the rotated stability condition
\begin{align}\label{rotate}
(-e^{-i \pi \phi_0}Z, \pP((\phi_0-1, \phi_0]))
\end{align}
is also algebraic. 
Then the required property follows from
Theorem~\ref{thm:val} (i) 
applied to the stability condition (\ref{rotate}).  
\end{proof}
Here we have used the following result by
Abramovich-Polishchuk~\cite{AP}.
\begin{thm}\emph{(\cite[Theorem~4.1.1]{AP})}\label{thm:val}
Let $\sigma \in \Stab_{\omega, B}^{\circ}(X)$ be a good
stability condition, and $\{\pP(\phi)\}_{\phi \in \mathbb{R}}$
the associated slicing. Let $C$ be a smooth curve, 
$p \in C$ a closed point
and set $U=C \setminus \{p\}$. 

\begin{enumerate}
\item  Every family $F_U$ of objects in $\pP(1)$ over $U$ extends to a family
$F$ of objects in $\pP(1)$. 

\item Let $F_1$ and $F_2$ be families of objects in $\pP(1)$ over $S$, 
and $\phi_U \colon (F_1)_{U} \to (F_2)_{U}$ an isomorphism. 
Then $(F_1)_p$ and $(F_2)_p$ are $S$-equivalent. 
\end{enumerate}
\end{thm}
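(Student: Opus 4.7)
The plan is to follow the strategy of Abramovich-Polishchuk \cite{AP}. Since $\sigma$ is good, the heart $\aA=\pP((0,1])$ is noetherian, and the construction recalled in (\ref{def:AS}) extends $\aA$ to a noetherian heart $\aA_C$ on $D^b\Coh(X\times C)$. The essential property is that restriction to any open $V\subset C$ intertwines $\aA_C$ with $\aA_V$; in particular, the t-structure truncations with respect to $\aA_C$ commute with restriction to $U$.

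For (i), first produce some extension $F_0\in\aA_C$ of the given $F_U\in\aA_U$. Concretely, pick any coherent complex $\widetilde{F}\in D^b\Coh(X\times C)$ restricting to $F_U$ on $X\times U$ (for instance, via pushforward along the open immersion followed by a coherent truncation), and then apply the truncation functors of the $\aA_C$-t-structure to land in $\aA_C$. Restriction-compatibility guarantees $F_0|_U\cong F_U$. The substance is to modify $F_0$ so that its fiber at $p$ lies in $\pP(1)$: if $(F_0)_p$ is destabilized by a subobject $A\subset(F_0)_p$ in $\aA$ of strictly larger phase, replace $F_0$ by the kernel of the composition $F_0\twoheadrightarrow i_{p,\ast}(F_0)_p\twoheadrightarrow i_{p,\ast}((F_0)_p/A)$. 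This elementary modification leaves the restriction to $U$ unchanged and strictly improves the fiber at $p$. Iterating terminates by noetherianness of $\aA$ combined with the support property of $\sigma$, yielding the desired semistable family $F$.

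For (ii), given two families $F_1,F_2\in\aA_C$ with fibers in $\pP(1)$ together with $\phi_U\colon(F_1)_U\simto(F_2)_U$, consider the graph $\Gamma_{\phi_U}\subset (F_1\oplus F_2)|_U$ as a subobject in $\aA_U$, and extend it to a subobject $\Gamma\subset F_1\oplus F_2$ in $\aA_C$ by the same extension procedure used in (i). The induced projections $\Gamma\to F_i$ are isomorphisms over $U$, so restricting to $p$ gives maps $\Gamma_p\to (F_i)_p$ whose kernels, images, and cokernels are $\sigma$-semistable of phase $1$. Extracting their Jordan-H\"older factors matches the composition series of $(F_1)_p$ and $(F_2)_p$, proving $S$-equivalence.

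The main obstacle is proving termination of the modification process in (i). This rests on constructing an integer-valued invariant of $F_0$ (for example, built from the discrete class in $\Gamma_{\omega,B}$ of the maximally destabilizing factor of the fiber at $p$, using the discreteness coming from the support property) that strictly decreases under each elementary transformation. Without such an invariant one cannot a priori rule out an infinite sequence of modifications, so translating noetherianity of $\aA_C$ into finiteness of this iteration is the technical core of the Abramovich-Polishchuk argument.
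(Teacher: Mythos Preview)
The paper does not give its own proof of this statement: Theorem~\ref{thm:val} is quoted verbatim from Abramovich--Polishchuk \cite[Theorem~4.1.1]{AP} and used as a black box in the proof of Theorem~\ref{thm:stack}. So there is nothing in the paper to compare your argument against.

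That said, your sketch does follow the Abramovich--Polishchuk strategy, and you have correctly identified where the real content lies. Your final paragraph essentially admits that the proof is incomplete: you assert in the body of (i) that the iteration of elementary modifications ``terminates by noetherianness of $\aA$ combined with the support property,'' but then concede that constructing the strictly decreasing invariant which actually forces termination is ``the technical core'' you have not carried out. This is a genuine gap. Noetherianity of $\aA_C$ by itself does not obviously bound an infinite sequence of kernels of maps $F_0 \twoheadrightarrow i_{p,\ast}Q$, since each step changes $F_0$ rather than producing an ascending chain in a fixed object; one needs the specific argument in \cite[\S4]{AP} (or an equivalent) to finish. For (ii), your graph-of-isomorphism idea is reasonable, but you should justify why the extension $\Gamma$ can be taken with fibers in $\pP(1)$ (which uses part (i)) and why the resulting maps $\Gamma_p\to (F_i)_p$ have semistable kernel and cokernel of the same phase; as written these steps are asserted rather than argued.
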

When all the objects $[E] \in \mM_{\sigma}(v)$ are $\sigma$-stable, 
we also have the following result: 
\begin{cor}\label{prop:space}
Suppose that any object $[E] \in \mM_{\sigma}(v)$
is $\sigma$-stable. Then $\mM_{\sigma}(v)$ is a 
$\mathbb{C}^{\ast}$-gerbe over a 
proper algebraic space $M_{\sigma}(v)$ of 
finite type. 
\end{cor}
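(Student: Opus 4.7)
The plan is to assemble the corollary from two independent inputs: (a) the rigidity of stable objects, which promotes $\mM_\sigma(v)$ to a $\mathbb{G}_m$-gerbe, and (b) the full valuative criterion, which promotes the coarse space to a proper algebraic space. Both inputs are already essentially available: by Theorem~\ref{thm:stack}, $\mM_\sigma(v)$ is a quasi-proper algebraic stack of finite type with $\mM_\sigma(v)\hookrightarrow \mM$ an open immersion, and Theorem~\ref{thm:val} supplies the limiting behavior of families in $\pP(1)$ after reducing to an algebraic $\sigma$ and a suitable rotation as in the proof of Theorem~\ref{thm:stack}.

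First I would establish the gerbe structure. Any $\sigma$-stable object $E\in\aA$ in the heart of a bounded t-structure satisfies $\End(E)=\mathbb{C}$ by the usual Schur-type argument (any non-scalar endomorphism would split $E$ via its kernel and cokernel in $\aA$, contradicting stability). Hence, the stabilizer group of every closed point of $\mM_\sigma(v)$ is exactly $\mathbb{G}_m=\mathbb{C}^{\ast}$, acting by scaling. Since $\mM_\sigma(v)$ is an algebraic stack of finite type with constant stabilizer $\mathbb{G}_m$, the Keel--Mori theorem (applied in the form for stacks with linearly reductive stabilizers) produces a coarse moduli algebraic space $M_\sigma(v)$ of finite type, and the natural morphism
\begin{equation*}
\mM_\sigma(v)\longrightarrow M_\sigma(v)
\end{equation*}
is a $\mathbb{G}_m$-gerbe.

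Next I would verify properness of $M_\sigma(v)$ via the valuative criterion. It suffices to check both existence and uniqueness of limits along a DVR $\oO_{C,p}$, with generic point in $U=C\setminus\{p\}$. After perturbation we may assume $\sigma$ is algebraic, and after the phase rotation used at the end of the proof of Theorem~\ref{thm:stack}, all relevant objects lie in $\pP(1)$. Then Theorem~\ref{thm:val}(i) extends any family $F_U$ of semistable objects of class $v$ over $U$ to a family $F$ over $C$; the extension $F_p$ lies in $\pP(1)$ and has class $v$, hence defines a point of $\mM_\sigma(v)$ and therefore of $M_\sigma(v)$. This handles existence. For uniqueness, Theorem~\ref{thm:val}(ii) asserts that any two extensions $F_1,F_2$ agreeing on $U$ satisfy $(F_1)_p\sim_S (F_2)_p$. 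Here is the step where the hypothesis enters crucially: since every $[E]\in\mM_\sigma(v)$ is $\sigma$-stable, the $\sigma$-semistable objects of class $v$ are simple in their slice of $\pP(1)$, so any Jordan--Hölder filtration has length one and $S$-equivalence collapses to isomorphism. Thus $(F_1)_p\cong (F_2)_p$, which gives separatedness of $M_\sigma(v)$ and hence, together with finite type and the existence part, properness.

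The main obstacle, if any, is the bookkeeping for the gerbe statement: one has to check that the $\mathbb{G}_m$-action on the universal family is faithful and that the inertia stack is globally the trivial $\mathbb{G}_m$-gerbe band, but both are automatic from $\End(E)=\mathbb{C}$ applied pointwise and the fact that the automorphisms extend over families (as the universal family is flat and $\Hom$ is a flat family of $1$-dimensional vector spaces, so scalars act). Once this is in place, the rest is an assembly of the previously proved quasi-properness with the $S$-equivalence $=$ isomorphism collapse under the stability hypothesis.
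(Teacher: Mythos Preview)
Your argument is essentially correct and reaches the same conclusion via the same valuative-criterion endgame, but your construction of the algebraic space $M_\sigma(v)$ differs from the paper's. The paper does not invoke Keel--Mori or rigidification at all: it simply takes Inaba's algebraic space $M$ of simple objects in $D^b\Coh(X)$ (which already exists by~\cite{Inaba}), observes that the locus $M_\sigma(v)\subset M$ of $\sigma$-stable objects with $v^B=v$ is open by the same argument that proved Theorem~\ref{thm:stack}, and then runs Theorem~\ref{thm:val}(i),(ii) on the rotated stability condition to get properness directly for this concrete space. The gerbe statement then drops out from the tautological map $\mM_\sigma(v)\to M_\sigma(v)$ together with $\Aut(E)=\mathbb{C}^\ast$.

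Your route is more abstract and self-contained (it does not rely on Inaba's construction), but one caution: the literal Keel--Mori theorem requires finite inertia, which fails here since every stabilizer is $\mathbb{G}_m$. What you actually need is $\mathbb{G}_m$-rigidification (as in Abramovich--Corti--Vistoli or the appendix to Abramovich--Olsson--Vistoli), which removes the central $\mathbb{G}_m$ from the inertia and leaves an algebraic stack with trivial stabilizers, hence an algebraic space. You gesture at this in your final paragraph when checking that scalars act compatibly in families, which is exactly the flatness-of-$\mathbb{G}_m$-in-inertia hypothesis needed for rigidification. So the idea is right, but the name ``Keel--Mori'' should be replaced. The paper's approach sidesteps this entirely by having the target algebraic space in hand from the start.
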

\begin{proof}
Let $M$ be the algebraic space of simple 
objects in $D^b \Coh(X)$ constructed by Inaba~\cite{Inaba}, 
and write $\sigma=(Z, \aA)$. 
Similarly to the proof of Theorem~\ref{thm:stack}, 
we may assume that $\sigma$ is good. 
The proof of Theorem~\ref{thm:stack} shows that 
the subset $M_{\sigma}(v) \subset M$
of $\sigma$-stable 
$E \in \aA$ with 
$v^B(E)=v$ is an open 
sub algebraic space. 
The valuative criterion 
in Theorem~\ref{thm:val} (i), (ii) applied to (\ref{rotate})
shows that $M_{\sigma}(v)$ is proper.  
By the assumption, any $[E] \in \mM_{\sigma}(v)$ is 
$\sigma$-stable, and so $\Aut(E)=\mathbb{C}^{\ast}$. 
The natural morphism 
$\mM_{\sigma}(v) \to M_{\sigma}(v)$
gives the desired $\mathbb{C}^{\ast}$-gerbe
structure of $\mM_{\sigma}(v)$. 
\end{proof}

\section{Donaldson-Thomas invariants for Bridgeland semistable objects}\label{sec:DT}
In this section, we mainly use Theorem~\ref{thm:stack} to define 
Donaldson-Thomas invariants counting Bridgeland semistable
objects on Calabi-Yau 3-folds which satisfy the BG inequality conjecture. 
\subsection{Donaldson-Thomas invariants}
Let $X$ be a smooth projective Calabi-Yau 3-fold, i.e. 
\begin{align*}
K_X=0, \ H^1(X, \oO_X)=0. 
\end{align*}
Throughout this section, we assume that 
$X$ satisfies Conjecture~\ref{conj:BMT}. 
So far, the only known examples of Calabi-Yau 3-folds
satisfying Conjecture~\ref{conj:BMT}
are A-type Calabi-Yau 3-folds, that are 
\'etale quotients of abelian 3-folds~\cite{BMS}. 
See \cite{OgSa} for a classification of such Calabi-Yau 3-folds. 
We describe one of such examples. 
\begin{exam}
Let $E_1, E_2, E_3$ be three elliptic curves, 
and set $A=E_1 \times E_2 \times E_3$. 
Let $\tau_i \in E_i$ be 2-torsion elements. 
We define the automorphisms $g_1, g_2$ on $A$
to be
\begin{align*}
&g_1(z_1, z_2, z_3)=(z_1+\tau_1, -z_2, -z_3), \\ 
&g_2(z_1, z_2, z_3)=(-z_1, z_2+\tau_2, -z_3+\tau_3). 
\end{align*}
Then $(g_1, g_2)$ defines the free action of 
$G=(\mathbb{Z}/2\mathbb{Z})^{\oplus 2}$ on $A$, 
and $X=A/G$ is an A-type Calabi-Yau 3-fold. 
\end{exam}

Let us take the classes $B$ and $\omega$ as in \eqref{class:choice}. 
Assuming Conjecture~\ref{conj:BMT}, we have 
the Bridgeland stability condition
$\sigma_{\omega, B}$
given by (\ref{BMT:stab}). 
We take the connected component
\begin{align*}
\Stab_{\omega, B}^{\circ}(X)
\subset \Stab_{\omega, B}(X)
\end{align*}
which contains $\sigma_{\omega, B}$. 
Our goal is 
to construct
a map
\begin{align}\label{map:DT}
\DT_{\ast}(v) \colon 
\Stab_{\omega, B}^{\circ}(X) \to \mathbb{Q}
\end{align}
for each $v\in H^{\ast}(X, \mathbb{Q})$, such 
that $\DT_{\sigma}(v)$ virtually counts 
$\sigma$-semistable objects $E \in D^b \Coh(X)$
with $\ch(E)=v$. 

\subsection{DT invariants via virtual classes}
In some cases, the DT invariants may be
defined along with the original idea by Thomas~\cite{Thom}. 
Let $\Lambda$ be the image of the Chern character map
\begin{align*}
\Lambda \cneq \im \left( \ch \colon 
K(X) \to H^{\ast}(X, \mathbb{Q}) \right). 
\end{align*}
Note that the group homomorphism $v^B$ in (\ref{def:vB})
factors through the Chern character map
\begin{align*}
v^B \colon K(X) \stackrel{\ch}{\to}
 \Lambda \stackrel{\alpha}{\to} \Gamma_{\omega, B}. 
\end{align*}
For $v \in \Lambda$ and $\sigma \in \Stab_{\omega, B}^{\circ}(X)$, 
Theorem~\ref{thm:stack} shows that 
the stack $\mM_{\sigma}(\alpha(v))$ is an 
algebraic stack of finite type. 
The substack 
\begin{align}\label{Mstack}
\mM_{\sigma}(v) \subset \mM_{\sigma}(\alpha(v))
\end{align}
of $\sigma$-semistable objects with Chern character $v$
is an open and closed substack. Hence, it is 
also an algebraic stack of finite type. 

Suppose that any object
$[E] \in \mM_{\sigma}(v)$ is $\sigma$-stable. 
Then by Corollary~\ref{prop:space},
the stack $\mM_{\sigma}(v)$ is a
$\mathbb{C}^{\ast}$-gerbe over 
a proper algebraic space $M_{\sigma}(v)$ of finite type. 
The argument of~\cite{HT2} shows that there is 
a symmetric perfect obstruction theory on $M_{\sigma}(v)$, 
and so the zero dimensional virtual 
class $[M_{\sigma}(v)]^{\rm{vir}}$. 
Since $M_{\sigma}(v)$ is proper and separated, 
we can integrate the 
virtual class and define the DT invariant:
\begin{defi}\label{defi:DT:ori}
Suppose that any object $[E] \in \mM_{\sigma}(v)$
is $\sigma$-stable. Then we define
\begin{align}\label{DTwB2}
\DT_{\sigma}(v)  \cneq 
\int_{[M_{\sigma}(v)]^{\rm{vir}}} 1 \in \mathbb{Z}. 
\end{align}
\end{defi}
Note that, by~\cite{Beh}, the invariant (\ref{DTwB2})
is also described as
the weighted Euler characteristic
\begin{align*}
\DT_{\sigma}(v) =\int_{M_{\sigma}(v)} \chi_{B} \ de 
\cneq \sum_{m\in \mathbb{Z}} m \cdot e(\chi_B^{-1}(m)).
\end{align*}
Here $\chi_B$ is Behrend's constructible
 function on $M_{\sigma}(v)$, and $e$
is the topological Euler characteristic. 
\begin{rmk}
The work of~\cite{BBBJ} shows that 
$M_{\sigma}(v)$ is locally written as a critical locus 
of some algebraic function $f$.
The Behrend function $\chi_B$ is locally 
described by the Euler number of the Milnor fiber of 
$f$. 
\end{rmk}
Suppose that $v$ and $\sigma=\sigma_{\omega, B}$
satisfies the assumption in Definition~\ref{defi:DT:ori}.
Then we have the 
associated invariant 
\begin{align*}
\DT_{\omega, B}(v) \cneq \DT_{\sigma_{\omega, B}}(v). 
\end{align*}
\begin{rmk}
The advantage of defining 
the DT invariant 
via the virtual cycle 
is that its deformation invariance
may be easier to prove. 
Indeed 
let $0 \in \Delta \subset \mathbb{C}$ be a small disc, and 
\begin{align*}
\pi \colon \xX \to \Delta
\end{align*}
a smooth one parameter family of smooth projective Calabi-Yau 3-folds.  
Let $B$ be a $\mathbb{Q}$-divisor on $\xX$, 
and $\omega$ be an $\pi$-ample $\mathbb{R}$-divisor on $\xX$ with 
$\omega^2$ rational.
If one can show 
that fiberwise $\sigma_{\omega_t, B_t}$-stable 
objects on $\xX$ for $t \in \Delta$ is proper over $\Delta$, 
then 
the argument of~\cite{BF} shows 
that 
\begin{align*}
\DT_{\omega_t, B_t}(v_t) \in \mathbb{Z}, \ t \in \Delta
\end{align*}
is independent of $t\in \Delta$
for $v \in \Gamma(\Delta, \dR \pi_{\ast}\mathbb{Q})$. 
However, proving the 
above relative properness
requires a relative version of the valuative criterion in 
Theorem~\ref{thm:val}, 
which seems to require a further work. 
\end{rmk}

In general, there may be strictly semistable 
objects $[E] \in \mM_{\sigma}(v)$.
In that case, 
we are not able to define 
the DT invariant by the virtual cycle at the moment. 
Instead, we will use the Hall algebra and the Behrend function 
following the idea of~\cite{K-S}, ~\cite{JS}.

\subsection{Hall algebras}
Here we use the notation in Subsection~\ref{subsec:state}. 
Let $\aA \subset D^b \Coh(X)$ be 
the heart of a bounded t-structure which is noetherian, 
and satisfies the generic flatness. 
Then the 
stack of objects in $\aA$
\begin{align*}
\oO bj(\aA) \subset \mM
\end{align*}
is realized as an open substack of $\mM$. 
In particular, it is an algebraic stack locally 
of finite type. 
The Hall algebra $H(\aA)$ is 
$\mathbb{Q}$-spanned by the isomorphism 
classes of the symbols 
\begin{align}\label{symbol}
[\rho \colon \xX \to \oO bj(\aA)]
\end{align}
where $\xX$ is an Artin stack of finite type over
$\mathbb{C}$ with 
affine geometric stabilizers and
$\rho$ is a 1-morphism. 
The relation is generated by 
\begin{align}\label{Hall:rel}
[\rho \colon \xX \to \oO bj(\aA)]
\sim [\rho|_{\yY} \colon \yY \to \oO bj(\aA)]
+ [\rho|_{\uU} \colon \uU \to \oO bj(\aA)]
\end{align}
where $\yY \subset \xX$ is a closed substack and 
$\uU\cneq \xX \setminus \yY$. 
There is an associative $\ast$-product
on $H(\aA)$
based on the Ringel-Hall algebra (see~\cite[Section~5.1]{Joy2}).
The unit is given by 
$1=[\Spec \mathbb{C} \to \oO bj(\aA)]$,
which corresponds to $0\in \aA$. 
Also there is a Lie subalgebra 
\begin{align*}
H^{\rm{Lie}}(\aA) \subset H(\aA)
\end{align*}
consisting of elements 
supported on \textit{virtual indecomposable 
objects}. 
See \cite[Section~5.2]{Joy2} for further details
on the definition of $H^{\rm{Lie}}(\aA)$. 

The algebra $H(\aA)$
is graded by $\Lambda$
\begin{align*}
H(\aA)=\bigoplus_{v \in \Lambda} H_v(\aA)
\end{align*}
where $H_v(\aA)$ is 
generated by symbols (\ref{symbol}) 
which factors through 
$\oO bj_v(\aA) \subset \oO bj(\aA)$. 
Here $\oO bj_v(\aA)$ is the stack of 
objects $E \in \aA$
with $\ch(E)=v$. 
By the Riemann-Roch theorem, 
the Euler pairing on $K(X)$ 
descends to the anti-symmetric 
pairing 
\begin{align*}
\chi \colon \Lambda \times \Lambda \to \mathbb{Z}. 
\end{align*}
Let $C(\Lambda)$ be the Lie algebra
\begin{align*}
C(\Lambda) \cneq \bigoplus_{v \in \Lambda}
\mathbb{Q} \cdot c_{v}
\end{align*}
with the bracket given by 
\begin{align}\label{bracket}
[c_{v_1}, c_{v_2}] \cneq (-1)^{\chi(v_1, v_2)}
\chi(v_1, v_2) c_{v_1+v_2}. 
\end{align}
 By~\cite[Theorem~5.12]{JS}, there is a $\Lambda$-graded
linear map 
\begin{align}\label{PiHall}
\Pi \colon H^{\rm{Lie}}(\aA) \to C(\Lambda)
\end{align}
such that if $\xX$ is a $\mathbb{C}^{\ast}$-gerbe over an 
algebraic space $\xX'$, we have 
\begin{align}\notag
\Pi([\rho \colon \xX \to \oO bj_v(\aA)])
=-\left(\sum_{k\in \mathbb{Z}}
k \cdot e(\chi_B^{-1}(k))  \right) c_{v}.
\end{align}
Here $\rho$ is an open immersion
and 
$\chi_B$ is  
Behrend's constructible function
on $\xX'$. 
The map (\ref{PiHall}) is shown 
to be a Lie algebra homomorphism 
if $\aA=\Coh(X)$
by~\cite[Theorem~5.12]{JS}.

\subsection{DT invariants for Bridgeland semistable objects}
Let us take the classes $B, \omega$  
as in the previous section (also as in \eqref{class:choice}).  
Consider a good
stability condition
\begin{align*}
\sigma=(Z, \aA)
\in \Stab_{\omega, B}^{\circ}(X).
\end{align*} 
For each element $v \in \Lambda$, 
the 
stack $\mM_{\sigma}(v)$ in (\ref{Mstack})
determines an element
\begin{align*}
\delta_{\sigma}(v) \cneq [\mM_{\sigma}(v) \subset \oO bj(\aA)] \in H(\aA). 
\end{align*}
Let $C(\aA) \subset \Lambda$ be the image of 
$\ch|_{\aA}$. 
We also define $\epsilon_{\sigma}(v) \in H(\aA)$ in the following way: 
\begin{align}\label{sum:e}
\epsilon_{\sigma}(v) \cneq \sum_{l\ge 1}\, \sum_{\begin{subarray}{c}
v_i \in C(\aA), \ 1\le i\le l \\ 
v_1+ \cdots +v_l=v \\
\arg Z(v_i)=\arg Z(v)
\end{subarray}}
\frac{(-1)^{l-1}}{l} \
\delta_{\sigma}(v_1) \ast \cdots \ast \delta_{\sigma}(v_l). 
\end{align}
By the support property and the boundedness of $\sigma$, the 
sum (\ref{sum:e}) is a finite sum, and so 
it is well-defined.
Then the argument of~\cite[Theorem~8.7]{Joy3}
shows that $\epsilon_{\sigma}(v) \in H^{\rm{Lie}}(\aA)$. 
Following the construction of 
generalized DT invariants in~\cite{JS}, we 
give the following definition: 
\begin{defi}\label{def:DTA}
For $v \in \Lambda$, 
we define the invariant $\DT_{\sigma}(v) \in \mathbb{Q}$
in the following way:
if $v \in C(\aA)$, we define 
it by the formula
\begin{align*}
\Pi \epsilon_{\sigma}(v)=-
\DT_{\sigma}(v) \cdot c_v. 
\end{align*}
Otherwise, we set
\begin{align*}
\DT_{\sigma}(v) \cneq \left\{ \begin{array}{cc}
\DT_{\sigma}(-v), & \mbox{ if } -v \in C(\aA), \\
0, & \mbox{ if } \pm v \notin C(\aA). 
\end{array} \right. 
\end{align*}
\end{defi}
As a summary, we have the following result: 
\begin{thm}\label{thm:DTstab}
Let $X$ be a smooth projective Calabi-Yau 3-fold 
satisfying Conjecture~\ref{conj:BMT}. 
Then for each $v \in \Lambda$, 
there is a map
\begin{align*}
\DT_{\ast}(v) \colon 
\Stab_{\omega, B}^{\circ}(X) \to \mathbb{Q}
\end{align*}
such that $\DT_{\sigma}(v)$ virtually counts 
$\sigma$-semistable objects 
$E \in D^b \Coh(X)$ with $\ch(E)=v$.    
If any $\sigma$-semistable object $E$ with $\ch(E)=v$
is $\sigma$-stable, then $\DT_{\sigma}(v)$
coincides with (\ref{DTwB2}). 
\end{thm}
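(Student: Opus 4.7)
The plan is to verify that Definition~\ref{def:DTA} actually produces a well-defined rational number for every $v\in \Lambda$ and every $\sigma\in\Stab_{\omega,B}^{\circ}(X)$, and then to match it with \eqref{DTwB2} in the stable case. First, one should reduce to the case where $\sigma=(Z,\aA)$ is good in the sense of Definition~\ref{def:good}; good stability conditions are dense in $\Stab_{\omega,B}^{\circ}(X)$ (as argued in the proof of Theorem~\ref{thm:stack}, step 3), and since the set of walls for a fixed class $v$ is locally finite, it suffices to define $\DT_\sigma(v)$ on each chamber and check that the definition is independent of the choice of good $\sigma$ within a chamber. For a good $\sigma$, Theorem~\ref{thm:stack} provides that $\mM_\sigma(v)$ is a quasi-proper algebraic stack of finite type, so the symbol $\delta_\sigma(v)=[\mM_\sigma(v)\hookrightarrow \oO bj(\aA)]$ gives an element of $H_v(\aA)$, and $\aA$ is noetherian and satisfies generic flatness by Corollary~\ref{cor:Bbound}, so that the Hall algebra machinery of \cite{Joy2} is available.

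Next I would check that the sum \eqref{sum:e} defining $\epsilon_\sigma(v)$ is a finite sum. The support property of $\sigma$ (part of the assumption that $\sigma$ is good) together with the boundedness of $\mM_\sigma(v)$ bounds the number of decompositions $v=v_1+\cdots+v_l$ with $v_i\in C(\aA)$, $\arg Z(v_i)=\arg Z(v)$, and $\mM_\sigma(v_i)\ne\emptyset$; indeed, along the ray $\mathbb{R}_{>0}Z(v)$ the classes $v_i$ are constrained to lie in a cone on which the quadratic form giving the support property is bounded, hence only finitely many $v_i$ occur. Once \eqref{sum:e} is finite, the argument of \cite[Theorem~8.7]{Joy3} applies verbatim, replacing $\Coh(X)$ by the heart $\aA$, to show $\epsilon_\sigma(v)\in H^{\rm Lie}(\aA)$; the only ingredients used are that $\aA$ is an abelian subcategory of $D^b\Coh(X)$ whose stack of objects is algebraic locally of finite type, that substacks of semistable objects are of finite type, and combinatorial identities on the multiplicative inversion relating $\delta$ and $\epsilon$. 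The Lie-algebra morphism $\Pi$ of \eqref{PiHall} then produces the element $\Pi\epsilon_\sigma(v)\in C(\Lambda)$, whose coefficient at $c_v$ (up to sign) defines $\DT_\sigma(v)\in\mathbb{Q}$.

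For the compatibility with \eqref{DTwB2} in the stable case, suppose every $[E]\in\mM_\sigma(v)$ is $\sigma$-stable. Then no proper decomposition $v=v_1+\cdots+v_l$ with $l\ge 2$ and $\arg Z(v_i)=\arg Z(v)$ contributes nontrivially, because any such summand $\delta_\sigma(v_i)\ast\delta_\sigma(v_j)$ corresponds to non-trivial extensions of semistable objects of the same phase, which would yield strictly semistable objects of class $v$, contradicting the stability assumption. Hence $\epsilon_\sigma(v)=\delta_\sigma(v)$. By Corollary~\ref{prop:space}, $\mM_\sigma(v)$ is a $\mathbb{C}^\ast$-gerbe over the proper algebraic space $M_\sigma(v)$, so by the defining property of $\Pi$ recalled after \eqref{PiHall} we have $\Pi\delta_\sigma(v)=-\bigl(\sum_k k\cdot e(\chi_B^{-1}(k))\bigr)c_v$, where $\chi_B$ is Behrend's function on $M_\sigma(v)$. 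By Behrend's theorem \cite{Beh} this coincides with $-\int_{[M_\sigma(v)]^{\rm vir}}1\cdot c_v$, using the symmetric obstruction theory of \cite{HT2}; this gives the identification with \eqref{DTwB2}.

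The main obstacle is checking that $\Pi$ really is a Lie algebra homomorphism on $H^{\rm Lie}(\aA)$ rather than only on $H^{\rm Lie}(\Coh(X))$, since in \cite{JS} this is proved for coherent sheaves using explicit integration identities over Behrend functions. The delicate point is that the heart $\aA=\aA_{\omega,B}$ is a double tilt of $\Coh(X)$ and consists of certain three-term complexes, so one must verify that the proof in \cite[Theorem~5.12]{JS} carries over using only the Calabi--Yau property together with the facts that $\aA$ is noetherian, satisfies generic flatness, and that $\oO bj(\aA)$ is an open substack of Lieblich's stack $\mM$. In fact the proof of \cite[Theorem~5.12]{JS} is local on the moduli stack and relies only on the existence of a shifted symplectic structure on $\oO bj(\aA)$, which follows in our Calabi--Yau setting from \cite{BBBJ} since $\oO bj(\aA)\subset\mM$ is open; consequently one may invoke the wall-crossing and no-pole formalism of \cite[Sections~5--6]{JS} in the present context to conclude that $\epsilon_\sigma(v)\in H^{\rm Lie}(\aA)$ pushes forward along $\Pi$ to a well-defined element of $C(\Lambda)$, giving the desired invariant.
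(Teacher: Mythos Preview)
Your proposal is essentially correct and follows the same strategy as the paper, though you supply considerably more detail than the paper's very short proof (which simply invokes Definition~\ref{def:DTA} for good $\sigma$ and perturbs to a nearby good $\sigma'$ otherwise). Your treatment of the stable case, reducing $\epsilon_\sigma(v)$ to $\delta_\sigma(v)$ via the observation that any nontrivial decomposition would produce a strictly semistable object of class $v$, is exactly the right argument; the paper leaves this implicit in the formulation of $\Pi$.

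One point deserves correction, though it is not a gap. Your final paragraph treats the question of whether $\Pi$ is a Lie algebra homomorphism on $H^{\rm Lie}(\aA)$ as a ``main obstacle''. For the purposes of \emph{this} theorem it is not: the paper only uses that $\Pi$ is a $\Lambda$-graded \emph{linear} map on $H^{\rm Lie}(\aA)$ (this is how it is stated around \eqref{PiHall}), together with $\epsilon_\sigma(v)\in H^{\rm Lie}(\aA)$, which follows from \cite[Theorem~8.7]{Joy3}. The Lie-algebra homomorphism property of $\Pi$ is only asserted for $\aA=\Coh(X)$ in the paper, and would be needed for wall-crossing formulas relating $\DT_\sigma(v)$ for different $\sigma$, not for the mere existence of the invariant. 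So your appeal to \cite{BBBJ} and the shifted symplectic structure, while plausible and relevant to future wall-crossing arguments, is unnecessary here and can be dropped.
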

\begin{proof}
If $\sigma$ is good, then 
the invariant $\DT_{\sigma}(v)$ 
is defined in Definition~\ref{def:DTA}.
Suppose $\sigma$ is not good. 
Since the set of good points in $\Stab_{\omega, B}^{\circ}(X)$
is dense, and the walls are defined over rational numbers, one 
can perturb $\sigma$ to a good stability condition 
$\sigma'$ so that 
stable factors of objects in $\mM_{\sigma}(v)$ and 
those in $\mM_{\sigma'}(v)$ are the same. 
We define $\DT_{\sigma}(v)$ to be 
$\DT_{\sigma'}(v)$, which is obviously 
independent of $\sigma'$.  
\end{proof}


\providecommand{\bysame}{\leavevmode\hbox to3em{\hrulefill}\thinspace}
\providecommand{\MR}{\relax\ifhmode\unskip\space\fi MR }
\providecommand{\MRhref}[2]{%
  \href{http://www.ams.org/mathscinet-getitem?mr=#1}{#2}
}
\providecommand{\href}[2]{#2}

\end{document}